\documentclass{amsart}
\usepackage{amsxtra}
\usepackage{mathdots}
\usepackage{mathrsfs}
\usepackage{verbatim}
\usepackage{calc}
\usepackage{graphicx}
\usepackage{fouridx}
\usepackage{tensor}
\usepackage[normalem]{ulem}
\usepackage{varwidth}
\usepackage{hyperref}
\usepackage[shortalphabetic,initials]{amsrefs}
\usepackage{bdsstandard}

\theoremstyle{plain}
\newtheorem{thm}[equation]{Theorem}
\newtheorem*{thm*}{Theorem}
\newtheorem{prop}[equation]{Proposition}
\newtheorem*{prop*}{Proposition}
\newtheorem{cor}[equation]{Corollary}
\newtheorem*{cor*}{Corollary}
\newtheorem{lem}[equation]{Lemma}
\newtheorem*{lem*}{Lemma}
\newtheorem{conj}[equation]{Conjecture}
\newtheorem*{conj*}{Conjecture}

\theoremstyle{definition}
\newtheorem{defn}[equation]{Definition}
\newtheorem*{defn*}{Definition}
\newtheorem{eg}[equation]{Example}
\newtheorem*{eg*}{Example}

\newtheorem*{ex*}{Exercise}
\newtheorem{rk}[equation]{Remark}
\newtheorem*{rk*}{Remark}

\newtheorem*{ntn*}{Notation}

\theoremstyle{plain}

\renewcommand{\H}{\ensuremath{\mathscr{H}}\xspace}
\renewcommand{\L}{\ensuremath{\mathscr{L}}\xspace}

\DeclareMathOperator{\Adm}{Adm}
\DeclareMathOperator{\Conv}{Conv}

\DeclareMathOperator{\Perm}{Perm}

\DeclareMathOperator{\SPerm}{SPerm}

\newcommand{\abs}{\ensuremath{\mathrm{abs}}\xspace}
\newcommand{\aff}{\ensuremath{\mathrm{aff}}\xspace}
\renewcommand{\Aff}{\ensuremath{\mathop{\mathrm{Aff}}}\xspace}

\renewcommand{\int}{\ensuremath{\mathop{\mathrm{int}}}\xspace}
\newcommand{\loc}{\ensuremath{\mathrm{loc}}\xspace}
\newcommand{\naive}{\ensuremath{\mathrm{naive}}\xspace}

\newcommand{\spin}{\ensuremath{\mathrm{spin}}\xspace}

\renewcommand{\vert}{\ensuremath{\mathrm{vert}}\xspace}

\entrymodifiers={+!!<0pt,\fontdimen22\textfont2>}

\begin{document}

\renewcommand{\O}{\ensuremath{\mathscr{O}}\xspace}

\title[Topological flatness of ramified unitary local models. II.]{Topological flatness of local\\ models for ramified unitary groups.\\  II.  The even dimensional case}
\author{Brian D. Smithling}
\address{University of Toronto, Department of Mathematics, 40 St.\ George St.,\ Toronto, ON  M5S 2E4, Canada}
\email{bds@math.toronto.edu}
\thanks{2010 \emph{Mathematics Subject Classification.} Primary 14G35; Secondary 05E15; 11G18; 17B22}
\thanks{\emph{Key words and phrases.}  Local model; unitary group; Iwahori-Weyl group, admissible set}

\begin{abstract}
Local models are schemes, defined in terms of linear-alge\-bra\-ic moduli problems, which give \'etale-local neighborhoods of integral models of certain $p$-adic PEL Shimura varieties defined by Rapoport and Zink.  In the case of a unitary similitude group whose localization at $\QQ_p$ is ramified, quasi-split $GU_n$, Pappas has observed that the original local models are typically not flat, and he and Rapoport have introduced new conditions to the original moduli problem which they conjecture to yield a flat scheme.  In a previous paper we proved that their new local models are topologically flat when $n$ is odd.  In the present paper we prove topological flatness when $n$ is even.  Along the way, we characterize the $\mu$-admissible set for certain cocharacters $\mu$ in types $B$ and $D$, and we show that for these cocharacters admissibility can be characterized in a vertexwise way, confirming a conjecture of Pappas and Rapoport.
\end{abstract}
\maketitle

\tableofcontents

\section{Introduction}
\numberwithin{equation}{section}

One of the basic problems in the arithmetic theory of Shimura varieties is the construction of reasonable integral models over the ring of integers \O of the reflex field, or over certain localizations of \O.  One would like to construct models that are faithfully flat with mild singularities, and in cases in which the Shimura variety can be described as a moduli space of abelian varieties with additional structure, to describe the model in terms of an integral version of the moduli problem.  A major intended application is the calculation of the Hasse-Weil zeta function of the Shimura variety.

Let $(\BG,X)$ be a Shimura datum, and let $K \subset \BG(\AA_f)$ be a sufficiently small compact open subgroup of the form $K = K^p K_p$, where $K_p \subset \BG(\QQ_p)$, $K^p \subset \BG(\AA_f^p)$, and $\AA_f^p$ denotes the ring of finite adeles over \QQ with trivial $p$-component.  Let $\BE$ denote the reflex field, let $\Sh_K(\BG,X)$ denote the canonical model of the Shimura variety over $\BE$, let $v$ be a place of $\BE$ over $p$, and let $\O_{\BE_v}$ denote the ring of integers in the $v$-adic completion $\BE_v$.  When $K_p$ is a \emph{hyperspecial} subgroup of $\BG(\QQ_p)$, smooth $\O_{\BE_v}$-models of $\Sh_K(\BG,X)$ were constructed in most cases of PEL type by Kottwitz \cite{kott92}, and, more generally, in cases of abelian type by Kisin \cite{ki10} and Vasiu (see \cite{vas08} and the references therein).

Our concern in this paper is the more general setting that $K_p$ is a \emph{parahoric,} but not necessarily hyperspecial, subgroup.  Then it is no longer reasonable to expect a smooth integral model.  For PEL Shimura varieties, when $K_p$ is a parahoric subgroup that can be described as the stabilizer of a lattice chain, Rapoport and Zink constructed natural integral models over $\O_{\BE_v}$ in \cite{rapzink96}.  When the localization $\BG_{\QQ_p}$ is unramified with simple factors of types only $A$ and $C$, G\"ortz \cite{goertz01,goertz03} showed that Rapoport and Zink's models are flat with reduced special fiber, and that the irreducible components of the special fiber are normal with rational singularities.  By contrast, when $\BG_{\QQ_p}$ is \emph{ramified,} Pappas \cite{pap00} observed that the Rapoport--Zink models may fail to be flat.

The key tool to investigate flatness of the Rapoport--Zink models, as well as other questions of a local nature, is the \emph{naive local model,} also introduced by Rapoport and Zink in \cite{rapzink96}.  The naive local model gives an \'etale-local neighborhood of the Rapoport--Zink integral Shimura model, but it is defined in terms of a purely linear-algebraic moduli problem, whereas the Shimura model is defined in terms of abelian varieties.  Therefore one expects the naive local model to be easier to study.  Here ``naive'' signifies that the naive local model, like the integral Shimura model, may fail to be flat.

In light of Pappas's observation, it is necessary to correct the definition of the integral Shimura models --- or, equivalently, of the naive local models --- to obtain flat schemes over $\O_{\BE_v}$.  The most straightforward solution is to define the corrected models and local models to be the scheme-theoretic closure of the generic fiber in the original models and local models, respectively.  Then the corrected models and local models are flat essentially by definition.  Zhu's recent proof \cite{zhu?} of the \emph{coherence conjecture} of Pappas--Rapoport \cite{paprap08} has strong implications for the geometry of the corrected models and local models whenever $\BG_{\QQ_p}$ is tamely ramified; see \cite{paprap08}*{Th.\ 11.3} for the local models we shall consider in this paper.  However, defined this way, the corrected models and local models carry the disadvantage of not admitting ready moduli-theoretic descriptions, since the process of scheme-theoretic closure can be difficult to interpret in a moduli-theoretic way.  Thus it is of interest to describe, when possible, the corrected models and local models via a refinement of the original moduli problem.  This is the problem of concern in this paper and its prequel \cite{sm11d}, in the case of a unitary similitude group attached to an imaginary quadratic number field ramified at $p \neq 2$.

More precisely, let $F/F_0$ be a ramified quadratic extension of discretely valued, non-Archimedean fields with perfect residue field of characteristic not $2$.  Endow $F^n$, $n \geq 2$, with the $F/F_0$-Hermitian form $\phi$ specified by the values $\phi(e_i,e_j) = \delta_{i,n+1-j}$ on the standard basis vectors $e_1,\dotsc$, $e_n$, and consider the reductive group $G := GU_{n}:= GU(\phi)$ over $F_0$.  In this paper we shall consider exclusively the case that $n$ is even; the case of odd $n$ was treated in \cite{sm11d}.  Let $m := n/2$.  In the even case, each conjugacy class of parahoric subgroups in $G(F_0)$ contains a unique parahoric of the form $P_I^\circ$, where $I$ is a nonempty subset of $\{0,1,\dotsc,m\}$ with the property
\begin{equation}\label{disp:I_cond}
   m+1 \in I \implies m \in I;
\end{equation}
see \s\ref{ss:parahoric} for definitions, and \cite{paprap09}*{\s1.2.3(b)} for more details.  We remark that $P_I^\circ$ is the full stabilizer of a lattice chain only when $m \in I$, owing to the existence of elements with nontrivial Kottwitz invariant.

Identifying $G \otimes_{F_0} F \iso GL_{n,F} \times \GG_{m,F}$, let $\mu_{r,s}$ denote the cocharacter $\bigl(1^{(s)},0^{(r)},1\bigr)$ of $D \times \GG_{m,F}$, where $D$ denotes the standard maximal torus of diagonal matrices in $GL_{n,F}$, and $r$ and $s$ are nonnegative integers with $r + s = n$.  Let $\{\mu_{r,s}\}$ denote the geometric conjugacy class of $\mu_{r,s}$.  In the special case that $F_0 = \QQ_p$ and $(F^n,\phi)$ is isomorphic to the $\QQ_p$-localization of a Hermitian space $(K^n,\psi)$ with $K$ an imaginary quadratic number field, the pair $(r,s)$ denotes the signature of $(K^n,\psi)$, and $\{\mu_{r,s}\}$ is the conjugacy class of minuscule cocharacters obtained in the usual way from the Shimura datum attached to the associated unitary similitude group, as in \cite[\s1.1]{paprap09}.

Let $E := F_0$ if $r = s$, and $E := F$ if $r \neq s$.  Then $E$ is the  reflex field of the conjugacy class $\{\mu_{r,s}\}$.  Attached to the triple $(G,\{\mu_{r,s}\},P_I^\circ)$ is the naive local model $M_I^\naive$ \cite{rapzink96,paprap09}, a projective $\O_E$-scheme whose explicit definition we shall recall in \s\ref{ss:naive_lm}.  As observed by Pappas \cite{pap00}, $M_I^\naive$ is not flat over $\O_E$ in general.  Let $M_I^\loc$ denote the honest local model, that is, the scheme-theoretic closure of the generic fiber of $M_I^\naive$ in $M_I^\naive$.  The papers \cite{pap00,paprap09} propose to describe $M_I^\loc$ by adding new conditions to the moduli problem defining $M_I^\naive$:  first Pappas adds the \emph{wedge condition} to define a closed subscheme $M_I^\wedge \subset M_I^\naive$, the \emph{wedge local model;} and then Pappas and Rapoport add a further condition, the \emph{spin condition,} to define a third closed subscheme $M_I^\spin \subset M_I^\wedge$, the \emph{spin local model} (see \s\ref{ss:wedge_spin_conds}).  The schemes
\[
   M_I^\loc \subset M_I^\spin \subset M_I^\wedge \subset M_I^\naive
\]
all have common generic fiber, and Pappas and Rapoport conjecture the following.

\begin{conj}[Pappas--Rapoport \cite{paprap09}*{7.3}]
$M_I^\spin$ coincides with the local model $M_I^\loc$ inside $M_I^\naive$, or in other words, $M_I^\spin$ is flat over $\O_E$.
\end{conj}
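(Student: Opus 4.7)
The plan is to split the conjecture into two parts: (i) topological flatness, $(M_I^\spin)_{\mathrm{red}} = (M_I^\loc)_{\mathrm{red}}$; and (ii) reducedness of the special fiber $M_I^\spin \otimes_{\O_E} k$, where $k$ is the residue field of $\O_E$. Given both, the closed immersion $M_I^\loc \hookrightarrow M_I^\spin$ is an isomorphism on generic fibers by construction of $M_I^\loc$, and on special fibers because a topological equality of closed subschemes into a reduced target is a scheme-theoretic equality; hence $M_I^\spin = M_I^\loc$, which yields the conjecture.

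For (i), the generic fibers already agree, so it suffices to match special fibers set-theoretically. I would embed $M_I^\naive \otimes k$ into the affine partial flag variety $\mathcal{F}_I$ for $G$ over the residue field, following the construction of Pappas--Rapoport, and use that $M_I^\loc \otimes k$ is topologically the union of affine Schubert varieties $S_w$ for $w \in \Adm_I(\mu_{r,s})$, a consequence of Zhu's proof of the coherence conjecture combined with the Pappas--Rapoport results cited in the introduction. The heart of the matter is then a combinatorial claim: a $k$-point of $M_I^\naive$ satisfies the spin condition if and only if its image in $\mathcal{F}_I$ lies in $\bigcup_{w \in \Adm_I(\mu_{r,s})} S_w$. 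The forward direction requires characterizing $\mu$-admissibility in types $B$ and $D$ vertexwise, as foreshadowed in the abstract, and then checking the spin-sign condition against each admissible $w$; the reverse direction is obtained by lifting admissible points to characteristic zero.

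Granting (i), for (ii) one must show $M_I^\spin \otimes k$ is reduced. My strategy is to identify $M_I^\spin \otimes k$ scheme-theoretically with $\bigcup_{w \in \Adm_I(\mu_{r,s})} S_w$ inside $\mathcal{F}_I$. Affine Schubert varieties in twisted affine flag varieties are known to be reduced, and normal with rational singularities, away from small torsion primes, by the Frobenius splitting arguments of Pappas--Rapoport refined by Zhu. So the task becomes to show that the spin equations cut out exactly the Schubert ideal, and not merely its radical. A clean route is cohomological: by Zhu's coherence theorem the dimension of $H^0$ of a suitable ample line bundle on $M_I^\loc$ is pinned down, so if one can produce enough natural global sections on $M_I^\spin$ matching this dimension, then $M_I^\spin$ is $\O_E$-flat with reduced geometric special fiber, and the conjecture follows.

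The main obstacle is (ii): strengthening topological flatness to scheme-theoretic flatness requires ruling out embedded components and nilpotents along the special fiber, which is not accessible by the combinatorial methods that suffice for (i). I expect the Zhu-style cohomological approach to be the most promising, but its execution still demands the construction of enough line-bundle sections on $M_I^\spin$ coming from the spin equations --- genuinely new input beyond what is needed for topological flatness, and the step that one should expect to be the serious technical heart of the proof.
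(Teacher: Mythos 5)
There is a genuine gap, and it is the one you yourself flag at the end: step (ii) is not proved, only planned. The statement under review is precisely the full Pappas--Rapoport conjecture, which this paper explicitly leaves open; what the paper establishes is only your step (i), namely topological flatness (Theorem \ref{st:main_thm}), and it does so by essentially the route you describe --- embedding the geometric special fiber into the affine flag variety and then proving the hard combinatorial identification of the spin-permissible set with the $\{\mu_{r,s}\}$-admissible set (Theorems \ref{st:2nd_main_thm} and \ref{st:adm_perm_D}, together with \eqref{st:sp-perm=adm_B}); the ``forward direction'' of your combinatorial claim is the content of Sections \ref{s:combinatorics_D}--\ref{s:combinatorics_B} and is far from a routine verification. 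Your reduction of the conjecture to (i) plus (ii) is sound: if $M_I^\spin \otimes_{\O_E} k$ is reduced and topologically equal to $M_I^\loc \otimes_{\O_E} k$, then the ideal of $M_I^\loc$ in $M_I^\spin$ is a torsion ideal contained in $\pi_0 \O_{M_I^\spin}$, hence vanishes by Nakayama. But (ii) itself is exactly the open part of the conjecture, and your sketch of it is circular at the decisive point: identifying $M_I^\spin \otimes k$ \emph{scheme-theoretically} with the reduced union $\bigcup_{w \in \Adm} S_w$ is equivalent to showing that the spin equations generate the Schubert ideal and not merely an ideal with the same radical, and the proposed cohomological shortcut (``produce enough natural global sections on $M_I^\spin$ matching Zhu's dimension count'') is precisely the construction that nobody knows how to carry out from the spin equations. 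No argument is offered that rules out nilpotents or embedded components along the special fiber.

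Two smaller inaccuracies in step (i): the topological identification of $M_I^\loc \otimes k$ with the admissible union of Schubert varieties does not come from Zhu's coherence theorem; one inclusion is \cite{paprap09}*{Prop.\ 3.1} (proved by lifting admissible points to characteristic zero, as you say), and the reverse inclusion is a \emph{consequence} of the combinatorial theorem $\SPerm = \Adm$ together with $M_I^\loc \subset M_I^\spin$, not an input to it. And the vertexwise-admissibility result, while true and proved here, is a byproduct of the type $B$/$D$ combinatorics rather than the mechanism by which the forward direction is checked. In sum: your proposal correctly reproduces the architecture of the paper's proof of topological flatness, but as a proof of the stated conjecture it fails, because the passage from topological to scheme-theoretic flatness is asserted as a program rather than established.
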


Although the full conjecture remains open, the main result of this paper is the following version of it, for even $n$.

\begin{thm}\label{st:main_thm}
Suppose that $n$ is even.  Then the scheme $M_I^\spin$ is topologically flat over $\O_E$, or in other words, the underlying topological spaces of $M_I^\spin$ and $M_I^\loc$ coincide.
\end{thm}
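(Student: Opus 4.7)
Topological flatness of $M_I^\spin$ is equivalent to the equality of underlying topological spaces $|M_I^\spin| = |M_I^\loc|$, and since $M_I^\loc \hookrightarrow M_I^\spin$ is a closed immersion of projective $\O_E$-schemes with common generic fiber, the plan is to prove the reverse inclusion on geometric special fibers. By \cite{paprap08}*{Th.\ 11.3} (which rests on Zhu's theorem \cite{zhu?}), the geometric special fiber of $M_I^\loc$ is the union of affine Schubert varieties indexed by the $\mu_{r,s}$-admissible set $\Adm_I(\{\mu_{r,s}\})$ in the partial affine flag variety attached to $P_I^\circ$. It therefore suffices to show that every $\overline{k}$-point of $M_I^\spin$ corresponds to an element of $\Adm_I(\{\mu_{r,s}\})$.

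To make this concrete, following the pattern of the prequel \cite{sm11d}, I would attach to each $\overline{k}$-point of $M_I^\spin$ a lattice chain in the $n$-dimensional $F^{\un}$-vector space and record its relative position with respect to the standard chain as an element $w$ of the Iwahori--Weyl group $\widetilde{W}$. The wedge and spin conditions, together with the conditions already defining $M_I^\naive$, translate into explicit combinatorial constraints on $w$. The heart of the argument then splits into two pieces: first, a proof of the Pappas--Rapoport vertexwise admissibility conjecture for the cocharacter class $\{\mu_{r,s}\}$, asserting that $w \in \Adm_I(\{\mu_{r,s}\})$ if and only if $w \in \Adm_{\{i\}}(\{\mu_{r,s}\})$ for every $i \in I$; and second, at each vertex $i \in \{0,1,\dotsc,m\}$, a concrete coordinate-level description of $\Adm_{\{i\}}(\{\mu_{r,s}\})$ that can be directly compared with the spin-type constraints. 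The vertex stabilizers here sit inside reductive groups whose affine Weyl systems are of types $B$ and $D$, which is why the characterization of $\mu$-admissibility in those types is the technical backbone of the paper.

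The main obstacle will be matching the spin condition --- a subtle condition defined via the half-spin representations of $\Spin$ and carrying intrinsic sign ambiguities from the double cover --- with the combinatorial admissibility condition at each vertex. This is considerably more delicate than in the odd-dimensional case of \cite{sm11d}: for even $n$ the relevant affine root system is of type $D$ rather than $B$, and the existence of two half-spin constituents together with the nontrivial outer automorphism of $D_m$ forces careful tracking of signs on Weyl group orbits. The vertexwise reduction will be essential here, as it localizes the sign analysis to individual maximal parahoric vertices where the combinatorics becomes tractable; the residual matching between spin-vanishing and admissibility can then be carried out by comparison against the explicit coordinate description of $\Adm_{\{i\}}(\{\mu_{r,s}\})$, and one expects the special configurations $m \in I$ versus $m \notin I$ (compare the parity condition \eqref{disp:I_cond}) to require separate bookkeeping.
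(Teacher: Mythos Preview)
Your overall framework---embed the special fiber in an affine flag variety, translate the spin condition into combinatorics, and compare with admissibility---is correct and matches the paper. But three points deserve comment.

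First, invoking Zhu via \cite{paprap08}*{Th.\ 11.3} is unnecessary. You only need the inclusion $\Adm_I(\{\mu_{r,s}\}) \subset M_{I,k}^\loc$, which is the elementary direction already proved in \cite{paprap09}*{Prop.\ 3.1}; the paper uses exactly this. Zhu's theorem gives the reverse inclusion too, but that is much deeper and not needed here.

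Second, and more substantively, your organization via vertexwise admissibility reverses the paper's logic. The paper first proves directly that $\mu$-spin-permissibility equals $\mu$-admissibility in the full Iwahori-Weyl group (Theorem~\ref{st:adm_perm_D}, via the Bruhat-order induction of Proposition~\ref{st:big_D_prop}: given a spin-permissible nontranslation element, find an affine reflection raising it in Bruhat order while preserving spin-permissibility). Vertexwise admissibility (Theorems~\ref{st:vert-adm_D}, \ref{st:vert-adm_B}) is then \emph{deduced} from this characterization. Your plan treats vertexwise admissibility as an input, but you give no indication of how to prove it independently; the only proof in the paper uses the very equivalence you want to establish. Unless you have a separate argument for the vertexwise conjecture, this is circular.

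Third, your remark that ``the vertex stabilizers sit inside reductive groups whose affine Weyl systems are of types $B$ and $D$'' misidentifies where type $D$ enters. The Iwahori-Weyl group of $GU_{2m}$ is uniformly of type $B_m$ (see \S\ref{ss:relation_to_B}); type $D_{m+1}$ appears not at individual vertices but through the Steinberg fixed-point formalism of \S\ref{ss:steinberg}, which embeds $\wt W_{B_m} \hookrightarrow \wt W_{D_{m+1}}$ so that the Bruhat order is inherited. The paper does the hard combinatorics in type $D$ because the induction of Proposition~\ref{st:big_D_prop} is cleaner there, then pulls the result back to $B_m$ via this embedding.
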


See \s\ref{ss:top_flat}.  Recall that a scheme over a regular, integral, $1$-dimensional base scheme is \emph{topologically flat} if its generic fiber is dense.  We proved the version of Theorem \ref{st:main_thm} for odd $n$ in \cite{sm11d}, where we also showed that the wedge local model $M_I^\wedge$ is topologically flat (though typically not flat, as its scheme structure typically differs from that of $M_I^\spin$).  For even $n$, we shall see later that $M_I^\wedge$ and $M_I^\spin$ typically do not even coincide topologically.  For example, when $m \in I$, $M_I^\wedge$ and $M_I^\spin$ coincide topologically only for the trivial signatures $(r,s) = (n,0)$ and $(r,s) = (0,n)$.

Following G\"ortz \cite{goertz01} (see also \cites{goertz03,goertz05,paprap03,paprap05,paprap08,paprap09,sm11b,sm11d}), the key technique in the proof of Theorem \ref{st:main_thm} is to embed the geometric special fiber of $M_I^\naive$ in an affine flag variety for $GU_n$, where it and the geometric special fibers of $M_I^\wedge$, $M_I^\spin$, and $M_I^\loc$ decompose (as topological spaces) into unions of finitely many Schubert varieties; see \s\ref{ss:embedding}.  We show the following.

\begin{thm}\label{st:2nd_main_thm}
With respect to the embedding into the affine flag variety specified in \s\ref{ss:embedding},
the Schubert varieties contained in the geometric special fiber of $M_I^\spin$ are indexed by the $\{\mu_{r,s}\}$-admissible set.
\end{thm}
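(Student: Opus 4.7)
The plan is to reinterpret the theorem as an equality of two subsets of the Iwahori-Weyl group indexing Schubert varieties, and to establish that equality by first translating the spin condition into combinatorics and then by comparing with an explicit description of the admissible set in types $B$ and $D$. Let $\Sigma_{\spin}$ denote the set of elements $w$ in the Iwahori-Weyl group (or in the relevant double coset space, when $I$ is not a singleton) such that the corresponding Schubert variety $S_w$ is contained in the geometric special fiber of $M_I^{\spin}$. I must show that $\Sigma_{\spin} = \Adm(\{\mu_{r,s}\})_I$.

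For the containment $\Sigma_{\spin} \supset \Adm(\{\mu_{r,s}\})_I$, I would use the inclusion $M_I^{\loc} \subset M_I^{\spin}$. By the flatness of $M_I^{\loc}$ and the standard Kottwitz--Rapoport description of the $\{\mu\}$-admissible locus, extended to the ramified unitary case via Pappas--Rapoport and Zhu's theorem on the coherence conjecture, the geometric special fiber of $M_I^{\loc}$ is topologically the union of Schubert varieties indexed by $\Adm(\{\mu_{r,s}\})_I$. Each such Schubert variety is therefore contained in the special fiber of $M_I^{\spin}$, giving the containment on indexing sets.

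For the reverse containment, the strategy is to produce a combinatorial subset $\Permsp(\mu_{r,s})_I$ of the Iwahori-Weyl group --- the \emph{spin-permissible set} --- that a priori contains $\Sigma_{\spin}$, and then to prove the purely group-theoretic identity $\Permsp(\mu_{r,s})_I = \Adm(\{\mu_{r,s}\})_I$. Extracting $\Permsp(\mu_{r,s})_I$ from the spin condition is a matter of computing the relevant spinor coordinates on the $w$-fixed lattice chains in the affine flag variety, following the setup in \cite{paprap09}; the outcome is a sign or parity constraint on the combinatorial datum attached to $w$. One then reduces $I$-level statements to vertexwise ones, showing that both $\Permsp(\mu_{r,s})_I$ and $\Adm(\{\mu_{r,s}\})_I$ are the intersections over $i \in I$ of their single-vertex versions; this is the vertexwise description of admissibility for these cocharacters, confirming the conjecture of Pappas--Rapoport advertised in the abstract.

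At each vertex, the problem reduces to combinatorics inside a finite Weyl group of type $B$ or $D$ acting on the coweight lattice, and the goal is to show that the admissible set for $\mu_{r,s}$ coincides with the spin-permissible set. The type $B$ vertices can largely be handled by adapting the calculations of the prequel \cite{sm11d}; the main obstacle, and the bulk of the combinatorial work of the paper, is the type $D$ case. There the spin condition encodes a subtle parity constraint that cuts out one of two symmetric pieces of the naive permissible set, and the hard part is to match this piece against the admissible set, defined via translations by Weyl conjugates of $\mu$ together with the Bruhat order. Carrying out this matching explicitly --- and verifying in particular that the type $D$ spin condition does not accidentally admit extra non-admissible elements or exclude admissible ones --- is where the main novel effort lies.
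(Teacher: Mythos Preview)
Your overall plan---translate the spin condition into a combinatorial ``spin-permissible'' set and then match it against the admissible set---is correct in spirit and matches the paper's strategy. But two concrete problems derail the argument as written.

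First, your reduction to vertices is circular. You propose to prove $\Permsp(\mu_{r,s})_I = \Adm(\{\mu_{r,s}\})_I$ by writing each side as the intersection over $i \in I$ of its single-vertex version. Spin-permissibility is indeed vertexwise by construction. But the statement that \emph{admissibility} is vertexwise for these cocharacters is precisely the Pappas--Rapoport conjecture you cite, and in this paper it is \emph{deduced from} the equality $\Adm = \Permsp$ (see \eqref{st:vert-adm_D}, \eqref{st:vert-adm_B}), not used to prove it. No independent proof was available. The paper therefore proceeds in the opposite order: it proves the Iwahori-level equality first (Theorem~\ref{st:adm_perm_D} and \eqref{st:sp-perm=adm_B}), then descends to general parahorics via an explicit surjectivity argument \eqref{st:spin-perm_surj}, and only afterwards reads off the vertexwise statement as a corollary.

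Second, your account of how types $B$ and $D$ enter is structurally off. It is not that different vertices of the alcove produce finite Weyl groups of different types. Rather, the entire Iwahori-Weyl group $\wt W_G$ is identified (via a special vertex) with the extended affine Weyl group $\wt W_{B_m}$ of a single root system of type $B_m$ \eqref{st:wtW_G->wtW_B_m}. Type $D$ enters because $\R_{B_m}$ is a Steinberg fixed-point root datum inside $\R_{D_{m+1}}$, giving a Bruhat-order-compatible embedding $\wt W_{B_m} \hookrightarrow \wt W_{D_{m+1}}$; the hard combinatorics is carried out in $\wt W_{D_{m+1}}$ at the Iwahori level, and the type $B$ result is extracted from it. In particular, the prequel \cite{sm11d} is of no direct help here: it treats the odd case, where the \'echelonnage root system is of type $C$ and Haines--Ng\^o's results apply; the paper is explicit that those results are unavailable in the even case.

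A minor point: for the inclusion $\Adm \subset \Sigma_{\spin}$ you only need that the admissible Schubert varieties lie in $M_{I,k}^\loc$, which is \cite{paprap09}*{Prop.\ 3.1}; Zhu's coherence theorem is not needed (and claiming that $M_{I,k}^\loc$ is \emph{exactly} the admissible locus would presuppose the very theorem under discussion).
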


See \s\ref{ss:adm_perm_sets} for the definition of the $\{\mu_{r,s}\}$-admissible set, and \s\ref{ss:top_flat} for the proof of the theorem.  The $\{\mu_{r,s}\}$-admissible set was defined by Kottwitz--Rapoport \cite{kottrap00} and Rapoport \cite{rap05}. It is a subset of a certain double quotient of the \emph{Iwahori-Weyl group} (see \s\ref{ss:I-W_gp}), which indexes the Schubert varieties in the relevant affine flag variety.  Pappas and Rapoport show in \cite{paprap09}*{Prop.\ 3.1} that the $\{\mu_{r,s}\}$-admissible Schubert varieties are contained in the geometric special fiber of $M_I^\loc$, which together with Theorem \ref{st:2nd_main_thm} implies Theorem \ref{st:main_thm}.

The analog of Theorem \ref{st:2nd_main_thm} for odd $n$ is proved in \cite{sm11d}.  However, the proof for even $n$ that we shall give here is considerably more laborious.  For simplicity, let us explain this in the case that our parahoric subgroup is an Iwahori subgroup.  Then the Schubert varieties in the affine flag variety are indexed by the Iwahori-Weyl group $\wt W_G$ itself.  The inclusion relations between the Schubert varieties are given by the \emph{Bruhat order $\leq$} on $\wt W_G$, which is defined in a purely combinatorial way; see \s\ref{ss:bo}.  Thus the problem of identifying which Schubert varieties are contained in the geometric special fiber of the spin local model amounts to the problem of identifying a certain finite subset of $\wt W_G$ closed under the Bruhat order.

Now let \A be an apartment in the building for the adjoint group of $G$, and let \H be the collection of affine root hyperplanes in \A.  By \cite{bourLGLA4-6}*{VI \s2.5 Prop.\ 8}, upon choosing a special vertex in \A as origin, there exists a unique reduced root system $\Sigma$ on \A such that the hyperplanes \H are precisely the affine root hyperplanes for $\Sigma$.  The Bruhat order on $\wt W_G$ is governed by the affine Weyl group for $\Sigma$, and indeed the study of $(\wt W_G,\leq )$, at least for the purposes of this paper, reduces to that of an extended affine Weyl group for $\Sigma$, as is explained in \cite{prs?}*{Rems.\ 4.4.7, 4.4.8}.  For odd $n$, $\Sigma$ is of type $C$, and Theorem \ref{st:2nd_main_thm} is proved in \cite{sm11d} by exploiting a result of Haines--Ng\^o \cite{hngo02b} relating admissible sets for $GSp_{2m}$ to \emph{permissible} sets for $GL_{2m}$.  See \s\ref{ss:adm_perm_sets} for the definition of permissibility; the force of their result is that it is relatively easy to determine when a given element is permissible, at least as compared to when an element is admissible.

By contrast, for even $n = 2m$, $\Sigma$ is of type $B_m$, and the results of Haines--Ng\^o do not apply.  We are therefore forced to work through the combinatorics largely from scratch.  To fix notation, let us make the harmless assumption that $s \leq r$, as we shall do in the body of the paper.  We shall see in \s\s\ref{ss:relation_to_B}--\ref{ss:mu_r,s-adm_set} that $\wt W_G$, equipped with its Bruhat order, identifies with the Iwahori-Weyl group $\wt W_{B_m}$ of split $GO_{2m+1}$, and that in this way the $\{\mu_{r,s}\}$-admissible set identifies with the $\mu_B$-admissible set in $\wt W_{B_m}$, where
\[
   \mu_B:= \bigl(2^{(s)},1^{(2m+1-2s)},0^{(s)}\bigr).
\]
In \s\s\ref{ss:naive_perm}--\ref{ss:spin-perm} we shall obtain simple combinatorial conditions in $\wt W_G$ that describe the Schubert varieties contained in the geometric special fiber of the spin local model.  To prove Theorem \ref{st:2nd_main_thm}, we must then show that these conditions, translated to $\wt W_{B_m}$, characterize the $\mu_B$-admissible set.

Rather than working directly with $\wt W_{B_m}$, we shall approach this problem by passing to the Iwahori-Weyl group $\wt W_{D_{m+1}}$ of split $GO_{2m+2}$.  The root datum of $GO_{2m+1}$ is a Steinberg fixed-point root datum (see \s\ref{ss:steinberg}) obtained from the root datum of $GO_{2m+2}$.  Therefore one has a natural inclusion $\wt W_{B_m} \inj \wt W_{D_{m+1}}$, and the Bruhat order on $\wt W_{D_{m+1}}$ restricts to the Bruhat order on $\wt W_{B_m}$.  The cocharacter $\mu_B$ in $\wt W_{B_m}$ has image
\[
   \mu_D := \bigl(2^{(s)},1^{(2m+2-2s)},0^{(s)}\bigr)
\]
in $\wt W_{D_{m+1}}$.  Motivated by the conditions characterizing the Schubert varieties that occur in the geometric special fiber of the spin local model, in \eqref{def:spin-perm_D} we introduce the notion of \emph{$\mu_D$-spin-permissibility} for elements in $\wt W_{D_{m+1}}$.  The fundamental combinatorial result in this paper is the following.

\begin{thm}\label{st:adm_perm_D}
An element $w \in \wt W_{D_{m+1}}$ is $\mu_D$-admissible $\iff$ $w$ is $\mu_D$-spin-permis\-si\-ble.
\end{thm}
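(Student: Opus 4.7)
The plan is to prove the two implications separately, with the reverse direction being substantially more involved.

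For the forward implication, admissible $\Rightarrow$ $\mu_D$-spin-permissible, the argument should be standard. First verify that $t_\lambda$ is $\mu_D$-spin-permissible for every $\lambda$ in the Weyl orbit $W\mu_D$; this should follow essentially from the formulation of spin-permissibility in \eqref{def:spin-perm_D}, whose defining inequalities are Weyl-equivariant and are designed to hold on translations by elements of $W\mu_D$. Then show that the set of $\mu_D$-spin-permissible elements is closed under the Bruhat order; this reduces to the cover case, i.e., if $w' = s_\alpha w$ with $\ell(w') = \ell(w) - 1$ and $w$ is $\mu_D$-spin-permissible, then so is $w'$. Both the ordinary permissibility inequalities and the additional spin inequalities transform predictably under an affine reflection, so preservation can be checked directly from their combinatorial form. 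Combining the two observations gives the implication, since admissible means $w \leq t_\lambda$ for some $\lambda \in W\mu_D$.

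For the converse, the natural plan is to show that the Bruhat-maximal elements of the $\mu_D$-spin-permissible set are exactly the translations $t_\lambda$ with $\lambda \in W\mu_D$; together with the downward Bruhat closure already established, this yields admissibility. Equivalently, one proves by induction: if $w$ is $\mu_D$-spin-permissible and is not of the form $t_\lambda$ for any $\lambda \in W\mu_D$, then there exists an affine reflection $s_\alpha$ with $w < s_\alpha w$ and $s_\alpha w$ still $\mu_D$-spin-permissible. To locate $s_\alpha$, one exploits the explicit combinatorial model of $\wt W_{D_{m+1}}$ as signed permutations of $\{1, \dotsc, m+1\}$ subject to an even-sign-change constraint, together with the rigid shape of $\mu_D = \bigl(2^{(s)}, 1^{(2m+2-2s)}, 0^{(s)})$, whose coordinates lie only in $\{0,1,2\}$. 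A failure of $w$ to be a translation in $W\mu_D$ should manifest as a pair of indices at which $w$ deviates from the translation pattern, and the corresponding affine root supplies the candidate reflection.

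The main obstacle will be the bookkeeping for the spin condition. In types $B$ and $D$, ordinary permissibility strictly exceeds admissibility, and the spin condition is designed precisely to cut out the excess; thus the inductive raising step must respect not merely the usual permissibility inequalities but also the finer spin inequalities, which are tied to the two half-spin representations of $\Spin_{2m+2}$. I expect this will force a case analysis by an auxiliary invariant distinguishing spin-permissible elements according to the behavior of the two half-spin components, combined with a reduction to a small list of canonical local deviations from the translation pattern, each handled by an explicit choice of raising reflection. Carrying this out uniformly in the signature parameter $s$, while respecting the type-$D$ outer automorphism that interchanges the two half-spin components, is likely to be the most delicate aspect of the proof.
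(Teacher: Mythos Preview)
Your outline is correct and follows essentially the same approach as the paper. The forward implication is handled exactly as you describe: the paper verifies that the $\mu_D$-spin-permissible set is closed under the Bruhat order (\eqref{st:SPerm_bruhat_closed}) and that the extreme translations $t_\lambda$, $\lambda\in W\mu_D$, lie in it, yielding \eqref{st:mu-adm=>mu-spin-perm}. For the converse, the paper carries out precisely the raising-reflection induction you propose (\eqref{st:big_D_prop}): given a $\mu_D$-spin-permissible $w$ that is not a translation, it produces an affine reflection $s_{\wt\alpha}$ with $w<s_{\wt\alpha}w$ still $\mu_D$-spin-permissible, terminating in a translation which is then admissible by dominance (\eqref{st:transl_adm}). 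Your anticipation that the spin bookkeeping forces an extended case analysis is accurate; the paper organizes this around the minimal proper index $i$ and the dichotomy $\sigma(i)=i^*$ versus $\sigma(i)\neq i^*$, with further subcases governed by the upper values $u(i)$, $u(\sigma(i))$ and the comparison of the counting invariants $c_k^w$.
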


The proof of Theorem \ref{st:adm_perm_D} will occupy almost all of \s\s\ref{ss:proper_elts}--\ref{ss:sp-perm=>adm_D}.  We shall extend it to the general parahoric case in \eqref{st:sp-perm=adm_D_parahoric}.  With Theorem \ref{st:adm_perm_D} (and its parahoric variants) in hand, the characterization of the $\mu_B$-admissible set in $\wt W_{B_m}$ (and its parahoric variants) that we need for Theorem \ref{st:2nd_main_thm} follows easily, via the formalism of Steinberg fixed-point root data.

In the course of working out the combinatorics in \s\s\ref{s:combinatorics_D}--\ref{s:combinatorics_B}, we shall also show that for the cocharacters $\mu_B$ and $\mu_D$, admissibility is equivalent to the property of \emph{vertexwise admissibility;} see \s\ref{ss:vert-adm_D} and \s\ref{ss:vert_adm_B}.  This result for $\mu_B$ immediately implies the conjecture \cite{paprap09}*{Conj.\ 4.2} of Pappas and Rapoport.  Vertexwise admissibility is defined in general in \cite{prs?}*{\s4.5}; particular instances of it appeared earlier (sometimes only implicitly) in papers of Pappas and Rapoport \cite{paprap03,paprap05,paprap09}.  The definition is motivated by Pappas and Rapoport's idea to describe the local model for general parahorics by first taking the flat closure in the maximal parahoric case, and then taking the intersection of the inverse images of these flat closures in the naive local model.  See, for example, p.\ 512 and \s4.2 in \cite{paprap09}.

We shall also obtain very explicit descriptions of the $\mu_D$-permissible set in $\wt W_{D_{m+1}}$ and its parahoric variants, and of the $\mu_B$-permissible set in $\wt W_{B_m}$ and its parahoric variants; see \eqref{st:mu-perm_D}, \eqref{st:mu-perm_D_parahoric}, and \eqref{st:mu-perm_B}.  It is a general result of Kottwitz--Rapoport \cite{kottrap00} that, for any cocharacter $\mu$ in a root datum, $\mu$-admissibility implies $\mu$-permissibility.  Haines and Ng\^o \cite{hngo02b} subsequently showed that the converse can fail.  More precisely, they showed that every irreducible root system of rank $\geq 4$ and not of type $A$ has coweights $\mu$ for which the $\mu$-permissible set is strictly bigger than the $\mu$-admissible set.  For our particular cocharacters $\mu_B$ and $\mu_D$ as above, we shall show that if $m$, $s \geq 3$, then the $\mu_B$- (resp.\ $\mu_D$-)admissible and $\mu_B$- (resp.\ $\mu_D$-)permissible sets in $\wt W_{B_m}$ (resp.\ $\wt W_{D_{m+1}}$) are not equal; see \eqref{eg:not_mu-adm_D} and \eqref{eg:perm_neq_adm_B}.  In the case of $\wt W_{B_3}$, this gives an example of a cocharacter in a rank $3$ root datum for which admissibility and permissibility do not coincide.  In the case of $\wt W_{D_{m+1}}$, we find that admissibility and permissibility do not coincide for the cocharacter
\[
   \bigl(2^{(m)},1,1,0^{(m)}\bigr) = \bigl(1^{(m+1)},0^{(m+1)}\bigr)
       + \bigl(1^{(m)},0,1,0^{(m)}\bigr),
\]
which is a sum of minuscule cocharacters for $D_{m+1}$ contained in the same closed Weyl chamber.  This gives a counterexample to Rapoport's conjecture \cite{rap05}*{\s3, p.~283} that admissibility and permissibility are equivalent for any cocharacter which is a sum of dominant minuscule cocharacters.  See \eqref{eg:not_mu-adm_D} for some further discussion.

Let us now outline the contents of the paper.  In \s\ref{s:loc_mod} we review the definitions of the various local models from \cite{paprap09}.  In \s\ref{s:GU} we review some basic group-theoretic aspects of ramified, quasi-split $GU_{2m}$.  In \s\ref{s:prelim_comb} we introduce some preliminary combinatorial considerations, essentially covering just enough so that we can translate the wedge and spin conditions into combinatorics later in \s\ref{s:schub_cells}.  In \s\ref{s:I-W_gps} we review the general formalism of Iwahori-Weyl groups, and then specialize this to the case of our group $GU_{2m}$.  In \s\ref{s:afv} we review the embedding of the geometric special fiber of the local model into an appropriate affine flag variety for $GU_{2m}$ from \cite{paprap09}.  In \s\ref{s:schub_cells} we obtain combinatorial descriptions of the Schubert varieties contained in the geometric special fibers of $M_I^\naive$, $M_{I}^\wedge$, and $M_{I}^\spin$ inside the affine flag variety, and we prove Theorems \ref{st:main_thm} and \ref{st:2nd_main_thm} modulo the later result \eqref{st:sp-perm=adm_B}, which characterizes the $\mu_B$-admissible set in $\wt W_{B_m}$.  Section \ref{s:combinatorics_D} is the technical heart of the paper.  In it we cover all of the type $D$ combinatorics, most notably proving Theorem \ref{st:adm_perm_D} and its parahoric generalization \eqref{st:sp-perm=adm_D_parahoric}.  We conclude the paper in \s\ref{s:combinatorics_B} by working through the type $B$ combinatorics, essentially all of which follows from \s\ref{s:combinatorics_D} via the formalism of Steinberg fixed-point root data.

\subsection*{Acknowledgments}
I thank Michael Rapoport for his encouragement to work on this problem and for introducing me to the subject.  
I also thank him, Thomas Haines, Robert Kottwitz, Stephen Kudla, and George Pappas for many helpful conversations.

\subsection*{Notation}
Throughout the paper $F/F_0$ denotes a ramified quadratic extension of discretely valued, non-Archimedean fields with respective rings of integers $\O_F$ and $\O_{F_0}$ and respective uniformizers $\pi$ and $\pi_0$ satisfying $\pi^2 = \pi_0$.  We assume that the residue field is perfect of characteristic not $2$, and we denote by $k$ an algebraic closure of it.  We also employ an auxiliary ramified quadratic extension $K/K_0$ of discretely valued, non-Archimedean Henselian fields with perfect residue field of characteristic not $2$, with respective rings of integers $\O_{K}$ and $\O_{K_0}$, and with respective uniformizers $u$ and $t$ satisfying $u^2 = t$; eventually $K$ and $K_0$ will be the fields of Laurent series $k((u))$ and $k((t))$, respectively.  We put $\Gamma:= \Gal(K/K_0)$, and we write $x \mapsto \ol x$ for the action of the nontrivial element of $\Gamma$ on $K$, so that $\ol u = -u$.  Abusing notation, we continue to write $x \mapsto \ol x$ for the $R$-algebra automorphism of $K \otimes_{K_0} R$ induced by any base change $K_0 \to R$.

We fix once and for all an even integer $2m \geq 2$ and nonnegative integers $r$ and $s$ with $2m = r+s$.%
\footnote{Strictly speaking, the paper of Pappas and Rapoport \cite{paprap09} restricts to unitary groups $GU_n$ for $n \geq 3$, but all of our appeals to their paper will still go through for $n = 2m = 2$.  On the other hand, local models for $GU_2$ are worked out explicitly in \cite{prs?}*{Rem.\ 2.6.13}.}
It will be easy to see later on that the cocharacters $\mu_{r,s}$ and $\mu_{s,r}$ defined above specify isomorphic local models, and that these cocharacters even have the same image in the Galois coinvariants $X_*(D \times \GG_m)_\Gamma$.  Hence for simplicity of notation we shall always assume $s \leq r$.  Although almost everything we shall do will depend on $m$, $r$, and $s$, we shall usually not embed them into the notation.  We continue to take $E := F_0$ if $r = s$ and $E:= F$ if $r \neq s$.

We relate objects by writing $\iso$ for isomorphic, $\ciso$  for canonically isomorphic, and $=$  for equal.

For $x \in \RR$, we write $\lfloor x \rfloor$ for the greatest integer $\leq x$ and $\lceil x \rceil$ for the least integer $\geq x$.

For any positive integer $n$, given a vector $v \in \RR^n$, we write $v(j)$ for the $j$th entry of $v$, $\Sigma v$ for the sum of the entries of $v$, and $v^*$ for the vector in $\RR^n$ defined by $v^*(j) = v(n+1-j)$.  Given another vector $w$, we write $v \geq w$ if $v(j) \geq w(j)$ for all $j$.  We write $\mathbf d$ for the vector $(d,d,\dotsc,d)$, leaving it to context to make clear the number of entries.  The expression $(d^{(i)}, e^{(j)}, \dotsc)$ denotes the vector with $d$ repeated $i$ times, followed by $e$ repeated $j$ times, and so on.

We write $S_n$ for the symmetric group on $1,\dotsc,$ $n$; $S_n^*$ for the subgroup
\[
   S_n^* := \bigl\{\, \sigma \in S_n \bigm| \sigma(n+1-j) = n+1 - \sigma(j)
                     \text{ for all } j \in \{1,\dotsc,n\}\,\bigr\};
\]
and $S_n^\circ$ for the subgroup
\[
   S_n^\circ := \{\, \sigma \in S_n^* \mid \text{$\sigma$ is 
                               even in $S_n$} \,\}.
\]

Given facets $\mathbf f$ and $\mathbf{f'}$ in an apartment, we write $\mathbf f \preceq \mathbf{f'}$ if $\mathbf f$ is a subfacet of $\mathbf{f'}$, that is, if $\mathbf f \subset \ol{\mathbf{f'}}$.  We say that a point $v$ is a \emph{vertex} of $\mathbf f$ if $v$ is contained in a minimal subfacet of $\mathbf f$.  In this paper, the apartments we shall consider will typically be attached to reductive groups which are not semisimple, and the minimal facets in such apartments are not themselves points.

\section{Unitary local models}\label{s:loc_mod}
\numberwithin{equation}{subsection}

We begin by recalling the definition and some of the discussion of local models for even ramified unitary groups from \cite{paprap09}.  Let $I \subset \{0,\dotsc,m\}$ be a nonempty subset satisfying property \eqref{disp:I_cond}.

\subsection{Pairings}\label{ss:pairings}
Let $V := F^{2m}$.  In this subsection we introduce some pairings on $V$ and notation related to them.

Let $e_1$, $e_2,\dotsc$, $e_{2m}$ denote the standard ordered $F$-basis in $V$, and let
\[
   \phi\colon V \times V \to F
\]
be the $F/F_0$-Hermitian form on $V$ whose matrix with respect to the standard basis is
\begin{equation}\label{disp:antidiag_1}
   \begin{pmatrix}
     &  &  1\\
     & \iddots\\
     1
   \end{pmatrix}.
\end{equation}
We attach to $\phi$ the respective alternating and symmetric $F_0$-bilinear forms $V \times V \to F_0$
\begin{equation}\label{disp:pairings}
   \langle x,y \rangle := \tfrac 1 2 \Tr_{F/F_0}\bigl( \pi^{-1}\phi(x,y) \bigr)
   \quad\text{and}\quad
   (x,y) := \tfrac 1 2 \Tr_{F/F_0}\bigl( \phi(x,y) \bigr).
\end{equation}

For any $\O_F$-lattice $\Lambda \subset V$, we denote by $\wh \Lambda$ the $\phi$-dual of $\Lambda$,
\begin{equation}\label{disp:wh_Lambda}
   \wh \Lambda := \bigl\{\, x\in V \bigm| \phi(\Lambda,x) \subset \O_F \,\bigr\}.
\end{equation}
Then $\wh\Lambda$ is also the $\langle$~,~$\rangle$-dual of $\Lambda$,
\[
   \wh\Lambda = \bigl\{\,  x\in V \bigm| \langle \Lambda,x\rangle \subset \O_{F_0} \,\bigr\};
\]
and $\wh\Lambda$ is related to the $($~,~$)$-dual $\wh\Lambda^s := \{\,  x\in V \mid ( \Lambda,x ) \subset \O_{F_0} \,\}$ by the formula $\wh\Lambda^s = \pi^{-1}\wh\Lambda$.  Both $\wh\Lambda$ and $\wh\Lambda^s$ are $\O_F$-lattices in $V$, and the forms $\langle$~,~$\rangle$ and $($~,~$)$ induce perfect $\O_{F_0}$-bilinear pairings
\begin{equation}\label{disp:perf_pairing}
   \Lambda \times \wh\Lambda \xra{\text{$\langle$~,~$\rangle$}} \O_{F_0}
   \quad\text{and}\quad
   \Lambda \times \wh\Lambda^s \xra{\text{$($~,~$)$}} \O_{F_0}
\end{equation}
for all $\Lambda$.

\subsection{Standard lattices}\label{ss:lattices}

For $i = 2mb+c$ with $0 \leq c < 2m$, we define the \emph{standard $\O_F$-lattice}
\begin{equation}\label{disp:Lambda_i}
   \Lambda_i := \sum_{j=1}^c\pi^{-b-1}\O_F e_j + \sum_{j=c+1}^{2m} \pi^{-b}\O_F e_j \subset V.
\end{equation}
Then $\wh\Lambda_i = \Lambda_{-i}$ for all $i$, and the $\Lambda_i$'s form a complete, periodic, self-dual lattice chain
\[
   \dotsb \subset \Lambda_{-2} \subset \Lambda_{-1} \subset \Lambda_0 \subset \Lambda_1 \subset \Lambda_2 \subset \dotsb,
\]
which we call the \emph{standard lattice chain}.  More generally, for our given subset $I \subset \{0,\dotsc,m\}$, we denote by $\Lambda_I$ the periodic, self-dual subchain of the standard chain consisting of all lattices of the form $\Lambda_i$ for $i \in 2m\ZZ \pm I$.  Thus $\Lambda_{\{0,\dotsc,m\}}$ denotes the standard chain itself.

The standard lattice chain admits the following obvious trivialization.  Let $\beta_i\colon \O_F^{2m} \to \O_F^{2m}$ multiply the $i$th standard basis element $\epsilon_i$ by $\pi$ and send all other standard basis elements to themselves.  Then there is a unique isomorphism of chains of $\O_F$-modules
\begin{equation}\label{disp:latt_triv}
   \vcenter{
   \xymatrix{
      \dotsb\,\, \ar@{^{(}->}[r]
         & \Lambda_0\, \ar@{^{(}->}[r] 
         & \Lambda_1\, \ar@{^{(}->}[r]
         & \,\dotsb\,\, \ar@{^{(}->}[r] 
         & \Lambda_{2m}\, \ar@{^{(}->}[r]
         & \,\dotsb\\
      \dotsb\, \ar[r]^-{\beta_{2m}}
         & \O_F^{2m} \ar[u]_\sim \ar[r]^-{\beta_1}
         & \O_F^{2m} \ar[u]_-\sim \ar[r]^-{\beta_2}
         & \,\dotsb\, \ar[r]^-{\beta_{2m}}
         & \O_F^{2m} \ar[u]_\sim \ar[r]^-{\beta_1}
         & \,\dotsb
   }
   }
\end{equation}
such that the leftmost displayed vertical arrow identifies the ordered $\O_F$-basis $\epsilon_1,\dotsc,\epsilon_{2m}$ of $\O_F^{2m}$ with the ordered basis $e_1,\dotsc,e_{2m}$ of $\Lambda_0$.  Restricting to subchains in the top and bottom rows in \eqref{disp:latt_triv}, we get an analogous trivialization of $\Lambda_I$ for any $I$.

\subsection{Naive local models}\label{ss:naive_lm}
We now review the definition of the naive local models from \cite{paprap09}*{\s1.5}.

Recall our fixed partition $2m = r + s$ with $s \leq r$.  The \emph{naive local model
$M_I^\naive$} is the following contravariant functor on the
category of $\O_E$-algebras.  Given an $\O_E$-algebra $R$, an $R$-point in $M_I^\naive$ consists of, up to an obvious notion of isomorphism,
\begin{itemize}
\item
   a functor
   \[
      \xymatrix@R=0ex{
         \Lambda_I\vphantom{(O_F \otimes_{\O_{F_0}}\O_S)} \ar[r] & {}(\text{$\O_F \otimes_{\O_{F_0}} R$-modules})\\
         \Lambda_i \ar@{|->}[r] & \F_i,
      }
   \]
   where $\Lambda_I$ is regarded as a category by taking the morphisms to be the inclusions of lattices in $V$; together with
\item
   an inclusion of $\O_F \otimes_{\O_{F_0}} R$-modules $\F_i \inj \Lambda_i \otimes_{\O_{F_0}} R$ for each $i \in 2m\ZZ \pm I$, functorial in $\Lambda_i$;
\end{itemize}
satisfying the following conditions for all $i \in 2m\ZZ \pm I$.
\begin{enumerate}
\renewcommand{\theenumi}{LM\arabic{enumi}}
\item 
   Zariski-locally on $\Spec R$, $\F_i$ embeds in $\Lambda_i \otimes_{\O_{F_0}} R$ as a direct $R$-module summand of rank $2m$.
\item\label{it:period_cond}
   The isomorphism $\Lambda_i \otimes_{\O_{F_0}} R \isoarrow \Lambda_{i - 2m} \otimes_{\O_{F_0}} R$ obtained by tensoring $\Lambda_i \xra[\sim]\pi \pi \Lambda_i = \Lambda_{i - 2m}$ identifies $\F_i$ with $\F_{i - 2m}$.
\item\label{it:perp_cond}
   The perfect $R$-bilinear pairing 
   \[
      (\Lambda_i \otimes_{\O_{F_0}} R) 
         \times (\Lambda_{-i} \otimes_{\O_{F_0}} R)
      \xra{\text{$\langle$~,~$\rangle$} \otimes R}
      R
   \]
   induced by \eqref{disp:perf_pairing} identifies $\F_i^\perp \subset \Lambda_{-i} \otimes_{\O_{F_0}} R$ with $\F_{-i}$.
\item\label{it:kottwitz_cond}
   The element $\pi \otimes 1 \in \O_F \otimes_{\O_{F_0}} R$ acts on $\F_i$ as an $R$-linear endomorphism with characteristic polynomial
   \[
      \det(\,T\cdot \id - \pi \otimes 1 \mid \F_i\,) = (T - \pi)^s (T+ \pi)^{r} \in R[T].
   \]
\end{enumerate}

When $E = F_0$, i.e.\ when $r = s = m$, the polynomial on the right-hand side of the display is to be interpreted as $(T^2 - \pi_0)^m$.  The functor $M_I^\naive$ is plainly represented by a closed subscheme, which
we again denote $M_I^\naive$, of a finite product of Grassmannians over
$\Spec \O_E$.  The $F$-generic fiber $M_I^\naive \otimes_{\O_E} F$ can be identified with the Grassmannian of $s$-planes in a $2m$-dimensional vector space; see \cite{paprap09}*{\s1.5.3}.


\subsection{Wedge and spin conditions}\label{ss:wedge_spin_conds}
In this subsection we review the wedge and spin conditions, which Pappas and Rapoport conjecture to define the honest local model inside $M_I^\naive$.

The \emph{wedge condition,} due to Pappas \cite{pap00}*{\s4}, on an $R$-point $(\F_i)_i$ of $M_I^\naive$ is the condition that
\begin{enumerate}
\setcounter{enumi}{4}
\renewcommand{\theenumi}{LM\arabic{enumi}}
\item
   if $r \neq s$, then for all $i \in 2m\ZZ \pm I$,
   \[
      \sideset{}{_R^{s+1}}{\bigwedge} (\,\pi\otimes 1 + 1 \otimes \pi \mid \F_i\,) = 0
      \quad\text{and}\quad
      \sideset{}{_R^{r+1}}{\bigwedge} (\,\pi\otimes 1 - 1 \otimes \pi \mid \F_i\,) = 0.
   \]
   (There is no condition when $r=s$.)
\end{enumerate}
We denote by $M_I^\wedge$ the subfunctor of $M_I^\naive$ of points that satisfy the wedge condition, and we call it the \emph{wedge local model}.  Plainly $M_I^\wedge$ is a closed subscheme of $M_I^\naive$.  As noted in \cite{paprap09}*{\s1.5.6}, the generic fibers of $M_I^\wedge$ and $M_I^\naive$ coincide.

To formulate the spin condition it is necessary to introduce some notation.  For brevity we shall recall only the minimum that we need from \cite{paprap09}*{\s7}; compare also \citelist{\cite{sm11b}*{\s2.3}\cite{sm11d}*{\s2.5}}.  Regarding $V$ as a $4m$-dimensional vector space over $F_0$, consider the ordered $F_0$-basis
\[
   -\pi^{-1}e_1, \dotsc, -\pi^{-1}e_m, e_{m+1}, \dotsc, e_{2m}, e_1,\dotsc, e_{m}, \pi e_{m+1}, \dotsc, \pi e_{2m},
\]
which we denote by $f_1,\dotsc$, $f_{4m}$.  Then $f_1,\dotsc,$ $f_{4m}$ is \emph{split} for the form $($~,~$)$ from \eqref{disp:pairings}, that is, we have $(f_i,f_j) = \delta_{i,4m+1-j}$ for all $i$ and $j$.

Using the basis $f_1,\dotsc,$ $f_{4m}$, we define an operator $a$ on $\bigwedge_{F_0}^{2m} V$ as follows.  For any subset $E \subset \{1,\dotsc,4m\}$ of cardinality $2m$, let
\begin{equation}\label{disp:f_E}
   f_E := f_{i_1}\wedge\dotsb\wedge f_{i_{2m}} \in \sideset{}{_{F_0}^{2m}}{\bigwedge} V,
\end{equation}
where $E = \{i_1,\dotsc,i_{2m}\}$ with $i_1 < \dotsb < i_{2m}$.  Given such $E$, we also let 
\[
   E^\perp := (4m+1-E)^c = 4m+1-E^c,
\]
where the set complements are taken in $\{1,\dotsc,4m\}$.  Then $E^\perp$ consists of the elements $j\in\{1,\dotsc,4m\}$ such that $(f_i,f_{j}) = 0$ for all $i\in E$.  We now define $a$ by defining it on the basis elements $f_E$ of $\bigwedge_{F_0}^{2m} V$ for varying $E$,
\[
   a(f_E) := \sgn(\sigma_E)f_{E^\perp},
\]
where $\sigma_E$ is the permutation on $\{1,\dotsc,4m\}$ sending $\{1,\dotsc,2m\}$ to the elements of $E$ in increasing order, and sending $\{2m+1,\dotsc,4m\}$ to the elements of $E^c$ in increasing order.

\begin{rk}\label{rk:a_sign_difference}
The definition of $a$ is in entirely the same spirit as \cite{sm11d}*{\s2.5}.  In analogy with \cite{sm11d}*{Rem.\ 2.5.2}, our $a$ agrees only up to a sign of $(-1)^m$ with the analogous operators denoted $a_{f_1\wedge \dotsb \wedge f_{4m}}$ in \cite{paprap09}*{disp.\ 7.6} and $a$ in \cite{sm11b}*{\s2.3}.
\end{rk}

It follows easily from the definition of $\sigma_E$, or directly from \cite{paprap09}*{Prop.\ 7.1} and the preceding remark, that $\sgn(\sigma_E) = \sgn(\sigma_{E^\perp})$.  Hence $a^2 = \id_{\bigwedge^{2m} V}$.  Hence $\bigwedge_{F_0}^{2m} V$ decomposes as
\[
   \sideset{}{_{F_0}^{2m}}{\bigwedge} V 
      = \Bigl(\sideset{}{_{F_0}^{2m}}{\bigwedge} V \Bigr)_{1} 
            \oplus \Bigl(\sideset{}{_{F_0}^{2m}}{\bigwedge} V \Bigr)_{-1},
\]
where
\[
   \Bigl(\sideset{}{_{F_0}^{2m}}{\bigwedge} V \Bigr)_{\pm 1} 
     := \spn_{F_0}\{f_E \pm \sgn(\sigma_E)f_{E^\perp}\}_E
\]
is the $\pm 1$-eigenspace for $a$; here $E$ ranges through the subsets of $\{1,\dotsc,4m\}$ of cardinality $2m$.

Now consider the $\O_F$-lattice $\Lambda_i \subset V$ for some $i$ and regard it as an $\O_{F_0}$-lattice.  Then $\bigwedge^{2m}_{\O_{F_0}} \Lambda_i$ is naturally an $\O_{F_0}$-lattice in $\bigwedge^{2m}_{F_0} V$.  We set
\[
   \Bigl(\sideset{}{_{F_0}^{2m}}{\bigwedge} \Lambda_i\Bigr)_{\pm 1}
      := \Bigl(\sideset{}{_{F_0}^{2m}}{\bigwedge} \Lambda_i \Bigr)
            \cap \Bigl(\sideset{}{_{F_0}^{2m}}{\bigwedge} V\Bigr)_{\pm 1}.
\]

To state the spin condition, recall our partition $2m = r + s$ and note that, given an $R$-point $(\F_i)_i$ of $M_I^\naive$, the $R$-module $\bigwedge_{R}^{2m} \F_i$ is naturally contained in $\bigwedge^{2m}_{R} (\Lambda_i \otimes_{\O_{F_0}} R)$ for all $i$.  The \emph{spin condition,} due to Pappas and Rapoport \cite{paprap09}*{\s7.2.1}, is that
\begin{enumerate}
\renewcommand{\theenumi}{LM\arabic{enumi}}
\setcounter{enumi}{5}
\item
   for all $i \in 2m\ZZ \pm I$, $\bigwedge_{R}^{2m} \F_i$ is contained in
   \[
      \im\biggl[
         \Bigl(\sideset{}{_{F_0}^{2m}}{\bigwedge} \Lambda_i\Bigr)_{(-1)^s}\otimes_{\O_{F_0}} R 
         \to \sideset{}{_{R}^{2m}}{\bigwedge} (\Lambda_i \otimes_{\O_{F_0}} R)
      \biggr].
   \]
\end{enumerate}
For fixed $i$, we say that $\F_i$ satisfies the spin condition if $\bigwedge_R^{2m} \F_i$ is contained in the displayed image.
We denote by $M_I^\spin$ the subfunctor of $M_I^\wedge$ of points satisfying the spin condition, and we call it the \emph{spin local model}.  Plainly $M_I^\spin$ is a closed subscheme of $M_I^\wedge$.  As noted in \cite{paprap09}*{\s7.2.2}, the generic fiber of $M_I^\spin$ agrees with the common generic fiber of $M_I^\naive$ and $M_I^\wedge$.

\begin{rk}
As we remarked in the odd case in \cite{sm11d}*{Rem.\ 2.5.3}, the statement of the spin condition in \cite{paprap09}*{\s7.2.1} contains a sign error which traces to the sign discrepancy observed in \eqref{rk:a_sign_difference}.  Indeed, in the even case the element $f_1\wedge\dotsb\wedge f_{2m}$ lies in the $(-1)^{m}$-eigenspace for the operator $a_{f_1\wedge\dotsb\wedge f_{4m}}$ of \cite{paprap09}.  Thus the argument of \cite{paprap09}*{\s7.2.2} shows that the sign of $(-1)^s$ in the statement of the spin condition in \cite{paprap09} should be replaced with $(-1)^{s+m}$.  For us, since $f_1\wedge\dotsb\wedge f_{2m}$ lies in the $+1$-eigenspace of our operator $a$, the same argument shows that we get a sign of $(-1)^s$.
\end{rk}

\subsection{Lattice chain automorphisms}\label{ss:latt_chain_auts}
Regarding $\Lambda_I$ as a lattice chain over $\O_{F_0}$, the perfect pairings $\langle$~,~$\rangle$ of \eqref{disp:perf_pairing} give a \emph{polarization} of $\Lambda_I$ in the sense of \cite{rapzink96}*{Def.\ 3.14} (with $B = F$, ${b}^* = \ol{b}$ in the notation of \cite{rapzink96}).  Let $\ul\Aut(\Lambda_I)$ denote the $\O_{F_0}$-group scheme of automorphisms of the lattice chain $\Lambda_I$ that preserve the pairings $\langle$~,~$\rangle$ for variable $\Lambda_i \in \Lambda_I$ up to common unit scalar.  Then $\ul\Aut(\Lambda_I)$ is smooth and affine over $\O_{F_0}$; see \cite{pap00}*{Th.\ 2.2}, which in turn relies on \cite{rapzink96}*{Th.\ 3.16}.  The base change $\ul\Aut(\Lambda_I)_{\O_E}$ acts naturally on $M_I^\naive$, and it is easy to see that this action preserves the closed subschemes $M_I^\wedge$, $M_I^\spin$, and $M_I^\loc$.  We shall return to this point in \s\ref{ss:embedding}.

\section{Unitary similitude group}\label{s:GU}
In this section we review a number of basic group-theoretic matters from \cite{paprap09}; these will become relevant when we begin to consider the affine flag variety.  We switch to working with respect to the auxiliary field extension $K/K_0$.  We write $i^* := 2m+1-i$ for $i \in \{1,\dotsc,2m\}$.

\subsection{Unitary similitudes}\label{ss:GU}
Let $h$ denote the Hermitian form on $K^{2m}$ whose matrix with respect to the standard ordered basis is \eqref{disp:antidiag_1}.  We denote by
\[
   G := GU_{2m} := GU(h)
\]
the algebraic group over $K_0$ of \emph{unitary similitudes} of $h$: for any $K_0$-algebra $R$, $G(R)$ is the group of elements $g\in GL_{n}(K \otimes_{K_0} R)$ satisfying $h_R(gx,gy) = c(g)h_R(x,y)$ for some $c(g) \in R^\times$ and all $x$, $y\in (K \otimes_{K_0} R)^{n}$, where $h_R$ is the induced form on $(K \otimes_{K_0} R)^{2m}$.  As the form $h$
is nonzero, the scalar $c(g)$ is uniquely determined, and $c$ defines an
exact sequence of $K_0$-groups
\[
   1 \to U_{2m} \to G \xra c \GG_m \to 1
\]
with evident kernel $U_{2m}:= U(h)$ the unitary group of $h$.  The displayed sequence splits (noncanonically), so that the choice of a splitting presents $G$ as a semidirect product $U_{2m} \rtimes \GG_m$.

After base change to $K$, we get the standard identification
\begin{equation}\label{disp:G_K=GL_ntimesG_m}
   G_K \xra[\sim]{(\varphi,c)} (GL_{2m})_K \times (\GG_{m})_K,
\end{equation}
where $\varphi\colon G_K \to (GL_{2m})_K$ is given on $R$-points by the map on matrix entries $K \otimes_{K_0} R \to R$ sending $x \otimes y \mapsto xy$.

\subsection{Tori}\label{ss:tori}
We denote by $S$ the standard diagonal maximal split torus in $G$:  on $R$-points,
\[
   S(R) = \bigl\{\, \diag(a_1,\dotsc,a_{2m}) \in GL_{n}(R) \bigm| 
          a_1 a_{2m} = \dotsb = a_m a_{m+1} \,\bigr\}.
\]
The centralizer $T$ of $S$ is the standard maximal torus of all diagonal matrices in $G$,
\[
   T(R) = \bigl\{\, \diag(a_1,\dotsc,a_{2m}) \in GL_n(K\otimes_{K_0} R) \bigm|
            a_1 \ol a_{2m} = \dotsb = a_m \ol a_{m+1} \,\bigr\}.
\]
The isomorphism \eqref{disp:G_K=GL_ntimesG_m} identifies $T_K$ with the split torus $D \times (\GG_{m})_K$, where $D$ denotes the standard diagonal maximal torus in $(GL_{n})_K$.  The standard identification $X_*\bigl(D \times (\GG_{m})_K\bigr) \ciso \ZZ^{2m} \times \ZZ$ then identifies the inclusion $X_*(S) \subset X_*(T)$ with
\begin{equation}\label{disp:X_*S_in_X_*T}
   \bigl\{\, (x_1,\dotsc,x_{2m},y) \bigm| x_1 + x_{2m} = \dotsb = x_m + x_{m+1} = y \,\bigr\}
   \subset \ZZ^{2m} \times \ZZ;
\end{equation}
note that this is not the description of $X_*(S)$ given in \cite{paprap09}*{\s2.4.1}, which appears to contain an error.


We write $\A_G$ for the standard apartment $X_*(S) \otimes_\ZZ \RR$, and we regard it as a sub\-space of $\RR^{2m} \times \RR$ via \eqref{disp:X_*S_in_X_*T}.

\subsection{Kottwitz homomorphism}\label{ss:Kott_hom}
Let $L$ be the completion of a maximal unramified extension of $K_0$, let $\ol L$ be an algebraic closure of $L$, and let $\I := \Gal(\ol L / L)$.  Let $\G$ be a connected reductive group defined over $L$.  In \cite{kott97}*{\s7} Kottwitz defines a surjective functorial surjection
\[
   \kappa_\G \colon \G(L) \surj \pi_1(\G)_\I,
\]
where the target is the \I-coinvariants of the fundamental group in the sense of Borovoi \cite{bor98}.

In the case of our group $G$, it harmless to work over $K_0$ itself, and we recall from \cite{paprap09}*{\s1.2.3(b)} that $\kappa_G$ may be described as the surjective map
\[
   \kappa_G\colon G(K_0) \surj \ZZ \oplus (\ZZ/2\ZZ)
\]
given on the first factor by $g \mapsto \ord_t c(g)$ and on the second factor by $g \mapsto \ord_u x \bmod 2$; here $\det(g)c(g)^{-m} \in K^\times$ has norm $1$ and, using Hilbert's Theorem 90, $x \in K^\times$ is any element satisfying $x\ol x^{-1} = \det(g)c(g)^{-m}$.

\subsection{Parahoric subgroups}\label{ss:parahoric}
We next recall the description of the parahoric subgroups of $G(K_0)$ from \cite{paprap09}.  In analogy with \s\ref{ss:lattices}, for $i = 2mb+c$ with $0 \leq c < 2m$, we define the $\O_{K}$-lattice
\begin{equation}\label{disp:lambda_i}
   \lambda_i := \sum_{j=1}^c u^{-b-1}\O_K e_j + \sum_{j=c+1}^{2m} u^{-b}\O_K e_j \subset K^{2m},
\end{equation}
where now $e_1,\dotsc,e_{2m}$ denotes the standard ordered basis in $K^{2m}$.  For any nonempty subset $I \subset \{0,\dotsc,m\}$, we write $\lambda_I$ for the chain consisting of all lattices $\lambda_i$ for $i \in 2m\ZZ \pm I$, and we define
\begin{align*}
   P_I &:= \bigl\{\, g\in G(K_0) \bigm| g\lambda_i = \lambda_i \text{ for all } i \in 2m\ZZ \pm I \,\bigr\}\\
       &\phantom{:}= \bigl\{\, g\in G(K_0) \bigm| g\lambda_i = \lambda_i \text{ for all } i \in I \,\bigr\}.
\end{align*}
In contrast with the odd case \cite{paprap09}*{\s1.2.3(a)}, $P_I$ is not a parahoric subgroup in general since it may contain elements with nontrivial Kottwitz invariant. Let
\[
   P_I^\circ := P_I \cap \ker\kappa_G.
\]

\begin{prop}[Pappas--Rapoport \cite{paprap09}*{\s1.2.3(b)}]\label{st:parahoric_description}
$P_I^\circ$ is a parahoric subgroup of $G(K_0)$, and every parahoric subgroup of $G(K_0)$ is conjugate to $P_I^\circ$ for a unique nonempty $I \subset \{0,\dotsc,m\}$ satisfying \eqref{disp:I_cond}.  For such $I$, we have $P_I^\circ = P_I$ exactly when $m \in I$.  The special maximal parahoric subgroups are exactly those conjugate to $P_{\{m\}}^\circ = P_{\{m\}}$.\qed
\end{prop}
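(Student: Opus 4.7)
The plan is to apply Bruhat--Tits theory to the quasi-split ramified group $G = GU_{2m}$ over $K_0$: match the lattice-chain stabilizer $P_I$ to the stabilizer in $G(K_0)$ of a facet in the Bruhat--Tits building, and then intersect with $\ker \kappa_G$ to produce the associated parahoric subgroup.

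First I would describe the local Dynkin diagram of $G$ over $K_0$. For quasi-split ramified $GU_{2m}$, Tits's tables give a linear diagram with $m+1$ nodes $v_0, v_1, \dotsc, v_m$, with $v_0$ and $v_m$ as the two distinguished ends. Facets of a fundamental alcove in $\A_G$ are indexed by nonempty subsets $J$ of $\{0, 1, \dotsc, m\}$, with $\mathbf{f}_J$ the open face of the alcove spanned by $\{v_j : j \in J\}$. Combined with periodicity and the duality $\widehat\lambda_i = \lambda_{-i}$, the standard lattices $\lambda_i$ of \eqref{disp:lambda_i} correspond to the vertices $v_i$ (extended periodically), so that the chain $\lambda_I$ cuts out $\mathbf{f}_I$, giving $P_I = \mathrm{Stab}_{G(K_0)}(\mathbf{f}_I)$. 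Invoking the general identification of the parahoric at $\mathbf{f}$ with $\mathrm{Stab}_{G(K_0)}(\mathbf{f}) \cap \ker \kappa_G$ then yields that $P_I^\circ$ is precisely the parahoric attached to $\mathbf{f}_I$, establishing the first claim. The second claim follows because every facet of the building is $G(K_0)$-conjugate to a unique facet of the base alcove after accounting for the outer action on the affine Dynkin diagram induced by elements of $G(K_0) \setminus \ker \kappa_G$; the combinatorial condition \eqref{disp:I_cond} is chosen precisely to pick one representative from each orbit.

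For the assertion $P_I = P_I^\circ$ iff $m \in I$, I would use the explicit formula $\kappa_G(g) = (\ord_t c(g), \ord_u x \bmod 2)$ from \s\ref{ss:Kott_hom}. The first component vanishes automatically on $P_I$, since stabilizing the chain forces $c(g) \in \O_{K_0}^\times$. For the second, when $m \in I$ the element $g$ stabilizes the lattice $\lambda_m$, which is self-dual up to the scalar $u$; the resulting constraint forces $\det(g) c(g)^{-m}$ to be a norm from $\O_K^\times$, so the second Kottwitz component also vanishes and $P_I \subset \ker \kappa_G$. Conversely, when $m \notin I$, one exhibits an explicit element of $P_I$ obtained by permuting lattices in the chain via a suitable power of $u$ and correcting on the dual side to preserve the polarization up to a unit scalar; this element has nontrivial second Kottwitz invariant, so $P_I \neq P_I^\circ$. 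The final claim, on special maximal parahorics, then follows by reading off from the local Dynkin diagram that the unique special vertex is $v_m$, whose associated facet is exactly $\mathbf{f}_{\{m\}}$.

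The main obstacle, I expect, is the careful matching of lattice chains to facets --- in particular, verifying that the combinatorial condition \eqref{disp:I_cond} correctly isolates one $P_I^\circ$ per conjugacy class, rather than doubling up representatives under the outer action of elements with nontrivial Kottwitz invariant. Once this normalization is in hand, the rest of the argument reduces to routine Bruhat--Tits bookkeeping together with the explicit Kottwitz computations recalled in \s\ref{ss:Kott_hom}.
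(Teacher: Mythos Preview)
The paper does not prove this proposition at all: it is stated with attribution to Pappas--Rapoport \cite{paprap09}*{\S1.2.3(b)} and closed immediately with \qed. So there is no ``paper's own proof'' to compare against; you have sketched from scratch what the cited argument amounts to.

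Your outline is essentially the right one and matches what Pappas--Rapoport do: identify $P_I$ as the full stabilizer of a facet in the building via the lattice-chain model, then invoke the general fact (Haines--Rapoport) that the parahoric attached to a facet is its stabilizer intersected with $\ker\kappa_G$. Your treatment of the equality $P_I = P_I^\circ$ is also on target: when $m \in I$ one has $\wh{\lambda_m} = u\lambda_m$, and this self-duality forces the second component of $\kappa_G$ to vanish on the stabilizer; when $m \notin I$, the element $\tau$ of \eqref{disp:tau} lies in $P_I$ but has nontrivial Kottwitz invariant.

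One correction: the local Dynkin diagram here is not linear for $m \geq 3$. The \'echelonnage root system is $B_m$ (see \S\ref{ss:relation_to_B}), and the affine diagram $\wt B_m$ has a fork at one end. Concretely, after the identification of \S\ref{ss:relation_to_B} the base alcove has vertices $a_0, a_{0'}, a_2, \dotsc, a_m$, with $a_0$ and $a_{0'}$ exchanged by the nontrivial element of $\pi_1(G)_\I$; the point $a_1$ sits on the edge joining them and is \emph{not} a vertex. Translated back to the $G$-side via $I \mapsto m - I$, this is exactly why one needs a normalizing condition on $I$ near $m$, and why the \emph{special} vertex (corresponding to the origin on the $B_m$-side) is $m$. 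Your identification of the obstacle --- getting the facet/lattice-chain dictionary and the orbit representatives right --- is accurate; the fork, not a linear chain, is what makes that bookkeeping nontrivial.
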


\subsection{Affine roots}\label{ss:affine_roots}
In terms of the identification \eqref{disp:X_*S_in_X_*T}, the \emph{relative roots} of $S$ in $G$ consist of the maps $\{\pm\alpha_{i,j}\}_{i < j < i^*} \cup \{\pm\alpha_{i,i^*}\}_{i \leq m}$ on $X_*(S)$, where
\[
   \alpha_{i,j} \colon (x_1,\dotsc,x_{2m},y) \mapsto
   x_i - x_j.
\]
The \emph{affine roots} are calculated in \cite{paprap09}*{\s2.4.1} (modulo the description of $X_*(S)$ there), but in a way that implicitly chooses coordinates on $\A_G$ such that the origin is a special vertex.  
Later on we shall wish to identify the action of $G$ on its building with the action of $G$ on lattices given by its standard representation, and to do so in the most straightforward way will require a different choice of coordinates:  \eqref{st:parahoric_description} says that the parahoric subgroup attached to $\lambda_m$, and not to $\lambda_0$ (which will correspond to the origin), is special.

We shall calculate the affine roots with regard to the following choices. 
Let $\nu \colon T(K_0) \to \A_G$ be the unique map satisfying
\[
   \chi\bigl(\nu(x)\bigr) = \ord_t \bigl(\chi(x)\bigr)
   \quad\text{for all}\quad
   \text{$x \in T(K_0)$ and $\chi \in X^*(T)$};
\]
here $\nu$ is the opposite of the map defined in Tits's article \cite{tits79}*{\s1.2}, but this will have no impact on the calculation of the affine roots.  Let $N$ be the normalizer of $S$ in $G$.  Then $N$ is the scheme of monomial matrices in $G$, and we have an internal semidirect product $N(K_0) = T(K_0) \rtimes S_{2m}^*$, where we identify $S_{2m}^*$ with the permutation matrices in $G(K_0)$.  We let $N(K_0)$ act by affine transformations on $\A_G$ by letting $T(K_0)$ act by translations via $\nu$, and by letting $S_{2m}^*$ act via its natural linear action on $X_*(S)$.  With respect to this choice of action, a calculation completely analogous to the one for quasi-split $SU_n$ with $n$ odd in \cite{tits79}*{\s1.15} shows that the affine roots for $G$ consist of the functions
\begin{equation}\label{disp:aff_roots}
   \pm\alpha_{i,j} + \frac 1 2 \ZZ
   \quad\text{for}\quad
   i < j <i^*
   \quad\text{and}\quad
   \pm \alpha_{i,i^*} + \frac 1 2 + \ZZ
   \quad\text{for}\quad
   i < m+1.
\end{equation}

\section{Preliminary combinatorics}\label{s:prelim_comb}
In this section we introduce some preliminary combinatorial considerations, covering just enough to be able to translate the wedge and spin conditions into combinatorics later in \s\ref{s:schub_cells}.  Let $n$ be a positive integer, and for $j \in \{1,\dotsc,n\}$, let $j^* := n + 1 - j$.

\subsection{The group \texorpdfstring{$\wt W_n$}{W\~{}\_n}}
Let
\begin{equation}\label{disp:X_*n}
   X_{*n} := \{\,v \in \ZZ^n \mid v + v^* = \mathbf d \text{ for some } d\in \ZZ \,\}.
\end{equation}
Then the group $S_n^*$ acts on $X_{*n}$ by permuting the factors of $\ZZ^n$, and we form the semidirect product
\begin{equation}\label{disp:wtW_n}
   \wt W_n := X_{*n} \rtimes S_n^*.
\end{equation}
Note that if $n$ is odd, then the integer $d$ appearing the definition of $X_{*n}$ must be even, since $\lceil n/2 \rceil^* = \lceil n/2 \rceil$.

\subsection{Faces of type $(n,I)$}\label{ss:faces_type_I}
Let $I$ be a nonempty subset of $\{1,\dotsc,\lfloor n/2\rfloor\}$.

\begin{defn}\label{def:face}
Let $d \in \ZZ$.  A \emph{$d$-face of type $(n,I)$} is a family $\mathbf v = (v_i)_{i\in n\ZZ\pm I}$ of vectors $v_i \in \ZZ^n$ such that
\begin{enumerate}
\renewcommand{\theenumi}{F\arabic{enumi}}
\item\label{it:face_per_cond}
   $v_{i+n} = v_i - \mathbf 1$ for all $i \in n\ZZ \pm I$;
\item
   $v_i \geq v_j$ for all $i$, $j \in n\ZZ \pm I$ with $i \leq j$;
\item\label{it:face_adjcy_cond}
   $\Sigma v_i - \Sigma v_j = j-i$ for all $i$, $j \in n\ZZ \pm I$; and
\item\label{it:face_dlty_cond}
   $v_i + v_{-i}^* = \mathbf d$ for all $i \in n\ZZ \pm I$.
\end{enumerate}
A family of vectors $\mathbf v = (v_i)_{i\in n\ZZ \pm I}$ is a \emph{face of type $(n,I)$} if it is a $d$-face of type $(n,I)$ for some $d$.
\end{defn}

When the integer $n$ is clear from context, we typically say just ``face of type $I$.''  Faces of type $I$ were defined by Kottwitz--Rapoport in \cite{kottrap00}*{\s\s9--10}, at least for even $n$, where they are called ``$G$-faces of type $n\ZZ\pm I$.'' (Kottwitz--Rapoport use the term ``face of type $n\ZZ \pm I$'' for the notion \eqref{def:face} without condition \eqref{it:face_dlty_cond}.)  As in \cite{sm11b,sm11d}, we have adopted a different convention from Kottwitz--Rapoport in formulating \eqref{def:face}, corresponding to a different choice of base alcove, to better facilitate working with the affine flag variety later on.

Note that if $n$ is odd, then it is an easy consequence of \eqref{it:face_adjcy_cond} and \eqref{it:face_dlty_cond} that $d$-faces only occur for even $d$.

For $i = bn + c$ with $b \in \ZZ$ and $0 \leq c < n$, let
\begin{equation}\label{disp:omega_i}
   \omega_i := \bigl((-1)^{(c)},0^{(n-c)}\bigr) - \mathbf b.
\end{equation}
Then the family
\begin{equation}\label{disp:omega_I}
   \omega_I := (\omega_i)_{i\in n\ZZ \pm I}
\end{equation}
is a $0$-face of type $I$, which we call the \emph{standard face of type $I$}.

The group $\wt W_n$ acts naturally on $\ZZ^n$ by affine transformations, and this induces an action of $\wt W_n$ on faces of type $I$,
\[
   w \cdot (v_i)_{i\in n\ZZ \pm I} = (wv_i)_{i\in n\ZZ\pm I},
\]
which one easily sees to be transitive.  In the ``Iwahori'' case $I = \{0,\dotsc,\lfloor n/2\rfloor\}$, $\wt W_n$ acts simply transitively.  For nonempty $J \subset I$, there is a natural $\wt W_n$-equivariant surjection
\[
   \xymatrix@R=0ex{
      \{\text{faces of type } I\} \ar@{->>}[r]  & \{\text{faces of type } J\}\\
      (v_i)_{i\in n\ZZ \pm I}  \ar@{|->}[r]  &  (v_i)_{i \in n\ZZ \pm J}.
   }
\]
%

\subsection{The vector \texorpdfstring{$\mu_i^{\mathbf v}$}{mu\_i\^{}v}}\label{ss:mu_i}

We continue with $I$ a nonempty subset of $\{0,\dotsc,\lfloor n/2 \rfloor\}$.  Given a face $\mathbf v = (v_i)_i$ of type $I$, let
\begin{equation}\label{disp:mu_i^v}
   \mu_i^{\mathbf v} := v_i - \omega_i
   \quad\text{for}\quad
   i \in n\ZZ \pm I.
\end{equation}
Then condition \eqref{it:face_per_cond} is equivalent to the periodicity relation
\begin{equation}\label{disp:mu_per_cond}
   \mu_i^{\mathbf v} = \mu_{i+n}^{\mathbf v} \quad \text{for all}\quad i,
\end{equation}
and \eqref{it:face_adjcy_cond} is equivalent to
\begin{equation}\label{disp:mu_adjcy_cond}
   \Sigma\mu_i^{\mathbf v} = \Sigma\mu_j^{\mathbf v} \quad \text{for all}\quad i,\ j.
\end{equation}
If $\mathbf v$ is a $d$-face, then condition \eqref{it:face_dlty_cond} is equivalent to
\begin{equation}\label{disp:mu_dlty_cond}
   \mu_i^{\mathbf v} + (\mu_{-i}^{\mathbf v})^* = \mathbf d \quad \text{for all}\quad i,
\end{equation}
from which we also determine
\begin{equation}\label{disp:Sigma_mu_val}
   \Sigma \mu_i^{\mathbf v} = nd/2.
\end{equation}

We now prove a couple of lemmas.  For $i \in I$, we define subsets of $\{1,\dotsc,n\}$
\begin{equation}\label{disp:A_i_B_i}
   A_i := \{1,2,\dotsc,i,i^*,i^*+1,\dotsc,n\}
   \quad\text{and}\quad
   B_i := \{i+1,i+2,\dotsc,n-i\}.
\end{equation}
For arbitrary $i \in \ZZ$, we define $A_i := A_{i'}$ and $B_i := B_{i'}$, where $i'$ is the unique element in $\{0,\dotsc,\lfloor n/2 \rfloor\}$ that is congruent mod $n$ to $i$ or $-i$.

\begin{lem}[Basic inequalities]\label{st:basic_ineqs}
Let $\mathbf v$ be a $d$-face of type $I$.  Then for any $i \in I$, we have
\[
   \begin{gathered}
      d \leq \mu_i^{\mathbf v}(j) + \mu_i^{\mathbf v}(j^*) \leq d+1, \\
      d-1 \leq \mu_{-i}^{\mathbf v}(j) + \mu_{-i}^{\mathbf v}(j^*) \leq d
   \end{gathered}
   \quad\text{for}\quad j \in A_i
\]
and
\[
   \begin{gathered}
      d-1 \leq \mu_i^{\mathbf v}(j) + \mu_i^{\mathbf v}(j^*) \leq d, \\
      d \leq \mu_{-i}^{\mathbf v}(j) + \mu_{-i}^{\mathbf v}(j^*) \leq d+1
   \end{gathered}
   \quad\text{for}\quad j \in B_i.
\]
\end{lem}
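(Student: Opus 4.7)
The plan is to reduce all four of the claimed inequalities to a single pair of statements about the vectors $v_i$ and $v_{-i}$ themselves, and then translate back by a case-by-case computation of $\omega_{\pm i}(j) + \omega_{\pm i}(j^*)$.

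First I would spell out $\omega_{\pm i}$ for $i \in I \subset \{0,\dotsc,m\}$ using \eqref{disp:omega_i}: one obtains $\omega_i = \bigl((-1)^{(i)}, 0^{(n-i)}\bigr)$ and $\omega_{-i} = \bigl(0^{(n-i)}, 1^{(i)}\bigr)$. A direct case check then yields
\[
   \omega_i(j) + \omega_i(j^*) = \begin{cases} -1, & j \in A_i, \\ \phantom{-}0, & j \in B_i, \end{cases}
   \qquad
   \omega_{-i}(j) + \omega_{-i}(j^*) = \begin{cases} 1, & j \in A_i, \\ 0, & j \in B_i. \end{cases}
\]
Since $\mu_{\pm i}^{\mathbf v} = v_{\pm i} - \omega_{\pm i}$, all four inequalities of the lemma collapse to the two assertions
\[
   d - 1 \leq v_i(j) + v_i(j^*) \leq d
   \quad\text{and}\quad
   d \leq v_{-i}(j) + v_{-i}(j^*) \leq d+1,
\]
valid for every $j \in \{1,\dotsc,n\}$. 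The duality condition \eqref{it:face_dlty_cond} gives $v_{-i}(j) + v_{-i}(j^*) = 2d - \bigl(v_i(j) + v_i(j^*)\bigr)$, so the second pair of bounds is equivalent to the first.

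It therefore remains to prove $d - 1 \leq v_i(j) + v_i(j^*) \leq d$ for every $j$. For the upper bound, the monotonicity condition (F2) applied to $-i \leq i$ in $n\ZZ \pm I$ gives $v_{-i} \geq v_i$, which after \eqref{it:face_dlty_cond} becomes $d - v_i(j^*) \geq v_i(j)$. For the lower bound, I would use that $n - i \in n\ZZ \pm I$ (since $n-i \equiv -i \pmod n$ and $-i \in -I$) and that $i \leq n-i$ (since $i \leq m$); monotonicity then gives $v_i \geq v_{n-i}$, the periodicity condition \eqref{it:face_per_cond} yields $v_{n-i} = v_{-i} - \mathbf 1$, and \eqref{it:face_dlty_cond} rewrites this inequality as $v_i(j) \geq d - 1 - v_i(j^*)$. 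There is no real obstacle here; the argument is essentially bookkeeping, and the limiting cases $i = 0$ (where $A_0 = \emptyset$ and $v_0(j) + v_0(j^*) = d$ is forced by duality alone) and, when $n$ is even, $i = m$ (where $v_i \geq v_{n-i}$ is trivially an equality but still combines with periodicity to give the lower bound) both fit unchanged into the same argument.
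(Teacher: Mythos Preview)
Your proof is correct. The computations of $\omega_{\pm i}(j)+\omega_{\pm i}(j^*)$ are right, the reduction to the single pair of bounds $d-1 \le v_i(j)+v_i(j^*) \le d$ is clean, and the two applications of monotonicity (to $-i \le i$ and to $i \le n-i$) combined with periodicity and duality give exactly those bounds. The edge cases $i=0$ and $i=m$ (for $n$ even) are handled correctly as you note.

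The paper takes a different route: it first reduces to the Iwahori case $I=\{0,\dotsc,\lfloor n/2\rfloor\}$ via the surjection of faces, then cites an earlier paper \cite{sm11c}*{Lem.\ 4.4.1} for the $\mu_i$ inequalities (for $n$ even, $d=0$, with the remark that the argument extends), and finally invokes the duality relation \eqref{disp:mu_dlty_cond} for the $\mu_{-i}$ inequalities. Your argument is more direct and self-contained: you work at the level of the face vectors $v_{\pm i}$ rather than $\mu_{\pm i}$, which lets you avoid both the reduction to the full $I$ and the external citation, and you never need to separate the cases $j\in A_i$ versus $j\in B_i$ until the very last translation step. The paper's approach has the virtue of brevity by citation; yours has the virtue of making the mechanism (monotonicity plus periodicity plus duality) completely explicit in a few lines.
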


\begin{proof}
Since faces of type $\{0,\dotsc,\lfloor n/2 \rfloor\}$ surject onto faces of type $I$, it suffices to assume that $I = \{0,\dotsc,\lfloor n/2 \rfloor\}$.  Modulo conventions related to the choice of base alcove, the inequalities in this case for $\mu_i^{\mathbf v}(j) + \mu_i^{\mathbf v}(j^*)$ are proved for $n$ even and $d = 0$ in \cite{sm11c}*{Lem.\ 4.4.1}, and the argument there works just as well for arbitrary $n$ and $d$.  The inequalities for $\mu_{-i}^{\mathbf v}(j) + \mu_{-i}^{\mathbf v}(j^*)$ then follow from \eqref{disp:mu_dlty_cond}.
\end{proof}

Of course, periodicity gives analogous inequalities for $\mu_i^{\mathbf v}(j) + \mu_i^{\mathbf v}(j^*)$ for any $i \in n\ZZ \pm I$.

\begin{rk}\label{rk:whatevs}
Suppose that $n$ is odd. Then $d$ is even, and the basic inequalities immediately imply that $\mu_i^{\mathbf v}\bigl(\lceil n/2 \rceil \bigr) = d /2$ for all $i$.
\end{rk}

\begin{defn}
Let $\mathbf v$ be a $d$-face of type $I$, and let $i \in n\ZZ \pm I$.  We say that $\mu_i^{\mathbf v}$ is \emph{self-dual} if $\mu_i^{\mathbf v} = \mu_{-i}^{\mathbf v}$, or in other words, if $\mu_i^{\mathbf v} + (\mu_i^{\mathbf v})^* = \mathbf d$.
\end{defn}

\begin{lem}\label{st:self-dual_mu}
Let $\mathbf v$ be a $d$-face of type $I$.  Let $i \in n\ZZ \pm I$, and suppose that the equality $\mu_i^{\mathbf v}(j) + \mu_i^{\mathbf v}(j^*) = d$ holds for all $j \in A_i$ or for all $j \in B_i$.  Then $\mu_i^{\mathbf v}$ is self-dual.
\end{lem}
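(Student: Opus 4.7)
The plan is to combine the total sum formula \eqref{disp:Sigma_mu_val} with the basic inequalities of Lemma \ref{st:basic_ineqs}, which bound the pair sum $p(j) := \mu_i^{\mathbf v}(j) + \mu_i^{\mathbf v}(j^*)$ on one of $A_i$, $B_i$ from above by $d$ and on the other from below by $d$. The strategy is that the hypothesized equality $p \equiv d$ on one of these sets accounts for exactly the appropriate share of the total sum, so that the basic inequalities on the complementary set are forced to be equalities as well.

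First I would reduce to the case $i \in I$ or $-i \in I$ using periodicity \eqref{disp:mu_per_cond} and the convention that $A_i$, $B_i$ depend only on $\pm i \bmod n$. Both $A_i$ and $B_i$ are stable under $j \mapsto j^*$, and $p(j) = p(j^*)$, so I would reindex using the pair representatives $P := \{1,\dotsc,\lfloor n/2\rfloor\}$, which splits as $P_A := P \cap A_i = \{1,\dotsc,i\}$ and $P_B := P \cap B_i = \{i+1,\dotsc,\lfloor n/2\rfloor\}$. Applying \eqref{disp:Sigma_mu_val} directly when $n$ is even, and subtracting off the contribution $\mu_i^{\mathbf v}(\lceil n/2\rceil) = d/2$ supplied by Remark \ref{rk:whatevs} when $n$ is odd (so that the $*$-fixed point $\lceil n/2 \rceil \in B_i$ is omitted from $P$), one obtains $\sum_{j \in P} p(j) = \lfloor n/2\rfloor \cdot d$.

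Now assume $i \in I$ and $p(j) = d$ for all $j \in A_i$. Then $\sum_{j \in P_A} p(j) = id$, hence $\sum_{j \in P_B} p(j) = (\lfloor n/2\rfloor - i)d$. Each term in the latter sum is at most $d$ by Lemma \ref{st:basic_ineqs}, so every term must equal $d$; combined with $p(j) = p(j^*)$ and the odd-$n$ value $p(\lceil n/2\rceil) = 2\mu_i^{\mathbf v}(\lceil n/2\rceil) = d$, this yields $p \equiv d$ on all of $\{1,\dotsc,n\}$, which is exactly the self-duality $\mu_i^{\mathbf v} + (\mu_i^{\mathbf v})^* = \mathbf d$. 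The case of equality on $B_i$ is symmetric, invoking instead the lower bound $p \geq d$ on $A_i$; and the case $-i \in I$ uses the same extremum-forcing argument after noting that the roles of $A_i$ and $B_i$ in the basic inequalities swap when one replaces $i$ by $-i$. The only point requiring any care is the bookkeeping for the $*$-fixed coordinate in the odd-$n$ case, which Remark \ref{rk:whatevs} dispatches cleanly, so no genuine obstacle arises.
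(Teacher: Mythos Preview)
Your proof is correct and follows exactly the approach the paper sketches: combine the total-sum identity $\Sigma\mu_i^{\mathbf v}=nd/2$ with the basic inequalities \eqref{st:basic_ineqs} to force the pair sums $p(j)$ to equal $d$ on the complementary set. The paper's own proof is a one-line pointer to these two ingredients; you have simply (and accurately) unpacked the bookkeeping, including the odd-$n$ fixed point and the $i\leftrightarrow -i$ swap.
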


\begin{proof}
By \eqref{disp:Sigma_mu_val} we have $\Sigma \mu_i^{\mathbf v} = nd/2$, and the conclusion is then an easy consequence of the basic inequalities.
\end{proof}

\subsection{Further lemmas}\label{ss:further_lems}
We continue with $I \subset \{0,\dotsc,\lfloor n/2 \rfloor\}$ nonempty.  The aim of this subsection is to prove \eqref{st:c_equiv_defs} below, which we will have occasion to use at a number of points later on in our study of admissibility and spin-permissibility.  For $S$ a subset of $\{1,\dotsc,n\}$, we write $S^*:= n + 1 - S$.

\begin{lem}\label{st:cardE=cardG}
Let $\mathbf v$ be a $2$-face of type $I$, and let $i \in I$.  Suppose that $\mathbf 0 \leq \mu_i^{\mathbf v} \leq \mathbf 2$.  Then
\[
   \#\bigl\{\, j \bigm| \mu_i^{\mathbf v}(j) = 2 \text{ and } \mu_i^{\mathbf v}(j^*) = 1 \,\bigr\}
   =
   \#\bigl\{\, j \bigm| \mu_i^{\mathbf v}(j) = 0 \text{ and } \mu_i^{\mathbf v}(j^*) = 1 \,\bigr\}.
\]
\end{lem}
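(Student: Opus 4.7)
The plan is to reduce the identity to a double-counting argument powered by Lemma~\ref{st:basic_ineqs}, exploiting the hypothesis $\mathbf 0 \leq \mu_i^{\mathbf v} \leq \mathbf 2$ to pin down the local behavior of $\mu_i^{\mathbf v}(j) + \mu_i^{\mathbf v}(j^*)$. Write $\mu := \mu_i^{\mathbf v}$ and $s(j) := \mu(j) + \mu(j^*)$. With $d = 2$ the basic inequalities read $s(j) \in \{2, 3\}$ for $j \in A_i$ and $s(j) \in \{1, 2\}$ for $j \in B_i$.

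First I would observe that under $\mathbf 0 \leq \mu \leq \mathbf 2$, $s(j) = 3$ forces $\{\mu(j), \mu(j^*)\} = \{1, 2\}$, and $s(j) = 1$ forces $\{\mu(j), \mu(j^*)\} = \{0, 1\}$. Since $\{1,\dotsc,n\} = A_i \sqcup B_i$ and $s \leq 2$ on $B_i$, every $j$ counted in the first cardinality of the lemma must lie in $A_i$ (and have $s(j) = 3$); symmetrically every $j$ counted in the second must lie in $B_i$ (and have $s(j) = 1$). Because both $A_i$ and $B_i$ are stable under $j \mapsto j^*$, each $*$-orbit of size two inside $A_i$ on which $s = 3$ contributes exactly one index to the first cardinality (namely the unique element with $\mu = 2$), and likewise for $B_i$ and the second cardinality. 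Writing $a_3$ for the number of $j \in A_i$ with $s(j) = 3$ and $b_1$ for the number of $j \in B_i$ with $s(j) = 1$, the two cardinalities in the lemma equal $a_3/2$ and $b_1/2$ respectively.

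It then suffices to prove $a_3 = b_1$, which I would extract by summing $s(j)$ in two ways. From \eqref{disp:Sigma_mu_val} with $d = 2$, $\sum_{j=1}^n s(j) = 2\Sigma \mu = 2n$. Splitting this sum across $A_i$ and $B_i$, and using $|A_i| = 2i$ and $|B_i| = n - 2i$ together with the allowed $s$-values there, a routine rearrangement collapses the identity to $a_3 = b_1$. I anticipate no genuine obstacle: the whole point is that the hypothesis $\mathbf 0 \leq \mu \leq \mathbf 2$ is just tight enough for the basic inequalities and the sum identity to line up into the desired pairing, and beyond this observation the proof is essentially formal.
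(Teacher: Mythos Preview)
Your proof is correct. Both your argument and the paper's are elementary counting arguments driven by the basic inequalities and the identity $\Sigma\mu_i^{\mathbf v}=n$, but they are organized differently. The paper partitions $\{1,\dots,n\}$ purely by the pair-type $\bigl(\mu_i^{\mathbf v}(j),\mu_i^{\mathbf v}(j^*)\bigr)$ into seven pieces $E,F,G,H,E^*,F^*,G^*$, then records the two linear relations $n=2e+2f+2g+h$ (from cardinality) and $n=3e+f+2g+h$ (from $\Sigma\mu_i^{\mathbf v}$) and subtracts to obtain $e=f$. Your route instead first localizes, using the full $A_i$/$B_i$ dichotomy in Lemma~\ref{st:basic_ineqs} to place the two sets being counted inside $A_i$ and $B_i$ respectively, and then balances the single identity $\sum_j s(j)=2n$ across that split to get $a_3=b_1$. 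The paper's version is marginally leaner in that it only needs the weaker consequence $s(j)\in\{1,2,3\}$ of the basic inequalities, with no reference to where each pair-type lives; your version extracts the additional structural fact that the ``$(2,1)$'' indices sit in $A_i$ and the ``$(0,1)$'' indices sit in $B_i$, which is not needed for the lemma itself but is a nice observation.
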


\begin{proof}
Let
{\allowdisplaybreaks
\begin{equation}\label{disp:EFGH}
\begin{aligned}
   E &:= \bigl\{\, j \bigm| \mu_i^{\mathbf v}(j) = 2 \text{ and } \mu_i^{\mathbf v}(j^*) = 1 \,\bigr\},\\
   F &:= \bigl\{\, j \bigm| \mu_i^{\mathbf v}(j) = 0 \text{ and } \mu_i^{\mathbf v}(j^*) = 1 \,\bigr\},\\
   G &:= \bigl\{\, j \bigm| \mu_i^{\mathbf v}(j) = 2 \text{ and } \mu_i^{\mathbf v}(j^*) = 0 \,\bigr\}, \text{ and}\\
   H &:= \bigl\{\, j \bigm| \mu_i^{\mathbf v}(j) = 1 \text{ and } \mu_i^{\mathbf v}(j^*) = 1 \,\bigr\},
\end{aligned}
\end{equation}}%
and let $e$, $f$, $g$, and $h$ denote the respective cardinalities of these sets.  The basic inequalities \eqref{st:basic_ineqs} and our assumptions on $\mathbf v$ imply that
\[
   \{1,\dotsc,n\} = E \amalg F \amalg G \amalg H \amalg E^* \amalg F^* \amalg G^*.
\]
Counting cardinalities on both sides of the display, we get
\[
   n = 2e+ 2f + 2g + h.
\]
On the other hand, by \eqref{disp:Sigma_mu_val}
\[
   n = \Sigma \mu_i^{\mathbf v} = 3e + f + 2g + h.
\]
Hence $e = f$.\end{proof}

\begin{lem}\label{st:c_equiv_defs}
Under the same assumptions as in the previous lemma,
\[
   \#\bigl\{\,j \bigm| \mu_i^{\mathbf v}(j) = 2\,\bigr\} = \#\bigl\{\,j \bigm| \mu_i^{\mathbf v}(j) = 0\,\bigr\}.
\]
\end{lem}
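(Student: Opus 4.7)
The plan is to lean directly on the decomposition and counting set up in the proof of Lemma~\ref{st:cardE=cardG}, together with the conclusion $|E|=|F|$ proved there. Essentially nothing new is needed: the statement is just a bookkeeping consequence of that lemma.

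First I would recall that, under the hypotheses $\mathbf 0 \le \mu_i^{\mathbf v} \le \mathbf 2$ and the basic inequalities \eqref{st:basic_ineqs} with $d=2$, each $j \in \{1,\dotsc,n\}$ falls into exactly one of the seven sets $E$, $F$, $G$, $H$, $E^*$, $F^*$, $G^*$ of \eqref{disp:EFGH}. Now $\mu_i^{\mathbf v}(j)=2$ forces $(\mu_i^{\mathbf v}(j),\mu_i^{\mathbf v}(j^*))=(2,1)$ or $(2,0)$ (the pair $(2,2)$ being excluded by the basic inequalities), which by definition means $j \in E \cup G$. Similarly $\mu_i^{\mathbf v}(j)=0$ forces $(\mu_i^{\mathbf v}(j),\mu_i^{\mathbf v}(j^*))=(0,1)$ or $(0,2)$, equivalently $j \in F \cup G^*$.

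Therefore
\[
   \#\bigl\{\,j \bigm| \mu_i^{\mathbf v}(j) = 2\,\bigr\} = |E| + |G|
   \quad\text{and}\quad
   \#\bigl\{\,j \bigm| \mu_i^{\mathbf v}(j) = 0\,\bigr\} = |F| + |G^*|.
\]
Since the involution $j \mapsto j^*$ restricts to a bijection $G \to G^*$, we have $|G|=|G^*|$, and Lemma~\ref{st:cardE=cardG} gives $|E|=|F|$. Combining these yields the desired equality. There is no real obstacle here; the content of the result is already packed into \eqref{st:cardE=cardG}, and this lemma simply repackages it in the form that will be convenient for the applications to admissibility and spin-permissibility later in the paper.
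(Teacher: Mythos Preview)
Your proof is correct and follows essentially the same approach as the paper: identify $\{j:\mu_i^{\mathbf v}(j)=2\}=E\amalg G$ and $\{j:\mu_i^{\mathbf v}(j)=0\}=F\amalg G^*$, then invoke Lemma~\ref{st:cardE=cardG}. The paper's version is slightly terser, leaving the identification of these sets and the equality $|G|=|G^*|$ implicit.
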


\begin{proof}
In the notation of \eqref{disp:EFGH}, we have
\[
   \bigl\{\,j \bigm| \mu_i^{\mathbf v}(j) = 2\,\bigr\} = E \amalg G
   \quad\text{and}\quad
   \bigl\{\,j \bigm| \mu_i^{\mathbf v}(j) = 0\,\bigr\} = F \amalg G^*.
\]
Now use \eqref{st:cardE=cardG}.
\end{proof}

\section{Iwahori-Weyl groups}\label{s:I-W_gps}
In this section we review some generalities on Iwahori-Weyl groups and the $\{\mu\}$-admissible set, and we make these generalities explicit in the case of our group $G$.  We then show that that the Iwahori-Weyl group for $G$, equipped with its Bruhat order, is isomorphic to the Iwahori-Weyl group for split $GO_{2m+1}$.  The significance of the Iwahori-Weyl group (and certain parahoric variants of it) is that, in the function field case, it indexes the Schubert varieties in the affine flag variety, and the Bruhat order gives the inclusion relations between the Schubert varieties.  We take \cite{prs?}*{\s4} as a general reference.  We write $i^* := 2m + 1 - i$.

\subsection{Iwahori-Weyl groups}\label{ss:I-W_gp}
Let $L$ be a complete, discretely valued, strictly Hen\-sel\-ian field with algebraic closure $\ol L$ and Galois group $\I := \Gal(\ol L/L)$.  Let \G be a connected reductive group over $L$.  Let \S be a maximal split torus in \G, and let \T and \N be its respective normalizer and centralizer in \G.  Then \T is a maximal torus since by Steinberg's theorem \G is quasi-split.  The \emph{Iwahori-Weyl group} of \G with respect to \S is the group
\[
   \wt W := \N(L)/\T(L)_1,
\]
where $\T(L)_1$ is the kernel of the Kottwitz homomorphism
\[
   \kappa_\T\colon \T(L) \surj X_*(\T)_\I.  
\]
The evident exact sequence
\[
   1 \to \T(L)/\T(L)_1 \to \wt W \to \N(L)/\T(L) \to 1
\]
splits by Haines--Rapoport \cite{hrap08}*{Prop.\ 13}.  Hence, via the Kottwitz homomorphism, $\wt W$ is expressible as a semidirect product
\[
   \wt W \iso X_*(\T)_\I \rtimes W,
\]
where $W:= \N(L)/\T(L)$ is the relative Weyl group of \S in \G.  Given $\mu \in X_*(\T)_\I$, we usually write $t_\mu$ when we wish to regard $\mu$ as an element of $\wt W$, and we refer to such elements in $\wt W$ as \emph{translation elements.}

If $\R = (X^*,X_*,\Phi,\Phi^\vee)$ is a root datum, then we define its \emph{extended affine Weyl group} $\wt W_\R := X_* \rtimes W_\R$, where $W_\R$ denotes the Weyl group of \R.  If \G is split with root datum \R attached to \S, then we obtain a canonical isomorphism $\wt W \ciso \wt W_\R$.

To consider local models with general parahoric, not just Iwahori, level structure, it is necessary to consider certain double coset variants of $\wt W$.  Let $\A := X_*(\S) \otimes \RR$ denote the apartment attached to \S.  The group $\wt W$ acts on $\A$ by affine transformations in a way that is prescribed uniquely up to isomorphism in \cite{tits79}*{\s1.2}.  Having chosen such an action, we may then consider the affine root system $\Phi_\aff$ and the attendant affine root hyperplanes.  Fix an alcove $\mathbf a$ in $\A$, and let $\Pi_{\mathbf a}$ denote the set of reflections across the walls of $\mathbf a$.  The \emph{affine Weyl group $W_\aff$} is the group of affine transformations on $\A$ generated by $\Pi_{\mathbf a}$; it lifts canonically to a normal subgroup of $\wt W$.  In fact the Kottwitz homomorphism $\kappa_\G$, restricted to $\N(L)$, induces an isomorphism
\[
   \wt W /W_\aff \xra[\sim]{\kappa_\G} \pi_1(\G)_\I,
\]
as is explained in \cite{hrap08}*{p.\ 196} or \cite{prs?}*{\s4.2}.  For $\mathbf f$ a subfacet of $\mathbf a$, we denote by $W_{\mathbf f}$ the common stabilizer and pointwise fixer of $\mathbf f$ in $\wt W_\aff$.  Then $W_{\mathbf f}$ is Coxeter group, generated by the reflections across the walls of $\mathbf a$ containing $\mathbf f$ \cite{bourLGLA4-6}*{V \s3.3 Prop.\ 2}.  If $P$ is the parahoric subgroup of $\G(L)$ corresponding to $\mathbf f$, then
\[
   W_{\mathbf f} = \bigl(\N(L) \cap P \bigr)\big/ \T(L)_1;
\]
see \cite{prs?}*{\s4.2}.  The double coset variants of $\wt W$ that we shall be interested in are those of the form $W_{\mathbf{f_1}} \bs \wt W / W_{\mathbf{f_2}}$ for $\mathbf{f_1}$, $\mathbf{f_2} \preceq \mathbf a$.

\subsection{Bruhat order}\label{ss:bo}
We continue with the notation of the previous subsection.  Let $\Omega_{\mathbf a}$ denote the stabilizer of $\mathbf a$ in $\wt W$.  The affine Weyl group $W_\aff$ acts simply transitively on the alcoves in \A, and therefore $\wt W$ decomposes as a semidirect product
\[
   \wt W = W_\aff \rtimes \Omega_{\mathbf a}.
\]
The pair $(W_\aff,\Pi_{\mathbf a})$ is a Coxeter system.  Hence $W_\aff$ is endowed with a length function $\ell$ and Bruhat order $\leq$, which extend to $\wt W$ via the rules $\ell(w\tau) = \ell(w)$ and $w\tau \leq w'\tau'$ if $\tau = \tau'$ and $w \leq w'$, where $w$, $w' \in W_\aff$ and $\tau$, $\tau' \in \Omega_{\mathbf a}$.

For subfacets $\mathbf{f_1}$, $\mathbf{f_2} \preceq \mathbf a$, the Bruhat order extends to $W_{\mathbf{f_1}} \bs \wt W / W_{\mathbf{f_2}}$ as follows.  Each double coset $w \in W_{\mathbf{f_1}} \bs \wt W / W_{\mathbf{f_2}}$ is known to contain a unique element $\tensor[^{\mathbf{f_1}}]w{^{\mathbf{f_2}}} \in \wt W$ of minimal length, and one then defines $w \leq x$ in $W_{\mathbf{f_1}} \bs \wt W / W_{\mathbf{f_2}}$ if $\tensor[^{\mathbf{f_1}}]w{^{\mathbf{f_2}}} \leq \tensor[^{\mathbf{f_1}}]x{^{\mathbf{f_2}}}$ in $\wt W$.  The Bruhat order has the property that if $\wt w \leq \wt x$ in $\wt W$, then $W_{\mathbf{f_1}}\wt w W_{\mathbf{f_2}} \leq W_{\mathbf{f_1}}\wt x W_{\mathbf{f_2}}$.

\subsection{Admissible and permissible sets}\label{ss:adm_perm_sets}
We now recall the key notion of $\{\mu\}$-admis\-si\-bil\-i\-ty, and the related notion of $\{\mu\}$-permissibility.  We continue with the notation of the previous two subsections.

Let $\{\mu\}$ denote a $\G(\ol L)$-conjugacy class of cocharacters of \G, or what amounts to the same, a $W^\abs$-conjugacy class in $X_*(\T)$, where $W^\abs := \N(\ol L)/\T(\ol L)$ is the absolute Weyl group of \T in \G.  Let $\wt \Lambda_{\{\mu\}} \subset \{\mu\}$ denote the subset of cocharacters contained in the closure of some absolute Weyl chamber corresponding to a Borel subgroup of \G containing \T and defined over $L$.  Such Borel subgroups exist since \G is quasi-split, and the elements of $\wt\Lambda_{\{\mu\}}$ form a single $W$-orbit, since all such Borel subgroups are $W$-conjugate.  Let $\Lambda_{\{\mu\}}$ denote the image of $\wt\Lambda_{\{\mu\}}$ in the coinvariants $X_*(\T)_\I$.  Finally, let $\mathbf{f_1}$, $\mathbf{f_2} \preceq \mathbf{a}$.

\begin{defn}[Kottwitz--Rapoport \cite{kottrap00}*{Intro.}; Rapoport \cite{rap05}*{disp.\ 3.6}]\label{def:adm}
An element $w \in W_{\mathbf{f_1}} \bs \wt W / W_{\mathbf{f_1}}$ is \emph{$\{\mu\}$-admissible} if there exists $\lambda \in \Lambda_{\{\mu\}}$ such that $w \leq W_{\mathbf{f_1}} t_\lambda W_{\mathbf{f_2}}$.
We write $\Adm_{\mathbf{f_1}, \mathbf{f_2}}(\{\mu\})$, or $\Adm_{\mathbf{f_1}, \mathbf{f_2}, \G}(\{\mu\})$ when we wish to emphasize \G, for the set of $\{\mu\}$-admissible elements in $W_{\mathbf{f_1}} \bs \wt W / W_{\mathbf{f_1}}$.  We abbreviate this to $\Adm_\G(\{\mu\}) = \Adm(\{\mu\}) \subset \wt W$ when $\mathbf{f_1} = \mathbf{f_2} = \mathbf{a}$.
\end{defn}

In the special case that \G is split, we have $\wt \Lambda_{\{\mu\}} = \{\mu\} = W^\abs\mu = W\mu$ for any $\mu \in \{\mu\}$.  We then typically speak of $\mu$-admissibility in place of $\{\mu\}$-admissibility, and we write $\Adm_{\mathbf{f_1}, \mathbf{f_2}}(\mu)$.  If \R is a root datum, then we analogously define $\Adm_{\mathbf{f_1}, \mathbf{f_2}, \R}(\mu) \subset W_{\mathbf{f_1}} \bs \wt W_\R / W_{\mathbf{f_2}}$.

Let $\{\ol\mu\}$ denote the image of the conjugacy class $\{\mu\}$ in $X_*(\T)_\I$.  It is conjectured in \cite{prs?}*{Conj.\ 4.5.3} that $\Lambda_{\{\mu\}}$ consists of exactly the subset of elements in $\{\ol\mu\}$ that are maximal for the Bruhat order.  Note that if this conjecture holds true, then the notion of $\{\mu\}$-admissibility is just that $w \leq W_{\mathbf{f_1}} t_{\ol\mu} W_{\mathbf{f_2}}$ for some $\ol\mu \in \{\ol\mu\}$.  We shall see explicitly in \s\ref{ss:mu_r,s-adm_set} that the conjecture holds for the pair $(G,\{\mu_{r,s}\})$, and it is not hard to see that furthermore the conjecture holds for any geometric conjugacy class for $G$.


Now let $\Conv(\Lambda_{\{\mu\}})$ denote the convex hull of the image of $\Lambda_{\{\mu\}}$ in $X_*(\T)_\I \otimes \RR \ciso \A$, and recall from \cite{rap05}*{Lem.\ 3.1} that all elements of $\Lambda_{\{\mu\}}$, regarded as elements of $\wt W$, are congruent mod $W_\aff$.  Suppose furthermore that $\mathbf{f_2} \preceq \mathbf{f_1} \preceq \mathbf{a}$.

\begin{defn}[Kottwitz-Rapoport \cite{kottrap00}*{Intro.}; Rapoport \cite{rap05}*{disp.\ 3.10}]\label{def:mu-perm}
An element $w \in W_{\mathbf{f_1}} \bs \wt W / W_{\mathbf{f_1}}$ is \emph{$\{\mu\}$-permissible} if $w \equiv t_\lambda \bmod W_\aff$ for one, hence any, $\lambda \in \Lambda_{\{\mu\}}$ and $wv - v \in \Conv(\Lambda_{\{\mu\}})$ for all $v \in \mathbf{f_2}$.  We write $\Perm_{\mathbf{f_1}, \mathbf{f_2}}(\{\mu\})$, or $\Perm_{\mathbf{f_1}, \mathbf{f_2}, \G}(\{\mu\})$ when we wish to emphasize \G, for the set of $\{\mu\}$-permissible elements in $W_{\mathbf{f_1}} \bs \wt W / W_{\mathbf{f_1}}$.  We abbreviate this to $\Perm_\G(\{\mu\}) = \Perm(\{\mu\}) \subset \wt W$ when $\mathbf{f_1} = \mathbf{f_2} = \mathbf{a}$.
\end{defn}

It is clear that $\{\mu\}$-permissibility is well-defined on right $W_{\mathbf{f_2}}$-cosets.  That it is also well-defined on left $W_{\mathbf{f_1}}$-cosets is shown immediately after display 3.10 in \cite{rap05}.  By convexity, the condition
\[
   \text{$wv - v \in \Conv(\Lambda_{\{\mu\}})$ for all $v \in \mathbf{f_2}$}
\]
is equivalent to the condition
\[
   \text{for all minimal subfacets $\mathbf f \preceq \mathbf{f_2}$, $wv-v \in \Conv(\Lambda_{\{\mu\}})$ for one, hence any, $v \in \mathbf f$.}
\]
In the split or root datum case, we typically speak of $\mu$-permissibility, for any $\mu \in \{\mu\}$, in place of $\{\mu\}$-permissibility, and we write $\Perm_{\mathbf{f_1}, \mathbf{f_2}}(\mu)$.

%

\subsection{Case of \texorpdfstring{$GU_{2m}$}{GU\_2m}}
We now begin to apply the generalities of the three previous subsections to our even unitary similitude group $G$ over $K_0$ and its maximal split torus $S$ from \s\ref{s:GU}.  We shall assume throughout the rest of \s\ref{s:I-W_gps} that the residue field of $K_0$ is algebraically closed.

We first compute the Galois coinvariants of $X_*(T)$.  Since $G$ splits over $K$, the action of the Galois group on $X_*(T)$ factors through $\Gamma$.  In terms of our identification $X_*(T) \iso \ZZ^{2m} \times \ZZ$ from \s\ref{ss:tori}, the nontrivial element in $\Gamma$ acts on $X_*(T)$ by sending
\[
   (x_1,\dotsc,x_{2m},y) \mapsto (y-x_{2m},\dotsc,y-x_1,y).
\]
Let
\[
   X := \{\, v\in\ZZ^{2m} \mid v + v^* = \mathbf d \text{ for some } d\in 2\ZZ \,\}.
\]
It follows that the surjective map
\begin{equation}\label{disp:coinvar_map}
   \begin{matrix}
   \xymatrix@R=0ex{
      X_*(T) \ar@{->>}[r]  &  X\\
      (x_1,\dotsc,x_{2m},y) \ar@{|->}[r]  & (x_1 - x_{2m} +y, x_2 - x_{2m+1} + y,\dotsc, x_{2m} - x_1 + y)
   }
   \end{matrix}
\end{equation}
identifies
\begin{equation}\label{disp:coinvar_X}
   X_*(T)_\Gamma \isoarrow X.
\end{equation}
In this way the Kottwitz map for $T$ is given by
\[
   \kappa_T\colon
   \xymatrix@R=0ex{
      T(K_0) \ar@{->>}[r]  &  X\\
      \diag(a_1,\dotsc,a_{2m}) \ar@{|->}[r]  &  \bigl(\ord_u (a_1),\dotsc, \ord_u (a_{2m})\bigr).
   }
\]

Now consider the apartment $\A_G = X_*(S) \otimes \RR \ciso X_*(T)_\Gamma \otimes \RR$, which we identify with $X \otimes \RR$ via \eqref{disp:coinvar_X}.  We declare that $\wt W_G$ acts on $\A_G$ via the semidirect product decomposition $N(K_0) = T(K_0) \rtimes S_{2m}^*$ from \s\ref{ss:affine_roots}, with $T(K_0)$ acting by translations on $X_*(T)_\Gamma$ via the Kottwitz homomorphism, and the permutation matrices $S_{2m}^*$ acting via their natural linear action on $X_*(T)_\Gamma$.

Note that the map \eqref{disp:coinvar_map} identifies $X_*(S)$ with its image in $X$,
\begin{equation}\label{disp:new_X_*(S)_coords}
   X_*(S) \isoarrow \{\, v\in X \mid \text{the entries of $v$ are even} \,\}.
\end{equation}
\emph{These coordinates on $X_*(S)$ are not the ones given earlier in \eqref{disp:X_*S_in_X_*T}}.  From now on we shall work with respect to these new coordinates \eqref{disp:new_X_*(S)_coords}, and therefore we must re-express the affine roots \eqref{disp:aff_roots} in terms of them.  We get that the affine roots consist of the functions on $X \otimes \RR$
\begin{equation}\label{disp:aff_roots_new}
   \frac 1 2 \alpha_{i,j} + \frac 1 2 \ZZ
   \quad\text{for}\quad
   j \neq i, i^*
   \quad\text{and}\quad
   \frac 1 2 \alpha_{i,i^*} + \frac 1 2 + \ZZ
   \quad\text{for}\quad
   1 \leq i \leq 2m.
\end{equation}
Regarding $X\otimes \RR$ as a subspace of $\RR^{2m}$, we then take as our base alcove
\[
   A_G := \biggl\{(x_1,\dotsc,x_{2m}) \in \RR^{2m} \biggm|
      \begin{varwidth}{\textwidth}
         \centering
         $x_1 + x_{2m} = \dotsb = x_m + x_{m+1}$ and\\
         $x_1 < x_2 < x_3 < \dotsb < x_{m-1} < x_m, x_{m+1}$
      \end{varwidth}\biggr\}.
\]
Of course, this choice of base alcove endows $\wt W_G$ with a Bruhat order, as discussed in \s\ref{ss:bo}.

\subsection{Faces of type $I$}\label{ss:GU_faces}
The action of $\wt W_G$ on $\A_G$ described in the previous subsection identifies $\wt W_G$ with $X \rtimes S_{2m}^*$ inside the group of affine transformations $\Aff(X \otimes \RR)$.  In this way we may regard $\wt W_G$ as a subgroup of $\wt W_{2m}$ \eqref{disp:wtW_n}, and we attach to each $w \in \wt W_G$ a face $\mathbf v$ of type $(2m,\{0,\dotsc,m\})$ by letting $w$ act on the standard face $\omega_{\{0,\dotsc,m\}}$, as in \s\ref{ss:faces_type_I}.  We write $\mu_i^w$ for the vector $\mu_i^{\mathbf{v}}$ attached to this face,
\begin{equation}\label{disp:something}
   \mu_i^w := w\omega_i - \omega_i
   \quad\text{for}\quad
   i \in \ZZ.
\end{equation}

For each $\O_K$-lattice $\lambda$ in $K^{2m}$ of the form $\sum_{i=1}^{2m} u^{j_i}\O_Ke_i$, let
\[
   T(\lambda) := (j_1,\dotsc,j_{2m}) \in \ZZ^{2m}.
\]
Then $T$ is equivariant in the sense that for $n \in N(K_0)$,
\[
   T(n\lambda) = \ol n \cdot T(\lambda),
\]
where $\ol n$ denotes the image of $n$ in $\wt W_G$, which acts via the action of $\wt W_{2m}$ on $\ZZ^{2m}$.

Recalling the lattice $\lambda_i$ from \eqref{disp:lambda_i}, we have
\[
   T(\lambda_i) = \omega_i
   \quad\text{for all}\quad
   i \in \ZZ.
\]
The points $\frac{\omega_i + \omega_{-i}}2$ are contained in $\ol A_G$ for all $i$, and we conclude that $P_{\{0,\dotsc,m\}}$ is precisely the Iwahori subgroup of $G$ fixing the alcove $A_G$ in the building.  For nonempty $I \subset \{0,\dotsc,m\}$ satisfying \eqref{disp:I_cond}, we define
\[
   W_{I,G} := \bigl( N(K_0) \cap P_I^\circ \bigr) \big/ T(K_0)_1
    \subset \wt W_G.
\]
Then, as discussed in \s\ref{ss:I-W_gp}, $W_{I,G}$ is a subgroup of the affine Weyl group inside $\wt W_G$.  It is generated by the reflections across the walls of $A_G$ containing the points $\frac{\omega_i + \omega_{-i}} 2$ for all $i \in I$.

By definition of $P_I^\circ$, elements of $W_{I,G}$ fix $\omega_i$ for all $i \in 2m\ZZ \pm I$.  Thus to each $w \in \wt W_G/ W_{I,G}$ we get a well-defined face $w\cdot \omega_I$ of type $(2m,I)$ and vector $\mu_i^w = w\omega_i - \omega_i$ for $i \in 2m\ZZ \pm I$.

\subsection{Kottwitz homomorphism on \texorpdfstring{$\wt W_G$}{W\~{}\_G}}
Let $W_{\aff,G}$ denote the affine Weyl group inside $\wt W_G$.  Then the Kottwitz homomorphism $\kappa_G$ (see \s\ref{ss:Kott_hom}) identifies
\[
   \wt W_G/ W_{\aff,G} \isoarrow \ZZ \oplus (\ZZ/2\ZZ).
\]
Explicitly, for $w \in \wt W_G$, the Kottwitz homomorphism sends
\begin{equation}\label{disp:explicit}
   w \mapsto \bigl(d, {\textstyle \sum_{i=1}^m \mu_m^w(i)} \bmod 2\bigr),
\end{equation}
where $w$ defines a $d$-face of type $\{0,\dotsc,m\}$.

\subsection{Relation to type \texorpdfstring{$B_m$}{B\_m}}\label{ss:relation_to_B}
As noted in the Introduction, upon choosing a special vertex in $\A_G$ as origin, there exists a unique reduced root system $\Sigma$ on $\A_G$ such that the affine root hyperplanes for $G$ are precisely the affine root hyperplanes for $\Sigma$, that is, the vanishing loci of the functions $\alpha + \ZZ$ for $\alpha \in \Sigma$.
In this subsection we shall show that $\Sigma$ is of type $B_m$.  In fact, we shall construct an explicit isomorphism between the triple $(\wt W_G, \A_G, \H_G)$, where $\H_G$ denotes the system of affine root hyperplanes in $\A_G$, and the analogous triple attached to split $GO_{2m+1}$.

The point $\omega_m \in X \otimes \RR$ is a special vertex for $A_G$, and we shall begin by transporting it to the origin.  Rather than simply translating by $-\omega_m$, let
\[
   \delta := (1,m+1)(2,m+2)\dotsm (m,2m) \in S_{2m}^*,
\]
where $(i,j)$ denotes the transposition interchanging $i$ and $j$ and fixing the other elements of $\{1,\dotsc,2m\}$.  Of course $\delta^{-1} = \delta$.  Let
\[
   z := \delta t_{-\omega_m} \in \Aff(X\otimes \RR).
\]
Then
\begin{equation}\label{disp:z}
   X \otimes \RR \xra[\sim]{z} X \otimes \RR
\end{equation}
sends $\omega_m \mapsto \mathbf 0$ and our base alcove $A_G$ to the set
\begin{equation}\label{disp:A_G_changed}
   \biggl\{(x_1,\dotsc,x_{2m}) \in \RR^{2m} \biggm|
      \begin{varwidth}{\textwidth}
         \centering
         $x_1 + x_{2m} = \dotsb = x_m + x_{m+1}$ and\\
         $x_1, x_{2m} - 1 < x_2 < x_3 < \dotsb < x_m < x_{m+1}$
      \end{varwidth}\biggr\}.
\end{equation}
Each affine root $\wt \alpha$, viewed as a function on the source in \eqref{disp:z}, is carried by $z$ to the function on the target
\[
   z_*\wt\alpha = \wt \alpha \circ z^{-1} = \wt\alpha \circ (t_{\omega_m}\delta).
\]
Recalling the explicit form \eqref{disp:aff_roots_new} for the affine roots, we then have
\[
   z_*\biggl(\frac 1 2 \alpha_{i,j} + \frac d 2\biggr) = \frac 1 2 \alpha_{\delta(i),\delta(j)} + \frac{\alpha_{i,j}(\omega_m) + d}2
   \quad
   (j\neq i,i^*,\ d\in \ZZ)
\]
and
\[
   z_*\biggl(\frac 1 2 \alpha_{i,i^*} + \frac 1 2 + d\biggr)
   = \frac 1 2 \alpha_{\delta(i),\delta(i^*)} + \frac{\alpha_{i,i^*}(\omega_m) + 1}2 + d
   \quad
   (d \in \ZZ).
\]
Since $\alpha_{i,i^*}(\omega_m) = \pm 1$, we conclude that $z$ carries the affine root hyperplanes in the source of \eqref{disp:z} to the zero loci of the functions
\begin{equation}\label{disp:Sigma_fcns}
   \alpha_{i,j} + \ZZ
   \quad\text{for}\quad
   j \neq i,i^*
   \quad\text{and}\quad
   \frac 1 2 \alpha_{i,i^*} + \ZZ
   \quad\text{for}\quad
   1 \leq i \leq 2m.
\end{equation}
The functions \eqref{disp:Sigma_fcns} are the affine roots for the root system
\[
   \Sigma := \biggl\{\, \alpha_{i,j}, \frac 1 2\alpha_{i,i^*} \biggm| 1 \leq i,j\leq 2m,\ j \neq i,i^*\,\biggr\}.
\]

Now consider the isomorphism
\[
   f\colon \xymatrix@R=0ex{
      X  \ar[r]^-\sim  &  X_{*2m+1}\\
      (x_1,\dotsc,x_{2m})  \ar@{|->}[r]  &  (x_1,\dotsc,x_m,c/2,x_{m+1},\dotsc,x_{2m}),
   }
\]
where $c$ denotes the common integer $x_1 + x_{2m} = \dotsb = x_m + x_{m+1}$.  The group $X_{*2m+1}$ is the cocharacter group for the root datum $\R_{B_m}$ defined later in \s\ref{ss:R_B_m}, which is the root datum for split $GO_{2m+1}$.  It is immediate that the roots $\Phi_{B_m}$ \eqref{disp:roots_B} for $\R_{B_m}$ pull back under $f$ to the functions $\Sigma$ on $X$.  Furthermore $f$ induces an isomorphism $S_{2m}^* \isoarrow S_{2m+1}^*$ between groups acting faithfully on source and target, respectfully.  Hence $f$ induces an isomorphism, which we still denote $f$,
\[
   f\colon X \rtimes S_{2m}^* \isoarrow X_{*2m+1} \rtimes S_{2m+1}^*;
\]
here the target is the extended affine Weyl group $\wt W_{B_m}$ of the root datum $\R_{B_m}$.  Since $f \otimes \RR$ plainly identifies the set \eqref{disp:A_G_changed} with the alcove $A_{B_m}$ \eqref{disp:A_B_m}, and since $X \rtimes S_{2m}^*$ is easily seen to be stable under conjugation by $z$ inside $\Aff(X \otimes \RR)$, we conclude the following.

\begin{lem}\label{st:wtW_G->wtW_B_m}
The composite
\[
   X \otimes \RR \xra[\sim]z X \otimes \RR \xra[\sim]{f \otimes \RR} X_{*2m+1} \otimes \RR
\]
induces a composite isomorphism
\[
   \wt W_G \ciso X\rtimes S_{2m}^* \xra[\sim]{\int(z)}
   X \rtimes S_{2m}^* \xra[\sim] f
   X_{*2m+1} \rtimes S_{2m+1}^* = \wt W_{B_m}
\]
which identifies the $A_G$-Bruhat order on $\wt W_G$ with the $A_{B_m}$-Bruhat order on $\wt W_{B_m}$.  Here $\int(z)$ is the conjugation map $w \mapsto zwz^{-1}$.\qed
\end{lem}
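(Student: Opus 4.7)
The plan is to assemble three ingredients, most of which are already established in the paragraphs leading up to the lemma statement. First, I would verify that $X\rtimes S_{2m}^*$ is stable under conjugation by $z$ inside $\Aff(X \otimes \RR)$, so that $\int(z)$ gives a well-defined automorphism of $\wt W_G$.  Second, I would confirm that $f$ induces the stated isomorphism of semidirect products.  Third, I would transfer the Bruhat orders by checking that the composite group isomorphism sends simple reflections to simple reflections and alcove stabilizer to alcove stabilizer.

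For the first step it suffices to check on generators.  For a translation $t_v$ with $v \in X$ one immediately gets $\int(z)(t_v) = t_{\delta v} \in X \rtimes S_{2m}^*$, since $\delta \in S_{2m}^*$ preserves $X$.  For $\sigma \in S_{2m}^*$, a direct computation using the relation $\sigma t_w = t_{\sigma w} \sigma$ and $\delta^{-1} = \delta$ yields
\[
   \int(z)(\sigma) = t_{\delta(\sigma\omega_m - \omega_m)} \cdot \delta\sigma\delta^{-1}.
\]
Here $\delta\sigma\delta^{-1} \in S_{2m}^*$ is clear, and $\sigma\omega_m - \omega_m$ lies in $X$ because $S_{2m}^*$ commutes with the involution $(\cdot)^*$, giving
\[
   (\sigma\omega_m - \omega_m) + (\sigma\omega_m - \omega_m)^*
      = \sigma(\omega_m + \omega_m^*) - (\omega_m + \omega_m^*) = \mathbf 0.
\]
The subtle point here is that $\omega_m \notin X$ individually (its $d$ equals $-1$, which is odd), so $t_{-\omega_m}$ is not itself an element of $\wt W_G$; only the conjugation operation, not $t_{-\omega_m}$ separately, preserves $\wt W_G$.

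The second step is already asserted in the preamble:  $f$ restricts to a bijection $X \isoarrow X_{*2m+1}$ equivariant for the natural actions of $S_{2m}^* \isoarrow S_{2m+1}^*$ (extending a permutation of $\{1,\dotsc,2m\}$ to fix the middle index $m+1$), and the roots $\Phi_{B_m}$ pull back under $f$ to the functions $\Sigma$ on $X$.  For the third step, the discussion preceding the lemma shows that the space map $(f \otimes \RR) \circ z$ carries the alcove $A_G$ to $A_{B_m}$ and the collection of affine root hyperplanes for $G$ to the collection for $\R_{B_m}$.  Hence the induced group isomorphism $\wt W_G \isoarrow \wt W_{B_m}$ sends the reflections across the walls of $A_G$ bijectively to the reflections across the walls of $A_{B_m}$, so it is an isomorphism of Coxeter systems $(W_{\aff,G}, \Pi_{A_G}) \isoarrow (W_{\aff, B_m}, \Pi_{A_{B_m}})$, and it carries the alcove stabilizer $\Omega_{A_G}$ onto $\Omega_{A_{B_m}}$.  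By the definition of the Bruhat order recalled in \s\ref{ss:bo}, this is exactly the data needed to identify the $A_G$-Bruhat order on $\wt W_G$ with the $A_{B_m}$-Bruhat order on $\wt W_{B_m}$.

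The main effort, modest as it is, is the translation bookkeeping in the first step; no deeper obstacle arises, since all the genuine geometric and combinatorial content has already been extracted in the calculations preceding the lemma.
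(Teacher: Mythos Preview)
Your proposal is correct and follows exactly the approach the paper intends: the lemma is marked \qed\ with no further proof, the paper having just asserted that ``$X \rtimes S_{2m}^*$ is easily seen to be stable under conjugation by $z$'' and that $f\otimes\RR$ identifies \eqref{disp:A_G_changed} with $A_{B_m}$.  You have simply supplied the explicit bookkeeping (the formula for $\int(z)(\sigma)$ and the verification that $\sigma\omega_m-\omega_m\in X$) that the paper leaves to the reader.
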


The lemma immediately allows us to transfer all questions we shall have about $\wt W_G$ to $\wt W_{B_m}$.  To do so in an effective way, we shall need to explicitly address the intervention of $\int(z)$ in the displayed isomorphism.  The following will take care of everything we need.

\begin{lem}\label{st:int(z)_lem}\hfill
\begin{enumerate}
\renewcommand{\theenumi}{\roman{enumi}}
\item\label{it:z.omega_i}
   $z\omega_i = \omega_{i-m}$ for all $i \in \ZZ$.
\item\label{it:mu^zwz^-1_fmla}
   $\mu_i^{zwz^{-1}} = \delta \mu_{i+m}^w$ for all $i \in \ZZ$ and $w \in \wt W_G$.
\end{enumerate}
\end{lem}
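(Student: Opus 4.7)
The plan is to prove part (i) by an explicit computation from the definitions, splitting into cases on the residue of $i$ modulo $2m$, and then to deduce part (ii) formally from part (i) by unwinding the definition of $\mu_i^w$ and using the fact that $z$ acts by the affine formula $z(x) = \delta(x) - \delta(\omega_m)$, with $\delta$ linear.

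For part (i), since $z = \delta t_{-\omega_m}$ and $\delta$ is linear, we have $z\omega_i = \delta(\omega_i) - \delta(\omega_m) = \delta(\omega_i - \omega_m)$. The key observation is that $\delta = (1,m+1)(2,m+2)\dotsm(m,2m)$ acts on a vector in $\RR^{2m}$ by swapping its first $m$ entries with its last $m$ entries as blocks. Writing $i = 2mb + c$ with $0 \leq c < 2m$, I would separate the cases $0 \leq c \leq m$ and $m < c < 2m$. In the first case, $\omega_i - \omega_m = (0^{(c)}, 1^{(m-c)}, 0^{(m)}) - \mathbf b$, whose image under $\delta$ is $(0^{(m+c)}, 1^{(m-c)}) - \mathbf b$; this matches $\omega_{i-m}$ after noting that $i - m = 2m(b-1) + (m+c)$ (and using $\lfloor \text{stuff} \rfloor + \mathbf 1 = -\mathbf{b-1} + (-1)^{(m+c)}$ trick), or $i - m = 2mb$ if $c = m$. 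The second case is entirely analogous, giving $\delta(\omega_i - \omega_m) = ((-1)^{(c-m)}, 0^{(3m-c)}) - \mathbf b$ which coincides with $\omega_{i-m}$ since $i-m = 2mb + (c-m)$ with $0 < c-m < m$.

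For part (ii), part (i) applied to $-m$ in place of $m$ (equivalently, applied to $z^{-1}$) gives $z^{-1}\omega_i = \omega_{i+m}$ for all $i$. Hence
\[
   zwz^{-1}\omega_i = z(w\omega_{i+m}) = z(\omega_{i+m} + \mu_{i+m}^w).
\]
Using that $z$ acts by the affine formula $z(x) = \delta(x) - \delta(\omega_m)$ and that $\delta$ is linear, the right-hand side equals
\[
   \delta(\omega_{i+m}) - \delta(\omega_m) + \delta(\mu_{i+m}^w) = z\omega_{i+m} + \delta(\mu_{i+m}^w) = \omega_i + \delta(\mu_{i+m}^w),
\]
where the last equality is part (i) with $i+m$ in place of $i$. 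Subtracting $\omega_i$ from both sides gives $\mu_i^{zwz^{-1}} = \delta\mu_{i+m}^w$, as desired.

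The only real obstacle is bookkeeping in part (i): the formula for $\omega_i$ depends on the residue $c$ of $i$ modulo $2m$, so one must separate cases and carefully track the integer shift $\mathbf b$ versus $\mathbf{b-1}$ when $i - m$ crosses a multiple of $2m$. Once part (i) is in place, part (ii) is a short formal manipulation, with the only subtlety being that $z$ is affine rather than linear, so one cannot simply "distribute" $z$ across a sum without first factoring off the translation part.
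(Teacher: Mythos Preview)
Your proposal is correct and is exactly the fleshing-out the paper has in mind: the paper's own proof reads in its entirety ``The first assertion is an exercise, and the second is an easy consequence of the first,'' and your computation for (i) and your affine manipulation for (ii) are precisely the intended exercise and consequence. The only cosmetic point is that your deduction of $z^{-1}\omega_i = \omega_{i+m}$ is cleaner stated as ``apply (i) with $i$ replaced by $i+m$ and then apply $z^{-1}$'' rather than ``apply (i) to $-m$ in place of $m$.''
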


\begin{proof}
The first assertion is an exercise, and the second is an easy consequence of the first.
\end{proof}

It follows easily from the first part of the lemma that, for any nonempty $I \subset\{0,\dotsc,m\}$ satisfying \eqref{disp:I_cond}, the composite isomorphism $\wt W_G \isoarrow \wt W_{B_m}$ in \eqref{st:wtW_G->wtW_B_m} sends the subgroup $W_{I,G}$ isomorphically to the subgroup $W_{m-I,B_m}$ defined in \eqref{disp:W_I,B_m}.

\subsection{The \texorpdfstring{$\{\mu_{r,s}\}$}{mu\_$\{r,s\}$}-admissible set}\label{ss:mu_r,s-adm_set}

Recalling our identification $X_*(T) \iso \ZZ^{2m} \times \ZZ$ from \s\ref{ss:tori}, let
\[
   \mu_{r,s} := \bigl(1^{(s)},0^{(2m-s)},1\bigr) \in X_*(T),
\]
and let
\[
   \{\mu_{r,s}\} := S_{2m} \cdot \mu_{r,s}
\]
denote the geometric conjugacy class of $\mu_{r,s}$, where $S_{2m}$ acts by permuting the first $2m$ entries.  The image of $\{\mu_{r,s}\}$ in $X$ under the map \eqref{disp:coinvar_map} is the set
\[
   \{\ol\mu_{r,s}\} = S_{2m}^*\cdot \bigl(2^{(s)},1^{(2m-2s)},0^{(s)}\bigr) \amalg S_{2m}^* \cdot \bigl(2^{(s-2)},1^{(2m-2s+4)},0^{(s-2)}\bigr) \amalg \dotsb,
\]
where the union terminates with $\{\mathbf{1}\}$ or $S_{2m}^* \cdot \bigl(2,1^{(2m-1)},0\bigr)$ according as $s$ is even or odd.  Regarding $\{\ol\mu_{r,s}\}$ as a subset of the translation subgroup of $\wt W_G$, the image of $\{\ol\mu_{r,s}\}$ in $\wt W_{B_m}$ under the isomorphism in \eqref{st:wtW_G->wtW_B_m} consists of the translation elements
\[
   S_{2m+1}^*\cdot \bigl(2^{(s)},1^{(2m-2s+1)},0^{(s)}\bigr) \amalg S_{2m+1}^* \cdot \bigl(2^{(s-2)},1^{(2m-2s+5)},0^{(s-2)}\bigr) \amalg \dotsb.
\]
These elements are all congruent modulo coroots, and the maximal elements among them in the Bruhat order are the elements $S_{2m+1}^*\cdot \bigl(2^{(s)},1^{(2m-2s+1)},0^{(s)}\bigr)$.  Hence the elements $S_{2m}^* \cdot \bigl(2^{(s)},1^{(2m-2s)},0^{(s)}\bigr)$ are maximal in $\{\ol\mu_{r,s}\}$.  This confirms, for $G$ and the conjugacy class $\{\mu_{r,s}\}$, the conjecture \cite{prs?}*{Conj.\ 4.5.3} we discussed in \s\ref{ss:adm_perm_sets}, since in this case the set $\Lambda_{\{\mu_{r,s}\}}$ is by definition the image of $S_{2m}^* \cdot\mu_{r,s}$ in the coinvariants.

Now let $\Adm_{J,I,G}(\{\mu_{r,s}\})$ denote the $\{\mu_{r,s}\}$-admissible set in $W_{J,G} \bs \wt W_G / W_{I,G}$, and for any cocharacter $\mu \in X_{*2m+1}$, let $\Adm_{J,I,B_m}(\mu)$ denote the $\mu$-admissible set in $W_{J,B_m} \bs \wt W_{B_m} / W_{I,B_m}$.  We have shown the following.

\begin{lem}\label{st:id_adm_sets}
For any nonempty subsets $I$, $J \subset \{0,\dotsc,m\}$ satisfying property \eqref{disp:I_cond}, the isomorphism $W_{J,G} \bs \wt W_G/W_{I,G} \isoarrow \wt W_{m-J,B_m} \bs \wt W_{B_m}/W_{m-I,B_m}$ induced by \eqref{st:wtW_G->wtW_B_m} identifies
\[
   \Adm_{J,I,G}(\{\mu_{r,s}\}) \isoarrow \Adm_{m-J,m-I,B_m}(\mu),
\]
where $\mu$ is the cocharacter $\bigl(2^{(s)},1^{(2m-2s+1)},0^{(s)}\bigr) \in X_{*2m+1}$.\qed
\end{lem}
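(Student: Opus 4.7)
The proposal is to assemble three ingredients that have already been put in place in this section, and observe that the identification of admissible sets is then essentially immediate from the definition of admissibility.

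First, by Lemma \ref{st:wtW_G->wtW_B_m}, the composite isomorphism $\wt W_G \isoarrow \wt W_{B_m}$ carries the $A_G$-Bruhat order to the $A_{B_m}$-Bruhat order. Second, by the remark immediately following Lemma \ref{st:int(z)_lem}, the same isomorphism carries $W_{I,G}$ isomorphically onto $W_{m-I,B_m}$ (and similarly for $J$), using that $z$ sends $\omega_i$ to $\omega_{i-m}$. Consequently, one obtains an induced bijection
\[
   W_{J,G}\bs \wt W_G/W_{I,G} \isoarrow W_{m-J,B_m}\bs \wt W_{B_m}/W_{m-I,B_m},
\]
and this bijection is compatible with the Bruhat order on double cosets, since (by \s\ref{ss:bo}) the double coset order is defined purely in terms of minimal-length representatives and the ambient Bruhat order. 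So the whole combinatorial structure used to define admissibility is preserved.

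Next I would invoke the explicit calculation carried out in the paragraphs immediately preceding the lemma, which showed that $\Lambda_{\{\mu_{r,s}\}}$, viewed inside the translation subgroup of $\wt W_G$, consists precisely of the elements $S_{2m}^*\cdot\bigl(2^{(s)},1^{(2m-2s)},0^{(s)}\bigr)$, and that the isomorphism of Lemma \ref{st:wtW_G->wtW_B_m} sends this set to $S_{2m+1}^*\cdot\bigl(2^{(s)},1^{(2m-2s+1)},0^{(s)}\bigr)$. Since $B_m$ is split, one has by definition $\Lambda_\mu = W_{B_m}\cdot\mu$, and because $W_{B_m}$ is identified (via the conventions of \s\ref{ss:R_B_m}) with $S_{2m+1}^*$ acting on $X_{*2m+1}$, this $W_{B_m}$-orbit is exactly the set on the right-hand side of the previous display. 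Hence the isomorphism carries $\{t_\lambda \mid \lambda \in \Lambda_{\{\mu_{r,s}\}}\}$ onto $\{t_\mu' \mid \mu' \in W_{B_m}\cdot\mu\}$.

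Finally, admissibility in both the unitary and the $B_m$ settings is defined (Definition \ref{def:adm}) by the existence of some $\lambda$ in the appropriate $\Lambda$-set for which $w \leq W_?t_\lambda W_?$ in the double coset Bruhat order. Combining the two steps above, one sees that a double coset is $\{\mu_{r,s}\}$-admissible on the $G$-side if and only if its image under the bijection is $\mu$-admissible on the $B_m$-side. There is no real obstacle: every piece has been established. The only mild care required is to verify that the conjugation by $z$, which intervenes in Lemma \ref{st:wtW_G->wtW_B_m}, does not disturb the translation-subgroup computation — but this follows from \eqref{it:mu^zwz^-1_fmla} of Lemma \ref{st:int(z)_lem}, which shows $\int(z)$ merely relabels the vectors $\mu_i^w$ by $\delta$ and an index shift, both of which preserve $S_{2m}^*$-orbits.
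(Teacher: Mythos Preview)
Your proposal is correct and follows essentially the same route as the paper: the lemma is stated with a bare \qed because it is a direct consequence of the Bruhat-order-preserving isomorphism of Lemma~\ref{st:wtW_G->wtW_B_m}, the identification $W_{I,G}\isoarrow W_{m-I,B_m}$ following Lemma~\ref{st:int(z)_lem}, and the explicit computation in the preceding paragraphs that $\Lambda_{\{\mu_{r,s}\}}=S_{2m}^*\cdot(2^{(s)},1^{(2m-2s)},0^{(s)})$ is carried to $S_{2m+1}^*\cdot\mu$. Your final remark about $\int(z)$ is fine, though the more direct observation is simply that $\int(z)(t_\nu)=t_{\delta\nu}$ and $\delta\in S_{2m}^*$, so the $S_{2m}^*$-orbit is preserved.
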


\section{Affine flag variety}\label{s:afv}
From now on we take $K_0 = k((t))$, $\O_{K_0} = k[[t]]$, $K = k((u))$, and $\O_K = k[[u]]$, where we recall that $k$ is an algebraic closure of the common residue field of $F_0$ and $F$.  In this section we review the affine flag variety attached to a parahoric subgroup of $G$, and the embedding of the geometric special fiber of the naive local model into it, from \cite{paprap09}*{\s3}.  Let $I \subset \{0,\dotsc,m\}$ be a nonempty subset satisfying property \eqref{disp:I_cond}.

\subsection{Affine flag variety}\label{ss:afv}
Let $P$ be a parahoric subgroup of $G(K_0)$.  Then Bruhat-Tits theory attaches to $P$ a smooth affine $\O_{K_0}$-group scheme whose generic fiber identifies with $G$, whose special fiber is connected, and whose group of $\O_{K_0}$-points identifies with $P$; abusing notation, we denote this group scheme again by $P$.  The \emph{affine flag variety $\F_P$ relative to $P$} is the fpqc quotient of functors on the category of $k$-algebras,
\[
   \F_P := LG/L^+P,
\]
where $LG$ is the \emph{loop group} $LG\colon R \mapsto G\bigl(R((t))\bigr)$ and $L^+P$ is the \emph{positive loop group}  $L^+P\colon R \mapsto G\bigl(R[[t]]\bigr)$.  See \cite{paprap08}.  The affine flag variety is an ind-$k$-scheme of ind-finite type.

\subsection{Lattice-theoretic description}\label{ss:lattice_theoretic}
In this subsection we give a slight variant of (and make a minor correction to) the description in \cite{paprap09}*{\s3.2} of the affine flag variety in terms of lattice chains in $K^{2m}$.  

Let $R$ be a $k$-algebra.  Recall that an \emph{$R[[u]]$-lattice in $R((u))^{2m}$} is an $R[[u]]$-submodule $L \subset R((u))^{2m}$ which is free as an $R[[u]]$-module Zariski-locally on $\Spec R$, and such that the natural arrow $L \otimes_{R[[u]]} R((u)) \to R((u))^{2m}$ is an isomorphism.  Borrowing notation from \eqref{disp:wh_Lambda}, given an $R[[u]]$-lattice $L$, we write $\wh L$ for the dual lattice
\[
   \wh L := \bigl\{\, x \in R((u))^{2m} \bigm| h_{R((u))}(L,x) \subset R[[u]] \, \bigr\},
\]
where $h_{R((u))} := h \otimes_K R((u))$ is the induced form on $R((u))^{2m}$. A collection $\{L_i\}_i$ of $R[[u]]$-lattices in $R((u))^{2m}$ is a \emph{chain} if it is totally ordered under inclusion and all successive quotients are locally free $R$-modules (necessarily of finite rank).  A lattice chain is \emph{periodic} if $u L$ is in the chain for every lattice $L$ in the chain.  We write $\L\bigl(R((u))^{2m}\bigr)$ for the category whose objects are the $R[[u]]$-lattices in $R((u))^{2m}$ and whose morphisms are the natural inclusions of lattices.  Of course, any $R[[u]]$-lattice chain may be regarded as a full subcategory of $\L\bigl(R((u))^{2m}\bigr)$.

We define $\F_I$ to be the functor on $k$-algebras that assigns to each $R$ the set of all functors $L\colon\lambda_I \to \L\bigl(R((u))^{2m}\bigr)$ such that
\begin{enumerate}
\renewcommand{\theenumi}{C}
\item\label{it:afv_chain_cond}
   (chain) the image $L(\lambda_I)$ is a lattice chain in $R((u))^{2m}$;
\renewcommand{\theenumi}{P}
\item\label{it:afv_periodicity_cond}
   (periodicity) $L(u\lambda_i) = uL(\lambda_i)$ for all $i \in {2m}\ZZ \pm I$, so that the chain $L(\lambda_I)$ is periodic;
\renewcommand{\theenumi}{R}
\item\label{it:afv_rank_cond}
   (rank) $\dim_k \lambda_i/\lambda_j = \rank_R L(\lambda_i) / L(\lambda_j)$ for all $j  < i$; and
\renewcommand{\theenumi}{D}
\item\label{it:afv_dlty_cond}
   (duality) Zariski-locally on $\Spec R$, there exists $\alpha \in R((t))^\times \subset R((u))^\times$ such that $\wh{L(\lambda_i)} = \alpha L\bigl(\wh\lambda_i\bigr)$ for all $i \in {2m}\ZZ \pm I$.
\end{enumerate}

If $L \in \F_I(R)$ globally admits a scalar $\alpha$ as in \eqref{it:afv_dlty_cond}, then $\alpha$ is well-defined modulo the group
\[
   \bigl\{\, a \in R((t))^\times \bigm| a L(\lambda_{-i}) = L(\lambda_{-i}) \,\bigr\}
   = R[[t]]^\times
\]
(independent of $i$).  It is then an easy exercise to check that the map specified locally by
\[
   L \mapsto \Bigl(\bigl( L(\lambda_i) \bigr)_{i \in I},\ \alpha \bmod R[[t]]^\times \Bigr)
\]
defines an isomorphism from the functor $\F_I$ as we've defined it to the functor $\F_I$ as defined in \cite{paprap09}*{\s3.2}, except that the functor in \cite{paprap09} should only require that $\alpha \bmod R[[t]]^\times$ be given Zariski-locally.

Now recall the subgroup $P_I \subset G(K_0)$, which is defined to be the lattice-wise stabilizer of the chain $\lambda_I$; it equals the parahoric subgroup $P_I^\circ$ when $m \in I$, contains $P_I^\circ$ as a subgroup of index $2$ when $m \notin I$, and in all cases is the full stabilizer in $G(K_0)$ of the facet corresponding to $P_I^\circ$.  As for parahoric subgroups, Bruhat-Tits theory attaches to $P_I$ a canonical smooth affine group scheme over $\O_{K_0}$ with generic fiber $G$ and whose $\O_{K_0}$ points identify with $P_I$; abusing notation as in \s\ref{ss:afv}, we continue to denote this group scheme by $P_I$.  We may then consider the positive loop group $L^+P_I$, which assigns each $k$-algebra $R$ to the lattice-wise stabilizer in $G\bigl(R((t))\bigr)$ of the lattice chain $\lambda_I \otimes_{\O_K} R[[u]]$.

The loop group $LG$ acts on $\F_I$ via the natural representation of $G\bigl(R((t))\bigr)$ on $R((u))^{2m}$, and it follows that the $LG$-equivariant map $LG \to \F_I$ specified by taking the tautological inclusion $\bigl(\lambda_I \inj \L(K^{2m})\bigr) \in \F_I(k)$ as basepoint defines an $LG$-equivariant isomorphism
\begin{equation}\label{disp:whatever}
   LG/L^+P_I \isoarrow \F_I.
\end{equation}
When $m \in I$, this identifies $\F_I$ with the affine flag variety $\F_{P_I^\circ}$.  When $m \notin I$, we recall that the Kottwitz homomorphism
\[
   \kappa_G \colon G(K_0) \to \ZZ \oplus (\ZZ/2\ZZ)
\]
extends to a homomorphism of functors
\[
   \kappa_G \colon LG \to \ZZ \oplus (\ZZ/2\ZZ);
\]
here the target even classifies the connected components of $LG$ \cite{paprap08}*{Th.\ 0.1}.  Let $H$ denote the kernel of the map to the second factor,
\[
   H := \ker[LG \to \ZZ/2\ZZ],
\]
and let $\tau$ denote the block diagonal matrix in $LG(k)$
\begin{equation}\label{disp:tau}
   \tau :=
   \begin{pmatrix}
      \Id_{m-1}\\
        & 0 & 1\\
        & 1 & 0\\
        & & & \Id_{m-1}
   \end{pmatrix}.
\end{equation}
Then $LG = H \amalg \tau H$, and
\begin{equation}\label{disp:hrnt}
   \F_{P_I^\circ} = (H/L^+P_I^\circ) \amalg (\tau H/L^+P_I^\circ)
      \isoarrow LG/L^+P_I \amalg LG/L^+P_I \isoarrow \F_I \amalg \F_I.
\end{equation}


\subsection{Schubert cells and varieties}
Consider the parahoric subgroup scheme $P_I^\circ$ over $\O_{K_0}$ and its associated affine flag variety $\F_{P_I^\circ}$.  For $n \in N(K_0)$, the associated \emph{Schubert cell} is the reduced $k$-subscheme
\[
   L^+P_I^\circ \cdot n \subset \F_{P_I^\circ}.
\]
The Schubert cell depends only on the image $w$ of $n$ in $W_{I,G} \bs \wt W_G / W_{I,G}$, and we denote the Schubert cell by $C_w$.  By Haines--Rapoport \cite{hrap08}*{Prop.\ 8}, the inclusion $N(K_0) \subset G(K_0)$ induces a bijection
\[
   W_{I,G} \bs \wt W_G / W_{I,G} 
   \isoarrow P_I^\circ(\O_{K_0}) \bs G(K_0) / P_I^\circ(\O_{K_0}),
\]
so that the Schubert cells are indexed by precisely the elements of $W_{I,G} \bs \wt W_G / W_{I,G}$ and give a stratification of all of $\F_{P_I^\circ}$.  Note that in the special case $I = \{0,\dotsc,m\}$, $P_I^\circ$ is an Iwahori subgroup, the group $W_{I,G}$ is trivial, and the Schubert cells are indexed by $\wt W_G$ itself.

For $w\in W_{I,G} \bs \wt W_G / W_{I,G}$, the associated \emph{Schubert variety $S_w$} is the reduced closure of $C_w$ in $\F_{P_I^\circ}$.  The closure relations between Schubert cells are given by the Bruhat order:  for $w$, $w' \in W_{I,G} \bs \wt W_G / W_{I,G}$, we have $S_w \subset S_{w'}$ in $\F_{P_I^\circ}$ $\iff$ $w \leq w'$ in $W_{I,G} \bs \wt W_G / W_{I,G}$.  See Richarz \cite{rich?}*{Prop.\ 2.8}.

\subsection{Embedding the geometric special fiber}\label{ss:embedding}
We now recall from \cite{paprap09}*{\s3.3} the embedding of the geometric special fiber $M_{I,k}^\naive := M_I^\naive \otimes_{\O_E} k$ of the naive local model into the affine flag variety $\F_{P_I^\circ}$.

The embedding makes use of the lattice-theoretic description of the affine flag variety in terms of $\F_I$ from \s\ref{ss:lattice_theoretic}.  First note that the $\O_K$-lattice chain $\lambda_I$ admits a trivialization in obvious analogy with the trivialization of $\Lambda_I$ specified by \eqref{disp:latt_triv}, where $\lambda_i$ replaces $\Lambda_i$, $\O_K$ replaces $\O_F$, and $u$ replaces $\pi$.  Upon identifying $\O_K \otimes_{\O_{K_0}} k \isoarrow \O_F \otimes_{\O_{F_0}} k$ by sending the $k$-basis elements $1 \otimes 1 \mapsto 1 \otimes 1$ and $u \otimes 1 \mapsto \pi \otimes 1$, our trivializations then yield an identification of chains of $k$-vector spaces
\begin{equation}\label{disp:chain_isom}
   \lambda_i \otimes_{\O_{K_0}} k \iso \Lambda_i \otimes_{\O_{F_0}} k;
\end{equation}
this is even an isomorphism of $k[u]/(u^2)$-modules, where $u$ acts on the right-hand side as multiplication by $\pi \otimes 1$.

Now let $R$ be a $k$-algebra.  Given an $R$-point $\{\F_i\}_i$ in $M_{I,k}^\naive$, for each $i$, let $L_i \subset \lambda_i \otimes_{\O_{K_0}} R[[t]]$ denote the inverse image of
\[
   \F_i \subset \Lambda_i \otimes_{\O_{F_0}} R \iso \lambda_i \otimes_{\O_{K_0}} R
\]
under the reduction-mod-$t$-map
\[
   \lambda_i \otimes_{\O_{K_0}} R[[t]]
   \surj \lambda_i \otimes_{\O_{K_0}} R.
\]
Then $L_i$ is naturally a lattice in $R((u))^{2m}$, and the functor $\lambda_I \to \L\bigl( R((u))^{2m} \bigr)$ sending $\lambda_i \mapsto L_i$ determines a point in $\F_I(R)$.  In this way we get a monomorphism
\begin{equation}\label{disp:embedding}
   \vcenter{
   \xymatrix@R=0ex{
      M_{I,k}^\naive\,  \ar@{^{(}->}[r]  &  \F_I\\
      \{\F_i\}_i  \ar@{|->}[r]  & (\lambda_i \shortmapsto L_i)
   }
   }.
\end{equation}
Since $M_{I,k}^\naive$ is proper, \eqref{disp:embedding} is a closed immersion of ind-schemes.  When $m \in I$, this embeds $M_{I,k}^\naive$ in $\F_{P_I^\circ} = \F_{P_I}$ via the isomorphism \eqref{disp:whatever}.  When $m \notin I$, $\F_{P_I^\circ}$ is a disjoint union of two copies of $\F_I$ via \eqref{disp:hrnt}, and we use \eqref{disp:embedding} to embed $M_{I,k}^\naive$ in the copy marked by $t_{\mu_{r,s}} \in \wt W_G$.  From now on we shall often identify $M_{I,k}^\naive$ with its image in $\F_{P_I^\circ}$.

The embedding $M_{I,k}^\naive \inj \F_{P_I^\circ}$ is \emph{$L^+P_I^\circ$-equivariant} with respect to the following left $L^+P_I^\circ$-actions on source and target; compare \citelist{\cite{paprap03}*{\s3}\cite{paprap05}*{\s6, \s11}\cite{paprap08}*{\s11}\cite{paprap09}*{\s3.3}\cite{sm11d}*{\s4.4}}.  On $\F_{P_I^\circ}$ we just take the natural left $L^+P_I^\circ$-action.  For $M_{I,k}^\naive$, $L^+P_I^\circ$ acts naturally on $\lambda_I \otimes_{\O_{K_0}} k$ via the tautological action of $L^+P_I$ on $\lambda_I$. The chain isomorphism \eqref{disp:chain_isom} then induces a homomorphism
\[
   L^+P_I^\circ \to \ul\Aut(\Lambda_I)_{\O_E} \otimes_{\O_{E}} k,
\]
where we recall the $\O_{E}$-group scheme $\ul\Aut(\Lambda_I)_{\O_E}$ from \s\ref{ss:latt_chain_auts}.  The $\ul\Aut(\Lambda_I)_{\O_E}$-action on $M_I^\naive$ now furnishes the desired $L^+P_I^\circ$-action on $M_{I,k}^\naive$.  In this way $L^+P_I^\circ$ also acts on $M_{I,k}^\wedge$, $M^\spin_{I,k}$, and $M_{I,k}^\loc$.

For nonempty $J \subset I \subset \{0,\dotsc,m\}$ satisfying \eqref{disp:I_cond}, we get a diagram
\[
   \xymatrix{
      M_{I,k}^\naive\, \ar@{^{(}->}[r] \ar[d]  &  \F_{P_I^\circ} \ar[d]\\
      M_{J,k}^\naive\, \ar@{^{(}->}[r]  &  \F_{P_J^\circ},
   }
\]
where the vertical arrows are the natural projections and the horizontal arrows are the embeddings just defined.  If $m \in J$ or $m \notin I$, then this diagram commutes, but if $m \in I \smallsetminus J$, then it does not, as will follow from our description of the Schubert varieties in $M_{I,k}^\naive$ and $M_{J,k}^\naive$ in the next section.  On the other hand, if we replace the geometric special fiber of the naive local model with that of the spin local model everywhere, then the diagram does commute for all nonempty $J \subset I$ satisfying \eqref{disp:I_cond}.

\section{Schubert cells in the geometric special fiber}\label{s:schub_cells}
We continue to take $I$ to be a nonempty subset of $\{0,\dotsc,m\}$ satisfying property \eqref{disp:I_cond}, and to identify $M_{I,k}^\naive$ with its image in the affine flag variety $\F_{P_I^\circ}$ under the embedding described in \s\ref{ss:embedding}.  It follows from $L^+P_I^\circ$-equivariance that the underlying topological spaces of $M^\naive_{I,k}$, $M_{I,k}^\wedge$, $M^\spin_{I,k}$, and $M_{I,k}^\loc$ are all unions of Schubert varieties in $\F_{P_I^\circ}$.  In this subsection we shall describe the Schubert varieties that are contained in each, and we shall prove Theorems \ref{st:main_thm} and \ref{st:2nd_main_thm} modulo the main combinatorial results of the next two sections.  We write $i^* := 2m+1-i$.

\subsection{The image of the geometric special fiber}\label{ss:im_spec_fib}
Let $R$ be a $k$-algebra.  It is clear from the definition of the embedding $M_{I,k}^\naive \inj \F_I$ \eqref{disp:embedding} and the various conditions in the definition of $M_I^\naive$ that the image of $M^\naive_{I,k}(R)$ in $\F_I(R)$ consists precisely of the functors $\lambda_i \mapsto L_i$ in $\F_I(R)$ such that, for all $i \in 2m\ZZ \pm I$,
\begin{enumerate}
\item\label{it:latt_incl_cond}
   $\lambda_{i,R[[t]]} \supset L_i \supset t\lambda_{i,R[[t]]}$, where $\lambda_{i,R[[t]]} := \lambda_i \otimes_{\O_{K_0}} R[[t]]$;
\item\label{it:rank_n_cond}
   the $R$-module $\lambda_{i,R[[t]]}/L_i$ is locally free of rank $2m$; and
\item\label{it:t^-1_cond}
   $\wh L_i = t^{-1} L_{-i}$.
\end{enumerate}

We remark that condition \eqref{it:t^-1_cond} is actually redundant; that is, for any functor $\lambda_I \to \L \bigl(R((u))^{2m} \bigr)$ satisfying conditions \eqref{it:afv_chain_cond}, \eqref{it:afv_periodicity_cond}, \eqref{it:afv_rank_cond}, and \eqref{it:afv_dlty_cond} from \s\ref{ss:lattice_theoretic} and conditions \eqref{it:latt_incl_cond} and \eqref{it:rank_n_cond} above, the scalar $\alpha$ appearing in \eqref{it:afv_dlty_cond} must be congruent to $t^{-1} \bmod R[[t]]^\times$.  Since we won't need to use this fact anywhere in the paper, we leave the details as an exercise to the reader.


\subsection{Naive permissibility}\label{ss:naive_perm}
Let $w \in W_{I,G} \bs \wt W_G / W_{I,G}$.  Then the condition that the Schubert variety $S_w$ be contained in $M_{I,k}^\naive$ (resp.\ $M_{I,k}^\wedge$, resp.\ $M_{I,k}^\spin$) amounts to the condition that for one, hence any, representative $\wt w$ of $w$ in $\wt W_G/W_{I,G}$, the point $\wt w \cdot \lambda_I \in \F_I(k)$ be contained in $M_{I,k}^\naive(k)$ (resp.\ $M_{I,k}^\wedge(k)$, resp.\ $M_{I,k}^\spin(k)$) and, in case $m \notin I$, that $\wt w$ mark the same copy of $\F_I$ inside $\F_{P_I^\circ}$ as $t_{\mu_{r,s}}$ does.  We shall find it convenient to express these conditions in terms of faces of type $I$, beginning in this subsection with containment in $M_{I,k}^\naive$.

\begin{defn}
Let $\wt w \in \wt W_{G}/W_{I,G}$.  We say that $\wt w$ is \emph{naively permissible} if the Schubert variety in $\F_{P_I^\circ}$ attached to $\wt w$ is contained in $M_{I,k}^\naive$.
\end{defn}

To translate the notion of naive permissibility into combinatorics, let $\wt w \in \wt W_G/W_{I,G}$, and consider the face $(\wt w \omega_i)_{i\in 2m\ZZ \pm I}$ of type $(2m,I)$ attached to $\wt w$, as in \s\ref{ss:GU_faces}.  Then it is clear from the definitions and from \s\ref{ss:im_spec_fib} that $\wt w$ is naively permissible $\iff$
\begin{enumerate}
\item\label{it:ineq_cond}
   $\omega_i \leq \wt w \omega_i \leq \omega_i + \mathbf 2$ for all $i \in 2m\ZZ \pm I$;
\item\label{it:size_cond}
   $\Sigma (\wt w\omega_i) = 2m - i$ for one, hence every, $i \in 2m\ZZ \pm I$; and
\item
   if $m \notin I$, then $\wt w$ and $t_{\mu_{r,s}}W_{I,G}$ have the same image under the composite map
   \[
      \wt W_G/W_{I,G} \xra{\kappa_G} \ZZ \oplus (\ZZ/2\ZZ)
         \xra{\pr_{\ZZ/2\ZZ}} \ZZ/2\ZZ.
   \]
\end{enumerate}
Recasting this in terms of the vector $\mu_i^{\wt w} = \wt w \omega_i - \omega_i$ \eqref{disp:something}, we obtain the following.

\begin{prop}
$\wt w \in \wt W_G/W_{I,G}$ is naively permissible $\iff$
\begin{enumerate}
\renewcommand{\theenumi}{P\arabic{enumi}}
\item\label{it:P1}
   $\mu_i^{\wt w} + (\mu_{-i}^{\wt w})^* = \mathbf 2$ and $\mathbf 0 \leq \mu_i^{\wt w} \leq \mathbf 2$ for all $i \in I$, and if $m \notin I$, then furthermore $\wt w \equiv t_{\mu_{r,s}} \bmod W_{\aff,G}$.
\end{enumerate}
\end{prop}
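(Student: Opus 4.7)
The plan is to unwind the three conditions (1)--(3) listed just before the proposition (which characterize naive permissibility in terms of the face $(\wt w\,\omega_i)_{i\in 2m\ZZ\pm I}$) and rewrite each in the language of the displacement vectors $\mu_i^{\wt w} = \wt w\,\omega_i - \omega_i$ from \eqref{disp:something}. The substitution is immediate for condition (1): $\omega_i \le \wt w\,\omega_i \le \omega_i + \mathbf 2$ translates verbatim into $\mathbf 0 \le \mu_i^{\wt w} \le \mathbf 2$. For condition (2), a quick computation gives $\Sigma\,\omega_i = -i$ (splitting $i = 2mb+c$ as in \eqref{disp:omega_i}), so $\Sigma\,\wt w\,\omega_i = 2m - i$ is equivalent to $\Sigma\,\mu_i^{\wt w} = 2m$; by \eqref{disp:mu_adjcy_cond} this is a single equation, independent of $i$.

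Next, I would verify that the family $(\wt w\,\omega_i)_i$ is automatically a $d$-face of type $(2m,I)$ for some $d$, and identify $d$. Writing $\wt w = t_\lambda \sigma$ with $\lambda \in X$ and $\sigma \in S_{2m}^*$, and using that $(\sigma v)^* = \sigma(v^*)$ for $\sigma \in S_{2m}^*$, together with the identity $\omega_i + \omega_{-i}^* = \mathbf 0$ (a brief case check on $\omega_i$), one computes
\[
   \wt w\,\omega_i + (\wt w\,\omega_{-i})^* \;=\; \lambda + \lambda^*,
\]
which is a constant $\mathbf d$ depending only on $\wt w$. Hence condition (F4) is automatic, and the duality equation $\mu_i^{\wt w} + (\mu_{-i}^{\wt w})^* = \mathbf d$ of \eqref{disp:mu_dlty_cond} holds. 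The remaining content of conditions (1) and (2) forces $d = 2$: indeed, the basic inequalities \eqref{st:basic_ineqs} combined with $\mathbf 0 \le \mu_i^{\wt w} \le \mathbf 2$ and \eqref{disp:Sigma_mu_val} pin down $nd/2 = 2m$, giving $d = 2$ as required.

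Once $d = 2$ is established, periodicity \eqref{disp:mu_per_cond} and the duality $\mu_i^{\wt w} + (\mu_{-i}^{\wt w})^* = \mathbf 2$ reduce the constraints indexed by $i \in 2m\ZZ \pm I$ to those indexed by $i \in I$, yielding the first half of (P1). Finally, for the case $m \notin I$, I would translate condition (3) using the explicit formula \eqref{disp:explicit} for the Kottwitz homomorphism. Both $\wt w$ and $t_{\mu_{r,s}}$ have first Kottwitz component $d = 2$ (the latter because $\mu_{r,s}$ has image $\bigl(2^{(s)},1^{(2m-2s)},0^{(s)}\bigr)$ in $X$), so equality in $\ZZ/2\ZZ$ is precisely equality of their full images in $\ZZ \oplus \ZZ/2\ZZ$, which is the congruence $\wt w \equiv t_{\mu_{r,s}} \bmod W_{\aff,G}$. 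This reassembles all three conditions into (P1). The whole argument is essentially bookkeeping; the only step requiring mild care is the computation showing that $\wt w\,\omega_i + (\wt w\,\omega_{-i})^*$ is independent of $i$, which is where the constraint that $\lambda \in X$ (i.e.\ that $\wt w$ lies in the unitary Iwahori-Weyl group, not just $\wt W_{2m}$) enters.
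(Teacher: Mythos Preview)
Your proposal is correct and follows essentially the same approach as the paper's proof, which is a terse version of exactly this bookkeeping: condition (2) is recognized as saying that $\wt w$ defines a $2$-face (your $\Sigma\mu_i^{\wt w}=2m$ combined with \eqref{disp:Sigma_mu_val}), and then \eqref{disp:mu_per_cond}, \eqref{disp:mu_dlty_cond}, and the explicit Kottwitz formula \eqref{disp:explicit} do the rest. One small simplification: you don't need the basic inequalities to pin down $d=2$; the identity $\Sigma\mu_i^{\wt w}=md$ from \eqref{disp:Sigma_mu_val} together with $\Sigma\mu_i^{\wt w}=2m$ from condition (2) already gives $d=2$ directly. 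Also, the computation showing $\wt w\,\omega_i+(\wt w\,\omega_{-i})^*$ is constant is not strictly needed, since the fact that $\wt w\cdot\omega_I$ is a face (hence satisfies (F4) for some $d$) is already part of the setup in \S\ref{ss:GU_faces}.
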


\begin{proof}
Note that condition \eqref{it:size_cond} above just says that $\wt w$ defines a $2$-face.  In light of the explicit description of the Kottwitz homomorphism \eqref{disp:explicit}, the implication $\Longrightarrow$ is then clear, and the implication $\Longleftarrow$ is clear from \eqref{disp:mu_per_cond} and \eqref{disp:mu_dlty_cond}.
\end{proof}

Given a naively permissible $\wt w$, the point $\wt w\cdot \lambda_I$ in $\F_I(k)$ corresponds to a point $(\F_i \subset \Lambda_i \otimes_{\O_{F_0}} k)_{i\in 2m\ZZ \pm I} \in M_{I,k}^\naive(k)$ of a rather special sort.  Namely, identifying $\Lambda_i \otimes_{\O_{F_0}} k$ with $\O_F^{2m} \otimes_{\O_{F_0}} k$ via \eqref{disp:latt_triv}, we have
\begin{enumerate}
\renewcommand{\theenumi}{S}
\item\label{it:S_fixed}
   for all $i$, $\F_i$, regarded as a subspace in $\O_F^{2m} \otimes_{\O_{F_0}} k$, is $k$-spanned by $2m$ of the elements $\epsilon_1\otimes 1,\dotsc$, $\epsilon_{2m}\otimes 1$, $\pi\epsilon_1 \otimes 1,\dotsc$, $\pi\epsilon_{2m} \otimes 1$,
\end{enumerate}
where we recall that $\epsilon_1,\dotsc,\epsilon_{2m}$ denotes the standard $\O_F$-basis in $\O_F^n$.  On the other hand, for any point $(\F_i)_i$ in $M_{I,k}^\naive(k)$, let us say that $(\F_i)_i$ is an \emph{$S$-fixed point} if it satisfies \eqref{it:S_fixed}; it is easy to check that the $S$-fixed points are exactly the points in $M_k^\naive(k)$ fixed by $L^+S(k)$.  In this way, we get a bijection between the naively permissible $\wt w\in \wt W_G/W_{I,G}$ and the $S$-fixed points in $M_{I,k}^\naive(k)$, which we denote by $\wt w \mapsto (\F_i^{\wt w})_i$.  Explicitly, for naively permissible $\wt w$, in terms of the vector $\mu_i^{\wt w}$, we have
\begin{equation}\label{disp:F_i}
   \F_i^{\wt w} = \sum_{\mu^{\wt w}_i(j) = 0} k\cdot (\epsilon_j\otimes 1)
                          +\sum_{\mu^{\wt w}_i(j) = 0,1} k \cdot (\pi\epsilon_j \otimes 1) \subset \O_F^{2m} \otimes_{\O_{F_0}} k.
\end{equation}

\subsection{Wedge-permissibility}\label{ss:wedge_spin_perm}
We continue with the notation of the previous subsection.

\begin{defn}
We say that $\wt w \in \wt W_G/W_{I,G}$ is \emph{wedge-permissible} if the Schubert variety in $\F_{P_I^\circ}$ attached to $\wt w$ is contained in $M_{I,k}^\wedge$.
\end{defn}

Of course wedge-permissibility depends only on the image of $\wt w$ in $W_{I,G} \bs \wt W_G / W_{I,G}$.
By definition, for naively permissible $\wt w$, we have
\[
   (\F^{\wt w}_i)_i \in M_{I,k}^\wedge(k) \iff
   \begin{varwidth}{\textwidth}
      \centering
      for all $i \in 2m\ZZ \pm I$, $\bigwedge_k^{s+1} (\,\pi\otimes 1 \mid \F^{\wt w}_i\,) = 0$\\
      and $\bigwedge_k^{r+1} (\,\pi\otimes 1 \mid \F^{\wt w}_i\,) = 0$,
   \end{varwidth}
\]
where we recall our fixed partition $2m = s + r$ with $s < r$.  For fixed $i$, the second equality on the right-hand side of the display is implied by the first.  We then have the following.

\begin{prop}\label{st:wedge-perm}
$\wt w \in \wt W_G/W_{I,G}$ is wedge-permissible $\iff$ $\wt w$ is naively permissible and
\begin{enumerate}
\renewcommand{\theenumi}{P2}
\item
   for all $i \in I$, $\#\{\, j\mid \mu^{\wt w}_i(j) = 0\, \} \leq s$.\qed
\end{enumerate}
\end{prop}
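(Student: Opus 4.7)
The plan is to use $L^+P_I^\circ$-equivariance to reduce wedge-permissibility to checking the wedge condition at the single $k$-point $\wt w \cdot \lambda_I$, and then to translate that condition into the counting statement via a direct calculation with the basis in \eqref{disp:F_i}. Since $M_{I,k}^\wedge$ is closed and $L^+P_I^\circ$-stable, the Schubert variety $S_w$ lies inside $M_{I,k}^\wedge$ if and only if its Schubert cell does, if and only if the $L^+S$-fixed $k$-point $(\F_i^{\wt w})_i$ representing $\wt w \cdot \lambda_I$ lies in $M_{I,k}^\wedge(k)$. Wedge-permissibility therefore forces naive permissibility (as $M_{I,k}^\wedge \subset M_{I,k}^\naive$), and, granted naive permissibility, the characterization of $M_{I,k}^\wedge(k)$ recalled just above the proposition reduces matters to
\[
   \sideset{}{_k^{s+1}}{\bigwedge}(\pi \otimes 1 \mid \F_i^{\wt w}) = 0 \quad \text{for every } i \in 2m\ZZ \pm I,
\]
the $(r+1)$-wedge vanishing being implied by this and $s \leq r$.

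Next comes the rank computation, which is the only real content. Over $k$ the operator $\pi \otimes 1$ sends $\epsilon_j \otimes 1 \mapsto \pi\epsilon_j \otimes 1$ and annihilates $\pi\epsilon_j \otimes 1$ (since $\pi^2 = \pi_0 \equiv 0$ in $k$). Reading off \eqref{disp:F_i}, the image of $\pi \otimes 1$ restricted to $\F_i^{\wt w}$ is the $k$-span of $\{\pi\epsilon_j \otimes 1\}$ taken over those $j$ with $\mu_i^{\wt w}(j) = 0$, so its $k$-dimension equals $\#\{j \mid \mu_i^{\wt w}(j) = 0\}$. Hence the displayed wedge condition at $i$ is equivalent to $\#\{j \mid \mu_i^{\wt w}(j) = 0\} \leq s$.

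Finally, to reduce the quantifier from $i \in 2m\ZZ \pm I$ to $i \in I$, periodicity \eqref{disp:mu_per_cond} takes care of the $2m\ZZ$-shifts, and for $-i$ with $i \in I$ one uses the duality \eqref{disp:mu_dlty_cond} to rewrite
\[
   \#\bigl\{j \bigm| \mu_{-i}^{\wt w}(j) = 0\bigr\} = \#\bigl\{j \bigm| \mu_i^{\wt w}(j) = 2\bigr\},
\]
which by Lemma \ref{st:c_equiv_defs} equals $\#\{j \mid \mu_i^{\wt w}(j) = 0\}$, using the bounds $\mathbf 0 \leq \mu_i^{\wt w} \leq \mathbf 2$ built into naive permissibility via \eqref{it:P1}. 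I expect no real obstacle here; all the substance sits in the one-line rank computation of the previous paragraph, and everything else is bookkeeping with the definitions of the $\mu_i^{\wt w}$.
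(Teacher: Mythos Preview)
Your argument is correct and follows exactly the paper's approach: the paper's proof says the forward implication is ``immediate from the description of $\F_i^{\wt w}$'' and the reverse follows from \eqref{disp:mu_per_cond}, \eqref{disp:mu_dlty_cond}, and \eqref{st:c_equiv_defs}, which is precisely the rank computation and the periodicity/duality reduction you spell out.
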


\begin{proof}
The implication $\Longrightarrow$ is immediate from the description of $\F_i^{\wt w}$ \eqref{disp:F_i}, and the implication $\Longleftarrow$ follows from this, \eqref{disp:mu_per_cond}, \eqref{disp:mu_dlty_cond}, and \eqref{st:c_equiv_defs}.
\end{proof}

\subsection{Spin-permissibility}\label{ss:spin-perm}
We continue with the notation of the previous subsections. We also freely use the notation of \s\ref{ss:wedge_spin_conds}, and we recall the sets $A_i$ and $B_i$ (taken with $n = 2m$) from \eqref{disp:A_i_B_i}.

\begin{defn}
We say that $\wt w \in \wt W/W_{I,G}$ is \emph{spin-permissible} if the Schubert variety in $\F_{P_I^\circ}$ attached to $\wt w$ is contained in $M_{I,k}^\spin$.
\end{defn}

As for naive and wedge-permissibility, spin-permissibility depends only on the image of $\wt w$ in $W_{I,G} \bs \wt W_G / W_{I,G}$.  Our aim in this subsection is to characterize the set of spin-permissible $\wt w$ in terms of the vector $\mu_i^{\wt w}$, for $i \in 2m\ZZ \pm I$.

Let $i\in I$, and suppose that $\wt w$ is wedge-permissible.  Continuing to identify $\O_F^{2m}$ with $\Lambda_i$ (resp.\ $\Lambda_{2m-i}$) via \eqref{disp:latt_triv}, the $2m$ elements in $\O_F^{2m}$
\begin{equation}\label{disp:F_i_spanners}
\begin{aligned}
   \epsilon_j \quad &\text{for}\quad \mu^{\wt w}_i(j) = 0  & &(\text{resp.}\ \mu_{2m-i}^{\wt w}(j) = 0); \quad \text{and}\\
   \pi\epsilon_j \quad &\text{for}\quad \mu^{\wt w}_i(j) = 0,1 & &(\text{resp.}\ \mu_{2m-i}^{\wt w}(j) = 0,1)
\end{aligned}
\end{equation}
span an $\O_{F_0}$-submodule whose image under the reduction-mod-$\pi_0$ map
\[
   \O_F^{2m}  \surj \O_F^{2m} \otimes_{\O_{F_0}} k
\]
is $\F^{\wt w}_i$ (resp.\ $\F_{2m-i}^{\wt w}$).  Take the wedge product (in any order) of the elements \eqref{disp:F_i_spanners} in $\bigwedge_{\O_{F_0}}^{2m} \O_F^{2m}$, and let $g_i \in \bigwedge_{\O_{F_0}}^{2m} \Lambda_i$ (resp.\ $g_{2m-i} \in \bigwedge_{\O_{F_0}}^{2m} \Lambda_{2m-i}$) denote the image of this element under the isomorphism $\bigwedge_{\O_{F_0}}^{2m} \O_F^{2m} \isoarrow \bigwedge_{\O_{F_0}}^{2m} \Lambda_i$ (resp.\ $\bigwedge_{\O_{F_0}}^{2m} \O_F^{2m} \isoarrow \bigwedge_{\O_{F_0}}^{2m} \Lambda_{2m-i}$) induced by \eqref{disp:latt_triv}.  Then, up to sign, and in terms of the notation \eqref{disp:f_E}, $g_i$ equals
\[
   \pi_0^{a_i} f_{E_i} \in \sideset{}{_{F_0}^{2m}}{\bigwedge} V,
\]
where $E_i \subset \{1,\dotsc,4m\}$ is the subset of cardinality $2m$
{\allowdisplaybreaks
\begin{equation}\label{disp:E_i}
\begin{aligned}
   E_i := {}&\bigl\{\, j \in \{1,\dotsc,i\} \bigm| \mu^{\wt w}_i(j) = 0 \,\bigr\}\\
                   &\quad\amalg \bigl\{\, j \in \{i+1,\dotsc,m\} \bigm| \mu^{\wt w}_i(j) = 0,1 \,\bigr\}\\
                   &\quad\amalg \bigl\{\, j \in \{m+1,\dotsc,2m\} \bigm| \mu^{\wt w}_i(j) = 0 \,\bigr\}\\
                   &\quad\amalg\bigl\{\, 2m+j \in \{2m+1,\dotsc,2m+i\} \bigm| \mu^{\wt w}_i(j) = 0,1 \,\bigr\}\\
                   &\quad\amalg\bigl\{\, 2m+j \in \{2m+i+1,\dotsc,3m\} \bigm| \mu^{\wt w}_i(j) = 0 \,\bigr\}\\
                   &\quad\amalg\bigl\{\, 2m+j \in \{3m+1,\dotsc,4m\} \bigm| \mu^{\wt w}_i(j) = 0,1 \,\bigr\},
\end{aligned}
\end{equation}
}%
and
\[
   a_i := \#\bigl(E_i \cap \{i+1,\dotsc,m\}\bigr) = \#\bigl\{\, j \in \{i+1,\dotsc,m\} \bigm| \mu^{\wt w}_i(j) = 0,1 \,\bigr\};
\]
and again up to sign, $g_{2m-i}$ equals
\[
   \pi_0^{a_{2m-i}} f_{E_{2m-i}} \in \sideset{}{_{F_0}^{2m}}{\bigwedge} V,
\]
where $E_{2m-i} \subset \{1,\dotsc,4m\}$ is the subset of cardinality $2m$
{\allowdisplaybreaks
\begin{align*}
   E_{2m-i} := {}&\bigl\{\, j \in \{1,\dotsc,m\} \bigm| \mu^{\wt w}_{2m-i}(j) = 0 \,\bigr\}\\
                   &\quad\amalg \bigl\{\, j \in \{m+1,\dotsc,2m-i\} \bigm| \mu^{\wt w}_{2m-i}(j) = 0,1 \,\bigr\}\\
                   &\quad\amalg \bigl\{\, j \in \{i^*,\dotsc,2m\} \bigm| \mu^{\wt w}_{2m-i}(j) = 0 \,\bigr\}\\
                   &\quad\amalg\bigl\{\, 2m+j \in \{2m+1,\dotsc,3m\} \bigm| \mu^{\wt w}_{2m-i}(j) = 0,1 \,\bigr\}\\
                   &\quad\amalg\bigl\{\, 2m+j \in \{3m+1,\dotsc,4m-i\} \bigm| \mu^{\wt w}_{2m-i}(j) = 0 \,\bigr\}\\
                   &\quad\amalg\bigl\{\, 2m+j \in \{2m+i^*,\dotsc,4m\} \bigm| \mu^{\wt w}_{2m-i}(j) = 0,1 \,\bigr\},
\end{align*}
}%
and
\begin{align*}
   a_{2m-i} &:= -\#\bigl(E_{2m-i} \cap \{3m+1,\dotsc,4m-i\}\bigr)\\
            &\phantom{:}= -\#\bigl\{\, j \in \{m+1,\dotsc,2m-i\} \bigm| \mu^{\wt w}_{2m-i}(j) = 0 \,\bigr\}.
\end{align*}

To study the spin condition for $\F^{\wt w}_i$, we shall also need the set $E_i^\perp = (4m+1-E_i)^c$, which is given by
{\allowdisplaybreaks
\begin{align*}
   E_i^\perp = {}&\bigl\{\, j \in \{1,\dotsc,m\} \bigm| \mu^{\wt w}_i(j^*) \neq 0,1 \,\bigr\}\\
          &\quad\amalg \bigl\{\, j \in \{m+1,\dotsc,2m-i\} \bigm| \mu^{\wt w}_i(j^*) \neq 0 \,\bigr\}\\
          &\quad\amalg \bigl\{\, j \in \{i^*,\dotsc,2m\} \bigm| \mu^{\wt w}_i(j^*) \neq 0,1 \,\bigr\}\\
          &\quad\amalg\bigl\{\, 2m+j \in \{2m+1,\dotsc,3m\} \bigm| \mu^{\wt w}_i(j^*) \neq 0 \,\bigr\}\\
          &\quad\amalg\bigl\{\, 2m+j \in \{3m+1,\dotsc,4m-i\} \bigm| \mu^{\wt w}_i(j^*) \neq 0,1 \,\bigr\}\\
          &\quad\amalg\bigl\{\, 2m+j \in \{2m+i^*,\dotsc,4m\} \bigm| \mu^{\wt w}_i(j^*) \neq 0 \,\bigr\}.
\end{align*}
}%
We then have the following.

\begin{lem}\label{st:laundry_list}\hfill
\begin{enumerate}
\renewcommand{\theenumi}{\roman{enumi}}
\item\label{it:E_i^perp}
   $E_i^\perp = E_{2m-i}$.
\item\label{it:a_i^perp}
   Let $a_i^\perp$ (resp.\ $a_{2m-i}^\perp$) denote the unique integer such that
   \[
      \pi_0^{a_i^\perp} f_{E_i^\perp} \in \sideset{}{_{\O_{F_0}}^{2m}}\bigwedge \Lambda_i \smallsetminus \pi_0\sideset{}{_{\O_{F_0}}^{2m}}\bigwedge \Lambda_i
   \]
   (resp.
   \[
      \pi_0^{a_{2m-i}^\perp} f_{E_{2m-i}^\perp} \in \sideset{}{_{\O_{F_0}}^{2m}}\bigwedge \Lambda_{2m-i} \smallsetminus \pi_0\sideset{}{_{\O_{F_0}}^{2m}}\bigwedge \Lambda_{2m-i} \text{).}
   \]
   Then
   \begin{align*}
      a_i^\perp &= \#(E_i^\perp \cap \{i+1,\dotsc,m\})\\
                &= \#\bigl\{\, j \in \{i+1,\dotsc,m\} \bigm| \mu^{\wt w}_i(j^*) \neq 0,1 \,\bigr\}
   \end{align*}
   and
   \begin{align*}
      a_{2m-i}^\perp &= -\#\bigl(E_{2m-i}^\perp \cap \{3m+1,\dotsc,4m-i\}\bigr)\\
               &= -\#\bigl\{\, j \in \{m+1,\dotsc,2m-i\} \bigm| \mu^{\wt w}_{2m-i}(j^*) \neq 0 \,\bigr\}.
   \end{align*}
\item\label{it:a_i_ineq}
   $a_i \geq a_i^\perp$ and $a_{2m-i} \geq a_{2m-i}^\perp$.
\item\label{it:a_i-a_2m-i}
   $a_i - a_{2m-i} = a_i^\perp - a_{2m-i}^\perp = m - i$.
\end{enumerate}
\end{lem}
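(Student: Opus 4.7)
The plan is to deduce each part by combining the duality identity
\[
   \mu_i^{\wt w}(j) + \mu_{2m-i}^{\wt w}(j^*) = 2,
\]
which holds for every $j$ by naive permissibility of $\wt w$ (with $d = 2$), the periodicity \eqref{disp:mu_per_cond}, and the duality condition \eqref{disp:mu_dlty_cond}, together with the basic inequalities \eqref{st:basic_ineqs} and the explicit piecewise descriptions of $E_i^\perp$ and $E_{2m-i}$ recorded above.

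Part (i) is definition-chasing: partition $\{1,\dotsc,4m\}$ into the six intervals appearing in the defining formulas, and on each one the duality identity translates the condition on $\mu_i^{\wt w}(j^*)$ defining $E_i^\perp$ into the condition on $\mu_{2m-i}^{\wt w}(j)$ defining $E_{2m-i}$; for example, $\mu_i^{\wt w}(j^*) \neq 0,1$ is equivalent to $\mu_i^{\wt w}(j^*) = 2$, i.e.\ to $\mu_{2m-i}^{\wt w}(j) = 0$.  For part (ii), the key observation is that for $0 \leq i \leq m$ the lattice $\Lambda_i$ admits the $\O_{F_0}$-basis $\pi^{-1}e_j, e_j$ for $j \leq i$ together with $e_j, \pi e_j$ for $j > i$ (and analogously for $\Lambda_{2m-i}$, with threshold $2m-i$ in place of $i$).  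A direct inspection of each $f_k$ ($1 \leq k \leq 4m$) against this basis shows that for $\Lambda_i$ the only $f_k$ of nonzero $\pi_0$-valuation is $f_k = -\pi^{-1}e_k$ for $k \in \{i+1,\dotsc,m\}$, with valuation $-1$; dually, for $\Lambda_{2m-i}$ the only $f_k$ of nonzero $\pi_0$-valuation is $f_k = \pi e_{k-2m}$ for $k \in \{3m+1,\dotsc,4m-i\}$, with valuation $+1$.  Summing these valuations over the wedge factors of $f_{E_i^\perp}$ and $f_{E_{2m-i}^\perp}$ then yields the two formulas.

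Parts (iii) and (iv) follow by simple counting.  For (iii), given $j \in \{i+1,\dotsc,m\}$ one has $j^* \in B_i$, and the basic inequality $\mu_i^{\wt w}(j) + \mu_i^{\wt w}(j^*) \leq 2$ combined with $\mu_i^{\wt w} \geq \mathbf 0$ forces $\mu_i^{\wt w}(j^*) = 2 \Longrightarrow \mu_i^{\wt w}(j) = 0$; this yields the inclusion $E_i^\perp \cap \{i+1,\dotsc,m\} \subseteq E_i \cap \{i+1,\dotsc,m\}$, hence $a_i \geq a_i^\perp$, and the same reasoning (after using (i) and the involutivity of $E \mapsto E^\perp$ to identify $E_{2m-i}^\perp = E_i$) gives $a_{2m-i} \geq a_{2m-i}^\perp$.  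For (iv), the duality identity transforms $\#\{j \in \{m+1,\dotsc,2m-i\} \mid \mu_{2m-i}^{\wt w}(j) = 0\}$ into $\#\{k \in \{i+1,\dotsc,m\} \mid \mu_i^{\wt w}(k) = 2\}$ via $k = j^*$, so substitution into the formulas of (ii) rewrites $a_i - a_{2m-i}$ as
\[
   \#\{j \in \{i+1,\dotsc,m\} \mid \mu_i^{\wt w}(j) \in \{0,1\}\}
     + \#\{k \in \{i+1,\dotsc,m\} \mid \mu_i^{\wt w}(k) = 2\},
\]
which equals $m - i$ since $\mu_i^{\wt w}$ partitions $\{i+1,\dotsc,m\}$ according to its values in $\{0,1,2\}$; the analogous manipulation gives $a_i^\perp - a_{2m-i}^\perp = m - i$.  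The one real risk of slipping is the lattice-basis bookkeeping of (ii); once that is in hand, everything else follows at once.
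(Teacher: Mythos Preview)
Your proof is correct and follows essentially the same route as the paper's. The paper dispatches (i) by comparing the explicit descriptions via $\mu_i^{\wt w} + (\mu_{2m-i}^{\wt w})^* = \mathbf 2$ and naive permissibility, declares (ii) ``clear'' (your lattice-basis bookkeeping is exactly the computation being suppressed), handles (iii) by citing naive permissibility and the basic inequalities, and for (iv) rewrites $a_{2m-i}$ as $-\#\{j \in \{i+1,\dotsc,m\} \mid \mu_i^{\wt w}(j) = 2\}$ via duality and then adds, precisely as you do.
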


\begin{proof}
Assertion \eqref{it:E_i^perp} follows immediately from our explicit expressions for $E_{2m-i}$ and $E_i^\perp$, using that $\mu_i^{\wt w} + (\mu_{2m-i}^{\wt w})^* = \mathbf 2$ and $\mathbf 0 \leq \mu_i^{\wt w} \leq \mathbf 2$ by naive permissibility.  Assertion \eqref{it:a_i^perp} is clear.  Assertion \eqref{it:a_i_ineq} follows from our explicit expressions for the quantities involved, naive permissibility, and the basic inequalities \eqref{st:basic_ineqs}.  To prove \eqref{it:a_i-a_2m-i}, by naive permissibility we have
\[
   a_{2m-i} = -\#\bigl\{\, j \in \{i+1,\dotsc,m\} \bigm| \mu_i^{\wt w}(j) = 2\,\bigr\}.
\]
Hence
\[
   a_i - a_{2m-i} = \#\{i+1,\dotsc,m\} = m - i.
\]
The proof that $a_i^\perp - a_{2m-i}^\perp = m-i$ is similar.
\end{proof}

As we shall see in a moment, one of the upshots of \eqref{st:laundry_list} is that $\F_i^{\wt w}$ satisfies the spin condition $\iff$ $\F_{2m-i}^{\wt w}$ does.  Before explaining this, let us first prove another lemma.

\begin{lem}\label{st:E_equiv_conds}
Consider the following conditions.
\begin{enumerate}
\renewcommand{\theenumi}{\roman{enumi}}
\item\label{it:E_i=E_i^perp}
   $E_i = E_i^\perp$.
\item\label{it:a_i=a_i^perp}
   $a_i = a_i^\perp$.
\item\label{it:mu(j)=0,2}
   $\mu_i^{\wt w}(j) \in \{0,2\}$ for all $j \in B_i$.
\item\label{it:mu_self_dual}
   $\mu_i^{\wt w}$ is self-dual.
\end{enumerate}
Then \eqref{it:E_i=E_i^perp} $\iff$ \eqref{it:a_i=a_i^perp} $\iff$ \eqref{it:mu(j)=0,2} $\implies$ \eqref{it:mu_self_dual}.
\end{lem}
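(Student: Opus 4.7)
The plan is to prove (iii)$\Longrightarrow$(iv) directly, and to close the loop (i)$\Longrightarrow$(ii)$\Longrightarrow$(iii)$\Longrightarrow$(i).  Of these, (i)$\Longrightarrow$(ii) is immediate from \eqref{st:laundry_list}\eqref{it:a_i^perp}, since $a_i = \#(E_i \cap \{i+1,\dotsc,m\})$ and $a_i^\perp = \#(E_i^\perp \cap \{i+1,\dotsc,m\})$.

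For (iii)$\Longrightarrow$(iv): since $\wt w$ is naively permissible, $\mathbf v$ is a $2$-face, so the basic inequalities \eqref{st:basic_ineqs} give $1 \leq \mu_i^{\wt w}(j) + \mu_i^{\wt w}(j^*) \leq 2$ for all $j \in B_i$.  As $B_i$ is stable under $j\mapsto j^*$, hypothesis (iii) gives $\mu_i^{\wt w}(j), \mu_i^{\wt w}(j^*) \in \{0,2\}$, forcing the sum to equal $2$.  Lemma \eqref{st:self-dual_mu} then delivers self-duality.

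For (ii)$\Longrightarrow$(iii): the basic inequalities restrict each pair $(\mu_i^{\wt w}(j), \mu_i^{\wt w}(j^*))$ with $j \in \{i+1,\dotsc,m\}\subset B_i$ to the five possibilities $(0,1), (0,2), (1,0), (1,1), (2,0)$.  Unwinding \eqref{st:laundry_list}\eqref{it:a_i^perp}, $a_i$ counts those $j$ with first coordinate in $\{0,1\}$, while $a_i^\perp$ counts those with second coordinate equal to $2$.  Hence $a_i - a_i^\perp$ equals the number of $j \in \{i+1,\dotsc,m\}$ giving one of the pairs $(0,1), (1,0), (1,1)$, and $a_i = a_i^\perp$ iff every such pair lies in $\{(0,2),(2,0)\}$, i.e.\ iff $\mu_i^{\wt w}(j)\in\{0,2\}$ for all $j \in \{i+1,\dotsc,m\}$.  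Applying $j\mapsto j^*$ yields the corresponding conclusion on $\{m+1,\dotsc,2m-i\}$, covering all of $B_i$.

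For (iii)$\Longrightarrow$(i): with self-duality (iv) in hand, I would rewrite every occurrence of $\mu_i^{\wt w}(j^*)$ in the six-part description of $E_i^\perp$ via $\mu_i^{\wt w}(j^*) = 2 - \mu_i^{\wt w}(j)$.  Using the common refinement $\{1,\dotsc,i\}\sqcup\{i+1,\dotsc,m\}\sqcup\{m+1,\dotsc,2m-i\}\sqcup\{i^*,\dotsc,2m\}$, one then compares $E_i$ with $E_i^\perp$ range-by-range.  Over the outer ranges (contained in $A_i$) self-duality alone forces equality, while over the two middle ranges the discrepancy between the $\mu=0$ and $\mu\in\{0,1\}$ clauses in $E_i$ versus $E_i^\perp$ is exactly annihilated by the hypothesis $\mu_i^{\wt w}(j)\in\{0,2\}$.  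The same analysis applies to the six lower pieces indexed by $2m+j$.  The main obstacle in this argument is not conceptual but simply this tedious piece-by-piece unpacking; there is no deeper combinatorial difficulty.
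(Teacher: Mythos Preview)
Your proof is correct and follows essentially the same approach as the paper: both use \eqref{st:self-dual_mu} for (iii)$\Rightarrow$(iv), both derive (ii)$\Leftrightarrow$(iii) by analyzing which pairs $(\mu_i^{\wt w}(j),\mu_i^{\wt w}(j^*))$ can occur for $j\in\{i+1,\dotsc,m\}$, and both verify (iii)$\Rightarrow$(i) by a range-by-range comparison using self-duality. The only cosmetic difference is that the paper checks (i)$\Leftrightarrow$(iii) directly (supplying the reverse direction via the contrapositive: if some $\mu_i^{\wt w}(j)=1$ with $j\in B_i$ then $l,\,2m+l^*\in E_i$ for $l=\min\{j,j^*\}$), whereas your cycle (i)$\Rightarrow$(ii)$\Rightarrow$(iii)$\Rightarrow$(i) sidesteps that step entirely.
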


\begin{proof}
The implication \eqref{it:mu(j)=0,2} $\implies$ \eqref{it:mu_self_dual} is given by \eqref{st:self-dual_mu}.
   
We now show \eqref{it:a_i=a_i^perp} $\iff$ \eqref{it:mu(j)=0,2}.  Suppose that $a_i = a_i^\perp$, or in other words, since $\wt w$ is naively permissible, that
\[
   \#\bigl\{\, j \in \{i+1,\dotsc,m\} \bigm| \mu_i^{\wt w}(j) = 0,1 \,\bigl\}
   =
   \#\bigl\{\, j \in \{i+1,\dotsc,m\} \bigm| \mu_i^{\wt w}(j^*) = 2 \,\bigl\}.
\]
For $j$ contained in the set on the right-hand side, the basic inequalities \eqref{st:basic_ineqs} imply that $\mu_i^{\wt w}(j) = 0$.  Hence $j$ is contained in the set on the left-hand side.  Since the two sets have the same cardinality, they must then be equal.  The basic inequalities then imply \eqref{it:mu(j)=0,2}.  Conversely, it is clear that \eqref{it:mu(j)=0,2} implies \eqref{it:a_i=a_i^perp}.

We finally show $\eqref{it:E_i=E_i^perp}$ $\iff$ $\eqref{it:mu(j)=0,2}$.  Suppose that \eqref{it:mu(j)=0,2} holds.  If $j \in A_i$, then
\begin{alignat*}{2}
   j \in E_i
      &\iff \mu_i^{\wt w}(j) = 0\\
      &\iff \mu_i^{\wt w}(j^*) = 2 & & \quad \text{(by \eqref{it:mu_self_dual})}\\
      &\iff 2m+j^* \notin E_i.
\end{alignat*}
If $j \in B_i$, then
\begin{alignat*}{2}
   j \in E_i
      &\iff \mu_i^{\wt w}(j) = 0 & &\quad \text{(by \eqref{it:mu(j)=0,2})}\\
      &\iff \mu_i^{\wt w}(j^*) = 2 & &\quad \text{(by \eqref{it:mu_self_dual})}\\
      &\iff 2m+j^* \notin E_i & &\quad \text{(by \eqref{it:mu(j)=0,2}).}
\end{alignat*}
Hence $E_i = E_i^\perp$.  Conversely, suppose that there exists $j \in B_i$ such that $\mu_i^{\wt w}(j) = 1$.  Then $\mu_i^{\wt w}(j^*) \in \{0,1\}$ by the basic inequalities.  Let $l := \min\{j,j^*\}$.  Then $l$, $2m+l^* \in E_i$.  Hence $E_i \neq E_i^\perp$.
\end{proof}

We are now ready to translate the spin condition for $\F_i^{\wt w}$ and $\F_{2m-i}^{\wt w}$ into combinatorics.  Following \eqref{st:laundry_list}\eqref{it:a_i_ineq}, we consider separately the cases $a_i > a_i^\perp$ and $a_i = a_i^\perp$.

First suppose that $a_i > a_i^\perp$, or equivalently, by \eqref{st:laundry_list}\eqref{it:a_i-a_2m-i}, that $a_{2m-i} > a_{2m-i}^\perp$.  Consider the elements
\[
   h_i := \pi_0^{a_i} f_{E_i} + (-1)^s\pi_0^{a_i - a_i^\perp} \pi_0^{a_i^\perp} \sgn(\sigma_{E_i}) f_{{E_i^\perp}}
   \in\Bigl(\sideset{}{_{\O_F}^{2m}}{\bigwedge} \Lambda_i\Bigr)_{(-1)^s}
\]
and
\begin{multline*}
   h_{2m-i} := \pi_0^{a_{2m-i}} f_{E_{2m-i}} + (-1)^s\pi_0^{a_{2m-i} - a_{2m-i}^\perp} \pi_0^{a_{2m-i}^\perp} \sgn(\sigma_{E_{2m-i}}) f_{{E_{2m-i}^\perp}}\\
   \in\Bigl(\sideset{}{_{\O_F}^{2m}}{\bigwedge} \Lambda_{2m-i}\Bigr)_{(-1)^s}.
\end{multline*}
Then the image of $h_i$ in $\bigwedge_{k}^{2m} (\Lambda_i \otimes_{\O_{F_0}} k)$ spans $\bigwedge_k^{2m} \F_i^{\wt w}$, and analogously with $i$ replaced by $2m-i$.  Hence $\F_i^{\wt w}$ and $\F_{2m-i}^{\wt w}$ satisfy the spin condition.

Now suppose that $a_i = a_i^\perp$, or equivalently by \eqref{st:laundry_list}\eqref{it:a_i-a_2m-i}, that $a_{2m-i} = a_{2m-i}^\perp$.  Then $E_i = E_i^\perp = E_{2m-i} = E_{2m-i}^\perp$ by \eqref{st:laundry_list} and \eqref{st:E_equiv_conds}.  Thus we see easily that $\F_i^{\wt w}$ and $\F_{2m-i}^{\wt w}$ simultaneously satisfy or fail the spin condition according as
\[
   f_{E_i} \in \Bigl(\sideset{}{_{F_0}^{2m}}\bigwedge V\Bigr)_{(-1)^s}
   \quad\text{or}\quad
   f_{E_i} \in \Bigl(\sideset{}{_{F_0}^{2m}}\bigwedge V\Bigr)_{(-1)^{s+1}},
\]
or equivalently, according as
\[
   \sgn(\sigma_{E_i}) = (-1)^s
   \quad\text{or}\quad
   \sgn(\sigma_{E_i}) = (-1)^{s+1}.
\]

\begin{lem}
Let $E \subset \{1,\dotsc,4m\}$ be a subset of cardinality $2m$ such that $E = E^\perp$.  Then $\sigma_E \in S_{4m}^*$, that is, $\sigma_E(4m+1-j) = 4m + 1 - \sigma_E(j)$ for all $j \in \{1,\dotsc,4m\}$.
\end{lem}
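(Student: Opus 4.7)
The plan is to unpack the hypothesis $E = E^\perp$ directly and then do the bookkeeping. By definition $E^\perp = (4m+1-E)^c$, so $E = E^\perp$ is equivalent to $E^c = 4m+1-E$, i.e., the order-reversing involution $j \mapsto 4m+1-j$ on $\{1,\dotsc,4m\}$ restricts to a bijection $E \to E^c$. Writing $E = \{i_1 < \dotsb < i_{2m}\}$ and $E^c = \{k_1 < \dotsb < k_{2m}\}$ in increasing order, this order-reversing bijection forces the explicit relation
\[
   k_\ell = 4m + 1 - i_{2m+1-\ell}
   \quad\text{for all}\quad
   \ell \in \{1,\dotsc,2m\}.
\]

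With this relation in hand, the desired identity will follow by direct computation from the definition of $\sigma_E$, which is $\sigma_E(\ell) = i_\ell$ and $\sigma_E(2m+\ell) = k_\ell$ for $\ell \in \{1,\dotsc,2m\}$. Indeed, for $j \in \{1,\dotsc,2m\}$ one computes
\[
   \sigma_E(4m+1-j) = \sigma_E\bigl(2m + (2m+1-j)\bigr) = k_{2m+1-j} = 4m+1-i_j = 4m+1-\sigma_E(j),
\]
using the boxed relation in the third equality. The case $j \in \{2m+1,\dotsc,4m\}$ then follows formally by applying the case already established to $j' := 4m+1-j \in \{1,\dotsc,2m\}$ and rearranging.

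I do not anticipate any obstacle: the entire content is combinatorial bookkeeping, and the only point that requires care is the initial translation of the self-duality condition $E = E^\perp$ into the statement that $j \mapsto 4m+1-j$ induces an order-reversing bijection between $E$ and $E^c$.
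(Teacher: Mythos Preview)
Your proof is correct and is exactly the unpacking of the definition that the paper has in mind; the paper's own proof consists of the single sentence ``Obvious from the definition of $\sigma_E$,'' and your argument spells out precisely this obviousness.
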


\begin{proof}
Obvious from the definition of $\sigma_E$.
\end{proof}

It follows from the lemma that $\sigma_{E_i}$ is even or odd according as the number of elements $j \in \{1,\dotsc,2m\}$ such that $\sigma_{E_i}(j) > 2m$ is even or odd.  By definition of $\sigma_{E_i}$, this is in turn equal to the parity of the cardinality of $E_i \cap \{2m+1,\dotsc,4m\}$.  By \eqref{st:E_equiv_conds}, if $\mu_i^{\wt w}(j) = 1$, then $j \in A_i$.  So we see from our explicit description of $E_i$ \eqref{disp:E_i} that
\[
   \#(E_i \cap\{2m+1,\dotsc,4m\}) = \#\bigl\{\, j \bigm| \mu_i^{\wt w}(j) = 0 \,\bigr\}
                                  + \#\bigl\{\, j \bigm| \mu_i^{\wt w}(j) = 1\,\bigr\}.
\]
Since $\mu_i^{\wt w}$ is self-dual, the second term on the right-hand side is even.  We conclude that under our assumption $a_i = a_i^\perp$,
\[
   \F_i^{\wt w}\ \text{and}\ \F_{2m-i}^{\wt w}\ \text{satisfy the spin condition}
   \iff
   \#\{\, j \mid \mu_i^{\wt w}(j) = 0 \,\} \equiv s \bmod 2.
\]

Rephrasing slightly, we have now proved the following.

\begin{prop}\label{st:wedge_iff_spin-perm}
$\wt w \in \wt W_G/W_{I,G}$ is spin-permissible $\iff$  $\wt w$ is wedge-permissible and
\begin{enumerate}
\renewcommand{\theenumi}{P3}
\item\label{it:P3}
   for all $i \in I$, if $\mu_i^{\wt w}$ is self-dual, then $\#\{\, j \mid \mu_i^{\wt w}(j) = 0\,\} \equiv s \bmod 2$ or there exists $j \in B_i$ such that $\mu_i^{\wt w}(j) = 1$.\qed
\end{enumerate}
\end{prop}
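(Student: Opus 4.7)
The strategy is to reduce spin-permissibility to an $S$-fixed-point check, compute the top exterior power of each $\F_i^{\wt w}$ explicitly in the split basis $f_1,\ldots,f_{4m}$, and translate the spin condition into a combinatorial condition on $\mu_i^{\wt w}$. Since $M_{I,k}^{\spin}\subset M_{I,k}^{\wedge}$, spin-permissibility already implies wedge-permissibility; so I assume $\wt w$ is wedge-permissible and check the spin condition on the $S$-fixed point $(\F_i^{\wt w})_i$. By periodicity \eqref{it:period_cond}, this reduces to checking it at each pair $\{i,2m-i\}$ with $i\in I$. Using the formula \eqref{disp:F_i} together with the trivialization \eqref{disp:latt_triv} and the split basis $f_1,\dotsc,f_{4m}$, the line $\bigwedge^{2m}_k\F_i^{\wt w}$ is the image of $\pm\pi_0^{a_i}f_{E_i}\in\bigwedge^{2m}_{\O_{F_0}}\Lambda_i$ for the explicit $E_i$ and $a_i$ recorded in \eqref{disp:E_i}, and analogously at $2m-i$.

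The key structural inputs are then Lemma \ref{st:laundry_list} (which gives $E_i^\perp=E_{2m-i}$, the inequalities $a_i\ge a_i^\perp$ and $a_{2m-i}\ge a_{2m-i}^\perp$, and the common difference $a_i-a_{2m-i}=a_i^\perp-a_{2m-i}^\perp=m-i$) and Lemma \ref{st:E_equiv_conds}, which identifies the dichotomy $a_i>a_i^\perp$ versus $a_i=a_i^\perp$ with the combinatorial dichotomy ``$\mu_i^{\wt w}$ is not self-dual, or $\mu_i^{\wt w}(j)=1$ for some $j\in B_i$'' versus its negation. Splitting into these two cases is the natural way to proceed.

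In the case $a_i>a_i^\perp$ I would produce the explicit lift
\[
h_i := \pi_0^{a_i}f_{E_i}+(-1)^s\pi_0^{a_i-a_i^\perp}\cdot\pi_0^{a_i^\perp}\sgn(\sigma_{E_i})f_{E_i^\perp}
\]
to the $(-1)^s$-eigenspace of $a$, and its counterpart $h_{2m-i}$: because the exponent $a_i-a_i^\perp$ is strictly positive, the second summand vanishes mod $\pi_0$, so $h_i$ reduces to a generator of $\bigwedge^{2m}_k\F_i^{\wt w}$, and the spin condition at $i$ and at $2m-i$ holds automatically. In the case $a_i=a_i^\perp$ one has $E_i=E_i^\perp=E_{2m-i}=E_{2m-i}^\perp$, so the spin condition at $i$ and $2m-i$ reduces to the pure sign condition $\sgn(\sigma_{E_i})=(-1)^s$.

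The main technical step is the resulting sign calculation. Since $E_i=E_i^\perp$ forces $\sigma_{E_i}\in S_{4m}^*$, the parity of $\sgn(\sigma_{E_i})$ equals that of $\#(E_i\cap\{2m+1,\dotsc,4m\})$. Reading off the explicit form \eqref{disp:E_i} of $E_i$ and using that in this case $\mu_i^{\wt w}$ is self-dual with $\mu_i^{\wt w}(j)\in\{0,2\}$ for $j\in B_i$, one finds that this cardinality equals $\#\{j\mid\mu_i^{\wt w}(j)=0\}+\#\{j\mid\mu_i^{\wt w}(j)=1\}$, and the second count is even by self-duality. Hence $\sgn(\sigma_{E_i})=(-1)^s$ if and only if $\#\{j\mid\mu_i^{\wt w}(j)=0\}\equiv s\bmod 2$. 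Assembling the two cases yields precisely condition \eqref{it:P3}. The bookkeeping of signs in the self-dual case is the main obstacle; everything else is a direct exploitation of \ref{st:laundry_list} and \ref{st:E_equiv_conds}.
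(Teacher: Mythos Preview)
Your proposal is correct and follows essentially the same approach as the paper: the same reduction to the $S$-fixed point, the same case split on $a_i>a_i^\perp$ versus $a_i=a_i^\perp$ via Lemmas~\ref{st:laundry_list} and~\ref{st:E_equiv_conds}, the same lift $h_i$ in the first case, and the same sign computation via $\sigma_{E_i}\in S_{4m}^*$ in the second. The only cosmetic point is that your phrasing ``$\mu_i^{\wt w}$ is not self-dual, or $\mu_i^{\wt w}(j)=1$ for some $j\in B_i$'' is slightly redundant, since by \ref{st:E_equiv_conds} non-self-duality already forces such a $j$; this does not affect the argument.
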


\begin{rk}
Suppose that $\wt w$ is spin-permissible and $m \in I$.  Then $\mu_m^{\wt w}$ is self-dual and $B_m = \emptyset$.  Hence \eqref{it:P3} and the condition $\mu_m^{\wt w} + (\mu_m^{\wt w})^* = \mathbf 2$ in \eqref{it:P1} imply that $\wt w \equiv t_{\mu_{r,s}} \bmod W_{\aff, G}$.  On the other hand, it really is necessary to \emph{impose} the condition $\wt w \equiv t_{\mu_{r,s}} \bmod W_{\aff, G}$ when $m \notin I$.  Indeed, for such $I$ and any $\wt w \in \wt W_G/W_{I,G}$, we have $\mu_i^{\wt w} = \mu_i^{\wt w\tau}$ for all $i \in 2m \pm I$, where $\tau$ is the matrix \eqref{disp:tau}, regarded as an element of the image of $S_{2m}^*$ in $\wt W_G/W_{I,G}$.  Here $\tau$ has nontrivial Kottwitz invariant, so that $\wt w \not\equiv \wt w\tau \bmod W_{\aff,G}$.
\end{rk}

\begin{rk}
Suppose that $\wt w$ is wedge-permissible and $0 \in I$.  Then we claim that $\F_0^{\wt w}$ automatically satisfies the spin condition.  Indeed, $B_0 = \{1,\dotsc,2m\}$.  So if $\mu_0^{\wt w}(j) \neq 1$ for all $j \in B_0$, then by wedge-permissibility we must have $\mu_0^{\wt w}(j) \in \{0,2\}$ for all $j$ and $s = m$.  Hence $\#\{\, j \mid \mu_0^{\wt w}(j) = 0\,\} = s$ on the nose.  For $m \geq 2$, it follows that $M_{\{0\}}^\wedge$ and $M_{\{0\}}^\spin$ coincide as topological spaces.  Once we complete the proof in the next subsection that $M_I^\spin$ is topologically flat, it will furthermore follow that $M_{\{0\}}^\wedge$ is topologically flat, in support of Pappas's conjecture \cite{pap00}*{\s4 p.\ 594} that $M_{\{0\}}^\wedge$ is flat over $\O_E$.
\end{rk}

\subsection{Topological flatness of \texorpdfstring{$M_I^\spin$}{M\_I\^{}spin}}\label{ss:top_flat}

We now come to the main algebro-geometric results of the paper.  The key combinatorial fact we shall need to prove that $M_I^\spin$ is topologically flat is the following.

\begin{thm}\label{st:adm<=>perm_I}
Let $\wt w \in \wt W_G/W_{I,G}$.  Then $\wt w$ is spin-permissible $\iff$ $\wt w$ is $\{\mu_{r,s}\}$-admissible.
\end{thm}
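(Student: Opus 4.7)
The plan is to transfer the question from $\wt W_G$ into $\wt W_{B_m}$ via the isomorphism of Lemma \ref{st:wtW_G->wtW_B_m}, and then into $\wt W_{D_{m+1}}$ via Steinberg fixed-point root datum formalism, so that the desired equivalence becomes a special case of (the parahoric form of) Theorem \ref{st:adm_perm_D}. By \eqref{st:wedge-perm} and \eqref{st:wedge_iff_spin-perm}, spin-permissibility of $\wt w \in \wt W_G/W_{I,G}$ is already characterized purely in terms of the vectors $\mu_i^{\wt w}$ by conditions \eqref{it:P1}, \eqref{it:P2}, \eqref{it:P3}, so the entire question is combinatorial from the start.

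First I would port conditions \eqref{it:P1}, \eqref{it:P2}, \eqref{it:P3} across the isomorphism $\wt W_G \isoarrow \wt W_{B_m}$. By Lemma \ref{st:int(z)_lem}\eqref{it:mu^zwz^-1_fmla}, if $\wt w \in \wt W_G$ corresponds to $w \in \wt W_{B_m}$, then $\mu_i^{\wt w}$ is a $\delta$-permutation of the analogous vector attached to $w$ at a shifted index, and by Lemma \ref{st:wtW_G->wtW_B_m} this shift sends the parahoric type $I$ to $m - I$. All of the defining inequalities, the duality with respect to $*$, the cardinality bound $\#\{j \mid \mu(j) = 0\} \leq s$, the self-duality test, and the spin parity condition with its $B_i$-exception are invariant under the $\delta$-permutation and the index shift, so they translate directly to combinatorial conditions on $w$; the Kottwitz-invariant constraint in \eqref{it:P1} falls out of the compatibility of $\wt W_G = W_{\aff, G} \rtimes \Omega$ with the isomorphism. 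This produces a notion of $\mu_B$-spin-permissibility on $\wt W_{B_m}/W_{m-I, B_m}$ equivalent to spin-permissibility in $\wt W_G/W_{I,G}$ by construction.

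Second, by Lemma \ref{st:id_adm_sets} the same isomorphism identifies $\{\mu_{r,s}\}$-admissibility in $\wt W_G/W_{I,G}$ with $\mu_B$-admissibility in $\wt W_{B_m}/W_{m-I, B_m}$ for $\mu_B = \bigl(2^{(s)}, 1^{(2m-2s+1)}, 0^{(s)}\bigr)$. The theorem therefore reduces to the type-$B$ statement that an element of $\wt W_{B_m}/W_{m-I, B_m}$ is $\mu_B$-admissible if and only if it is $\mu_B$-spin-permissible (this is the content of \eqref{st:sp-perm=adm_B} in the paper). I would derive this from its type-$D$ counterpart \eqref{st:sp-perm=adm_D_parahoric}: the Steinberg fixed-point root datum realizes $\wt W_{B_m}$ as the fixed subgroup of an involution on $\wt W_{D_{m+1}}$ compatibly with Bruhat order, with $\mu_B$ lifting to $\mu_D = \bigl(2^{(s)}, 1^{(2m-2s+2)}, 0^{(s)}\bigr)$; in this setting both $\mu_D$-admissibility and $\mu_D$-spin-permissibility are inherited by restriction, so the $B_m$-equivalence follows formally from the $D_{m+1}$-equivalence.

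The main obstacle is Theorem \ref{st:adm_perm_D} and its parahoric extension \eqref{st:sp-perm=adm_D_parahoric} themselves. The direction admissible $\Longrightarrow$ spin-permissible is essentially formal, since admissibility is a Bruhat closure and spin-permissibility is inherited by passing below a translation element $t_\lambda$ with $\lambda \in W \cdot \mu_D$; but the converse, that the explicit local conditions defining spin-permissibility force $w \leq t_\lambda$ for some such $\lambda$, requires the lengthy case analysis of Sections \ref{ss:proper_elts}--\ref{ss:sp-perm=>adm_D}, and this is where essentially all of the technical work lies.
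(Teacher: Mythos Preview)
Your overall strategy is exactly the paper's: transport to $\wt W_{B_m}$ via \eqref{st:wtW_G->wtW_B_m} and \eqref{st:int(z)_lem}, invoke \eqref{st:id_adm_sets} for admissibility, and then cite the type-$B$ result \eqref{st:sp-perm=adm_B}, which in turn rests on the type-$D$ work of \s\ref{s:combinatorics_D}. So far so good.

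There is, however, a genuine gap in your last reduction. You write that ``both $\mu_D$-admissibility and $\mu_D$-spin-permissibility are inherited by restriction, so the $B_m$-equivalence follows formally from the $D_{m+1}$-equivalence.'' Spin-permissibility does restrict (this is \eqref{st:spin-perm_intersection}), and the Bruhat order restricts (\eqref{st:bruhat_inherited}), so the inclusion $\Adm_{B_m}(\mu_B) \subset \Adm_{D_{m+1}}(\mu_D) \cap \wt W_{B_m}$ is automatic. But the reverse inclusion is \emph{not} formal: knowing $w \leq t_\nu$ in $\wt W_{D_{m+1}}$ for some $\nu \in S_{2m+2}^\circ \mu_D$ does not immediately give a $\nu' \in S_{2m+1}^* \mu_B$ with $w \leq t_{\nu'}$ in $\wt W_{B_m}$. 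The paper isolates precisely this issue as Conjecture \eqref{conj}, which it establishes here only by invoking the extra input \eqref{st:big_D_prop}\eqref{it:part_ii}: one must show that, starting from a $\mu_D$-spin-permissible $w \in \wt W_{B_m}$, the chain of reflections up to a translation can be chosen to stay inside $\wt W_{B_m}$ (equivalently, to preserve $\nu_0(m+1) = 1$). That is genuine additional content beyond the Iwahori $D$-theorem, and your sketch omits it. Once you account for this, your proof coincides with the paper's.
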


\begin{proof}
We exploit the isomorphism
\[
   \wt W_G/W_{I,G} \isoarrow \wt W_{B_m}/W_{m-I,B_m}
\]
induced by the composite isomorphism $\wt W_G \isoarrow \wt W_{B_m}$ in \eqref{st:wtW_G->wtW_B_m}.  By \eqref{st:id_adm_sets} this identifies the $\{\mu_{r,s}\}$-admissible set in $\wt W_G/W_{I,G}$ with the $\mu$-admissible set in $\wt W_{B_m}/W_{m-I,B_m}$, where $\mu$ denotes the cocharacter $\bigl(2^{(s)},1^{(2m-2s+1)},0^{(s)}\bigr)$.  On the other hand, it follows easily from \eqref{st:int(z)_lem}\eqref{it:mu^zwz^-1_fmla} that the displayed isomorphism identifies the set of spin-permissible elements in $\wt W_G/W_{I,G}$ with the set of \emph{$\mu$-spin-permissible} elements in $\wt W_{B_m}/W_{m-I,B_m}$ defined in \eqref{def:spin-permissible_B_m}.  So the result follows from the equivalence of $\mu$-admissibility and $\mu$-spin-permissibility in $\wt W_{B_m}/W_{m-I,B_m}$, which we shall prove later in \eqref{st:sp-perm=adm_B}.
\end{proof}

As corollaries, we deduce Theorems \ref{st:main_thm} and \ref{st:2nd_main_thm} from the Introduction.

\begin{proof}[Proof of \eqref{st:2nd_main_thm}]
By definition, the set of spin-permissible elements in $\wt W_G/W_{I,G}$ surjects onto the set of $w \in W_{I,G} \bs \wt W_G / W_{I,G}$ such that the associated Schubert variety $S_w$ is contained in $M_{I,k}^\spin$; and the set of $\{\mu_{r,s}\}$-admissible elements in $\wt W_G/W_{I,G}$ surjects onto the set of $\{\mu_{r,s}\}$-admissible elements in $W_{I,G} \bs \wt W_G / W_{I,G}$.  So the theorem is immediate from \eqref{st:adm<=>perm_I}.
\end{proof}

\begin{proof}[Proof of \eqref{st:main_thm}]
Without loss of generality, we may assume that $\O_E$ has algebraically closed residue field.  Then, since the set of $\{\mu_{r,s}\}$-admissible elements in $\wt W_G$ surjects onto the set of $\{\mu_{r,s}\}$-admissible elements in $W_{I,G}\bs\wt W_G/W_{I,G}$, \cite{paprap09}*{Prop.\ 3.1} asserts exactly that the Schubert varieties $S_w$ indexed by $\{\mu_{r,s}\}$-admissible $w$ are contained in $M_{I,k}^\loc$.  Now use \eqref{st:2nd_main_thm}.
\end{proof}

\section{Type $D$ combinatorics}\label{s:combinatorics_D}
In this section we work through the paper's combinatorics in type $D$. Our main aim is to prove Theorem \ref{st:adm_perm_D} from the Introduction and its parahoric generalization \eqref{st:sp-perm=adm_D_parahoric}.  We shall also prove results, for cocharacters of the form $\mu$ below, on permissible sets (see \s\ref{ss:mu-perm} and \s\ref{ss:mu-perm_parahoric}) and vertexwise admissibility (see \s\ref{ss:vert-adm_D}).  For most of the section we shall work just in the Iwahori case; beginning in \s\ref{ss:parahoric_D} we shall turn to the parahoric case.  Aside from the preliminaries in \s\ref{s:prelim_comb} (which we shall always apply with $n = 2m$), this section is independent of everything else in the paper.

For convenience, we shall make some notational changes from earlier in the paper.  We fix an integer $m \geq 2$.  Except where noted to the contrary, throughout this section we denote by $\mu$ the cocharacter
\setcounter{equation}{0}
\begin{equation}\label{disp:mu_D}
   \mu := \bigl(2^{(q)},1^{(2m-2q)},0^{(q)}\bigr),\quad 0 \leq q \leq m-1.
\end{equation}
For $i \in \{1,\dotsc,2m\}$, we write $i^* := 2m + 1 - i$.  For any nonempty $E \subset \{1,\dotsc,2m\}$, we put $E^* := 2m + 1 - E$.  We denote by $e_1,\dotsc,$ $e_{2m}$ the standard basis in $\ZZ^{2m}$.  We no longer use the symbol $k$ to denote an algebraic closure of the common residue field of $F_0$ and $F$; instead we shall use $k$ to denote integers.  By a \emph{half-integer} we shall mean an element of $\frac 1 2 \ZZ \smallsetminus \ZZ$.

\subsection{The root datum \texorpdfstring{$\R_{D_m}$}{R\_$\{$D\_m$\}$}}\label{ss:type_D_variant}
In this subsection we define the root datum $\R_{D_m}$.  Let
\begin{align*}
   X_{*D_m} &:= X_{*2m}\\ 
            &\phantom{:}= \bigl\{\,(x_1,\dotsc,x_{2m}) \in \ZZ^{2m} \bigm|
                        x_1+x_{2m} = x_2+x_{2m-1} = \dots = x_m+x_{m+1}\,\bigr\},
\end{align*}
where we recall $X_{*2m}$ from \eqref{disp:X_*n},
and
\[
   X^*_{D_m} := \ZZ^{2m}\big/
       \bigl\{(x_1,\dotsc,x_{m-1},\textstyle{-\sum_{i=1}^{m-1}x_i},
       \textstyle{-\sum_{i=1}^{m-1}x_i},x_{m-1},\dotsc,x_1)\bigr\}.
\]
Then the standard dot product on $\ZZ^{2m}$ induces a perfect pairing $X^*_{D_m} \times X_{*D_m} \to \ZZ$.  For $1 \leq i,j \leq 2m$ with $j \neq i$, $i^*$, let
\begin{equation}\label{disp:alpha_i,j}
   \begin{matrix}
   \alpha_{i,j}\colon
   \xymatrix@R=0ex{
      X_{*D_m} \ar[r] & \ZZ\\
      (x_1,\dotsc,x_{2m}) \ar@{|->}[r] & x_i - x_j
   }
   \end{matrix}
\end{equation}
and
\begin{equation}\label{disp:alpha^vee_i,j}
   \alpha_{i,j}^\vee := e_i-e_j+e_{j^*}-e_{i^*} \in X_{*D_m}.
\end{equation}
Then we may regard $\alpha_{i,j}$ as an element of $X^*_{D_m}\ciso \Hom(X_{*D_m},\ZZ)$, and we let
\[
   \Phi_{D_m}:= \{\alpha_{i,j}\}_{j\neq i,i^*} \subset X^*_{D_m}
   \quad\text{and}\quad
   \Phi^\vee_{D_m} := \{\alpha^\vee_{i,j}\}_{j\neq i,i^*} \subset X_{*D_m}.
\]
The objects so defined form a root datum
\[
   \R_{D_m} := (X^*_{D_m},X_{*D_m},\Phi_{D_m},\Phi^\vee_{D_m}),
\]
which is the root datum for split $GO_{2m}$.  We take as simple roots
\begin{equation}\label{disp:simple_roots_D}
   \alpha_{1,2}, \alpha_{2,3},\dotsc,\alpha_{m-1,m},\alpha_{m-1,m+1}.
\end{equation}

The Weyl group of $\R_{D_m}$ identifies canonically with $S_{2m}^\circ$, as defined at the end of the Introduction.  As discussed in \s\ref{ss:I-W_gp}, we then have the extended affine Weyl group
\[
   \wt W_{D_m} := X_{*D_m} \rtimes S_{2m}^\circ.
\]
The affine Weyl group $W_{\aff,D_m}$ is the subgroup $Q^\vee_{D_m} \rtimes S_{2m}^\circ \subset \wt W_{D_m}$, where $Q^\vee_{D_m} \subset X_{*D_m}$ is the coroot lattice.  Explicitly,
\[
   Q^\vee_{D_m} = \biggl\{(x_1,\dotsc,x_{2m}) \in X_{*D_m} \biggm|
   \begin{varwidth}{\textwidth}
      \centering
      $x_1 + x_{2m} = \dots = x_m + x_{m+1} = 0$\\
      and $x_1 + \dotsb + x_m$ is even
   \end{varwidth}\biggr\}.
\]

Let
\[
   \A_{D_m}:= X_{*D_m} \otimes_\ZZ \RR \subset \RR^{2m}.
\]
We take as positive Weyl chamber the chamber in $\A_{D_m}$ on which the simple roots are all positive, and we say that a cocharacter is dominant if it is contained in the closure of this chamber.
We take as our base alcove
\[
   A_{D_m} := \biggl\{(x_1,\dotsc,x_{2m}) \in \RR^{2m} \biggm|
      \begin{varwidth}{\textwidth}
         \centering
         $x_1 + x_{2m} = \dotsb = x_m + x_{m+1}$ and\\
         $x_1, x_{2m}-1 < x_2 < x_3 < \dotsb < x_{m-1} < x_m, x_{m+1}$
      \end{varwidth}\biggr\};
\]
note that this is the unique alcove contained in the Weyl chamber \emph{opposite} the positive chamber and whose closure contains the origin.  The minimal facets of $A_{D_m}$ are the lines
\[
   a + \RR\cdot \bigl(1,\dotsc,1\bigr)
\]
for $a$ one of the points
\begin{equation}\label{disp:D_m_verts}
\begin{aligned}
   a_k &:= \bigl((-\tfrac 1 2)^{(k)},0^{(2m + 1 - 2k)},(\tfrac 1 2)^{(k)}\bigr)
   \quad\text{for}\quad k = 0,2,3,\dotsc,m-2,m ,\\
   a_{0'} &:= \bigl(-1,0^{{2m-1}},1\bigr),\\
   a_{m'} &:= \bigl((-\tfrac 1 2)^{(m-1)},\tfrac 1 2, -\tfrac 1 2, (\tfrac 1 2)^{(m-1)}\bigr).
\end{aligned}
\end{equation}
As discussed in \s\ref{ss:bo}, our choice of $A_{D_m}$ endows $W_{\aff,D_m}$ and $\wt W_{D_m}$ with Bruhat orders.

The extended affine Weyl group $\wt W_{D_m}$ is naturally a subgroup of $\wt W_{2m}$ \eqref{disp:wtW_n}.  Thus for each $w \in \wt W_{D_m}$ we get we get a face $\mathbf v$ of type $(2m,\{0,\dotsc,m\})$ by letting $w$ act on the standard face $\omega_{\{0,\dotsc,m\}}$, as in \s\ref{ss:faces_type_I}.  We write $\mu_k^w$ for the vector $\mu_k^{\mathbf{v}}$ attached to this face,
\begin{equation}\label{disp:mu_i^w}
   \mu_k^w := w\omega_k - \omega_k
   \quad\text{for}\quad
   k \in \ZZ.
\end{equation}
The rule $w \mapsto w\cdot\omega_{\{0,\dotsc,m\}}$ identifies $\wt W_{D_m}$ with the faces $\mathbf v$ of type $(2m,\{0,\dotsc,m\})$ such that $\mu_0^{\mathbf{v}} \equiv \mu_m^{\mathbf{v}} \bmod Q^\vee_{D_m}$.

\subsection{\texorpdfstring{$\mu$}{mu}-spin-permissibility}\label{ss:mu-spin-perm_D}
In this subsection we define \emph{$\mu$-spin-permissibility} in $\wt W_{D_m}$, for $\mu$ the cocharacter \eqref{disp:mu_D}, and we formulate its equivalence with $\mu$-admissibility.

\begin{defn}\label{def:spin-perm_D}
Let $\mu \in X_{*D_m}$ be the cocharacter \eqref{disp:mu_D}.  We say that $w \in \wt W_{D_m}$ is \emph{$\mu$-spin-permissible} if it satisfies the following conditions for all $0 \leq k \leq m$.
\begin{enumerate}
\renewcommand{\theenumi}{SP\arabic{enumi}}
\item\label{it:SP1} $\mu_k^w + (\mu_{-k}^w)^* = \mathbf 2$ and $\mathbf 0 \leq \mu_k^w \leq \mathbf{2}$.
\item\label{it:SP2} $\#\{\, j\mid \mu^w_k(j) = 2\,\} \leq q$.
\item\label{it:SP3} (spin condition) If $\mu_k^w$ is self-dual and $\mu_k^w \not\equiv \mu \bmod Q^\vee_{D_m}$, then there exist elements $j_1 \in A_k$ and $j_2 \in B_k$ such that $\mu_k^w(j_1) = \mu_k^w(j_2) = 1$, with the sets $A_k$ and $B_k$ as in \eqref{disp:A_i_B_i}.
\end{enumerate}
For fixed $k$ with $0 \leq k \leq m$, we say that $\mu_k^w$ is \emph{$\mu$-spin-permissible} if it satisfies \eqref{it:SP1}, \eqref{it:SP2}, and \eqref{it:SP3}.  We say that $w$ is \emph{naively $\mu$-permissible} if it satisfies \eqref{it:SP1} for all $0 \leq k \leq m$.
\end{defn}

Note that the condition $\mu_k^w + (\mu_{-k}^w)^* = \mathbf 2$ in \eqref{it:SP1} holds for all $k$ as soon as it holds for a single $k$, and says just that $w$ defines a $2$-face.  By \eqref{st:c_equiv_defs}, when \eqref{it:SP1} is satisfied, the quantity $\#\{\, j\mid \mu^w_k(j) = 2\,\}$ appearing in \eqref{it:SP2} is also equal to $\#\{\, j\mid \mu^w_k(j) = 0\,\}$; we shall study it in more detail in \s\ref{ss:c_i^w}.  Also note that the $k = 0$ and $k = m$ cases of \eqref{it:SP3} require respectively that $\mu_0^w \equiv \mu$ and $\mu_m^w \equiv \mu \bmod Q^\vee_{D_m}$, since $\mu_0^w$ and $\mu_m^w$ are always self-dual and $A_0 = B_m = \emptyset$.  Of course, the condition that $\mu_k^w$ be self-dual is exactly the condition that it be contained in $X_{*D_m}$.

The key result we shall prove in \s\ref{s:combinatorics_D} is Theorem \ref{st:adm_perm_D} from the Introduction, which we formulate in our present notation as follows.  Recall from \s\ref{ss:adm_perm_sets} that an element $w \in \wt W_{D_m}$ is $\mu$-admissible if $w \leq t_\lambda$ for some $\lambda \in S_{2m}^\circ \cdot \mu$.

\begin{thm}\label{st:sp-perm=adm_D}
Let $\mu$ be the cocharacter \eqref{disp:mu_D}.  Then $w \in \wt W_{D_m}$ is $\mu$-admissible $\iff$ w is
$\mu$-spin-permissible.
\end{thm}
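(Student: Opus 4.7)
The plan is to prove the two implications separately, with the converse being by far the harder direction. For the easy direction that $\mu$-admissibility implies $\mu$-spin-permissibility, I would first verify SP1--SP3 directly on every translation $t_\lambda$ with $\lambda \in S_{2m}^\circ \cdot \mu$, which is a routine computation since $\mu_k^{t_\lambda}$ is just a rearrangement of $\lambda$ dictated by $\omega_k$. Then I would show that SP1--SP3 are preserved under the Bruhat order. Conditions SP1 and SP2 together form classical $\mu$-permissibility, whose preservation under $\leq$ is due to Kottwitz--Rapoport. The genuinely new point is preservation of the spin condition SP3, for which I would analyze how an elementary Bruhat descent by an affine simple reflection modifies the sequence $(\mu_k^w)_{k}$; in particular one must check that if such a descent creates a newly self-dual $\mu_k^w$, then either $\mu_k^w \equiv \mu \bmod Q^\vee_{D_m}$ or else a pair of $1$-entries is produced, one in $A_k$ and one in $B_k$, as required by SP3.

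For the converse direction I expect a considerably longer argument, to be organized along the lines suggested by the section titles \s\ref{ss:proper_elts}--\s\ref{ss:sp-perm=>adm_D}. First I would reduce to a class of \emph{proper} spin-permissible elements by absorbing redundant ambiguity in the vectors $\mu_k^w$; this reduction should rest on the invariants $c_k^w := \#\{\,j \mid \mu_k^w(j)=2\,\} = \#\{\,j \mid \mu_k^w(j)=0\,\}$ (Lemma \ref{st:c_equiv_defs}), whose variation in $k$ is controlled by SP1 and SP2. Next, for a proper $\mu$-spin-permissible $w$, I would construct explicitly a $\lambda \in S_{2m}^\circ \cdot \mu$ together with a chain of affine reflections lifting $w$ to $t_\lambda$, and verify that the chain remains inside the naive $\mu$-permissible set at every stage; at any intermediate step where some $\mu_k^w$ becomes self-dual, the spin condition SP3 will guarantee that one lands in the correct coset of $Q^\vee_{D_m}$ rather than the ``wrong'' parity coset. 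The direct equivalence of vertexwise conditions $k = 0$ and $k = m$ (where $A_k$ or $B_k$ is empty, so SP3 forces $\mu_k^w \equiv \mu \bmod Q^\vee_{D_m}$ outright) will anchor the construction.

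The main obstacle will be the hard direction, and within it the precise role of SP3. The difficulty is underscored by the counterexamples in \eqref{eg:not_mu-adm_D} of the Introduction: naive $\mu$-permissibility is strictly weaker than $\mu$-admissibility in type $D$, so SP3 must be exactly the condition that rules out the superfluous permissible elements. Accordingly, the crux of the proof will be a characterization lemma stating that a naively $\mu$-permissible $w$ fails to be $\mu$-admissible precisely when some $\mu_k^w$ is self-dual, satisfies $\mu_k^w \not\equiv \mu \bmod Q^\vee_{D_m}$, and has no simultaneous $1$-entries in both $A_k$ and $B_k$. Establishing this will require careful bookkeeping of length-minimizing representatives of the $Q^\vee_{D_m}$-cosets containing the translation part of $w$, together with sign-tracking under the index-$2$ inclusion $S_{2m}^\circ \subset S_{2m}^*$ that distinguishes type $D$ from type $C$; this is exactly the feature that makes Haines--Ng\^o's permissibility-based strategy from the odd case inapplicable here. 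Once the characterization is in hand, the downward closure of both $\mu$-admissibility and $\mu$-spin-permissibility, combined with the easy direction, will yield the claimed equality.
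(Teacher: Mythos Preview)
Your plan for the easy direction is essentially the paper's: check the translations $t_\lambda$ directly, then prove the spin-permissible set is closed under the Bruhat order (so that anything $\leq t_\lambda$ inherits SP1--SP3). The paper does this in \S\ref{ss:adm=>sp-perm}, reducing via \cite{kottrap00}*{Lem.\ 11.3} to a single lemma on how SP3 behaves under one affine reflection.

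The hard direction, however, has a genuine gap. The paper's strategy (stated at the end of \S\ref{ss:mu-spin-perm_D} and carried out in Proposition~\ref{st:big_D_prop}) is the Kottwitz--Rapoport inductive scheme: given a $\mu$-spin-permissible non-translation $w$, produce \emph{one} affine root $\wt\alpha$ with $w < s_{\wt\alpha}w$ and $s_{\wt\alpha}w$ again \emph{$\mu$-spin-permissible}; then iterate. The entire content lies in the choice of $\wt\alpha$, and the long case analysis in \S\ref{ss:sp-perm=>adm_D} is precisely about making that choice so that SP3 is \emph{preserved}. Your proposal inverts this: you plan to keep the chain inside the naively $\mu$-permissible set and say that SP3 ``will guarantee that one lands in the correct coset.'' But SP3 on $w$ does not by itself force SP3 on $s_{\wt\alpha}w$; for a generic $\wt\alpha$ the new element can fail SP3 (or SP2), and then the chain has left the spin-permissible set and the induction breaks. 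The paper's Lemmas~\ref{st:wt_alpha_i,sigma(i)}--\ref{st:useful_reflections_lem} exist exactly to handle this: one first tries a natural candidate $\wt\alpha'$ via \eqref{st:wt_alpha_i,sigma(i)}, and if $s_{\wt\alpha'}w$ fails SP2 or SP3 at some minimal index $p$, one uses that failure (together with \eqref{st:c_i_neq_c_p}, \eqref{st:spin_fail_domain}, \eqref{st:fail_entries}) to locate a replacement $\wt\alpha$ that works.

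Two smaller points. First, ``proper'' in \S\ref{ss:proper_elts} is a property of an index $j\in\{1,\dots,2m\}$ (namely $\sigma(j)\neq j$), not of the element $w$; the proof of Proposition~\ref{st:big_D_prop} is organized around the \emph{minimal proper index} $i$, splitting into Case~I ($\sigma(i)=i^*$) and Case~II ($\sigma(i)\neq i^*$). Second, your proposed ``characterization lemma'' --- that a naively $\mu$-permissible $w$ is non-admissible exactly when SP3 fails at some $k$ --- is both stronger than what is needed and not what the paper proves; indeed it cannot hold as stated, since a $w$ satisfying SP1 and SP3 but violating SP2 is already non-permissible (Proposition~\ref{st:mu-perm_D}) and hence non-admissible, yet SP3 holds everywhere. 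The paper never isolates such a lemma; it proves the two implications directly.
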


We shall generalize the theorem to the general parahoric case in \eqref{st:sp-perm=adm_D_parahoric}.  We emphasize that the theorem does not include the case $\mu = \bigl(2^{(m)},0^{(m)}\bigr)$; we shall not consider in this paper how to characterize the admissible set for this cocharacter.  We shall prove the implication $\Longrightarrow$ in \s\ref{ss:adm=>sp-perm} and the implication $\Longleftarrow$ in \s\ref{ss:sp-perm=>adm_D}; the intervening sections shall large serve to lay the groundwork.  The implication $\Longleftarrow$ is by far the harder of the two.  The strategy we shall use to prove it is, in essence, that of Kottwitz--Rapoport \cite{kottrap00}, which is also the strategy used in \cites{sm11b,sm11c}.  To explain it in the present situation, let us first prove a baby case of the theorem.

\begin{lem}\label{st:transl_adm}
Let $w = t_\nu$ be a translation element in $\wt W_{D_m}$, and suppose that $w$ is $\mu$-spin-permissible.  Then $w$ is $\mu$-admissible.
\end{lem}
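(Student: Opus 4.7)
The plan is to unpack what $\mu$-spin-permissibility says about a translation $t_\nu$, then exhibit an explicit $\lambda$ in the Weyl orbit $S_{2m}^\circ\cdot\mu$ and verify $t_\nu\le t_\lambda$ via the convex-hull characterization of the Bruhat order for translations.

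First I extract the constraints on $\nu$. Since $w = t_\nu$ is a translation, $\mu_k^w = \nu$ for every $k$, so \eqref{it:SP1} says $\nu$ is self-dual with entries in $\{0,1,2\}$ and \eqref{it:SP2} says $a := \#\{j : \nu(j)=2\} \leq q$. For the spin condition \eqref{it:SP3} at $k=0$, the set $A_0$ is empty, so its conclusion fails automatically; the hypothesis must therefore fail, forcing $\nu \equiv \mu \bmod Q^\vee_{D_m}$. A short computation using the explicit description of $Q^\vee_{D_m}$ in \s\ref{ss:type_D_variant} (the parity of the first $m$ coordinates) then gives $a \equiv q \bmod 2$.

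Next I build $\lambda$. The set $J := \{j : \nu(j) = 1\}$ is $\ast$-stable of size $2m-2a$, hence decomposes into $m-a$ pairs $(j, j^*)$ with $j \leq m$. Because $q \leq m-1$, we have $q - a \leq m - a$, so I pick any $q-a$ such pairs $(j_1,j_1^*),\dotsc,(j_{q-a},j_{q-a}^*)$; define $\lambda$ by $\lambda(j_i) := 2$, $\lambda(j_i^*) := 0$ on the chosen pairs, and $\lambda := \nu$ elsewhere. Then $\lambda$ is self-dual with $q$ twos, $q$ zeros, and $2m-2q$ ones. Since $q \leq m-1$, the stabilizer of $\mu$ in $S_{2m}^*$ contains the transposition $(q+1,(q+1)^*)$, which is odd in $S_{2m}$; hence $S_{2m}^\circ\cdot\mu = S_{2m}^*\cdot\mu$, and $\lambda$ lies in this common orbit.

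It remains to verify $t_\nu \leq t_\lambda$. I invoke the standard characterization of the Bruhat order on translation elements in an extended affine Weyl group: for $\lambda', \nu'$ with $\nu' \equiv \lambda' \bmod Q^\vee$, one has $t_{\nu'} \leq t_{\lambda'}$ iff $\nu' \in \Conv(W_{\mathrm{fin}}\cdot\lambda')$ (see e.g.\ \cite{kottrap00} or \cite{rap05}*{\s3}). The congruence $\nu \equiv \lambda \bmod Q^\vee_{D_m}$ is already in hand. For the convex-hull condition, let $\lambda'$ be obtained from $\lambda$ by swapping the values at $j_i$ and $j_i^*$ for each chosen pair; then $\lambda' \in S_{2m}^\circ \cdot\mu$ by the same orbit argument, and $\nu = \tfrac{1}{2}(\lambda + \lambda')$ exhibits $\nu$ as a midpoint of two orbit elements, so $\nu \in \Conv(S_{2m}^\circ\cdot\mu)$.

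The main obstacle is the Bruhat comparison, but its entire weight is carried by the classical convex-hull criterion for translations; the combinatorial content of the proof lies entirely in producing $\lambda$ and $\lambda'$ in the correct orbit, and this is where the standing hypothesis $q \leq m-1$ (which excludes the uncovered case $\mu = (2^{(m)},0^{(m)})$) is used.
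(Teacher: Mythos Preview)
Your argument reaches the correct conclusion, but the ``standard characterization'' you invoke is misstated. The claim that for arbitrary $\lambda',\nu'$ with $\nu'\equiv\lambda'\bmod Q^\vee$ one has $t_{\nu'}\le t_{\lambda'}$ iff $\nu'\in\Conv(W_{\mathrm{fin}}\cdot\lambda')$ is false: already in type $\tilde A_1$ the translations $t_{\alpha^\vee}=s_0s_1$ and $t_{-\alpha^\vee}=s_1s_0$ both have length~$2$ and are incomparable, yet $-\alpha^\vee\in\Conv(W\cdot\alpha^\vee)$. In general the $t_\lambda$ for $\lambda\in W\mu$ form an antichain (they are exactly the maximal elements of $\Adm(\mu)$), so your conclusion $t_\nu\le t_\lambda$ for the \emph{specific} $\lambda$ you built does not follow from the convex-hull condition alone. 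The correct statement you want is the weaker one: a translation $t_\nu$ lies in $\Adm(\mu)$ iff $\nu\equiv\mu\bmod Q^\vee$ and $\nu\in\Conv(W\mu)$, equivalently iff the dominant conjugate $\nu^+$ satisfies $\nu^+\preccurlyeq\mu$. Your midpoint construction $\nu=\tfrac12(\lambda+\lambda')$ with $\lambda,\lambda'\in S_{2m}^\circ\mu$ does establish $\nu\in\Conv(S_{2m}^\circ\mu)$, so with this corrected criterion your proof goes through.

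By comparison, the paper's proof is a one-liner: since $\mu_k^{t_\nu}=\nu$ for all $k$, conditions \eqref{it:SP1}--\eqref{it:SP2} say $\nu$ is self-dual with entries in $\{0,1,2\}$ and at most $q$ twos, so the dominant conjugate $\nu^+=(2^{(a)},1^{(2m-2a)},0^{(a)})$ with $a\le q$ satisfies $\nu^+\preccurlyeq\mu$ immediately, and $\mu$-admissibility follows. Your explicit construction of $\lambda$ and $\lambda'$ is correct but unnecessary once you phrase things via the dominance order; both approaches are really the same fact $\nu\in\Conv(W\mu)$, but the paper reads it off directly rather than exhibiting a convex combination.
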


\begin{proof}
Let $\nu^+$ denote the dominant Weyl conjugate of $\nu$.  Then it is immediate from the definition of $\mu$-spin-permissibility that $\nu^+ \preccurlyeq \mu$ in the dominance order.  Hence $w$ is $\mu$-admissible.
\end{proof}

To now explain our strategy to prove the implication $\Longleftarrow$ in \eqref{st:sp-perm=adm_D}, suppose that $w \in \wt W_{D_m}$ is $\mu$-spin-permissible.  If $w$ is a translation element, then $w$ is $\mu$-admissible by the lemma.  If $w$ is not a translation element, then our task will be to find an affine root $\wt\alpha$ for $\R_{D_m}$ such that, for the associated reflection $s_{\wt\alpha} \in W_{\aff,D_m}$, $s_{\wt\alpha} w$ is again $\mu$-spin-permissible and $w < s_{\wt\alpha} w$ in the Bruhat order.  For then, repeating the argument as needed, we obtain a chain $w < s_{\wt\alpha} w < \dotsb$ of $\mu$-spin-permissible elements which must terminate in a translation element, since the set of $\mu$-spin-permissible elements is manifestly finite.

\subsection{Proper elements}\label{ss:proper_elts}
Let $w = t_{\mu_0^w}\sigma \in \wt W_{D_m}$, with $\mu_0^w \in X_{*D_m}$ and $\sigma\in S_{2m}^\circ$.

\begin{defn}
We say the element $j \in \{1,\dotsc,2m\}$ is \emph{proper} if $\sigma(j) \neq j$.
\end{defn}

Of course $j$ is proper $\iff$ $j^*$ is proper.  The formula \eqref{disp:mu_i^w} for $\mu_k^w$ gives the obvious recursion relation
\begin{equation}\label{disp:mu_recursion}
   \mu^w_k = \mu^w_{k-1} + e_k - e_{\sigma(k)},\quad
   1 \leq k \leq 2m.
\end{equation}
When $j$ is proper, it follows from this that $\mu_k^w(j)$ takes exactly two values as $k$ varies between $0$ and $2m$ and $j$ remains fixed, and that these two values differ by $1$.

\begin{defn}\label{def:upper_value}
For $j \in \{1,\dotsc,2m\}$ proper, we define the \emph{upper value}
\[
   u(j):= \max\{\mu_k^w(j)\}_{0\leq k \leq 2m}.
\]
\end{defn}

Of the course the upper value depends on $w$ as well as $j$, but to avoid clutter, we do not embed $w$ in the notation.  If $j$ is proper and $w$ defines a $d$-face, then it is clear from the formula \eqref{disp:mu_dlty_cond} that $u(j) + u(j^*) = d + 1$.  In particular, when $w$ satisfies \eqref{it:SP1}, one of every pair $j$, $j^*$ of proper elements has upper value $2$, and the other has upper value $1$.

\subsection{The integer \texorpdfstring{$c^w_k$}{c\_k\^{}w}}\label{ss:c_i^w}
We continue with $w = t_{\mu_0^w} \sigma \in \wt W_{D_m}$.  In this subsection we study the quantity $\#\{\, j \mid \mu_k^w(j) = 2 \,\}$ appearing in condition \eqref{it:SP2}.  In light of \eqref{st:c_equiv_defs}, we make the following definition.

\begin{defn}\label{def:c_i^w}
Let $0 \leq k \leq m$, and suppose that $\mu_k^w$ satisfies \eqref{it:SP1}.  We define $c^w_k$ to be the common integer
\[
   c^w_k := \#\bigl\{\,j \bigm| \mu^w_k(j) = 2\,\bigr\} = \#\bigl\{\,j \bigm| \mu^w_k(j) = 0\,\bigr\}.
\]
\end{defn}
Thus we write $c_k^w \leq q$ for condition \eqref{it:SP2} in the definition of $\mu$-spin-permissible; it will be useful to have both interpretations of $c_k^w$ in the display.  Another use for $c_k^w$ is the following, which is relevant for the spin condition.

\begin{lem}\label{st:c_i_equiv_q}
Suppose that $\mu_k^w$ satisfies \eqref{it:SP1} and is self-dual.  Then $\mu^w_k \equiv \mu \bmod Q^\vee_{D_m}$ $\iff$ $c^w_k \equiv q \bmod 2$.
\end{lem}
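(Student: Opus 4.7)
My plan is as follows. Since $\mu_k^w$ is self-dual and satisfies \eqref{it:SP1}, it lies in $X_{*D_m}$ with the pairs $(\mu_k^w(j),\mu_k^w(j^*))$ each summing to $2$; the same holds for $\mu$. Hence the difference $\mu_k^w - \mu$ has $(\mu_k^w - \mu)(j) + (\mu_k^w - \mu)(j^*) = 0$ for every $j$. In the explicit description of $Q^\vee_{D_m}$ recalled in \S\ref{ss:type_D_variant}, this difference therefore already satisfies the ``antipodal'' vanishing condition automatically, so the only remaining condition for $\mu_k^w - \mu \in Q^\vee_{D_m}$ is that the sum $\Sigma_{\leq m}(\mu_k^w - \mu)$ of its first $m$ entries be even.

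Next, I would compute both of these partial sums explicitly. From the definition $\mu = \bigl(2^{(q)},1^{(2m-2q)},0^{(q)}\bigr)$ together with $q \leq m-1$, the first $m$ entries of $\mu$ consist of $q$ twos followed by $m-q$ ones, so $\Sigma_{\leq m}\mu = m+q$. For $\mu_k^w$, introduce
\[
   a := \#\bigl\{1 \leq j \leq m \bigm| \mu_k^w(j) = 2\bigr\}, \quad
   c := \#\bigl\{1 \leq j \leq m \bigm| \mu_k^w(j) = 0\bigr\},
\]
and write $b := m - a - c$ for the number of ones in positions $\leq m$. Then $\Sigma_{\leq m}\mu_k^w = 2a + b = m + a - c$, so $\Sigma_{\leq m}(\mu_k^w - \mu) = a - c - q$.

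Finally, I would relate $a - c$ to $c_k^w$ via self-duality. By self-duality, $\mu_k^w(j) = 2$ iff $\mu_k^w(j^*) = 0$, which gives
\[
   c_k^w = \#\bigl\{j \bigm| \mu_k^w(j) = 2\bigr\} = a + c.
\]
Since $a + c \equiv a - c \pmod{2}$, we conclude
\[
   \mu_k^w \equiv \mu \bmod Q^\vee_{D_m}
   \iff a - c \equiv q \pmod{2}
   \iff c_k^w \equiv q \pmod 2,
\]
as required.

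There is no real obstacle here; the argument is a short bookkeeping exercise. The only point that requires any care is identifying which condition characterizing $Q^\vee_{D_m}$ inside $X_{*D_m}$ is actually nontrivial on the difference $\mu_k^w - \mu$, and then keeping the two expressions $a + c$ and $a - c$ for $c_k^w \bmod 2$ straight via self-duality.
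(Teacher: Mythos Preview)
Your proof is correct and follows essentially the same approach as the paper. The paper packages the argument by observing that $(x_1,\dotsc,x_{2m}) \mapsto (d,\ x_1+\dotsb+x_m \bmod 2)$ induces an isomorphism $X_{*D_m}/Q^\vee_{D_m} \isoarrow \ZZ \oplus \ZZ/2\ZZ$ and then declares the lemma ``transparent''; you unpack that transparency by computing $\Sigma_{\leq m}\mu = m+q$, $\Sigma_{\leq m}\mu_k^w = m+a-c$, and $c_k^w = a+c \equiv a-c \pmod 2$, which is exactly the bookkeeping the paper leaves to the reader.
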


\begin{proof}
The map $X_{*D_m} \to \ZZ \oplus \ZZ/2\ZZ$ sending
\[
   (x_1,\dotsc,x_{2m}) \mapsto (d, x_1 + x_2 + \dotsb + x_m \bmod 2),
\]
where $d$ denotes the common integer $x_1 + x_{2m} = \dotsb = x_m + x_{m+1}$, induces an isomorphism $X_{*D_m}/Q^\vee_{D_m} \isoarrow \ZZ \oplus \ZZ/2\ZZ$.  This renders the lemma transparent.
\end{proof}


We conclude the subsection with a couple of simple lemmas.

\begin{lem}\label{st:c_i_change_lem}
Let $1 \leq k \leq m$, and suppose that $\mu_k^w$ and $\mu_{k-1}^w$ satisfy \eqref{it:SP1}.  Then $c_k^w = c_{k-1}^w + u(k) - u\bigl(\sigma(k)\bigr)$.
\end{lem}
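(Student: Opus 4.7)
The plan is to apply the recursion $\mu_k^w = \mu_{k-1}^w + e_k - e_{\sigma(k)}$ from \eqref{disp:mu_recursion}, which shows that $\mu_k^w$ and $\mu_{k-1}^w$ agree except possibly at positions $k$ and $\sigma(k)$.  If $\sigma(k) = k$, i.e., $k$ is not proper, then both sides of the desired equality vanish, interpreting $u(k) - u(\sigma(k))$ trivially as $0$; so I may henceforth assume $k$ is proper.

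The core observation is that, for any proper $j$, the sequence $\mu_0^w(j), \mu_1^w(j), \dotsc, \mu_{2m}^w(j)$ takes exactly the two values $u(j) - 1$ and $u(j)$, executing one ``up'' step at index $j$ (from the recursion applied at $k = j$) and one ``down'' step at index $\sigma^{-1}(j)$, wrapping around cyclically by periodicity.  A short case split on whether $j < \sigma^{-1}(j)$ or $j > \sigma^{-1}(j)$ shows that in both cases $\mu_j^w(j) = u(j)$.  Applying this to $j = k$ and to $j = \sigma(k)$ (noting $\sigma^{-1}(\sigma(k)) = k$) yields
\[
   \mu_{k-1}^w(k) = u(k) - 1,\ \mu_k^w(k) = u(k),\ \mu_{k-1}^w(\sigma(k)) = u(\sigma(k)),\ \mu_k^w(\sigma(k)) = u(\sigma(k)) - 1.
\]

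Next, invoking \eqref{it:SP1} on both $\mu_k^w$ and $\mu_{k-1}^w$, the four values above force $u(k), u(\sigma(k)) \in \{1,2\}$: the bound $\leq 2$ comes from the constraint $\mu_\bullet^w \leq \mathbf{2}$, and the bound $\geq 1$ from $\mu_\bullet^w \geq \mathbf{0}$.  Since $\mu_k^w$ and $\mu_{k-1}^w$ differ only in positions $k$ and $\sigma(k)$, the difference $c_k^w - c_{k-1}^w$ is the net change in the number of entries equal to $2$, which from the displayed equations equals $[u(k) = 2] - [u(\sigma(k)) = 2]$, where $[\,\cdot\,]$ denotes the indicator.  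Using the trivial identity $[u = 2] = u - 1$ for $u \in \{1,2\}$, this simplifies to $u(k) - u(\sigma(k))$, as desired.

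The only real obstacle is the case distinction showing that $\mu_j^w(j)$ always equals the upper value $u(j)$, irrespective of whether the up-step for position $j$ precedes or follows its down-step; everything else is a mechanical indicator count.
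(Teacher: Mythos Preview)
Your proof is correct and follows the same approach as the paper, which simply says ``easily checked using the recursion relation \eqref{disp:mu_recursion}'' without further detail. One minor remark: your case split on $j < \sigma^{-1}(j)$ versus $j > \sigma^{-1}(j)$ is unnecessary, since the fact that there is an up-step at index $j$ immediately gives $\mu_{j-1}^w(j) = u(j)-1$ and $\mu_j^w(j) = u(j)$, and dually the down-step at index $k = \sigma^{-1}(\sigma(k))$ gives the values at position $\sigma(k)$; no ordering considerations are needed.
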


\begin{proof}
This is easily checked using the recursion relation \eqref{disp:mu_recursion}.
\end{proof}

Our second lemma will prove to be a very useful tool later on in the proof of the key proposition \eqref{st:big_D_prop}.

\begin{lem}\label{st:c_i_neq_c_p}
Let $0 \leq i < p \leq m$, and suppose that $\mu_i^w$ and $\mu_p^w$ satisfy \eqref{it:SP1}.
\begin{enumerate}
\renewcommand{\theenumi}{\roman{enumi}}
\item\label{it:c_i<c_p}
   If $c_i^w < c_p^w$, then there exists $j \in \{i+1,\dotsc, p\}$ such that $\mu^w_i(j) = 1$ and $\mu^w_p(j) = 2$.
\item\label{it:c_i>c_p}
   If $c_i^w > c_p^w$, then there exists $j \in \{i+1,\dotsc, p\}$ such that $\mu_i^w(j) = 0$ and $\mu_p^w(j) = 1$.
\end{enumerate}
\end{lem}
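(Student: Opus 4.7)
The plan is to exploit the telescoping of the recursion \eqref{disp:mu_recursion}. Iterating it from $i$ to $p$ gives
\[
   \mu_p^w - \mu_i^w = \sum_{k=i+1}^{p} \bigl(e_k - e_{\sigma(k)}\bigr).
\]
Since each index $k \in \{i+1,\dotsc,p\}$ and each image $\sigma(k)$ appears exactly once in this sum, the $j$th coordinate of $\mu_p^w - \mu_i^w$ lies in $\{-1,0,1\}$. More precisely, setting
\[
   U := \{i+1,\dotsc,p\} \smallsetminus \sigma\bigl(\{i+1,\dotsc,p\}\bigr),
   \qquad
   D := \sigma\bigl(\{i+1,\dotsc,p\}\bigr) \smallsetminus \{i+1,\dotsc,p\},
\]
one has $\mu_p^w(j) - \mu_i^w(j) = 1$ for $j \in U$, $= -1$ for $j \in D$, and $= 0$ otherwise; and $|U| = |D|$ since $\sigma$ is a bijection. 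Crucially, $U \subset \{i+1,\dotsc,p\}$, which will be the source of the range condition on $j$.

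Next I would combine this with the hypothesis \eqref{it:SP1}, which confines $\mu_i^w(j)$ and $\mu_p^w(j)$ to $\{0,1,2\}$. Partitioning $\{1,\dotsc,2m\}$ according to the ordered pair $\bigl(\mu_i^w(j),\mu_p^w(j)\bigr)$ and using the fact that consecutive $\mu$-values differ by at most $1$, a short count gives
\[
   c_p^w - c_i^w
   = \#\bigl\{\,j \bigm| \mu_i^w(j) = 1,\ \mu_p^w(j) = 2\,\bigr\}
     - \#\bigl\{\,j \bigm| \mu_i^w(j) = 2,\ \mu_p^w(j) = 1\,\bigr\},
\]
where $c_k^w$ is counted via $\#\{\mu_k^w(j) = 2\}$. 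For part \eqref{it:c_i<c_p}, the assumption $c_i^w < c_p^w$ forces the first set on the right to be nonempty, yielding $j$ with $\mu_i^w(j) = 1$ and $\mu_p^w(j) = 2$; such a $j$ lies in $U \subset \{i+1,\dotsc,p\}$, as desired.

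For part \eqref{it:c_i>c_p}, I would run the identical argument with $c_k^w$ counted instead via $\#\{\mu_k^w(j) = 0\}$, which is permitted by \eqref{def:c_i^w} (justified by \eqref{st:c_equiv_defs}). The analogous identity becomes
\[
   c_i^w - c_p^w
   = \#\bigl\{\,j \bigm| \mu_i^w(j) = 0,\ \mu_p^w(j) = 1\,\bigr\}
     - \#\bigl\{\,j \bigm| \mu_i^w(j) = 1,\ \mu_p^w(j) = 0\,\bigr\},
\]
and the hypothesis produces a $j \in U \subset \{i+1,\dotsc,p\}$ with $\mu_i^w(j) = 0$ and $\mu_p^w(j) = 1$. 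There is essentially no hard step here; the only point requiring care is tracking that the witness $j$ lies in the half where the difference is positive, which is exactly the set $U$ contained in $\{i+1,\dotsc,p\}$.
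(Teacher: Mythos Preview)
Your proof is correct and follows essentially the same route as the paper's: both use the telescoped recursion \eqref{disp:mu_recursion} to see that $\mu_p^w(j) - \mu_i^w(j) \in \{-1,0,1\}$ with the $+1$ case forcing $j \in \{i+1,\dotsc,p\}$, and then a cardinality count on the $2$-entries (resp.\ $0$-entries) to produce the required witness. You have simply made explicit the counting identity and the set $U$ that the paper's two-sentence proof leaves implicit.
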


\begin{proof}
The hypothesis in \eqref{it:c_i<c_p} implies that there exists a $j$ such that $\mu_p^w(j) = 2$ and $\mu_i^w(j) = 1$.  The recursion relation \eqref{disp:mu_recursion} then forces $j \in \{i+1,\dotsc,p\}$.  Assertion \eqref{it:c_i>c_p} is proved in a similar way, using that $c_i^w$ also equals the number of entries of $\mu_i^w$ equal to $0$.
\end{proof}

\subsection{The vector \texorpdfstring{$\nu^w_k$}{nu\_k\^{}w}}\label{ss:nu_i^w}
We continue with our element $w = t_{\mu_0^w}\sigma \in \wt W_{D_m}$.  In this subsection we introduce the vector $\nu_k^w$, which for many purposes is better to work with than $\mu_k^w$.

In addition to the vertices \eqref{disp:D_m_verts}, let us define the points in $\A_{D_m}$
\begin{equation}\label{disp:a_k}
   a_1 := \bigl(-\tfrac 1 2,0^{(2m-2)},\tfrac 1 2\bigr)
   \quad\text{and}\quad
   a_{m-1} := \bigl((-\tfrac 1 2)^{(m-1)},0,0,(\tfrac 1 2)^{(m-1)}\bigr).
\end{equation}
Then $a_1$, $a_{m-1} \in \ol{A_{D_m}}$, but neither is a vertex.  Instead the points $a_0$, $a_1,\dotsc$, $a_m$ are vertices for an alcove for the symplectic group.  Since this symplectic alcove is contained in $A_{D_m}$, we shall find the $a_k$'s quite suitable for our purposes.

\begin{defn}\label{def:nu_i^w}
For $0 \leq k \leq m$, we define
\[
   \nu^w_k : = wa_k - a_k = \frac{\mu_k^w + \mu_{-k}^w} 2 = \frac{\mu^w_k +\mu^w_{2m-k}} 2.
\]
\end{defn}

Note that $\nu_k^w = \mu_k^w$ whenever $\mu_k^w$ is self-dual; in particular $\nu_0^w = \mu_0^w$ and $\nu_m^w = \mu_m^w$.  If $w$ defines a $d$-face, then \eqref{disp:mu_dlty_cond} gives
\begin{equation}\label{disp:random_nu_i_fmla}
   \nu_k^w = \frac{\mu_k^w - (\mu_k^w)^* + \mathbf d} 2.
\end{equation}
Since $\mu_k^w$ has only integer entries, the basic inequalities \eqref{st:basic_ineqs} then allow us to recover $\mu_k^w$ uniquely from $\nu_k^w$.  Using the recursion relation \eqref{disp:mu_recursion} for $\mu_k^w$, we get the recursion relation for $\nu_k^w$, valid for $1 \leq k \leq m$,
\begin{equation}\label{disp:nu_recursion}
\begin{aligned}
   \nu_k^w &= \frac{\mu^w_{k-1} + e_k - e_{\sigma(k)}
                    + e_{\sigma(k^*)} - e_{k^*} + \mu^w_{k^*}}2\\
           &= \nu^w_{k-1} +
   \begin{cases}
      0, & \sigma(k) = k\\
      e_k - e_{k^*}, & \sigma(k) = k^*\\
      \frac 1 2 \alpha_{k,\sigma(k)}^\vee, & \sigma(k) \neq k, k^*.
   \end{cases}
\end{aligned}
\end{equation}
It is clear from the definition of $\nu_k^w$ that
\[
   \nu_k^w(j) \in \bigl\{u(j) - 1, u(j) - \tfrac 1 2, u(j)\bigr\}
   \quad\text{for all}\quad
   0 \leq k \leq m \text{ and } 1 \leq j \leq 2m.
\]

We shall devote the rest of the subsection to expressing conditions \eqref{it:SP1}--\eqref{it:SP3} in terms of the vector $\nu_k^w$.

\begin{lem}\label{it:self_dual_conds}
Let $0 \leq k \leq m$.  The following are equivalent.
\begin{enumerate}
\renewcommand{\theenumi}{\roman{enumi}}
\item\label{it:mu_self-dual}
   $\mu_k^w$ is self-dual.
\item\label{it:mu_i=nu_i}
   $\nu_k^w = \mu_k^w$.
\item\label{it:nu_in_ZZ^2m}
   $\nu_k^w \in \ZZ^{2m}$ (as opposed to $\frac 1 2 \ZZ^{2m}$).
\end{enumerate}
\end{lem}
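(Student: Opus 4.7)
\smallskip

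The plan is to establish the cycle (i) $\Rightarrow$ (ii) $\Rightarrow$ (iii) $\Rightarrow$ (i), where the first two implications are essentially tautological and all the content sits in the last.

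First I would verify (i) $\Leftrightarrow$ (ii). Every $w \in \wt W_{D_m}$ defines a $d$-face for some $d$ (namely the common value $d = \mu_0^w(j) + \mu_0^w(j^*)$ dictated by $\mu_0^w \in X_{*D_m}$), so the duality condition \eqref{it:face_dlty_cond} reads $\mu_k^w + (\mu_{-k}^w)^* = \mathbf{d}$. If $\mu_k^w$ is self-dual, then $\mu_k^w + (\mu_k^w)^* = \mathbf{d}$, and comparing these two relations gives $\mu_{-k}^w = \mu_k^w$, hence $\nu_k^w = \tfrac{1}{2}(\mu_k^w + \mu_{-k}^w) = \mu_k^w$. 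Conversely, $\nu_k^w = \mu_k^w$ forces $\mu_{-k}^w = \mu_k^w$, whence $\mu_k^w + (\mu_k^w)^* = \mu_k^w + (\mu_{-k}^w)^* = \mathbf{d}$, so $\mu_k^w$ is self-dual. The implication (ii) $\Rightarrow$ (iii) is immediate since $\mu_k^w \in \ZZ^{2m}$.

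The main step is (iii) $\Rightarrow$ (i), and this is the only place where the specific structure (basic inequalities plus a parity observation) is used. I would invoke the formula \eqref{disp:random_nu_i_fmla}, which presents $\nu_k^w(j) = \tfrac{1}{2}\bigl(\mu_k^w(j) - \mu_k^w(j^*) + d\bigr)$. The assumption $\nu_k^w \in \ZZ^{2m}$ thus translates into the parity condition
\[
   \mu_k^w(j) + \mu_k^w(j^*) \equiv d \pmod{2} \quad \text{for all } j \in \{1,\dotsc,2m\}.
\]
Now the basic inequalities \eqref{st:basic_ineqs} restrict $\mu_k^w(j) + \mu_k^w(j^*)$ to lie in the length-$1$ window $\{d, d+1\}$ when $j \in A_k$ and in $\{d-1, d\}$ when $j \in B_k$. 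In either window there is exactly one value of the correct parity, namely $d$ itself, so the parity condition forces $\mu_k^w(j) + \mu_k^w(j^*) = d$ for every $j$, i.e.\ $\mu_k^w$ is self-dual.

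No step here should pose a real obstacle; the only subtlety is making sure the correct value of $d$ is used (the one coming from $w$ defining a $d$-face via \eqref{disp:mu_dlty_cond}, and not a value independently imposed by a spin-permissibility hypothesis), and that the window in \eqref{st:basic_ineqs} really does have length $1$, so that parity pins down $\mu_k^w(j) + \mu_k^w(j^*)$ uniquely.
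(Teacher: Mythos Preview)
Your proof is correct and follows essentially the same approach as the paper: both invoke the formula \eqref{disp:random_nu_i_fmla} together with the basic inequalities \eqref{st:basic_ineqs} to handle (iii) $\Rightarrow$ (i), with the other implications being trivial. You have simply made explicit the parity argument that the paper leaves as a ``straightforward consequence.''
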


\begin{proof}
The implications \eqref{it:mu_self-dual} $\implies$ \eqref{it:mu_i=nu_i} and \eqref{it:mu_i=nu_i} $\implies$ \eqref{it:nu_in_ZZ^2m} are trivial.  Now assume \eqref{it:nu_in_ZZ^2m}.  To deduce \eqref{it:mu_self-dual}, we must show that $\mu_k^w = \mu_{-k}^w$, or equivalently, if $w$ defines a $d$-face, that $\mu_k^w = \mathbf d - (\mu_k^w)^*$.  This is a straightforward consequence of the formula \eqref{disp:random_nu_i_fmla} and the basic inequalities \eqref{st:basic_ineqs}.
\end{proof}

Note that if $\nu_k^w \in \ZZ^{2m}$, then $\nu_k^w = \mu_k^w \in X_{*D_m}$.

\begin{lem}\label{st:c_i_nu_interp}
Let $0 \leq k \leq m$, and suppose that $\mu_k^w$ satisfies \eqref{it:SP1}.  Then
\begin{align*}
   c_k^w &= \#\bigl\{\,j \bigm| \nu^w_k(j) = 2\,\bigr\} + \frac{\#\bigl\{\,j \bigm| \nu^w_k(j) \notin \ZZ\,\bigr\}}4\\
         &= \#\bigl\{\,j \bigm| \nu^w_k(j) = 0\,\bigr\} + \frac{\#\bigl\{\,j \bigm| \nu^w_k(j) \notin \ZZ\,\bigr\}}4.
\end{align*}
\end{lem}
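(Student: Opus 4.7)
The plan is to classify each index $j \in \{1,\dots,2m\}$ according to the unordered pair $\{\mu_k^w(j), \mu_k^w(j^*)\}$, and then to read off both $c_k^w$ and the relevant counts of $\nu_k^w(j)$ values from this classification.

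First I would note that condition \eqref{it:SP1} forces the face associated to $w$ to be a $2$-face, so formula \eqref{disp:random_nu_i_fmla} specializes to $\nu_k^w(j) = \bigl(\mu_k^w(j) - \mu_k^w(j^*) + 2\bigr)/2$. Hence $\nu_k^w(j) = 2$ exactly when $(\mu_k^w(j),\mu_k^w(j^*)) = (2,0)$; $\nu_k^w(j) = 0$ exactly when $(\mu_k^w(j),\mu_k^w(j^*)) = (0,2)$; and $\nu_k^w(j) \notin \ZZ$ exactly when $\mu_k^w(j) - \mu_k^w(j^*)$ is odd, that is, when $\{\mu_k^w(j),\mu_k^w(j^*)\}$ equals $\{2,1\}$ or $\{1,0\}$. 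Next I would apply the basic inequalities \eqref{st:basic_ineqs} with $d = 2$: since both $A_k$ and $B_k$ are stable under $j \mapsto j^*$, the pair $\{j,j^*\}$ lies entirely in one part, and the sum $\mu_k^w(j)+\mu_k^w(j^*)$ is constrained to $\{2,3\}$ for $j \in A_k$ or to $\{1,2\}$ for $j \in B_k$. In particular, the pairs of values $(2,2)$ and $(0,0)$ cannot occur.

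Let $N_{ab}$ denote the number of unordered pairs $\{j,j^*\}$ with $\{\mu_k^w(j), \mu_k^w(j^*)\} = \{a,b\}$; by the previous step, only $N_{20}$, $N_{21}$, $N_{11}$, $N_{10}$ can be nonzero. Counting then gives $c_k^w = N_{20} + N_{21}$, while the case analysis of the second paragraph yields $\#\{j : \nu_k^w(j) = 2\} = \#\{j : \nu_k^w(j) = 0\} = N_{20}$ (each $\{2,0\}$-pair contributes exactly one $j$ of each kind) and $\#\{j : \nu_k^w(j) \notin \ZZ\} = 2(N_{21} + N_{10})$ (both $j$'s in a $\{2,1\}$- or $\{1,0\}$-pair qualify). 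Finally, Lemma \ref{st:cardE=cardG} forces $N_{21} = N_{10}$, so dividing $\#\{j : \nu_k^w(j) \notin \ZZ\}$ by $4$ produces $N_{21}$, and summing with $N_{20}$ recovers $c_k^w$. I do not anticipate any genuine obstacle: the argument is a clean case split made available by \eqref{st:basic_ineqs} and the balancing identity \eqref{st:cardE=cardG}.
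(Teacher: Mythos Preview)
Your proof is correct and follows essentially the same route as the paper: both classify indices by the unordered pair $\{\mu_k^w(j),\mu_k^w(j^*)\}$, use the basic inequalities to rule out $(2,2)$ and $(0,0)$, and invoke Lemma~\ref{st:cardE=cardG} for the balancing identity. The paper phrases this via the named sets $E$, $F$, $G$, $H$ from \eqref{disp:EFGH} (so that your $N_{20}$, $N_{21}$, $N_{10}$ are $\#G$, $\#E$, $\#F$), but the content is identical.
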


\begin{proof}
Since $\mu_k^w$ satisfies \eqref{it:SP1}, we have $\nu_k^w + (\nu_k^w)^* = \mathbf 2$.  This implies the second equality.  To prove the first, consider the sets defined in \eqref{disp:EFGH}, where we replace $\mu_i^{\mathbf v}$ with $\mu_k^w$.  We have
\[
   \bigl\{\,j \bigm| \nu^w_k(j) = 2\,\bigr\} = G
   \quad\text{and}\quad
   \bigl\{\,j \bigm| \nu^w_k(j) \notin \ZZ\,\bigr\} = E \amalg E^* \amalg F \amalg F^*.
\]
By \eqref{st:cardE=cardG} $\#E = \#F$, and the lemma follows.
\end{proof}

We now introduce the following conditions on $\nu_k^w$ for $0 \leq k \leq m$.

\begin{enumerate}
\renewcommand{\theenumi}{SP\arabic{enumi}$'$}
\item\label{it:SP1'}
   $\nu_k^w + (\nu_k^w)^* = \mathbf 2$ and $\mathbf 0 \leq \nu_k^w \leq \mathbf{2}$.
\item\label{it:SP2'}
   $\#\{\, j\mid \nu^w_k(j) = 2\,\} + \#\{\, j \mid \nu^w_k(j) \notin \ZZ\, \}/4 \leq q$.
\item\label{it:SP3'}
   (spin condition) If $\nu_k^w \in \ZZ^{2m}$ and $\nu_k^w \not\equiv \mu \bmod Q^\vee_{D_m}$, then there exist elements $j_1 \in A_k$ and $j_2 \in B_k$ such that $\nu_k^w(j_1) = \nu_k^w(j_2) = 1$.
\end{enumerate}

\begin{lem}\label{st:SP_SP'_equiv}
Let $0 \leq k \leq m$.
\begin{enumerate}
\renewcommand{\theenumi}{\roman{enumi}}
\item\label{it:SP1_equiv}
   $\mu_k^w$ satisfies \eqref{it:SP1} $\iff$ $\nu_k^w$ satisfies \eqref{it:SP1'}.
\item\label{it:SP2_equiv}
   Suppose that the equivalent conditions in \eqref{it:SP1_equiv} hold.  Then $\mu_k^w$ satisfies \eqref{it:SP2} $\iff$ $\nu_k^w$ satisfies \eqref{it:SP2'}.
\item\label{it:SP3_equiv}
   $\mu_k^w$ satisfies \eqref{it:SP3} $\iff$ $\nu_k^w$ satisfies \eqref{it:SP3'}.
\end{enumerate}
\end{lem}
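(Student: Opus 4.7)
I would prove each of the three equivalences in turn, relying on a single unifying observation together with the inequalities already established. A direct calculation using \eqref{disp:omega_i} shows $\omega_k + (\omega_{-k})^* = \mathbf 0$ for all $k$. Hence, if $w$ defines a $d$-face, then $\mu_k^w + (\mu_{-k}^w)^* = \mathbf d$, and averaging this relation with its $(k,-k)$-swap yields $\nu_k^w + (\nu_k^w)^* = \mathbf d$ as well. Consequently, the first halves of \eqref{it:SP1} and \eqref{it:SP1'} are each equivalent to the single assertion $d = 2$, and both parts of the claim reduce to comparing bounds on $\mu_k^w$ and $\nu_k^w$ in the presence of this common condition.

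For \eqref{it:SP1_equiv}, after reducing to $d = 2$, the direction \eqref{it:SP1} $\Longrightarrow$ \eqref{it:SP1'} is immediate: from $\mu_{-k}^w = \mathbf 2 - (\mu_k^w)^*$ the bound $\mathbf 0 \leq \mu_k^w \leq \mathbf 2$ transfers to $\mu_{-k}^w$, and the bound on $\nu_k^w = (\mu_k^w + \mu_{-k}^w)/2$ follows by averaging. The converse is the main (though still mild) obstacle. For this I would use the formula $\nu_k^w(j) = \tfrac 1 2\bigl(\mu_k^w(j) - \mu_k^w(j^*)\bigr) + 1$, valid when $d = 2$, which turns $\mathbf 0 \leq \nu_k^w \leq \mathbf 2$ into $\lvert \mu_k^w(j) - \mu_k^w(j^*)\rvert \leq 2$. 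Combined with the basic inequalities \eqref{st:basic_ineqs} (giving $\mu_k^w(j) + \mu_k^w(j^*) \in [1,3]$ according as $j \in A_k$ or $B_k$) and the integrality of the entries of $\mu_k^w$, this forces $\mathbf 0 \leq \mu_k^w \leq \mathbf 2$.

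Part \eqref{it:SP2_equiv} is then an immediate restatement: granting \eqref{it:SP1_equiv}, Lemma \eqref{st:c_i_nu_interp} rewrites $c_k^w$ in precisely the form $\#\{j \mid \nu_k^w(j) = 2\} + \#\{j \mid \nu_k^w(j) \notin \ZZ\}/4$ appearing in \eqref{it:SP2'}, so the inequality $c_k^w \leq q$ of \eqref{it:SP2} is verbatim \eqref{it:SP2'}.

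Finally, part \eqref{it:SP3_equiv} is essentially tautological given Lemma \eqref{it:self_dual_conds}: self-duality of $\mu_k^w$ is equivalent to $\nu_k^w \in \ZZ^{2m}$, and in that situation $\nu_k^w = \mu_k^w$. Hence the hypotheses and conclusions of \eqref{it:SP3} and \eqref{it:SP3'} coincide term-for-term, and there is no condition to verify when both fail. The entire proof thus amounts to extracting $d = 2$ from either dual condition and then applying the basic inequalities once.
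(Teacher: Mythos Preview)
Your proof is correct and follows essentially the same approach as the paper's. Both reduce part \eqref{it:SP1_equiv} to the observation that the duality conditions on $\mu_k^w$ and $\nu_k^w$ are each equivalent to $d=2$, then handle the bounds via the formula \eqref{disp:random_nu_i_fmla} together with the basic inequalities \eqref{st:basic_ineqs}; parts \eqref{it:SP2_equiv} and \eqref{it:SP3_equiv} are dispatched in both by citing \eqref{st:c_i_nu_interp} and \eqref{it:self_dual_conds} respectively.
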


Thus $w$ is $\mu$-spin-permissible $\iff$ \eqref{it:SP1'}--\eqref{it:SP3'} hold for all $0 \leq k \leq m$.  We shall find this 
formulation of $\mu$-spin-permissibility 
more convenient to work with than the original one \eqref{def:spin-perm_D}.  We remind the reader that when the equivalent conditions \eqref{it:SP1} and \eqref{it:SP1'} hold and $\nu_k^w \in \ZZ^{2m}$, the condition $\nu_k^w \not\equiv \mu \bmod Q^\vee_{D_m}$ is equivalent to $c_k^w \not\equiv q \bmod 2$ by \eqref{st:c_i_equiv_q}.  We shall usually work with this latter formulation of the spin condition in practice.

\begin{proof}[Proof of \eqref{st:SP_SP'_equiv}]
Assertion \eqref{it:SP2_equiv} is proved by \eqref{st:c_i_nu_interp}, and assertion \eqref{it:SP3_equiv} is proved by \eqref{it:self_dual_conds}.  For \eqref{it:SP1_equiv}, suppose that $w$ defines a $d$-face.  Then $\mu_k^w + (\mu_{-k}^w)^* = \mathbf d$, and
\[
   \nu_k^w + (\nu_k^w)^* = \frac{\mu_k^w + \mu_{-k}^w + (\mu_k^w)^* + (\mu_{-k}^w)^*}2 = \mathbf d.
\]
So the conditions $\mu_k^w + (\mu_{-k}^w)^* = \mathbf 2$ and $\nu_k^w + (\nu_k^w)^* = \mathbf 2$ are equivalent.  Assume that they hold.  If $\mathbf 0 \leq \mu_k^w \leq \mathbf 2$, then this assumption implies that $\mathbf 0 \leq \mu_{-k}^w \leq \mathbf 2$.  Hence $\mathbf 0 \leq \nu_k^w \leq \mathbf 2$.  Conversely, if $\mathbf 0 \leq \nu_k^w \leq \mathbf 2$, then writing $\nu_k^w$ as in \eqref{disp:random_nu_i_fmla} and combining with the basic inequalities \eqref{st:basic_ineqs} (applied with $d = 2$), we easily get $\mathbf 0 \leq \mu_k^w \leq \mathbf 2$.
\end{proof}

\subsection{\texorpdfstring{$\mu$}{mu}-permissibility}\label{ss:mu-perm}
We continue with our element $w = t_{\nu_0^w} \sigma \in \wt W_{D_m}$.  In this subsection we shall characterize the \emph{$\mu$-permissible set} of Kottwitz--Rapoport (see \s\ref{ss:adm_perm_sets}) inside $\wt W_{D_m}$ in terms of a subset of the conditions that define the $\mu$-spin-permissible set.  We begin with a lemma which gives a fairly direct geometric interpretation of conditions \eqref{it:SP1'} and \eqref{it:SP2'}.  Let $\Conv(S_{2m}^\circ \mu)$ denote the convex hull in $\A_{D_m}$ of the Weyl group orbit $S_{2m}^\circ \mu$.

\begin{lem}\label{st:SP1-2'_conv_cond}
Let $0 \leq k \leq m$.  Then $\nu_k^w$ satisfies \eqref{it:SP1'} and \eqref{it:SP2'} $\iff$ $\nu_k^w \in \Conv(S_{2m}^\circ\mu)$.
\end{lem}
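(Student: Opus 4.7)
The plan is to reformulate \eqref{it:SP2'} as a single convex inequality and then to invoke a standard description of a Weyl-orbit polytope. First I would show that, granted \eqref{it:SP1'}, condition \eqref{it:SP2'} is equivalent to $f(\nu_k^w)\leq 2q$, where $f(v):=\sum_j|v(j)-1|$. This is bookkeeping on the only possible forms of a coordinate pair $\bigl(\nu_k^w(j),\nu_k^w(j^*)\bigr)$ allowed by \eqref{it:SP1'}, namely (up to order) $\{0,2\}$, $\{1/2,3/2\}$, and $\{1,1\}$, contributing $2$, $1$, and $0$ to $f(\nu_k^w)$ respectively; it follows that $f(\nu_k^w)$ is exactly twice the left-hand side of \eqref{it:SP2'}. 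The function $f$ is convex on $\RR^{2m}$, and takes the constant value $2q$ on $S_{2m}^\circ\mu$ because $\mu$ has $q$ entries equal to $2$, $q$ equal to $0$, and the rest equal to $1$. Condition \eqref{it:SP1'} is itself convex and preserved by $S_{2m}^\circ$, so \eqref{it:SP1'} and $f\leq 2q$ both hold on any convex combination of Weyl conjugates of $\mu$; this settles the implication $\Longleftarrow$.

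For the forward direction $\Longrightarrow$, I would pass to the translated variable $w := \nu_k^w - \mathbf 1$, which by \eqref{it:SP1'} satisfies $w+w^* = \mathbf 0$ and $-\mathbf 1\leq w\leq\mathbf 1$, and which by the reformulation of \eqref{it:SP2'} satisfies $\sum_{i=1}^m|w(i)|\leq q$. Because $q\leq m-1$, the nontrivial class in $S_{2m}^*/S_{2m}^\circ$ (represented by the transposition $(m,m+1)$) fixes $\mu$, so $S_{2m}^\circ\mu = S_{2m}^*\mu$, and its projection to the first $m$ coordinates consists of all signed permutations of $(1^{(q)},0^{(m-q)})$. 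The lemma then reduces to the purely convex-geometric assertion that the polytope
\[
   P_m^q := \Bigl\{\,x\in\RR^m \Bigm| |x_i|\leq 1 \text{ for all } i,\ \sum_{i=1}^m|x_i|\leq q\,\Bigr\}
\]
coincides with the convex hull of these signed permutations.

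This last identification --- the intersection of an $\ell_\infty$-cube with an $\ell_1$-ball as the convex hull of its vertices --- is the main obstacle; it is classical, and I would adapt its proof by induction on the number of fractional coordinates of $x\in P_m^q$, namely those $i$ with $|x_i|\in(0,1)$. If $x$ has no fractional coordinates and $\sum_{i=1}^m|x_i| = q$, then $x$ is itself an orbit point. Otherwise I would locate two fractional coordinates with which to symmetrically perturb $x$ (or a single fractional coordinate when $\sum_{i=1}^m|x_i|<q$, so that the $\ell_1$-constraint is slack) and thereby express $x$ as the midpoint of two elements of $P_m^q$ each having strictly fewer fractional coordinates, then iterate. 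This presents $\nu_k^w$ as a convex combination of Weyl conjugates of $\mu$ in finitely many steps.
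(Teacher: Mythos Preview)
Your argument is sound, with one small lacuna in the sketched induction: the inductive step, whether perturbing two fractional coordinates or one, presupposes at least one fractional coordinate, so the case where all $|x_i|\in\{0,1\}$ yet $\sum_i|x_i|<q$ (e.g.\ $x=0$ when $q\geq 1$) is not covered. This is easily repaired---choose $q-\sum_i|x_i|$ of the zero coordinates and average over all $\pm 1$ assignments there, each summand then being an orbit point---so the proof stands. (A side remark: you reuse the symbol $w$ for $\nu_k^w-\mathbf 1$, clashing with the ambient Iwahori--Weyl element.)

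Your route is genuinely different from the paper's. The paper invokes a half-space description of $\Conv(S_{2m}^\circ\mu)$, namely $\{v\in\A_{D_m}:v+v^*=\mathbf 2,\ \lambda\cdot v\leq\lambda_i\cdot\mu\text{ for all }i\text{ and }\lambda\in S_{2m}^*\lambda_i\}$ with $\lambda_i=\bigl(1^{(i)},0^{(2m-i)}\bigr)$, quoted from earlier work. For the forward direction it then verifies these inequalities not for $\nu_k^w$ itself but for the integer vectors $\mu_k^w$ and $\mu_{-k}^w$ separately (via \eqref{st:SP_SP'_equiv}) and averages; for the backward direction it uses one well-chosen inequality against the sets $E,F,G,H$ of \eqref{disp:EFGH}. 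Your key observation---that under \eqref{it:SP1'} and the half-integrality of $\nu_k^w$, condition \eqref{it:SP2'} is exactly $\sum_j|\nu_k^w(j)-1|\leq 2q$---makes the backward implication immediate by convexity and reduces the forward one to a bare polytope identity, with no reference to the auxiliary $\mu_{\pm k}^w$. The paper's approach buys a ready-made lemma and compatibility with its $\mu_k^w$-framework; yours is more self-contained and conceptually direct.
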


\begin{proof}
This is proved in the proof of \cite{sm11d}*{Prop.\ 6.6.2}, but for convenience we shall make the argument explicit here.  Recall the group $S_{2m}^*$ from the Introduction.  Since $q < m$, we have $S_{2m}^\circ \mu = S_{2m}^*\mu$.  For $1 \leq i \leq m$, let $\lambda_i = \bigl(1^{(i)},0^{(2m-i)}\bigr)$.  Then the convex hull $\Conv(S_{2m}^\circ \mu) = \Conv(S_{2m}^* \mu)$ admits the description
\begin{equation}\label{disp:Conv_explicit}
   \Conv(S_{2m}^\circ \mu) =
   \biggl\{v \in \A_{D_m} \biggm|
   \begin{varwidth}{\textwidth}
      \centering
      $v + v^* = \mathbf 2$ and $\lambda \cdot v \leq \lambda_i \cdot \mu$\\
      for all $1 \leq i \leq m$ and all $\lambda \in S_{2m}^*\lambda_i$
   \end{varwidth}\biggr\},
\end{equation}
where we use the standard dot product on $\RR^{2m}$; see for example \cite{sm11d}*{Lem.\ 6.6.1}.

Now suppose that $\nu_k^w$ satisfies \eqref{it:SP1'} and \eqref{it:SP2'}.  Then, using \eqref{st:SP_SP'_equiv}, $\mu_k^w$ and $\mu_{-k}^w$ each have all entries contained in $\{0,1,2\}$ and at most $q$ entries equal to $2$.  Hence
\[
   \lambda \cdot \mu_k^w,\ \lambda \cdot \mu_{-k}^w \leq \lambda_i \cdot \mu
   \quad\text{for all}\quad
   1 \leq i \leq m\ \text{and}\ \lambda \in S_{2m}^*\lambda_i.
\]
Hence
\[
   \lambda \cdot \nu_k^w = \lambda \cdot \biggl(\frac{\mu_k^w + \mu_{-k}^w} 2\biggr) \leq \lambda_i \cdot \mu
   \quad\text{for all}\quad
   1 \leq i \leq m\ \text{and}\ \lambda \in S_{2m}^* \lambda_i.  
\]
Hence $\nu_k^w \in \Conv(S_{2m}^\circ \mu)$.

Conversely, suppose that $\nu_k^w \in \Conv(S_{2m}^\circ\mu)$.  Then $\nu_k^w$ clearly satisfies \eqref{it:SP1'}.  Next recall the sets $E$, $F$, $G$, and $H$ from \eqref{disp:EFGH}, where $\mu_k^w$ replaces $\mu_i^{\mathbf v}$, and again denote their respective cardinalities by $e$, $f$, $g$, and $h$.  By \eqref{st:cardE=cardG} $e = f$, and by definition $c_k^w = e + g = f + g$.  Let $l := e + f + g$.  If $l < q$, then trivially $c_k^w < q$.  If $l \geq q$, then
\begin{align*}
   \sum_{j \in E \amalg F^* \amalg G}\nu_k^w(j) 
      = \frac 3 2 e + \frac 3 2 f + 2 g &= 3e + 2g\\
      &\leq \lambda_{l} \cdot \mu = 2q + (l - q) = q + 2e + g,
\end{align*}
where the inequality in the middle holds because $\nu_k^w \in \Conv(S_{2m}^\circ \mu)$.  Hence
\[
   c_k^w = e + g \leq q,
\]
as desired.
\end{proof}


Now recall the vertices $a_{0'}$ and $a_{m'}$ from \eqref{disp:D_m_verts}, and in analogy with \eqref{def:nu_i^w}, define
\[
   \nu_{0'}^w := wa_{0'} - a_{0'}
   \quad\text{and}\quad
   \nu_{m'}^w := wa_{m'} - a_{m'}.
\]

\begin{lem}\label{st:sickofnaminglemmas}\hfill
\begin{enumerate}
\renewcommand{\theenumi}{\roman{enumi}}
\item\label{it:0'}
   $\nu_0^w$ and $\nu_1^w$ are $\mu$-spin-permissible $\iff$ $w \equiv t_{\mu} \bmod W_{\aff,D_m}$ and $\nu_0^w$, $\nu_{0'}^w \in \Conv(S_{2m}^\circ \mu)$.
\item\label{it:m'}
   $\nu_{m-1}^w$ and $\nu_m^w$ are $\mu$-spin-permissible $\iff$ $w \equiv t_{\mu} \bmod W_{\aff,D_m}$ and $\nu_m^w$, $\nu_{m'}^w \in \Conv(S_{2m}^\circ \mu)$.
\end{enumerate}
\end{lem}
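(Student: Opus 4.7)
The geometric heart of the argument is that the vertex $a_{0'}$ lies on the line through $a_0$ and $a_1$, specifically $a_{0'} = 2a_1 - a_0$; by affineness of $w$ this yields the key identity
\[
    \nu_{0'}^w = 2\nu_1^w - \nu_0^w,
\]
and analogously $\nu_{m'}^w = 2\nu_{m-1}^w - \nu_m^w$. Combining the recursion $\mu_1^w = \mu_0^w + e_1 - e_{\sigma(1)}$ with the duality $\mu_{-1}^w = \mathbf 2 - (\mu_1^w)^*$, I would next derive the explicit formula $\nu_{0'}^w = \mu_0^w + \alpha^\vee_{1,\sigma(1)}$, with the convention $\alpha^\vee_{1,1} := 0$, which organizes the proof into three sub-cases according to whether $\sigma(1)$ equals $1$, $1^*$, or neither. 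The membership statements about $\nu_0^w$ and $\nu_{0'}^w$ would then be recast, via \eqref{st:SP1-2'_conv_cond}, as verifications of \eqref{it:SP1'} and \eqref{it:SP2'} for these two vectors.

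For the forward direction of \eqref{it:0'}, condition \eqref{it:SP3'} on $\nu_0^w$ forces $\nu_0^w \equiv \mu \bmod Q^\vee_{D_m}$ (since $A_0 = \emptyset$), giving $w \equiv t_\mu \bmod W_{\aff,D_m}$; the membership $\nu_0^w \in \Conv(S_{2m}^\circ\mu)$ then follows immediately from \eqref{st:SP1-2'_conv_cond}. The substantive work lies in verifying \eqref{it:SP1'} and \eqref{it:SP2'} for $\nu_{0'}^w$. Entry bounds drop out of \eqref{it:SP1} for $\mu_0^w$ and $\mu_1^w$ in all sub-cases except $\sigma(1) = 1^*$ with $\mu_0^w(1) = 1$; I would exclude this by invoking the spin condition \eqref{it:SP3} on $\mu_1^w$, noting that $\mu_1^w$ is then self-dual with $c_1^w = c_0^w + 1 \not\equiv q \bmod 2$, so some $j_1 \in A_1 = \{1,2m\}$ must satisfy $\mu_1^w(j_1) = 1$, which is impossible since $\mu_1^w(1) = 2$ and $\mu_1^w(2m) = 0$. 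For \eqref{it:SP2'}, a short enumeration of $\bigl(\mu_0^w(1), \mu_0^w(\sigma(1))\bigr) \in \{0,1\} \times \{1,2\}$ in the sub-case $\sigma(1) \neq 1, 1^*$ shows the count of $2$'s in $\nu_{0'}^w$ changes by $0$, $-2$, $+2$, or $0$ respectively; the only problematic case is $(1,1)$, where \eqref{it:SP2} on $\mu_1^w$ gives $c_0^w + 1 \leq q$ and the parity constraint $c_0^w \equiv q \bmod 2$ upgrades this to $c_0^w + 2 \leq q$, as required.

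For the reverse direction, $\nu_0^w$ is $\mu$-spin-permissible by combining \eqref{st:SP1-2'_conv_cond} with $w \equiv t_\mu$. For $\nu_1^w$, conditions \eqref{it:SP1'} and \eqref{it:SP2'} follow from $\nu_1^w = (\nu_0^w + \nu_{0'}^w)/2$ being a convex combination. The spin condition \eqref{it:SP3'} is vacuous unless $\nu_1^w \in \ZZ^{2m}$, which via the formula for $\nu_{0'}^w$ forces $\sigma(1) \in \{1, 1^*\}$. The case $\sigma(1) = 1$ gives $\nu_1^w = \nu_0^w \equiv \mu$ directly; for $\sigma(1) = 1^*$, the convex hull condition on $\nu_{0'}^w$ forces $\mu_0^w(1) = 0$, so $\nu_1^w(1) = 1$ supplies $j_1 \in A_1$, and a pigeonhole argument based on $c_0^w \leq q \leq m - 1$ produces $j_2 \in B_1 = \{2,\dotsc,2m-1\}$ with $\nu_1^w(j_2) = \mu_0^w(j_2) = 1$. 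Part \eqref{it:m'} is proved identically after using $a_{m'} = 2a_{m-1} - a_m$ and exchanging the roles of $(A_0, B_0)$ with $(A_m, B_m)$. The main obstacle is the forward-direction case analysis for $\nu_{0'}^w$: the naive entry-bound and cardinality estimates are slightly too weak, and one must weave in the parity and spin conditions on $\mu_0^w$ and $\mu_1^w$ simultaneously to close the gap.
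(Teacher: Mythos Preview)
Your proposal is correct and follows essentially the same route as the paper's proof: the paper sets $\varepsilon := e_1 - e_{\sigma(1)} + e_{\sigma(2m)} - e_{2m}$ (which is your $\alpha^\vee_{1,\sigma(1)}$), observes $\nu_{0'}^w = \nu_0^w + \varepsilon$ and $\nu_1^w = \nu_0^w + \tfrac12\varepsilon$, and then runs the identical three-way case split on $\sigma(1)$, using the spin condition on $\nu_1^w$ to rule out $\sigma(1)=1^*$ with $\nu_0^w(1)=1$ and the parity upgrade $c_0^w \equiv q \bmod 2$ to push $c_1^w \leq q$ to $c_{0'}^w \leq q$ in the $(1,1)$ sub-case. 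Your organization via the explicit $\{0,1\}\times\{1,2\}$ enumeration is a tidy repackaging of the same computation.
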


\begin{proof}
We shall just prove \eqref{it:0'}; \eqref{it:m'} is proved in a completely analogous way, with $\nu_{m-1}^w$ in the role of $\nu_1^w$, $\nu_m^w$ in the role of $\nu_0^w$, and $\nu_{m'}^w$ in the role of $\nu_{0'}^w$.  Let
\[
  \varepsilon := e_1 - e_{\sigma(1)} + e_{\sigma(2m)} - e_{2m}.
\]
Then
\[
   \nu_{0'}^w = \nu_0^w + \varepsilon
   \quad\text{and}\quad
   \nu_1^w = \nu_0^w + \frac 1 2 \varepsilon.
\]
The congruence $w \equiv t_\mu \bmod W_{\aff, D_m}$ is equivalent to $\nu_0^w \equiv \mu \bmod Q^\vee_{D_m}$, and we shall assume that both hold throughout the proof.

Let us first prove the implication $\Longleftarrow$.  Lemma \ref{st:SP1-2'_conv_cond} gives us what we need to conclude that $\nu_0^w$ is $\mu$-spin-permissible.  Since $\nu_1^w$ is the midpoint of $\nu_0^w$ and $\nu_{0'}^w$, we have $\nu_1^w \in \Conv(S_{2m}^\circ \mu)$, and we conclude from \eqref{st:SP1-2'_conv_cond} that $\nu_1^w$ satisfies \eqref{it:SP1'} and \eqref{it:SP2'}.

To see that $\nu_1^w$ satisfies the spin condition, suppose that $\nu_1^w \in \ZZ^{2m}$.  Then, since $\nu_0^w \in \ZZ^{2m}$, we must have $\varepsilon = 0$ or $\varepsilon = 2e_1 - 2e_{2m}$.  In the former case $\nu_1^w = \nu_0^w$ certainly satisfies \eqref{it:SP3'}.  In the latter case, since $\nu_0^w$, $\nu_{0'}^w \in \Conv(S_{2m}^\circ)$, we must have $\nu_0^w(1) = 0$ and $\nu_{0'}^w(1) = 2$.  Hence $\nu_1^w(1) = 1$.  Since $q < m$, there also exists a $j$ such that $\nu_0^w(j) = 1$.  Evidently $j \neq 1$, $2m$, whence $\nu_1(j) = 1$.  Since $1 \in A_1$ and $j \in B_1$, we conclude that $\nu_1^w$ satisfies \eqref{it:SP3'}.

We now prove the implications $\Longrightarrow$.  Lemma \ref{st:SP1-2'_conv_cond} immediately gives $\nu_0^w \in \Conv(S_{2m}^\circ\mu)$.  It remains to show that $\nu_{0'}^w \in \Conv(S_{2m}^\circ\mu)$.  For this, since plainly $\nu_{0'}^w + (\nu_{0'}^w)^* = \mathbf 2$ and $\nu_{0'}^w \in \ZZ^{2m}$, it is immediate from the description of the convex hull \eqref{disp:Conv_explicit} that it suffices to show that $\mathbf 0 \leq \nu_{0'}^w \leq \mathbf 2$ and $c_{0'}^w \leq q$, where
\[
   c_{0'}^w := \bigl\{\, j \bigm| \nu_{0'}^w(j) = 2 \,\bigr\}.
\]
To do so, we shall enter into a case analysis based on the possibilities for $\sigma(1)$.

If $\sigma(1) = 1$, then $\varepsilon = 0$ and $\nu_{0'}^w = \nu_0^w \in \Conv(S_{2m}^\circ \mu)$.  If $\sigma(1) = 2m$, then $\varepsilon = 2e_1 - 2e_{2m}$.  Since $\nu_1^w = \nu_0^w + e_1 - e_{2m}$ satisfies \eqref{it:SP1'}, we must have $\nu_0^w(1) \neq 2$.  We claim that furthermore $\nu_0^w(1) \neq 1$.  For evidently $\nu_1^w \in \ZZ^{2m}$, and $c_1^w = c_0^w \pm 1 \not\equiv q \bmod 2$ by \eqref{st:c_i_equiv_q}.  If $\nu_0^w(1) = 1$, then $\nu_1^w(1) = 2$ and $\nu_1^w(2m) = 0$, so that $\nu_1^w(j) \neq 1$ for all $j \in A_1$, in violation of the spin condition.  Thus the only possibility is that $\nu_0^w(1) = 0$, and we see by inspection that $\mathbf 0 \leq \nu_{0'}^w \leq \mathbf 2$ and $c_{0'}^w = c_0^w \leq q$.

Finally suppose that $\sigma(1) \neq 1,$ $2m$.  Since $\nu_1^w$ satisfies \eqref{it:SP1'}, we have $\nu_0^w(1) \in \{0,1\}$ and $\nu_0^w\bigl(\sigma(1)\bigr) \in \{1,2\}$.  Hence $\mathbf 0 \leq \nu_{0'}^w \leq \mathbf 2$.  Moreover, by \eqref{st:c_i_change_lem} $c_1^w \in \{c_0^w-1, c_0^w, c_0^w+1\}$, and one sees easily that $c_{0'}^w = c_0^w + 2(c_1^w - c_0^w)$.  So if $c_1^w \leq c_0^w$, then certainly $c_{0'}^w \leq c_0^w \leq q$.  If $c_1^w = c_0^w + 1$, then the conditions $c_1^w \leq q$ and $c_0^w \equiv q \bmod 2$ give us $c_0^w + 1 = c_1^w < q$.  Hence $c_{0'}^w = c_0^w + 2 \leq q$, as desired.
\end{proof}

Our characterization of the $\mu$-permissible set in $\wt W_{D_m}$ is as follows.

\begin{prop}\label{st:mu-perm_D}
Let $\mu$ be the cocharacter \eqref{disp:mu_D}.  Then
$w$ is $\mu$-permissible $\iff$ conditions \eqref{it:SP1'} and \eqref{it:SP2'} hold for all $0 \leq k \leq m$, and condition \eqref{it:SP3'} holds for $k = 0$, $1$, $m-1$, $m$.
\end{prop}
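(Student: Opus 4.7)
My plan is to reformulate $\mu$-permissibility as a convex-hull condition on the vectors $\nu_k^w$ at the minimal facets of $A_{D_m}$, and then pull \eqref{it:SP1'}--\eqref{it:SP3'} out of Lemmas \ref{st:SP1-2'_conv_cond} and \ref{st:sickofnaminglemmas}.  Since the stabilizer $W_{\mathbf{f_1}}=W_{\mathbf{f_2}}$ is trivial for the Iwahori case $\mathbf{f_1}=\mathbf{f_2}=A_{D_m}$, Definition \ref{def:mu-perm} says $w$ is $\mu$-permissible exactly when $w \equiv t_\mu \bmod W_{\aff,D_m}$ and $wv - v \in \Conv(S_{2m}^\circ \mu)$ for every $v \in A_{D_m}$.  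By the convexity reduction noted just after \eqref{def:mu-perm}, the second condition need only be checked at the minimal facets, and by \eqref{disp:D_m_verts} these are precisely the lines through $a_k$ for $k \in \{0,\,0',\,2,3,\dotsc,m-2,\,m,\,m'\}$ --- conspicuously \emph{missing} $k=1$ and $k=m-1$, since $a_1,a_{m-1}$ lie in $\overline{A_{D_m}}$ but are not vertices there.  Since $wv - v$ is constant along $\RR\cdot(1,\dotsc,1)$, the criterion becomes $\nu_k^w \in \Conv(S_{2m}^\circ\mu)$ for $k$ in the indexing set above.

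For the implication $\Longleftarrow$, I will first invoke Lemma \ref{st:sickofnaminglemmas}\eqref{it:0'} applied to the spin-permissible pair $\nu_0^w,\nu_1^w$ to obtain simultaneously the translation congruence $w \equiv t_\mu \bmod W_{\aff,D_m}$ and the membership $\nu_{0'}^w \in \Conv(S_{2m}^\circ\mu)$; a symmetric application of \eqref{st:sickofnaminglemmas}\eqref{it:m'} yields $\nu_{m'}^w \in \Conv(S_{2m}^\circ\mu)$.  For the remaining minimal-facet indices $k \in \{0,2,3,\dotsc,m-2,m\}$, Lemma \ref{st:SP1-2'_conv_cond} translates \eqref{it:SP1'} and \eqref{it:SP2'} directly into $\nu_k^w \in \Conv(S_{2m}^\circ\mu)$, and the criterion of the first paragraph then supplies $\mu$-permissibility.

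For the converse $\Longrightarrow$, $\mu$-permissibility gives $w \equiv t_\mu \bmod W_{\aff,D_m}$ together with $\nu_k^w \in \Conv(S_{2m}^\circ\mu)$ for all minimal-facet indices $k$.  Lemma \ref{st:SP1-2'_conv_cond} extracts \eqref{it:SP1'} and \eqref{it:SP2'} at $k \in \{0,2,3,\dotsc,m-2,m\}$ at once.  The delicate step --- the only real obstacle, reflecting the asymmetry of the $D$-type alcove --- is recovering \eqref{it:SP1'} and \eqref{it:SP2'} at the non-vertex indices $k=1$ and $k=m-1$; for this I will use the midpoint identities $\nu_1^w = \tfrac 1 2(\nu_0^w + \nu_{0'}^w)$ and $\nu_{m-1}^w = \tfrac 1 2(\nu_m^w + \nu_{m'}^w)$, which force those vectors into the convex hull $\Conv(S_{2m}^\circ\mu)$ as well, and hence yield \eqref{it:SP1'}, \eqref{it:SP2'} via a second invocation of Lemma \ref{st:SP1-2'_conv_cond}.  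Finally, the spin condition \eqref{it:SP3'} at $k = 0,m$ is automatic: its hypothesis $\nu_k^w \not\equiv \mu \bmod Q^\vee_{D_m}$ fails because $w \equiv t_\mu$, so the implication is vacuous (as it must be, since $A_0 = B_m = \emptyset$).  At the remaining values $k = 1,m-1$, the spin condition is exactly the $\mu$-spin-permissibility output of Lemmas \ref{st:sickofnaminglemmas}\eqref{it:0'} and \eqref{it:m'}, whose hypotheses we have just verified.
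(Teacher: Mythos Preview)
Your proof is correct and follows the same approach as the paper, which simply says the statement ``is obvious from \eqref{st:SP1-2'_conv_cond} and \eqref{st:sickofnaminglemmas}'' once one knows the minimal facets of $A_{D_m}$ are those through $a_0, a_{0'}, a_2, \dotsc, a_{m-2}, a_m, a_{m'}$.  One small redundancy: in the $\Longrightarrow$ direction your midpoint argument for \eqref{it:SP1'} and \eqref{it:SP2'} at $k=1,\,m-1$ is unnecessary, since your subsequent appeal to Lemma \ref{st:sickofnaminglemmas} already delivers all three conditions \eqref{it:SP1'}--\eqref{it:SP3'} at those indices in one stroke.
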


\begin{proof}
Since each minimal facet of $A_{D_m}$ contains exactly one of the vertices $a_0$, $a_{0'}$, $a_2$, $a_3,\dotsc,$ $a_{m-2}$, $a_m$, $a_{m'}$, this is obvious from \eqref{st:SP1-2'_conv_cond} and \eqref{st:sickofnaminglemmas}.
\end{proof}

\begin{eg}\label{eg:not_mu-adm_D}
The proposition affords us a ready supply of elements in $\wt W_{D_m}$ that are $\mu$-permissible but not $\mu$-spin-permissible.  Once we have shown later in \eqref{st:mu-adm=>mu-spin-perm} that $\mu$-admissibility implies $\mu$-spin-permissibility, this will give us examples of elements that are $\mu$-permissible but not $\mu$-admissible.  For example, take $m = 4$ and $\mu = \bigl(2^{(3)},1,1,0^{(3)}\bigr)$, and let $w = t_{(2,1^{(6)},0)}(27)(36)$, where we use cycle notation to denote elements in the Weyl group.  Then one readily verifies, using \eqref{st:mu-perm_D}, that $w$ is $\mu$-permissible.  However
\[
   \nu_2^w = (2,2,1,1,1,1,0,0) \not\equiv \mu \bmod Q^\vee_{D_m},
\]
in violation of the spin condition.  This example generalizes in an obvious way to show that the $\mu$-admissible and $\mu$-permissible sets in $\wt W_{D_m}$ differ whenever $m \geq 4$ and $q \geq 3$ (still with $q < m$, of course).

Note that for $m \geq 4$,
\[
   \bigl(2^{(m-1)},1,1,0^{(m-1)}\bigr) = \bigl(1^{(m)},0^{(m)}\bigr)
       + \bigl(1^{(m-1)},0,1,0^{(m-1)}\bigr)
\]
is a sum of two dominant minuscule cocharacters.  Thus we get a counterexample to Rapoport's conjecture \cite{rap05}*{\s3, p.~283} that $\mu$-admissibility and $\mu$-permissibility are equivalent whenever $\mu$ is a sum of dominant minuscule cocharacters.

Two further remarks on this point are in order.  First, our counterexample aside, Rapoport's conjecture is known to hold true when $\mu$ \emph{itself} is minuscule and the root datum involves only types $A$, $B$, $C$ and $D$; see \cite{prs?}*{Prop.\ 4.4.5} or \cite{sm11c}*{Th.\ 2.2.1} for an overview.  Moreover, no counterexamples to the conjecture in other types are known when $\mu$ is minuscule.  Second, something of a replacement for Rapoport's conjecture in general is proposed in \cite{prs?}*{Conj.\ 4.5.3} in terms of the notion of \emph{vertexwise admissibility}.  See the discussion in \cite{prs?}*{\s4.5}.  We shall show in \eqref{st:vert-adm_D} that $\mu$-admissibility is equivalent to $\mu$-vertexwise admissibility for the cocharacter $\mu$ \eqref{disp:mu_D}.
\end{eg}

As a counterpoint to the example just given, we have the following result. 

\begin{prop}\label{st:exceptional_adm=perm}
Let $\mu$ be the cocharacter \eqref{disp:mu_D}, and suppose that $q \leq 2$.  Then $w$ is $\mu$-admissible $\iff$ $w$ is $\mu$-permissible.
\end{prop}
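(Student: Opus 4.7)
The plan is to combine Theorem~\ref{st:sp-perm=adm_D} with a direct verification that, for $q \leq 2$, the spin condition \eqref{it:SP3'} at the ``intermediate'' vertices $a_k$ with $2 \leq k \leq m-2$ is automatic given the other conditions built into $\mu$-permissibility.  The direction $\mu$-admissible $\Rightarrow$ $\mu$-permissible is the general Kottwitz--Rapoport implication recalled in the introduction, so only the reverse direction requires work.

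By Theorem~\ref{st:sp-perm=adm_D} and Lemma~\ref{st:SP_SP'_equiv}, proving $\mu$-admissibility amounts to verifying \eqref{it:SP1'}, \eqref{it:SP2'}, and \eqref{it:SP3'} at each $0 \leq k \leq m$; and by Proposition~\ref{st:mu-perm_D}, $\mu$-permissibility already supplies \eqref{it:SP1'} and \eqref{it:SP2'} for all $k$, together with \eqref{it:SP3'} at $k \in \{0,1,m-1,m\}$.  So it remains to verify \eqref{it:SP3'} at each intermediate $k$ with $2 \leq k \leq m-2$.  I would assume $\nu_k^w \in \ZZ^{2m}$ (otherwise \eqref{it:SP3'} is vacuous), so that by Lemma~\ref{it:self_dual_conds} the vector $\nu_k^w = \mu_k^w$ has entries in $\{0,1,2\}$ with $c_k^w$ twos, $c_k^w$ zeros, and $2m-2c_k^w$ ones, and by Lemma~\ref{st:c_i_equiv_q} I would further assume $c_k^w \not\equiv q \pmod 2$, since otherwise $\nu_k^w \equiv \mu \bmod Q_{D_m}^\vee$ and \eqref{it:SP3'} is again vacuous.

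A brief case analysis on $q \leq 2$ then finishes the job.  For $q = 0$ the bound $c_k^w \leq q$ forces $c_k^w = 0 \equiv q \pmod 2$, so there is nothing to check.  For $q = 1$ the only nontrivial case is $c_k^w = 0$, whence $\nu_k^w = \mathbf 1$ and any $j_1 \in A_k$, $j_2 \in B_k$ serve (both sets are nonempty for $1 \leq k \leq m-1$).  For $q = 2$ the only nontrivial case is $c_k^w = 1$, so the non-$1$ entries of $\nu_k^w$ lie at a single pair $\{j_0, j_0^*\}$; since $|A_k| = 2k \geq 4$ and $|B_k| = 2m - 2k \geq 4$ for $2 \leq k \leq m-2$, while only two positions of $\nu_k^w$ differ from $1$, both $A_k$ and $B_k$ must contain a position of value $1$, supplying the required $j_1$ and $j_2$.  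No step is expected to present a serious obstacle; the argument is sharp in the sense that for $q \geq 3$ one can instead have $c_k^w = 2$ with all four non-$1$ positions filling $A_k$, which is precisely the obstruction exhibited by Example~\ref{eg:not_mu-adm_D}.
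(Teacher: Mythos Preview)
Your proof is correct and follows essentially the same approach as the paper: reduce via Proposition~\ref{st:mu-perm_D} and Theorem~\ref{st:sp-perm=adm_D} to checking \eqref{it:SP3'} at $2 \leq k \leq m-2$, then observe that $c_k^w \leq 1$ forces at most two non-$1$ entries while $|A_k|, |B_k| \geq 4$. The paper treats $q \in \{1,2\}$ uniformly via the inequality $c_k^w < q$ rather than splitting by parity, but the counting argument is identical.
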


\begin{proof}
By Kottwitz--Rapoport \cite{kottrap00}*{11.2}, we need only prove the implication $\Longleftarrow$.  We shall do so by assuming the equivalence of $\mu$-admissibility and $\mu$-spin-permis\-sibil\-i\-ty asserted in \eqref{st:sp-perm=adm_D}; the proof of this equivalence will be completed in \s\ref{ss:sp-perm=>adm_D} and will make no use of the result we are proving now.  So suppose that $w$ is $\mu$-permissible.  If $q = 0$, then the $\mu$-admissible and $\mu$-permissible sets each consist of a single element, namely $t_{(1^{(2m)})}$, and the result is trivial.  If $q$ equals $1$ or $2$, then we must show that $\nu_k^w$ satisfies the spin condition for all $2 \leq k \leq m - 2$.  Fix such a $k$.  Since $\nu_k^{w}$ satisfies \eqref{it:SP1'} and \eqref{it:SP2'}, we immediately reduce to considering the case that $\nu_k^w \in \ZZ^{2m}$ and $c_k^w < q$, i.e.\ $c_k^w$ equals $0$ or $1$.  But our restriction on $k$ means that $A_k$ and $B_k$ each have cardinality at least $4$.  The constraint on $c_k^w$ then forces each to contain an element $j$ such that $\nu_k^w(j) = 1$.
\end{proof}

\begin{rk}
For $m = 2$, $3$, we conclude that $\mu$-permissibility and $\mu$-admissibility in $\wt W_{D_m}$ are equivalent.  Here the root system $\Phi_{D_m}$ is of type $A_1 \times A_1$ and $A_3$, respectively.  Thus this equivalence is a special case of the general result of Haines--Ng\^o \cite{hngo02b}*{Th.\ 1} that $\mu$-admissibility and $\mu$-permissibility are equivalent for all cocharacters $\mu$ in root data of type $A$.
\end{rk}

\subsection{Intervals}\label{ss:intervals}
We continue with our element $w = t_{\nu_0^w}\sigma \in \wt W_{D_m}$.  By the periodicity relation \eqref{disp:mu_per_cond} (applied with $n = 2m$), we may regard the vector $\mu_k^w$ as indexed by $k \in \ZZ/2m\ZZ$.

\begin{defn}
For $j \in \{1,\dotsc,2m\}$ proper, we define $K_j$ to be the set of all $k \in \ZZ/2m\ZZ$ for which $\mu_k^w(j)$ takes its lower value,%
\footnote{This definition is the analog of the set denoted $K_j$ in \cite{kottrap00}*{5.2}, but, in terms of the terminology we've introduced, Kottwitz and Rapoport define $K_j$ to be the set of all $k$ where $\mu_k^w(j)$ takes its \emph{upper} value.  This discrepancy is due to differing choices of base alcove:  our $a_k$'s are located in the closure of the Weyl chamber for $GL_{2m}$ \emph{opposite} the one containing Kottwitz and Rapoport's base alcove.  On the other hand, our definition of $K_j$ is in obvious analogy with the definition of $K_j$ in \cite{sm11b}*{\s8.3}.}
i.e.\ the value $u(j) - 1$. 
\end{defn}

For $j$ proper, the set $K_j$ is an example of an \emph{interval} in $\ZZ/2m\ZZ$, in the terminology of Kottwitz--Rapoport \cite{kottrap00}*{5.4}.  Recall from their paper that if $a$ and $b$ are distinct in $\ZZ/2m\ZZ$, then the interval $[a,b) \subset \ZZ/2m\ZZ$ is defined in the following way.  Let $\wt a$ denote any representative of $a$ in \ZZ, and let $\wt b$ denote the unique representative of $b$ in \ZZ satisfying $\wt a < \wt b < \wt a + 2m$.  Then $[a,b)$ is the image of $\{\, k\in \ZZ \mid \wt a \leq k < \wt b\,\}$ in $\ZZ/2m\ZZ$.  For $j$ proper, it is immediate from the recursion relation \eqref{disp:mu_recursion} that $K_j$ equals the interval $[\sigma^{-1}(j),j)$ in $\ZZ/2m\ZZ$; here and elsewhere we use the same symbols to denote integers and their residues mod $2m\ZZ$.  Of course $K_{j^*} = [\sigma^{-1}(j^*),j^*) = [\sigma^{-1}(j)^*,j^*)$.

Note that if $i$, $i' \in \ZZ/2m\ZZ$ are distinct from $j$, then one of the intervals $[i,j)$ and $[i',j)$ is always contained in the other.

Let us extend the definition of $\nu_k^w$ to all $k \in \ZZ/2m\ZZ$ in the obvious way, by setting $\nu_k^w = \frac 1 2 (\mu_k^w + \mu_{-k}^w)$.  Then for proper fixed $j$, the sets of $k$ for which $\nu_k^w(j)$ takes its various values are easily described in terms of intervals.  Indeed, the formula \eqref{disp:random_nu_i_fmla} makes clear that
\begin{gather*}
   \bigl\{\, k \in \ZZ/2m\ZZ \bigm| \nu_k^w(j) = u(j) - 1 \,\bigl\} = K_j \cap K_{j^*}^c = \bigl[\sigma^{-1}(j),j\bigr) \cap \bigl[j^*,\sigma^{-1}(j)^*\bigr),\\
   \bigl\{\, k \in \ZZ/2m\ZZ \bigm| \nu_k^w(j) = u(j) \,\bigl\} = K_j^c \cap K_{j^*} = \bigl[j,\sigma^{-1}(j)\bigr) \cap \bigl[\sigma^{-1}(i)^*, i^*\bigr),\\
\intertext{and}
   \begin{aligned}
   \biggl\{\, k \in \ZZ/2m\ZZ \biggm| \nu_k^w(j) = u(j) - \frac 1 2 \,\biggl\} &= (K_j \cup K_{j^*}^c) \smallsetminus (K_j \cap K_{j^*}^c)\\
   &= (K_j^c \cup K_{j^*}) \smallsetminus (K_j^c \cap K_{j^*}),
   \end{aligned}
\end{gather*}
where we use a superscript $c$ to denote set complements in $\ZZ/2m\ZZ$.  Of course, some of these sets may be empty.

In practice, rather than having to consider $\nu_k^w(j)$ for all $k \in \ZZ/2m\ZZ$, we shall wish to restrict just to $k$ in the range $0 \leq k \leq m$.  The following is an immediate consequence of our above discussion; it also follow easily from the recursion relation \eqref{disp:nu_recursion}.

\begin{lem}\label{st:nu(j)_vals}
Let $j \in \{1,\dotsc,2m\}$.  Then exactly one of the following three possibilities holds.

\begin{enumerate}
\item
   $j$ is not proper, and $\nu_k^w(j)$ is constant as $k$ varies in the range $0 \leq k \leq m$.
\item
   $\sigma(j) = j^*$, and $\nu_k^w(j)$ takes exactly the two values $u(j)$ and $u(j) - 1$ as $k$ varies in the range $0 \leq k \leq m$.  There is exactly one $k$ in the range $1 \leq k \leq m$ with the property that $\nu_k^w(j) \neq \nu_{k-1}^w(j)$, namely $k = \min\{j,j^*\}$.
\item
   $\sigma(j) \neq j$, $j^*$, and $\nu_k^w(j)$ takes the value $u(j) - \frac 1 2$ for some $k$ in the range $0 \leq k \leq m$.  There are exactly two $k$ in the range $1 \leq k \leq m$ with the property that $\nu_k^w(j) \neq \nu_{k-1}^w(j)$, namely $k = \min\{j,j^*\}$ and $k = \min\{\sigma^{-1}(j), \sigma^{-1}(j)^*\}$.  For each of these two $k$, the values $\nu_{k-1}^w(j)$ and $\nu_k^w(j)$ differ by $\pm \frac 1 2$.\qed
\end{enumerate}
\end{lem}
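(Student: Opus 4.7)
The proof will be a direct case analysis using the recursion relation \eqref{disp:nu_recursion}, which tells me that $\nu_k^w - \nu_{k-1}^w$ is supported in the coordinates $\{k, k^*, \sigma(k), \sigma(k)^*\}$ and vanishes when $\sigma(k) = k$. So to see which steps $k \in \{1,\dots,m\}$ can alter the $j$th coordinate, I just need to determine which such $k$ satisfy $j \in \{k, k^*, \sigma(k), \sigma(k)^*\}$ together with $\sigma(k) \neq k$, and then read off the size of the change from the three cases of the recursion.

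First I would handle case (i): if $\sigma(j) = j$, then $\sigma(j^*) = j^*$ as well because $\sigma \in S_{2m}^\circ \subset S_{2m}^*$. Hence if $k \in \{j, j^*\}$ then $\sigma(k) = k$ and the recursion contributes $0$; and if $\sigma(k) \in \{j, j^*\}$ then $k = \sigma^{-1}(\{j, j^*\}) = \{j, j^*\}$, back to the previous subcase. So no step alters $\nu_k^w(j)$, and $\nu_k^w(j)$ is constant in $k$.

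For case (ii), $\sigma(j) = j^*$ forces $\sigma(j^*) = j$, and a step $k$ with $\sigma(k) \in \{j, j^*, k\}^c$ cannot change the $j$th entry since this would again force $k \in \{j, j^*\}$. The only candidate indices are thus $k = j$ and $k = j^*$, both falling into the $\sigma(k) = k^*$ branch, giving a change of $\pm 1$; exactly one of these two integers lies in $\{1, \dots, m\}$, namely $\min\{j, j^*\}$. This pins down the unique transition and shows $\nu_k^w(j)$ assumes only the values $u(j)$ and $u(j) - 1$.

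Case (iii) is similar but with an extra pair of transitions: the condition $j \in \{k, k^*, \sigma(k), \sigma(k)^*\}$ together with $\sigma(k) \neq k, k^*$ gives $k \in \{j, j^*, \sigma^{-1}(j), \sigma^{-1}(j)^*\}$, and the hypothesis $\sigma(j) \neq j, j^*$ ensures $\sigma^{-1}(j) \notin \{j, j^*\}$, so these four indices are genuinely distinct; restricting to $1 \leq k \leq m$ leaves exactly the two transitions $k = \min\{j, j^*\}$ and $k = \min\{\sigma^{-1}(j), \sigma^{-1}(j)^*\}$, each contributing $\pm \tfrac{1}{2}$ via the $\tfrac{1}{2}\alpha_{k,\sigma(k)}^\vee$ branch. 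Finally, to confirm that the half-integer value $u(j) - \tfrac{1}{2}$ is actually attained in the range $0 \leq k \leq m$, I would use that $\nu_0^w$ and $\nu_m^w$ lie in $\ZZ^{2m}$ (both $\mu_0^w$ and $\mu_m^w$ are self-dual by periodicity), so $\nu_0^w(j)$ is an integer and exactly one of the two $\pm\tfrac{1}{2}$ transitions has been executed at any $k$ strictly between the two transition indices, producing a half-integer value there. The whole argument is essentially bookkeeping once the recursion \eqref{disp:nu_recursion} is in hand; the only mild subtlety is case (iii)'s attainment claim, which requires the parity observation just mentioned.
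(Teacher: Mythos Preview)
Your proposal is correct and follows exactly the route the paper indicates: the lemma is stated with a \qed{} and the only justification given is that it ``is an immediate consequence of our above discussion; it also follow[s] easily from the recursion relation \eqref{disp:nu_recursion}.'' You have carried out the recursion-relation argument in detail, which is precisely what the paper leaves to the reader. The one place where you could be slightly more explicit is the attainment claim in case~(iii): having shown the value at some $k$ is a half-integer, the reason it equals $u(j)-\tfrac12$ specifically (and not, say, $u(j)+\tfrac12$) is the a priori bound $\nu_k^w(j)\in\{u(j)-1,\,u(j)-\tfrac12,\,u(j)\}$ recorded just before the lemma, which follows from $\nu_k^w=\tfrac12(\mu_k^w+\mu_{2m-k}^w)$ and the fact that each $\mu_\bullet^w(j)\in\{u(j)-1,u(j)\}$.
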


\subsection{Reflections}
The \emph{affine roots} in $\R_{D_m}$ are the functions
\begin{equation}\label{disp:wtalpha_i,j;d}
   \begin{matrix}
   \wt\alpha_{i,j;d}\colon
   \xymatrix@R=0ex{
      X_{*D_m} \ar[r] & \ZZ\\
      (x_1,\dotsc,x_{2m}) \ar@{|->}[r] & x_i - x_j - d
   }
   \end{matrix}
\end{equation}
for $1 \leq i,j\leq 2m$ with $j \neq i$, $i^*$, and $d \in \ZZ$.  Plainly $\wt\alpha_{i,j;d} = \wt\alpha_{j^*,i^*;d}$.  Attached to the affine root $\wt\alpha = \wt\alpha_{i,j;d}$ is its linear part $\alpha := \alpha_{i,j}$ \eqref{disp:alpha_i,j}; the coroot $\alpha^\vee := \alpha^\vee_{i,j}$ \eqref{disp:alpha^vee_i,j}; and the reflection $s_{\wt\alpha} \in W_{\aff,D_m}$ sending
\[
   v \mapsto v - \langle \wt\alpha, v\rangle\alpha^\vee,\quad v \in \A_{D_m};
\]
here of course $\langle\wt\alpha, v\rangle = \langle \alpha,v\rangle - d$, where $\langle \alpha,v \rangle$ is defined by \RR-linearly extending $\alpha$ from a function on $X_{*D_m}$ to $\A_{D_m}$.  We also write $s_{i,j;d} := s_{\wt\alpha}$, and we define $s_{i,j} := s_{i,j;0} = s_{\alpha}$.  Note that $s_{i,j}$ acts on vectors as the permutation $(ij)(i^*j^*)$ (expressed in cycle notation) on entries.

For $w \in \wt W_{D_m}$, it will be important for us to relate the vectors $\nu_k^w$ and $\nu_k^{s_{\wt\alpha}w}$.  In general, for any $v \in \A_{D_m}$, we have
\begin{equation}\label{disp:nu^sw_fmla1}
   s_{\wt\alpha}wv -v = wv -v - \langle \wt\alpha, wv \rangle \alpha^\vee. 
\end{equation}
Hence
\begin{equation}\label{disp:ss_fmla}
\begin{aligned}
   s_{\alpha}(s_{\wt\alpha}wv -v) &= wv - v - \langle\wt\alpha, wv\rangle \alpha^\vee - \bigl( \langle \alpha,wv \rangle - \langle \alpha,v\rangle - 2\langle \wt\alpha,wv\rangle \bigr)\alpha^\vee\\
   &= wv-v + \langle\wt\alpha, v \rangle\alpha^\vee.
\end{aligned}
\end{equation}
Taking $v = a_k$, we conclude
\begin{equation}\label{disp:nu^w'_fmla}
   \nu_k^{s_{\wt\alpha}w} = s_{\alpha}\bigl(\nu_k^w + \langle\wt\alpha, a_k \rangle\alpha^\vee\bigr),\quad 0 \leq k \leq m.
\end{equation}

\subsection{Reflections and the Bruhat order}
We continue with our element $w \in \wt W_{D_m}$ from previous subsections.  Let $\wt\alpha$ be an affine root with linear part $\alpha$.  In this subsection we give a characterization of when $w < s_{\wt\alpha}w$ in the Bruhat order.  We shall simply recall the result from \cite{sm11c}, noting that that argument given there is completely general.  Recall that $A_{D_m}$ denotes our base alcove in $\A_{D_m}$.

\begin{lem}[\cite{sm11c}*{Lem.\ 5.3.2}]\label{st:bo}
Let $S$ be any subset of $\ol{A_{D_m}}$, and suppose that
\begin{itemize}
\item
   $|\langle\alpha, wv -v\rangle| < \bigl|\bigl\langle\alpha, wv -v + \langle \wt \alpha, v \rangle \alpha^\vee \bigr\rangle\bigr|$ for some $v \in S$; or
\item
   there exists $v \in S$ such that $\langle \wt \alpha, v \rangle = 0$ and $\langle \alpha, wv-v \rangle$ is positive or negative according as $\wt \alpha$ is positive or negative on $A_{D_m}$.
\end{itemize}
Then $w < s_{\wt\alpha}w$.  The converse holds if $S$ is not contained in a single proper subfacet of $A_{D_m}$.  In particular, $w < s_{\wt\alpha}w$ $\iff$ for some $0 \leq k \leq m$,
\begin{enumerate}
\item\label{it:crit_1}
   $|\langle\alpha, \nu_k^w\rangle| < \bigl|\bigl\langle\alpha, \nu_k^w + \langle \wt \alpha, a_k \rangle \alpha^\vee \bigr\rangle\bigr|$; or
\item\label{it:crit_2}
   $\langle \wt \alpha, a_k \rangle = 0$ and $\langle \alpha, \nu_k^w \rangle$ is positive or negative according as $\wt \alpha$ is positive or negative on $A_{D_m}$.\qed
\end{enumerate}
\end{lem}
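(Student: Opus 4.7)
The plan is to derive this from a standard geometric characterization of the Bruhat order in the extended affine Weyl group $\wt W_{D_m}$. For an affine root $\wt\alpha$, let $H_{\wt\alpha} = \{\wt\alpha = 0\}$ denote the corresponding hyperplane in $\A_{D_m}$. The underlying fact is: $w < s_{\wt\alpha}w$ if and only if $A_{D_m}$ and $wA_{D_m}$ lie on the same open side of $H_{\wt\alpha}$, equivalently, $\wt\alpha$ takes values of the same sign on $A_{D_m}$ and on $wA_{D_m}$. This is a standard Coxeter-theoretic property of the affine Weyl group $W_{\aff,D_m}$, which passes to $\wt W_{D_m}$ via the semidirect product with the stabilizer of $A_{D_m}$, since length and Bruhat order are preserved by that stabilizer.

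The next step is to translate the two bullets into this geometric language. Using $\langle\alpha,\alpha^\vee\rangle = 2$, one computes
\[
   \bigl\langle \alpha,\, wv - v + \langle \wt\alpha, v\rangle\alpha^\vee \bigr\rangle = \langle\alpha, wv-v\rangle + 2\langle\wt\alpha, v\rangle.
\]
Writing $x = \langle\alpha, wv-v\rangle$ and $y = \langle\wt\alpha, v\rangle$, the identity $|x+2y|^2 - |x|^2 = 4y(x+y)$ shows that the first bullet's inequality is equivalent to $y(x+y) > 0$. But $\wt\alpha$ is affine with linear part $\alpha$, so $y = \wt\alpha(v)$ and $x+y = \wt\alpha(wv)$, whence the first bullet says exactly that $\wt\alpha(v)$ and $\wt\alpha(wv)$ are both nonzero and of the same sign. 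The second bullet handles the boundary case $\wt\alpha(v) = 0$ by requiring $\wt\alpha(wv)$ to take the sign of $\wt\alpha$ on $A_{D_m}$.

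For the forward implication, if some $v \in \ol{A_{D_m}}$ satisfies either bullet, then $\wt\alpha(wv)$ is nonzero with the sign of $\wt\alpha$ on $A_{D_m}$. Since the open alcove $wA_{D_m}$ is connected and disjoint from $H_{\wt\alpha}$, the value of $\wt\alpha$ maintains this sign throughout $wA_{D_m}$, yielding $w < s_{\wt\alpha}w$. For the converse, assume $w < s_{\wt\alpha}w$ and, without loss of generality, that $\wt\alpha$ is positive on $A_{D_m}$; then $\wt\alpha \geq 0$ on both $\ol{A_{D_m}}$ and $\ol{wA_{D_m}}$. A point $v \in S$ fails both bullets iff $\wt\alpha(wv) = 0$, that is, $v \in w^{-1}H_{\wt\alpha} \cap \ol{A_{D_m}}$. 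Since no alcove interior meets any affine root hyperplane, this intersection lies in $\partial A_{D_m}$; a brief case analysis shows it is always contained in a single proper subfacet of $A_{D_m}$ --- either $w^{-1}H_{\wt\alpha}$ coincides with a wall of $A_{D_m}$, in which case the intersection is that wall, or it meets $\ol{A_{D_m}}$ in codimension at least two and so lies in an intersection of two walls. Hence if $S$ is not contained in a single proper subfacet, some $v \in S$ satisfies one of the two bullets.

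Finally, the ``in particular'' assertion is obtained by applying the general criterion to $S = \{a_0, a_1, \dotsc, a_m\}$: these $m+1$ points have affine span equal to $\ol{A_{D_m}}$ modulo the line $\RR\cdot(1,\dotsc,1)$, so they cannot all lie in a single proper subfacet of $A_{D_m}$. The main technical point in this plan is the subfacet-containment argument in the converse direction; everything else is a routine translation between the algebraic inequality of the first bullet and the geometric separation criterion governing the Bruhat order.
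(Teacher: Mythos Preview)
Your proposal is correct. The paper itself does not prove this lemma at all: it is stated with a \qed and attributed to \cite{sm11c}*{Lem.\ 5.3.2}, so there is no argument in the paper to compare against. What you have supplied is a self-contained proof of the cited result, and the strategy---reducing to the geometric criterion that $w < s_{\wt\alpha}w$ iff $A_{D_m}$ and $wA_{D_m}$ lie on the same side of $H_{\wt\alpha}$, then translating the two bullets via the identity $(x+2y)^2 - x^2 = 4y(x+y)$ with $y = \wt\alpha(v)$ and $x+y = \wt\alpha(wv)$---is exactly right.

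One small point worth tightening: in the converse, your ``brief case analysis'' showing that $\ol{A_{D_m}} \cap w^{-1}H_{\wt\alpha}$ lies in a single proper subfacet is a bit loose. The clean way to say it is that, modulo the central line, $\ol{A_{D_m}}$ is a simplex, and for any affine hyperplane $H$ nonnegative on a closed simplex, the zero set $\{f=0\}$ is the face spanned by the vertices on which $f$ vanishes---hence a single closed face. This replaces the somewhat vague ``codimension at least two, so lies in an intersection of two walls.'' Your verification that $\{a_0,\dotsc,a_m\}$ is not contained in any proper face (because $a_{0'} = 2a_1 - a_0$ and $a_{m'} = 2a_{m-1} - a_m$ lie in its affine span, recovering all $m+1$ vertices) is fine.
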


\subsection{\texorpdfstring{$\mu$}{mu}-admissibility implies \texorpdfstring{$\mu$}{mu}-spin-permissibility}\label{ss:adm=>sp-perm}
It is a general result of Kottwitz--Rapoport \cite{kottrap00}*{11.2} that for any cocharacter $\mu$ in any root datum, $\mu$-admissibility implies $\mu$-permissibility for elements in the extended affine Weyl group.  In this subsection we shall show that for the cocharacter $\mu$ \eqref{disp:mu_D} in $\R_{D_m}$, we have the stronger result that $\mu$-admissibility implies $\mu$-spin-permissibility in $\wt W_{D_m}$.  This will prove the ``easy'' implication in \eqref{st:sp-perm=adm_D}.

We first prove that the set of $\mu$-spin-permissible elements is closed in the Bruhat order.

\begin{prop}\label{st:SPerm_bruhat_closed}
Let $w$, $x \in \wt W_{D_m}$.  Suppose that $w \leq x$ in the Bruhat order and that $x$ is $\mu$-spin-permissible.  Then $w$ is $\mu$-spin-permissible.
\end{prop}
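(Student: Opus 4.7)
The plan is to reduce to the case of a single covering reflection in the Bruhat order and check that each of the three conditions SP1--SP3 is preserved. By induction on the length difference $\ell(x) - \ell(w)$, it suffices to show the following: if $x$ is $\mu$-spin-permissible and $w = s_{\wt\alpha}x$ (or equally, a right multiple) satisfies $w < x$ for some affine reflection $s_{\wt\alpha} \in W_{\aff,D_m}$, then $w$ is $\mu$-spin-permissible. Throughout I would work with the equivalent formulation \eqref{it:SP1'}--\eqref{it:SP3'} in terms of the vectors $\nu_k^w$, since the reflection formula \eqref{disp:nu^w'_fmla} acts on these directly.

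For conditions \eqref{it:SP1'} and \eqref{it:SP2'}, I would invoke Lemma \ref{st:SP1-2'_conv_cond}, which recasts them as the vertexwise convex hull condition $\nu_k^w \in \Conv(S_{2m}^\circ\mu)$ for each $0 \leq k \leq m$. Preservation of this condition under Bruhat-covering reflections is the standard Kottwitz--Rapoport argument \cite{kottrap00}*{11.2}: the criterion in \eqref{st:bo} gives $|\langle\alpha, \nu_k^w + \langle\wt\alpha,a_k\rangle\alpha^\vee\rangle| > |\langle\alpha, \nu_k^w\rangle|$ at some witnessing vertex, and combined with the reflection formula one concludes that each $\nu_k^w$ stays inside the convex hull if $\nu_k^x$ did. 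The congruence $\nu_0^w \equiv \mu \bmod Q_{D_m}^\vee$ needed in \eqref{it:SP3'} is automatic from $w \equiv x \bmod W_{\aff,D_m}$.

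The remaining work, and the main obstacle, is to propagate the spin condition \eqref{it:SP3'} from $x$ to $w$ for each vertex $a_k$. Fix $0 \leq k \leq m$. If $s_{\wt\alpha}$ fixes $a_k$ (i.e.\ $\langle\wt\alpha, a_k\rangle = 0$), then \eqref{disp:nu^w'_fmla} reduces to $\nu_k^w = s_\alpha \nu_k^x$, which preserves both integrality and the parity of $c_k^{\,\cdot}$ (since $s_\alpha$ permutes entries); moreover $s_\alpha$ maps $A_k$ and $B_k$ into themselves whenever $\alpha$ is a root not crossing $a_k$, so witnessing entries equal to $1$ inside $A_k$ and $B_k$ are simply permuted, and \eqref{it:SP3'} persists. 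If instead $\langle\wt\alpha, a_k\rangle \neq 0$, I would do a case analysis on the relative position of the indices $i$, $j$, $i^*$, $j^*$ of $\wt\alpha = \wt\alpha_{i,j;d}$ with respect to the partition $\{1,\dotsc,2m\} = A_k \amalg B_k \setminus A_k$. In each case one checks, using the basic inequalities \eqref{st:basic_ineqs} and the hypothesis that $\nu_k^x$ satisfies \eqref{it:SP1'}, \eqref{it:SP2'}, that either $\nu_k^w \notin \ZZ^{2m}$ (so \eqref{it:SP3'} is vacuous at $a_k$), or $c_k^w \equiv q \bmod 2$ via Lemma \ref{st:c_i_equiv_q} (again vacuous), or one can exhibit the required entries equal to $1$ in $A_k$ and $B_k$ directly from the action of $s_\alpha$ on $\nu_k^x + \langle\wt\alpha,a_k\rangle\alpha^\vee$.

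The main obstacle is this last step: the spin condition is sensitive to the precise location of entries, not just to a convex-hull estimate, so the case analysis in the non-fixing case requires keeping careful track of which coordinates of $\alpha^\vee = e_i - e_j + e_{j^*} - e_{i^*}$ land in $A_k$ versus $B_k$, and of how the parity of $c_k^w$ changes. Once these cases are dispatched, vertex-by-vertex preservation of \eqref{it:SP1'}--\eqref{it:SP3'} yields the desired $\mu$-spin-permissibility of $w$.
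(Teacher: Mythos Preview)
Your overall strategy matches the paper's: reduce to a single reflection $x = s_{\wt\alpha}w$ with $w < x$, dispose of \eqref{it:SP1'} and \eqref{it:SP2'} by citing Kottwitz--Rapoport's result that $\mu$-permissibility is Bruhat-closed (the paper does this via \cite{kottrap00}*{Lem.\ 11.3} together with \eqref{st:mu-perm_D}), and then treat the spin condition \eqref{it:SP3'} vertex by vertex via a case analysis. Your case $\langle\wt\alpha, a_k\rangle = 0$ is fine: there $\nu_k^w = s_\alpha\nu_k^x$, and since $\langle\wt\alpha, a_k\rangle \in \ZZ$ forces $i,j$ to lie in the same block $A_k$ or $B_k$, the permutation $(ij)(i^*j^*)$ does preserve $A_k$ and $B_k$.

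The gap is in the case where $\langle\wt\alpha, a_k\rangle$ is a nonzero integer. The tools you invoke there---the basic inequalities and \eqref{it:SP1'}, \eqref{it:SP2'} for $\nu_k^x$ and $\nu_k^w$---are completely symmetric in $w$ and $x$ and therefore cannot possibly distinguish $w < x$ from $w > x$. But the spin condition is \emph{not} preserved by arbitrary reflections: \eqref{st:spin_fail_domain} describes precisely the situation where $\nu_k^w$ satisfies \eqref{it:SP1'} and \eqref{it:SP3'} while $\nu_k^{s_{\wt\alpha}w}$ satisfies \eqref{it:SP1'} but fails \eqref{it:SP3'}, and one checks easily that both vectors can satisfy \eqref{it:SP2'} as well. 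So some genuine use of the hypothesis $w < x$ is needed in the spin step, and your outline never supplies one. The missing ingredient is the line-segment lemma \cite{kottrap00}*{Lem.\ 11.4}: when $w < x = s_{\wt\alpha}w$, the vector $\nu_k^w$ lies on the closed segment between $\nu_k^x$ and $s_{i,j}\nu_k^x$, so each entry $\nu_k^w(l)$ is trapped between $\nu_k^x(l)$ and $(s_{i,j}\nu_k^x)(l)$. The paper argues contrapositively (assume $\nu_k^w(l)\in\{0,2\}$ for all $l\in A_k$, say), and these segment bounds together with $0 \leq \nu_k^x \leq 2$ then force $\nu_k^w(l)$ to coincide with one of the two endpoint values; from there the spin condition for $\nu_k^x$ finishes the job.
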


\begin{proof}
Since the $\mu$-permissible set is closed in the Bruhat order by \cite{kottrap00}*{Lem.\ 11.3}, our characterization of $\mu$-permissibility \eqref{st:mu-perm_D} immediately reduces us, via an obvious induction argument, to proving the following lemma.   
%
\end{proof}

\begin{lem}
Let $0 \leq k \leq m$, and let $\wt\alpha := \wt\alpha_{i,j;d}$ for $i \neq j$, $j^*$ be an affine root such that $w < s_{\wt\alpha}w$.  Suppose that $\nu_k^w$ satisfies \eqref{it:SP1'} and that $\nu_k^{s_{\wt\alpha}w}$ satisfies \eqref{it:SP1'} and the spin condition.  Then $\nu_k^w$ satisfies the spin condition.
\end{lem}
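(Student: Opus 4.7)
The plan is to exploit the identity
\[
   \nu_k^{s_{\wt\alpha}w} = \nu_k^w - N\alpha^\vee, \qquad N := \langle\alpha, \nu_k^w\rangle + \langle\wt\alpha, a_k\rangle,
\]
obtained by taking $v = a_k$ in \eqref{disp:nu^sw_fmla1} and using $\nu_k^w = wa_k - a_k$. Since $\alpha^\vee = e_i - e_j + e_{j^*} - e_{i^*}$ is supported on the four distinct coordinates $S := \{i, j, i^*, j^*\}$, the vectors $\nu_k^w$ and $\nu_k^{s_{\wt\alpha}w}$ agree off $S$, and for each the $1$-entry set is $*$-invariant. Assume the spin condition hypothesis is non-vacuous: $\nu_k^w \in \ZZ^{2m}$ and (via \eqref{st:c_i_equiv_q}) $c_k^w \not\equiv q \bmod 2$; the task is then to produce $j_1 \in A_k$ and $j_2 \in B_k$ with $\nu_k^w(j_1) = \nu_k^w(j_2) = 1$.

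The boundary cases $k \in \{0, m\}$ are immediate, since there $A_k$ or $B_k$ is empty, the spin condition collapses to the congruence $\nu_k^w \equiv \mu \bmod Q^\vee_{D_m}$, and that congruence is preserved both by $s_\alpha$ and by translation by coroots, hence by passage to $\nu_k^{s_{\wt\alpha}w}$. For $1 \leq k \leq m-1$ I would split on whether $\langle\wt\alpha, a_k\rangle$ is an integer; using the explicit form of $a_k$ (which vanishes on $B_k$ and takes values $\pm\tfrac 1 2$ on $A_k$), this is equivalent to asking whether $\{i, i^*\}$ and $\{j, j^*\}$ lie on the same side of the partition $A_k \amalg B_k$ or straddle it.

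In the integer case $\nu_k^{s_{\wt\alpha}w} \in \ZZ^{2m}$, and a brief enumeration of the SP1$'$-compatible value patterns for $(\nu_k^w(i), \nu_k^w(j))$ shows $c_k^{s_{\wt\alpha}w} \equiv c_k^w \bmod 2$, so the parity hypothesis transfers to $\nu_k^{s_{\wt\alpha}w}$. The spin condition on $\nu_k^{s_{\wt\alpha}w}$ then supplies $j'_1 \in A_k$, $j'_2 \in B_k$ with value $1$; these transfer verbatim to $\nu_k^w$ when they lie outside $S$, and otherwise the explicit value patterns on $S$ together with $*$-invariance furnish replacements inside $S$. In the half-integer case, $\nu_k^{s_{\wt\alpha}w}$ has non-integer entries on $S$, its spin condition is vacuous, and one must fall back on SP1$'$ together with $w < s_{\wt\alpha}w$: Lemma \eqref{st:bo} applied with $k' = k$ fixes the sign of $N$, SP1$'$ restricts $\nu_k^w|_S$ to a short list of patterns, and in all but one of them the required $1$-entries appear directly on $S$.

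The main obstacle is the residual pattern in the half-integer case (of shape $(2, 0, 0, 2)$ after normalization), in which no $1$-entry lies on $S$ and no inductive input from the spin condition on $\nu_k^{s_{\wt\alpha}w}$ is available. Dispatching it will require producing $1$-entries at positions outside $S$ in both $A_k$ and $B_k$ purely from the Bruhat inequality, SP1$'$ for both vectors, and the parity constraint $c_k^w \not\equiv q \bmod 2$.
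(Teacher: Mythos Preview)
The gap you flag at the end is real, and a version of it already undermines your integer case. First, ``Lemma \eqref{st:bo} applied with $k'=k$ fixes the sign of $N$'' is not justified: \eqref{st:bo} is a biconditional only when the test set is not confined to a single proper subfacet, so $w<s_{\wt\alpha}w$ only tells you that one of the criteria holds at \emph{some} $a_{k'}$, not at the $a_k$ you are working with. Second, your transfer argument in the integer case is incomplete. Take $i,j\in A_k$ with $(\nu_k^w(i),\nu_k^w(j))=(2,0)$ and $\langle\wt\alpha,a_k\rangle=-1$: then $\nu_k^{s_{\wt\alpha}w}$ has value $1$ at every point of $S$, so the spin witness $j_1'\in A_k$ for $\nu_k^{s_{\wt\alpha}w}$ may lie in $S$, while $\nu_k^w|_S$ has no entry equal to $1$; none of \eqref{it:SP1'}, the parity $c_k^w\not\equiv q\bmod 2$, or the existential \eqref{st:bo} then locates a $1$ for $\nu_k^w$ in $A_k\smallsetminus S$. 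In the half-integer case the mixed patterns $(1,0),(0,1),(2,1),(1,2)$ have the same defect on the $B_k$ (resp.\ $A_k$) side, not only the extreme pattern you single out.

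What is missing is a constraint that bites at \emph{every} $k$. The paper supplies it via \cite{kottrap00}*{Lem.~11.4}: since $w<x:=s_{\wt\alpha}w$, the vector $\nu_k^w$ lies on the line segment from $\nu_k^x$ to $s_{i,j}\nu_k^x$, so each $\nu_k^w(l)$ is sandwiched between $\nu_k^x(l)$ and $\nu_k^x\bigl(s_{i,j}(l)\bigr)$. The paper then argues contrapositively (assume $\nu_k^w(l)\in\{0,2\}$ for all $l\in A_k$, say): if $i\in A_k$, then $\nu_k^w(i)\in\{0,2\}$ is extremal in $[0,2]$, and the sandwich together with \eqref{it:SP1'} for $\nu_k^x$ forces $\nu_k^w(i)$ to coincide with one of the endpoints $\nu_k^x(i)$, $\nu_k^x(j)$. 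Comparing with \eqref{disp:nu^sw_fmla1} and \eqref{disp:ss_fmla} then yields $\nu_k^w=\nu_k^x$, or $\nu_k^w=s_{i,j}\nu_k^x$ with $\langle\wt\alpha,a_k\rangle=0$ (hence $j\in A_k$ too); either way the spin condition on $\nu_k^x$ transfers. The remaining case $i,j\in B_k$ is easier. Note that this segment constraint disposes of all your bad patterns at once: in each of them the endpoints $\nu_k^x(i)$ and $\nu_k^x(j)$ lie strictly inside $(0,2)$, so an extremal value $\nu_k^w(i)\in\{0,2\}$ simply cannot sit on the segment.
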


\begin{proof}
Suppose that $\nu_k^w \in \ZZ^{2m}$.  We must show that if $\nu_k^w(l) \in \{0,2\}$ for all $l \in A_k$ or for all $l \in B_k$, then $\nu_k^w \equiv \mu \bmod Q^\vee_{D_m}$.  Let $x := s_{\wt\alpha} w$.  By \cite{kottrap00}*{Lem.\ 11.4}, since $w < x$, the vector $\nu_k^w$ lies on the line segment between $\nu_k^x$ and $s_{i,j}\nu_k^x$.  Hence for any $l$,
\begin{equation}\label{disp:adm=>spin-perm_ineqs}
   \nu_k^x(l) \leq \nu_k^w(l) \leq (s_{i,j}\nu_k^x)(l)
   \quad\text{or}\quad
   \nu_k^x(l) \geq \nu_k^w(l) \geq (s_{i,j}\nu_k^x)(l).
\end{equation}

To continue with the proof, let us suppose that $\nu_k^w(l) \in \{0,2\}$ for all $l \in A_k$; the case that $B_k$ replaces $A_k$ is entirely similar.  We consider subcases based on the containment of $i$ and $j$ in $A_k$.  First suppose that $i \in A_k$.  Then $\nu_k^w(i) \in \{0,2\}$.  Since $\nu_k^x$ satisfies \eqref{it:SP1'}, the inequalities \eqref{disp:adm=>spin-perm_ineqs} then imply that $\nu_k^w(i) = \nu_k^x(i)$ or $\nu_k^w(i) = (s_{i,j}\nu_k^x)(i)$.  If $\nu_k^w(i) = \nu_k^x(i)$, then by examining the $i$th entries on both sides of the equality
\[
   \nu_k^w - \nu_k^x = \langle \wt\alpha, wa_k\rangle \alpha^\vee_{i,j},
\]
which is obtained from \eqref{disp:nu^sw_fmla1}, we see that $\langle \wt\alpha, wa_k\rangle = 0$.  This and the spin condition for $\nu_k^x$ then yield $\nu_k^w = \nu_k^x \equiv \mu \bmod Q^\vee_{D_m}$, as desired.  If $\nu_k^w(i) = (s_{i,j}\nu_k^x)(i)$, then by examining the $i$th entries on both sides of
\[
   s_{i,j}\nu_k^x - \nu_k^w = \langle \wt\alpha, a_k \rangle \alpha^\vee_{i,j},
\]
which is obtained from \eqref{disp:ss_fmla}, we see that $\langle \wt\alpha, a_k \rangle = 0$ and $\nu_k^w = s_{i,j}\nu_k^x$.  Since $\langle \wt\alpha, a_k \rangle \in \ZZ$, we must have $j \in A_k$ as well.  Thus $A_k$ is $s_{i,j}$-stable and $\nu_k^x(l) = (s_{i,j}\nu_k^w)(l) \in \{0,2\}$ for all $l \in A_k$.  Since $\nu_k^x$ satisfies the spin condition, we conclude $\nu_k^w = s_{i,j}\nu_k^x \equiv \nu_k^x \equiv \mu \bmod Q^\vee_{D_m}$, as desired.

If $j \in A_k$ then one proves the lemma in a completely similar way.

Finally suppose that $i$, $j \in B_k$.   Then $\langle \wt\alpha, wa_k\rangle \in \ZZ$, so that $\nu_k^w \equiv \nu_k^x \bmod Q^\vee_{D_m}$; and for $l \in A_k$, the inequalities \eqref{disp:adm=>spin-perm_ineqs} are all equalities, so that $\nu_k^x(l) = \nu_k^w(l) \in \{0,2\}$.  Hence $\nu_k^x \equiv \mu \bmod Q^\vee_{D_m}$ by the spin condition, and we're done.
  %
\end{proof}

\begin{cor}\label{st:mu-adm=>mu-spin-perm}
Let $w \in \wt W_{D_m}$.  Then $w$ is $\mu$-admissible $\implies$ $w$ is $\mu$-spin-per\-mis\-sible.
\end{cor}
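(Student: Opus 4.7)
The plan is to reduce to the case of translation elements via the Bruhat-closure property just established in Proposition \ref{st:SPerm_bruhat_closed}, and then verify the three conditions \eqref{it:SP1}, \eqref{it:SP2}, \eqref{it:SP3} directly for such elements.

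By definition, if $w$ is $\mu$-admissible then $w \leq t_\lambda$ for some $\lambda \in S_{2m}^\circ \cdot \mu$. Since Proposition \ref{st:SPerm_bruhat_closed} shows that the $\mu$-spin-permissible set is closed downward in the Bruhat order, the corollary reduces to showing that $t_\lambda$ itself is $\mu$-spin-permissible for every $\lambda$ in the Weyl orbit $S_{2m}^\circ \cdot \mu$.

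So fix such a $\lambda$, and set $w = t_\lambda$. Then $\mu_k^w = w\omega_k - \omega_k = \lambda$ is independent of $k$, so all three conditions reduce to statements about $\lambda$ alone. For \eqref{it:SP1}: since $S_{2m}^\circ \subset S_{2m}^*$ preserves the antipodal involution $j \mapsto j^*$, and since $\mu(j) + \mu(j^*) = 2$ for all $j$, the same holds for $\lambda$; combined with $\mathbf 0 \leq \mu \leq \mathbf 2$ this gives $\lambda + \lambda^* = \mathbf 2$ and $\mathbf 0 \leq \lambda \leq \mathbf 2$. For \eqref{it:SP2}: $\lambda$ is a permutation of the entries of $\mu$, so it has exactly $q$ entries equal to $2$, whence $c_k^w = q \leq q$. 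For \eqref{it:SP3}: $\mu_k^w = \lambda \in X_{*D_m}$ is self-dual by \eqref{it:SP1}, and since $c_k^w = q \equiv q \bmod 2$, Lemma \ref{st:c_i_equiv_q} gives $\lambda \equiv \mu \bmod Q^\vee_{D_m}$, so the hypothesis of \eqref{it:SP3} fails and the condition is vacuous.

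There is really no obstacle at this step: the substantive work was done in Proposition \ref{st:SPerm_bruhat_closed}, whose proof in turn rested on the description of $\mu$-permissibility in \eqref{st:mu-perm_D} and the verification that the spin condition itself is preserved under Bruhat descent. The corollary is then a one-line reduction followed by the trivial check above.
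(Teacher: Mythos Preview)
Your proof is correct and follows exactly the same approach as the paper: reduce via Proposition \ref{st:SPerm_bruhat_closed} to the translation elements $t_\lambda$ for $\lambda \in S_{2m}^\circ \mu$, and then observe that these are $\mu$-spin-permissible. The paper dispatches the latter point in a single word (``evidently''), whereas you spell out the verification of \eqref{it:SP1}--\eqref{it:SP3}; your argument for \eqref{it:SP3} via $c_k^w = q$ and Lemma \ref{st:c_i_equiv_q} is a clean way to make this explicit.
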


\begin{proof}
Since the translation elements $t_{\lambda}$ for $\lambda \in S_{2m}^\circ \mu$ are evidently $\mu$-spin-permis\-sible, this follows from \eqref{st:SPerm_bruhat_closed}.
\end{proof}

\subsection{Lemmas on reflections and \texorpdfstring{$\mu$}{mu}-spin-permissibility}

In this subsection we collect a number of technical lemmas for use in the next subsection when we prove that $\mu$-spin-permissibility implies $\mu$-admissibility.  We continue with our element $w = t_{\nu_0^w}\sigma \in \wt W_{D_m}$.


\begin{lem}\label{st:c^w'_change_lem}
Let $\wt\alpha = \wt\alpha_{i,j;d}$ for some $j \neq i$, $i^*$ be an affine root.  Let $0 \leq k \leq m$, and suppose that $\nu_k^w$ and $\nu_k^{s_{\wt\alpha}w}$ satisfy \eqref{it:SP1'}.
\begin{enumerate}
\renewcommand{\theenumi}{\roman{enumi}}
\item\label{it:all_integers}
   If $\nu_k^w(i)$, $\nu_k^w(j)$, and $\langle \wt\alpha, a_k\rangle$ are all integers, then
   \[
      c_k^{s_{\wt\alpha}w} \in \{c_k^w-2, c_k^w, c_k^w + 2\}.
   \]
\item\label{it:some_half-integer}
   If any one of  $\nu_k^w(i)$, $\nu_k^w(j)$, or $\langle \wt\alpha, a_k\rangle$ is a half-integer, then
   \[
      c_k^{s_{\wt\alpha}w} \in \{c_k^w-1, c_k^w, c_k^w + 1\}.
   \]
\end{enumerate}
\end{lem}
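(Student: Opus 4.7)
The plan is to unravel $\nu_k^{s_{\wt\alpha}w}$ via \eqref{disp:nu^w'_fmla} and then track how $c_k^w$ changes. Setting $c := \langle\wt\alpha,a_k\rangle$, using $\alpha^\vee_{i,j} = e_i-e_j+e_{j^*}-e_{i^*}$, and noting that $s_{\alpha_{i,j}}$ acts on $\A_{D_m}$ as the double transposition $(ij)(i^*j^*)$, the formula shows that $\nu_k^{s_{\wt\alpha}w}$ agrees with $\nu_k^w$ outside the four coordinates $\{i,j,i^*,j^*\}$, with $\nu_k^{s_{\wt\alpha}w}(i) = \nu_k^w(j)-c$ and $\nu_k^{s_{\wt\alpha}w}(j) = \nu_k^w(i)+c$, and the values at $i^*$, $j^*$ determined by duality \eqref{it:SP1'}. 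In particular $\nu_k^w(i)+\nu_k^w(j)$ is preserved. Invoking \eqref{it:SP1'} and the formula of \eqref{st:c_i_nu_interp}, I would then rewrite $c_k^w$ as a sum of dual-pair contributions, where each pair $\{l,l^*\}$ contributes $1$, $1/2$, or $0$ according as $\nu_k^w(l)\in\{0,2\}$, $\nu_k^w(l)$ is a half-integer, or $\nu_k^w(l)=1$; the analogous formula holds for $c_k^{s_{\wt\alpha}w}$. Since only the pairs $\{i,i^*\}$ and $\{j,j^*\}$ can change, $c_k^{s_{\wt\alpha}w}-c_k^w$ is the sum of just two \emph{pair deltas}.

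For case (i), all relevant entries of $\nu_k^w$ and $\nu_k^{s_{\wt\alpha}w}$ are integers in $\{0,1,2\}$, so each pair contribution equals $1-[\nu_k^w(l)=1]$, and $c_k^w-c_k^{s_{\wt\alpha}w}$ equals the change in $[\nu_k^w(i)=1]+[\nu_k^w(j)=1]$. Since $\nu_k^w(i)+\nu_k^w(j)$ is preserved with all values in $\{0,1,2\}$, a short case analysis on this common sum $S$ shows that the unordered pair $\{\nu_k^w(i),\nu_k^w(j)\}$ is uniquely determined when $S\ne 2$, whereas for $S=2$ the only possibilities are $\{0,2\}$ and $\{1,1\}$, differing by exactly $2$ in their count of $1$'s. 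This yields $c_k^{s_{\wt\alpha}w}-c_k^w\in\{-2,0,2\}$.

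For case (ii), I would split into sub-cases according to which of $\nu_k^w(i)$, $\nu_k^w(j)$, and $c$ are half-integers. In each sub-case the integer/half-integer types of the new entries $\nu_k^w(j)-c$ and $\nu_k^w(i)+c$ are forced, so each pair delta is either $0$ (if the pair remains of half-integer type), an element of $\{-\tfrac12,\tfrac12\}$ (if the pair switches type), or an element of $\{-1,0,1\}$ (if the pair remains of integer type). A direct tally over all sub-cases shows the two pair deltas always sum to an element of $\{-1,0,1\}$. The only conceptually substantive step is the parity identity for case (i); case (ii) is routine book-keeping once the sub-cases are enumerated.
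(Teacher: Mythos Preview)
Your argument is correct and is precisely the explicit case analysis the paper gestures at: you use the formula \eqref{disp:nu^w'_fmla} together with the dual-pair reformulation of $c_k^w$ coming from \eqref{st:c_i_nu_interp}, which is exactly what the paper means by ``our various equivalent expressions for $c_k^w$.'' Your organization via pair contributions (each pair $\{l,l^*\}$ contributing $1$, $\tfrac12$, or $0$) is a clean way to carry out the bookkeeping, and your observation that $\nu_k^w(i)+\nu_k^w(j)$ is preserved gives the parity conclusion in case~(i) immediately.
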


\begin{proof}
This is easily verified by explicit case analysis using the formula $\nu_k^{s_{\wt\alpha}w} = s_{i,j}\bigl(\nu_k^w + \langle \wt\alpha, a_k\rangle \alpha^\vee_{i,j}\bigr)$ from \eqref{disp:nu^w'_fmla} and our various equivalent expressions for $c_k^w$ in \s\s\ref{ss:c_i^w}--\ref{ss:nu_i^w}.
\end{proof}

The following lemma will be a key tool for us in the next subsection.  It is an analog of \cite{sm11b}*{Lem.\ 8.5.2}, but rather than taking the time to build up to it systematically, as is done to some extent in \cite{sm11b}, we shall give a narrow statement and proof tailored just to what we need later on.  Recall our discussion of intervals in \s\ref{ss:intervals}.

\begin{lem}\label{st:wt_alpha_i,sigma(i)}
Suppose that $w$ is naively $\mu$-permissible, and let $i \in \{1,\dotsc,2m\}$ be such that $i < \sigma(i) < i^*$.
\begin{itemize}
\renewcommand{\theenumi}{\roman{enumi}}
\item
   Suppose $u(i) = u\bigl(\sigma(i)\bigr)$.  If $K_i \subset [\sigma(i),i)$, then let $\wt\alpha := \wt\alpha_{i,\sigma(i);0}$.  If $[\sigma(i),i) \subset K_i$, then let $\wt\alpha := \wt\alpha_{i,\sigma(i);-1}$.
\item
   Suppose $u(i) = 2$ and $u\bigl(\sigma(i)\bigr) = 1$.  If $K_i \subset [\sigma(i),i)$, then let $\wt\alpha := \wt\alpha_{i,\sigma(i);1}$.  If $[\sigma(i),i) \subset K_i$, then let $\wt\alpha := \wt\alpha_{i,\sigma(i);-1}$.
\item
   Suppose $u(i) = 1$ and $u\bigl(\sigma(i)\bigr) = 2$.  If $K_i \subset [\sigma(i),i)$, then let $\wt\alpha := \wt\alpha_{i,\sigma(i);0}$.  If $[\sigma(i),i) \subset K_i$, then let $\wt\alpha := \wt\alpha_{i,\sigma(i);-2}$.
\end{itemize}
Then in every case, $s_{\wt\alpha}w$ is naively $\mu$-permissible and $w < s_{\wt\alpha}w$ in the Bruhat order.  If $u(i) = u\bigl(\sigma(i)\bigr)$, then moreover $c_k^{s_{\wt\alpha}w} = c_k^w$ for all $k$.
\end{lem}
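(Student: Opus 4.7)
The plan is to use the formulas \eqref{disp:nu^w'_fmla}--\eqref{disp:ss_fmla} to compute $\nu_k^{s_{\wt\alpha}w}$, then verify each assertion by direct inspection, guided by Lemma \ref{st:nu(j)_vals} and the interval structure from \S\ref{ss:intervals}. Setting $\alpha := \alpha_{i,\sigma(i)}$ and $\alpha^\vee := \alpha^\vee_{i,\sigma(i)}$, the permutation part of $s_{\wt\alpha}w$ is $s_{i,\sigma(i)}\sigma$, under which $i$ becomes a fixed point and $\sigma^{-1}(i)$ is redirected to $\sigma(i)$. The two hypotheses on interval containment --- $K_i = [\sigma^{-1}(i),i) \subset [\sigma(i),i)$ versus the reverse --- translate into whether $\sigma^{-1}(i)$ lies in the arc $[\sigma(i),i)$ of $\ZZ/2m\ZZ$; this dichotomy in turn dictates the sign of $\wt\alpha$ on the base alcove $A_{D_m}$ and explains the correct choice of offset $d$.

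First I would verify that $s_{\wt\alpha}w$ is naively $\mu$-permissible. By \eqref{disp:nu^w'_fmla}, the only coordinates of $\nu_k^{s_{\wt\alpha}w}$ differing from those of $\nu_k^w$ (up to the duality $j \leftrightarrow j^*$) are
\[
   \nu_k^{s_{\wt\alpha}w}(i) = \nu_k^w(\sigma(i)) - \langle \wt\alpha, a_k \rangle
   \quad\text{and}\quad
   \nu_k^{s_{\wt\alpha}w}(\sigma(i)) = \nu_k^w(i) + \langle \wt\alpha, a_k \rangle.
\]
Lemma \ref{st:nu(j)_vals} enumerates the possible pairs $(\nu_k^w(i), \nu_k^w(\sigma(i)))$ as $k$ varies, while the quantity $\langle\wt\alpha, a_k\rangle = a_k(i) - a_k(\sigma(i)) - d$ is piecewise constant in $k$ with jumps only at $k \in \{i,\sigma(i)\}$ (modulo boundary effects between the vertices listed in \eqref{disp:D_m_verts} and \eqref{disp:a_k}). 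With $d$ prescribed as in the statement, a case-by-case check --- separately for each of the three configurations of upper values and for each direction of interval containment --- shows that the resulting entries still lie in $[0,2]$ and preserve the self-duality relation $\nu_k^{s_{\wt\alpha}w} + (\nu_k^{s_{\wt\alpha}w})^* = \mathbf 2$. This verification is the main obstacle: it is the most laborious step, requiring that the six sub-cases all line up.

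To establish $w < s_{\wt\alpha}w$ I would invoke Lemma \ref{st:bo}. In each of the interval-containment cases one finds a vertex $a_k$ at which either $|\langle\alpha, \nu_k^w\rangle|$ strictly increases under the shift by $\langle\wt\alpha, a_k\rangle \alpha^\vee$ --- realized at a $k$ where $\nu_k^w(i)$ and $\nu_k^w(\sigma(i))$ take values on the side of the wall $\{\wt\alpha=0\}$ opposite the sign of $\wt\alpha$ on $A_{D_m}$ --- or where $\langle\wt\alpha, a_k\rangle = 0$ and the sign of $\langle\alpha, \nu_k^w\rangle$ matches that of $\wt\alpha$ on $A_{D_m}$ (the relevant $k$ being roughly $\min\{i,\sigma(i)^*\}$ or $\min\{\sigma(i),i^*\}$, chosen according to the interval containment). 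Finally, when $u(i) = u(\sigma(i))$, both entries $\nu_k^w(i)$ and $\nu_k^w(\sigma(i))$ lie in the common three-element set $\{u(i)-1, u(i)-\tfrac 1 2, u(i)\}$; in this range the pair $(\nu_k^w(i), \nu_k^w(\sigma(i)))$ contains no entry equal to $2$ exactly when $u(i)=1$ and no entry equal to $0$ exactly when $u(i)=2$, and one checks that the operation of swapping the two entries and shifting by $\pm\langle\wt\alpha, a_k\rangle$ preserves, in each case, the number of entries equal to $2$, the number equal to $0$, and the number of half-integers. The equality $c_k^{s_{\wt\alpha}w} = c_k^w$ then follows from \eqref{st:c_i_nu_interp}.
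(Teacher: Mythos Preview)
Your approach is essentially identical to the paper's: an explicit case analysis using the formula $\nu_k^{s_{\wt\alpha}w} = s_{\alpha}\bigl(\nu_k^w + \langle\wt\alpha, a_k\rangle\alpha^\vee\bigr)$, with the ranges of $k$ organized according to where $\langle\wt\alpha, a_k\rangle$ is constant, and Lemma~\ref{st:bo} invoked for the Bruhat inequality. Two small corrections to tighten when you write this up: the jumps of $\langle\wt\alpha, a_k\rangle$ occur at $k=i$ and $k=n:=\min\{\sigma(i),\sigma(i)^*\}$ (not at $\sigma(i)$ itself when $\sigma(i)>m$), and the witness $k$ for Lemma~\ref{st:bo} is $k=i-1$ (criterion~\eqref{it:crit_2} or~\eqref{it:crit_1}) in the $K_i\subset[\sigma(i),i)$ cases and $k=i$ (criterion~\eqref{it:crit_1}) in the $[\sigma(i),i)\subset K_i$ cases, rather than $k=n$.
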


Note that one of the inclusions $K_i \subset [\sigma(i),i)$ or $[\sigma(i),i) \subset K_i$ always holds.  So for any naively $\mu$-permissible $w$ and element $i$ such that $i < \sigma(i) < i^*$, the lemma gives us an $\wt\alpha$ such that $s_{\wt\alpha}w$ is naively $\mu$-permissible and $w < s_{\wt\alpha}w$.  To be clear, when there is an \emph{equality} $K_i = [\sigma(i),i)$, the conclusion of the lemma holds for both choices of $\wt\alpha$.

\begin{proof}
We shall proceed by a fairly explicit case analysis, making repeated use of the formula $\nu_k^{s_{\wt\alpha}w} = s_{i,\sigma(i)}\bigl(\nu_k^w + \langle \wt\alpha, a_k \rangle \alpha^\vee_{i,\sigma(i)}\bigr)$ \eqref{disp:nu^w'_fmla}.  Let $n := \min\{\sigma(i),\sigma(i)^*\}$.  It is natural to consider $\nu_k^{s_{\wt\alpha}w}$ and $\nu_k^w$ for $k$ separately in the ranges $0 \leq k < i$, $i \leq k < n$, $n \leq k \leq m$ with $n = \sigma(i)$, and $n \leq k \leq m$ with $n = \sigma(i)^*$.  In all cases, we have
{\allowdisplaybreaks
\begin{alignat*}{2}
   \nu_k^w\bigl(\sigma(i)\bigr) &= u\bigl(\sigma(i)\bigr)
      & &\quad\text{for}\quad
      0 \leq k < i;\\
   \nu_k^w\bigl(\sigma(i)\bigr) &= u\bigl(\sigma(i)\bigr) - \frac 1 2
      &&\quad\text{for}\quad
      i \leq k < n;\\
   \nu_k^w\bigl(\sigma(i)\bigr) &= u\bigl(\sigma(i)\bigr)
      &&\quad\text{for}\quad
      \sigma(i) \leq k \leq m;\ \text{and}\\
   \nu_k^w\bigl(\sigma(i)\bigr) &= u\bigl(\sigma(i)\bigr) - 1
      &&\quad\text{for}\quad
      \sigma(i)^* \leq k \leq m.
\end{alignat*}}

First suppose that $K_i \subset [\sigma(i),i)$.  Then for $0 \leq k < i$ or $\sigma(i) \leq k \leq m$, there is no condition on $\nu_k^w(i)$, i.e.\ $\nu_k^w(i) \in \bigl\{ u(i)-1, u(i) - \frac 1 2, u(i) \bigr\}$.  For $i \leq k < n$, this assumption implies that $\mu_k^w(i) = u(i)$.  Hence $\nu_k^w(i) \in \bigl\{ u(i) - \frac 1 2, u(i) \bigr\}$.  And for $\sigma(i)^* \leq k \leq m$, 
this assumption implies that $\mu_k^w(i) = \mu_{2m-k}^w(i) = u(i)$.  Hence $\nu_k^w(i) = u(i)$.

If $u(i) = u\bigl(\sigma(i)\bigr)$, or if $u(i) = 1$ and $u\bigl(\sigma(i)\bigr) = 2$, then let $\wt\alpha := \wt\alpha_{i,\sigma(i);0}$.  Then for $0 \leq k < i$ or $\sigma(i) \leq k \leq m$, we have $\langle \wt\alpha, a_k\rangle = 0$ and $\nu_k^{s_{\wt\alpha}w} = s_{i,\sigma(i)} \nu_k^w$.  Hence $\nu_k^{s_{\wt\alpha}w}$ satisfies \eqref{it:SP1'} because $\nu_k^w$ does, and $c_k^{s_{\wt\alpha}w} = c_k^w$.  Moreover, by the recursion relation \eqref{disp:nu_recursion} $\nu_{i-1}^w(i) = \nu_i^w(i) - \frac 1 2 \leq u(i) - \frac 1 2$.  Since $\nu_k^w\bigl(\sigma(i)\bigr) = u\bigl(\sigma(i)\bigr)$, we conclude that $\nu_{i-1}^w(i) < \nu_{i-1}^w\bigl(\sigma(i)\bigr)$.  Hence $w < s_{\wt\alpha}w$ by taking $k = i-1$ in criterion \eqref{it:crit_2} in \eqref{st:bo}.  For $i \leq k < n$, we have $\langle \wt\alpha , a_k \rangle = - \frac 1 2$ and $\nu_k^{s_{\wt\alpha}w} = s_{i,\sigma(i)} \bigl(\nu_k^w - \frac 1 2 \alpha^\vee_{i,\sigma(i)}\bigr)$.  Since $\nu_k^w(i) \in \bigl\{ u(i) - \frac 1 2, u(i) \bigr\}$ and $\nu_k^w\bigl(\sigma(i)\bigr) = u\bigl(\sigma(i)\bigr) - \frac 1 2$, we conclude that $\nu_k^{s_{\wt\alpha}w}$ satisfies \eqref{it:SP1'}.  If moreover $u(i) = u\bigl(\sigma(i)\bigr)$, then we see by inspection that $c_k^{s_{\wt\alpha}w} = c_k^w$.  For $\sigma(i)^* \leq k \leq m$, we have $\nu_k^w(i) = u(i)$, $\nu_k^w\bigl(\sigma(i)\bigr) = u\bigl(\sigma(i)\bigr) - 1$, $\langle \wt\alpha, a_k \rangle = -1$, and $\nu_k^{s_{\wt\alpha}w} = s_{i,\sigma(i)}( \nu_k^w - \alpha^\vee_{i,\sigma(i)})$.  So $\nu_k^{s_{\wt\alpha}w}$ satisfies \eqref{it:SP1'} by inspection.  And if moreover $u(i) = u\bigl(\sigma(i)\bigr)$, then $\nu_k^{s_{\wt\alpha}w} = \nu_k^w$, so that certainly $c_k^{s_{\wt\alpha}w} = c_k^w$.

If $u(i) = 2$ and $u\bigl(\sigma(i)\bigr) = 1$, then let $\wt\alpha := \wt\alpha_{i,\sigma(i);1}$.  Then for $0 \leq k < i$ or $\sigma(i) \leq k \leq m$, we have $\langle \wt\alpha, a_k\rangle = -1$ and $\nu_k^{s_{\wt\alpha}w} = s_{i,\sigma(i)}(\nu_k^w - \alpha^\vee_{i,\sigma(i)})$. So by our assumption on $u$-values, $\nu_k^{s_{\wt\alpha}w}$ satisfies \eqref{it:SP1'}.  Moreover, $\nu_{i-1}^w\bigl(\sigma(i)\bigr) = 1$ and, by the same reasoning as in the previous paragraph, $\nu_{i-1}^w(i) \in \bigl\{1, \frac 3 2\bigr\}$.  Hence $w < s_{\wt\alpha}w$ by taking $k = i-1$ in criterion \eqref{it:crit_1} in \eqref{st:bo}.  For $i \leq k < n$, we have $\langle \wt\alpha, a_k \rangle = - \frac 3 2$ and $\nu_k^{s_{\wt\alpha}w} = s_{i,\sigma(i)}\bigl(\nu_k^w - \frac 3 2 \alpha^\vee_{i,\sigma(i)}\bigr)$.  Since $\nu_k^w(i) \in \bigl\{ \frac 3 2, 2 \bigr\}$ and $\nu_k^w\bigl(\sigma(i)\bigr) = \frac 1 2$, we conclude that $\nu_k^{s_{\wt\alpha}w}$ satisfies \eqref{it:SP1'}.  For $\sigma(i)^* \leq k \leq m$, we have $\nu_k^w(i) = 2$, $\nu_k^w\bigl(\sigma(i)\bigr) = 0$, $\langle \wt\alpha, a_k \rangle = -2$, and $\nu_k^{s_{\wt\alpha}w} = s_{i,\sigma(i)}(\nu_k^w - 2\alpha^\vee_{i,\sigma(i)})$.  Hence $\nu_k^{s_{\wt\alpha}w}$ satisfies \eqref{it:SP1'} by inspection.  This completes the proof when $K_i \subset [\sigma(i),i)$.

Now suppose that $[\sigma(i),i) \subset K_i$.  Then for $0 \leq k < i$ or $\sigma(i) \leq k \leq m$, this assumption implies that $\mu_k^w(i) = \mu_{2m-k}^w(i) = u(i) - 1$.  Hence $\nu_k^w(i) = u(i) - 1$.  For $i \leq k < n$, our assumption implies that $\mu_{2m-k}^w(i) = u(i) - 1$.  Hence $\nu_k^w(i) \in \bigl\{ u(i) - 1, u(i) - \frac 1 2 \bigr\}$.  For $\sigma(i)^* \leq k \leq m$, there is no constraint on $\nu_k^w(i)$.

If $u(i) = u\bigl(\sigma(i)\bigr)$, or if $u(i) = 2$ and $u\bigl(\sigma(i)\bigr) = 1$, then let $\wt\alpha := \wt\alpha_{i,\sigma(i);-1}$.  Then for $0 \leq k < i$ or $\sigma(i) \leq k \leq m$, we have $\nu_k^w(i) = u(i) - 1$, $\nu_k^w\bigl(\sigma(i)\bigr) = u\bigl(\sigma(i)\bigr)$, $\langle \wt\alpha, a_k\rangle = 1$, and $\nu_k^{s_{\wt\alpha}w} = s_{i,\sigma(i)}(\nu_k^w + \alpha^\vee_{i,\sigma(i)})$.  So $\nu_k^{s_{\wt\alpha}w}$ satisfies \eqref{it:SP1'} by inspection.  And if moreover $u(i) = u\bigl(\sigma(i)\bigr)$, then $\nu_k^{s_{\wt\alpha}w} = \nu_k^w$, so that certainly $c_k^{s_{\wt\alpha}w} = c_k^w$.  For $i \leq k < n$, we have $\langle \wt\alpha, a_k\rangle = \frac 1 2$ and $\nu_k^{s_{\wt\alpha}w} = s_{i,\sigma(i)}\bigl(\nu_k^w + \frac 1 2 \alpha^\vee_{i,\sigma(i)}\bigr)$.  Since $\nu_k^w(i) \in \bigl\{ u(i) - 1, u(i) - \frac 1 2\bigr\}$ and $\nu_k^w\bigl(\sigma(i)\bigr) = u\bigl(\sigma(i)\bigr) - \frac 1 2$, we conclude that $\nu_k^{s_{\wt\alpha}w}$ satisfies \eqref{it:SP1'}.  Moreover, since $\nu_{i-1}^w(i) = u(i) - 1$, we have $\nu_i^w(i) = u(i) - \frac 1 2$ by \eqref{disp:nu_recursion}.  Hence $w < s_{\wt\alpha}w$ by taking $k = i$ in criterion \eqref{it:crit_1} in \eqref{st:bo}.  If furthermore $u(i) = u\bigl(\sigma(i)\bigr)$, then we also see by inspection that $c_k^{s_{\wt\alpha}w} = c_k^w$.  For $\sigma(i)^* \leq k \leq m$, we have $\langle \wt\alpha, a_k \rangle = 0$ and $\nu_k^{s_{\wt\alpha}w} = s_{i,\sigma(i)}\nu_k^w$.  Hence $\nu_k^{s_{\wt\alpha}w}$ satisfies \eqref{it:SP1'} because $\nu_k^w$ does, and $c_k^{s_{\wt\alpha}w} = c_k^w$.

If $u(i) = 1$ and $u\bigl(\sigma(i)\bigr) = 2$, then let $\wt\alpha := \wt\alpha_{i,\sigma(i);-2}$.  Then for $0 \leq k < i$ or $\sigma(i) \leq k \leq m$, we have $\nu_k^w(i) = 0$, $\nu_k^w\bigl(\sigma(i)\bigr) = 2$, $\langle \wt\alpha, a_k \rangle = 2$, and $\nu_k^{s_{\wt\alpha}w} = s_{i,\sigma(i)}(\nu_k^w + 2 \alpha^\vee_{i,\sigma(i)})$.  Hence $\nu_k^{s_{\wt\alpha}w}$ satisfies \eqref{it:SP1'}.  For $i \leq k < n$, we have $\langle \wt\alpha, a_k\rangle = \frac 3 2$ and $\nu_k^{s_{\wt\alpha}w} = s_{i,\sigma(i)}\bigl(\nu_k^w + \frac 3 2 \alpha^\vee_{i,\sigma(i)}\bigr)$.  Since $\nu_k^w(i) \in \bigl\{ 0, \frac 1 2 \bigr\}$ and $\nu_k^w\bigl(\sigma(i)\bigr) = \frac 3 2$, we conclude that $\nu_k^{s_{\wt\alpha}w}$ satisfies \eqref{it:SP1'}.  Moreover, by the same reasoning as in the previous paragraph, $\nu_i^w(i) = \frac 1 2$.  Hence $w < s_{\wt\alpha}w$ by taking $k = i$ in criterion \eqref{it:crit_1} in \eqref{st:bo}.  Finally, for $\sigma(i)^* \leq k \leq m$ we have $\langle \wt\alpha, a_k \rangle = 1$ and $\nu_k^{s_{\wt\alpha}w} = s_{i,\sigma(i)}(\nu_k^w + \alpha^\vee_{i,\sigma(i)})$.  So $\nu_k^{s_{\wt\alpha}}$ satisfies \eqref{it:SP1'} by our assumption on $u$-values.  This exhausts all the cases we have to consider and completes the proof of the lemma.
\end{proof}

\begin{lem}\label{st:i_sigma(i)_entries_fractional}
Suppose that $w$ is naively $\mu$-permissible, and let $i$ be as in the preceding lemma.  Let $\wt\alpha$ be an affine root prescribed by the preceding lemma such that $s_{\wt\alpha}w$ is naively $\mu$-permissible and $w < s_{\wt\alpha}w$.  Suppose that $u(i) \neq u\bigl(\sigma(i)\bigr)$ and that $\nu_k^w(i)$ and $\nu_k^w\bigl(\sigma(i)\bigr)$ are half-integers for some $k$.  Then $c_k^{s_{\wt\alpha}w} = c_k^w + 1$.
\end{lem}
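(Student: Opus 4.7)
The plan is to read $c_k^{s_{\wt\alpha}w} - c_k^w$ directly off \eqref{st:c_i_nu_interp} after computing $\nu_k^{s_{\wt\alpha}w}$ at positions $i$, $j := \sigma(i)$, $i^*$, $j^*$ via \eqref{disp:nu^w'_fmla}; the two vectors agree at all other positions, so these four account for the entire change. By \eqref{st:c^w'_change_lem}\eqref{it:some_half-integer} the difference already lies in $\{-1, 0, 1\}$, so it will suffice to rule out $0$ and $-1$.

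First I would pin down the relevant $k$. The case analysis in the proof of \eqref{st:wt_alpha_i,sigma(i)} shows that $\nu_k^w(j) = u(j) - \tfrac 1 2$ precisely for $i \leq k < n$, with $n := \min\{j, j^*\}$, and is an integer outside this range; so the hypothesis forces $k \in [i, n)$. In this range the same analysis gives $\nu_k^w(i) \in \{u(i) - \tfrac 1 2,\, u(i)\}$ when $K_i \subset [\sigma(i), i)$ and $\nu_k^w(i) \in \{u(i) - 1,\, u(i) - \tfrac 1 2\}$ when $[\sigma(i), i) \subset K_i$; either way, the half-integer hypothesis forces $\nu_k^w(i) = u(i) - \tfrac 1 2$. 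Combined with the duality $\nu_k^w + (\nu_k^w)^* = \mathbf 2$ from \eqref{it:SP1'}, the fact that $u(l) + u(l^*) = 3$ for $l \in \{i, j\}$ (a consequence of naive $\mu$-permissibility), and the hypothesis $u(i) \neq u(j)$, the quadruple $\bigl(\nu_k^w(i), \nu_k^w(j), \nu_k^w(i^*), \nu_k^w(j^*)\bigr)$ is a permutation of $(\tfrac 1 2, \tfrac 1 2, \tfrac 3 2, \tfrac 3 2)$, contributing $0 + 4/4 = 1$ to $c_k^w$ at these four positions.

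Next I would enter the four subcases of \eqref{st:wt_alpha_i,sigma(i)} corresponding to $u(i) \neq u(j)$. In each, $\langle \wt\alpha, a_k \rangle$ can be read off directly from the proof of that lemma---the values are $-\tfrac 3 2$, $\tfrac 1 2$, $-\tfrac 1 2$, and $\tfrac 3 2$ respectively---and substituting into \eqref{disp:nu^w'_fmla} shows by inspection that the quadruple above is carried to either $(2, 0, 0, 2)$ or $(0, 2, 2, 0)$, eliminating all four half-integers and creating two new $2$'s at positions $i, j, i^*, j^*$. By \eqref{st:c_i_nu_interp} these four positions then contribute $2 + 0/4 = 2$ to $c_k^{s_{\wt\alpha}w}$, yielding $c_k^{s_{\wt\alpha}w} - c_k^w = 1$. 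The work in each of the four subcases is brief and parallel; the only real obstacle is the bookkeeping, which is essentially an immediate byproduct of the verification already carried out in the proof of \eqref{st:wt_alpha_i,sigma(i)}.
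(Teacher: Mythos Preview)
Your proof is correct and follows essentially the same route as the paper's: pin down $i \le k < n$ from the half-integer hypothesis on $\nu_k^w\bigl(\sigma(i)\bigr)$, read off that the four entries at $i,\sigma(i),i^*,\sigma(i)^*$ are $\tfrac12,\tfrac12,\tfrac32,\tfrac32$ in some order, compute $\nu_k^{s_{\wt\alpha}w}$ at these positions via \eqref{disp:nu^w'_fmla}, and see that all four become $0$ or $2$ with exactly two $2$'s. The only difference is cosmetic: the paper relabels $\{i,\sigma(i)\}$ as $\{a,b\}$ with $\nu_k^w(a)=\tfrac32$ and $\nu_k^w(b)=\tfrac12$, which collapses your four subcases into the single formula $\nu_k^{s_{\wt\alpha}w} = s_{a,b}\bigl(\nu_k^w + \varepsilon\,\alpha^\vee_{a,b}\bigr)$ with $\varepsilon \in \{-\tfrac32,\tfrac12\}$, so the inspection is done once rather than four times.
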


\begin{proof}
Our assumption $u(i) \neq u\bigl(\sigma(i)\bigr)$ requires that for one element $a \in \{i,\sigma(i)\}$ we have $\nu_i^w(a) = \frac 3 2$, and for the other element $b \in \{i,\sigma(i)\}$ we have $\nu_i^w(b) = \frac 1 2$.  Since $\nu_k\bigl(\sigma(i)\bigr)$ is a half-integer we moreover have $i \leq k < \min\{\sigma(i),\sigma(i)^*\}$.  Thus we see from the formula \eqref{disp:nu^w'_fmla} and the explicit possibilities for $\wt\alpha$ in \eqref{st:wt_alpha_i,sigma(i)} that
\[
   \nu_k^{s_{\wt\alpha}w} = s_{a,b}(\nu_k^w + \varepsilon\alpha^\vee_{a,b})
   \quad\text{for}\quad
   \varepsilon \in \biggl\{ -\frac 3 2, \frac 1 2\biggr\}.
\]
The conclusion now follows by inspection.
\end{proof}

\begin{lem}\label{st:spin_fail_domain}
Let $0 \leq k \leq m$, and suppose that $\nu_k^w$ satisfies \eqref{it:SP1'} and \eqref{it:SP3'}.  For $j \neq i,$ $i^*$, let $\wt\alpha = \wt\alpha_{i,j;d}$ be an affine root such that $\nu_k^{s_{\wt\alpha}w}$ satisfies \eqref{it:SP1'}.  Suppose that $\nu_k^{s_{\wt\alpha}w}$ fails \eqref{it:SP3'} and $\langle \wt\alpha, a_k\rangle \in \ZZ$.  Then
\[
   \nu_k^w \in \ZZ^{2m},\quad
   \langle \wt\alpha, a_k \rangle = \pm 1,\quad
   \nu_k^w(i) = \nu_k^w(j) = 1,\quad
   \text{and}\quad
   c_k^w \not\equiv q \bmod 2.
\]
\end{lem}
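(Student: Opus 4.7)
The plan is to extract each of the four conclusions from the formula $\nu_k^{s_{\wt\alpha}w} = s_{i,j}\bigl(\nu_k^w + \langle \wt\alpha, a_k \rangle \alpha^\vee_{i,j}\bigr)$ of \eqref{disp:nu^w'_fmla}. Set $c := \langle \wt\alpha, a_k \rangle$. Then $\nu_k^{s_{\wt\alpha}w}$ and $\nu_k^w$ agree at every position $l \notin \{i, j, i^*, j^*\}$, while at these four positions they are related by $\nu_k^{s_{\wt\alpha}w}(i) = \nu_k^w(j) - c$, $\nu_k^{s_{\wt\alpha}w}(j) = \nu_k^w(i) + c$, with the values at $i^*$ and $j^*$ determined by the duality $v + v^* = \mathbf 2$. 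Failure of \eqref{it:SP3'} for $\nu_k^{s_{\wt\alpha}w}$ requires $\nu_k^{s_{\wt\alpha}w} \in \ZZ^{2m}$; combined with $c \in \ZZ$ and the integrality of $\alpha^\vee_{i,j}$, this forces $\nu_k^w \in \ZZ^{2m}$. Because $s_{i,j}$ lies in the Weyl group $S_{2m}^\circ$ and $\alpha^\vee_{i,j} \in Q^\vee_{D_m}$, we have $\nu_k^{s_{\wt\alpha}w} \equiv \nu_k^w \bmod Q^\vee_{D_m}$; the failure condition $\nu_k^{s_{\wt\alpha}w} \not\equiv \mu$ then passes to $\nu_k^w \not\equiv \mu \bmod Q^\vee_{D_m}$, which via \eqref{st:c_i_equiv_q} is equivalent to $c_k^w \not\equiv q \bmod 2$.

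Applying \eqref{it:SP3'} to $\nu_k^w$ now produces $j_1 \in A_k$ and $j_2 \in B_k$ with $\nu_k^w(j_1) = \nu_k^w(j_2) = 1$; in particular both $A_k$ and $B_k$ are nonempty, so $0 < k < m$. The key structural consequence of the integrality hypothesis $c \in \ZZ$ is the following observation on the vertices $a_k$ of \eqref{disp:D_m_verts} and \eqref{disp:a_k}: the coordinates of $a_k$ take values $\pm\tfrac{1}{2}$ on entries indexed by $A_k$ and the value $0$ on entries indexed by $B_k$. Hence $a_k(i) - a_k(j) \in \ZZ$ forces $\{i, j\}$ to lie entirely in $A_k$ or entirely in $B_k$; since both sets are $*$-stable, the same conclusion holds for $\{i, j, i^*, j^*\}$.

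Failure of \eqref{it:SP3'} for $\nu_k^{s_{\wt\alpha}w}$ means that one of $A_k$ or $B_k$ contains no entry of value $1$ in $\nu_k^{s_{\wt\alpha}w}$. Since entries off $\{i, j, i^*, j^*\}$ are unchanged and $j_1, j_2$ witness a $1$ in each of $A_k$ and $B_k$ for $\nu_k^w$, this empty side must coincide with the side containing $\{i, j, i^*, j^*\}$, and $\nu_k^w$ can have $1$'s on this side only among positions in $\{i, j, i^*, j^*\}$. By duality the number of such $1$'s is $0$, $2$, or $4$. The count $0$ contradicts the existence of $j_1$ or $j_2$. The count $2$, say at $\{i, i^*\}$ with $\nu_k^w(j) \in \{0, 2\}$, is ruled out by writing the four entries of $\nu_k^{s_{\wt\alpha}w}$ at $\{i, j, i^*, j^*\}$ as linear functions of $c$ and verifying that the joint constraints of lying in $\{0, 1, 2\}$ (from \eqref{it:SP1'} for $\nu_k^{s_{\wt\alpha}w}$) and avoiding the value $1$ admit no integer $c$; the symmetric possibility of $2$ ones at $\{j, j^*\}$ is excluded the same way. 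Hence the count is $4$, giving $\nu_k^w(i) = \nu_k^w(j) = 1$, and the remaining constraints $\nu_k^{s_{\wt\alpha}w}(j) = 1 + c \in \{0, 2\}$ and $\nu_k^{s_{\wt\alpha}w}(j^*) = 1 - c \in \{0, 2\}$ force $c = \pm 1$. The routine but fiddly exclusion of the partial count $2$ is the only technical step; everything else is bookkeeping around \eqref{disp:nu^w'_fmla}.
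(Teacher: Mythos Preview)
Your proof is correct and follows essentially the same route as the paper's: both establish $\nu_k^w \in \ZZ^{2m}$ and $c_k^w \not\equiv q$ from \eqref{disp:nu^w'_fmla} and the congruence $\nu_k^{s_{\wt\alpha}w} \equiv \nu_k^w \bmod Q^\vee_{D_m}$, then use the form of $a_k$ to see that $\{i,j,i^*,j^*\}$ lies entirely in $A_k$ or entirely in $B_k$, and pin down the failing side. The only organizational difference is in the endgame: the paper, after reducing without loss of generality to $l_1 = i$ (so $\nu_k^w(i)=1$), notes that $\langle\wt\alpha,a_k\rangle=0$ would make $\nu_k^{s_{\wt\alpha}w}$ a permutation of $\nu_k^w$ on the failing side, reads off $\langle\wt\alpha,a_k\rangle=\pm 1$ from \eqref{it:SP1'} applied to $\nu_k^{s_{\wt\alpha}w}(j)=1+\langle\wt\alpha,a_k\rangle$, and then obtains $\nu_k^w(j)=1$ from $\nu_k^{s_{\wt\alpha}w}(i)\in\{0,2\}$ --- avoiding your separate case analysis on the count of $1$'s among $\{i,j,i^*,j^*\}$.
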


\begin{proof}
For $\nu_k^{s_{\wt\alpha}w}$ to fail the spin condition, we must have $\nu_k^{s_{\wt\alpha}w} \in \ZZ^{2m}$ and $c_k^{s_{\wt\alpha}w} \not\equiv q \bmod 2$.  Since $\langle \wt\alpha, a_k\rangle \in \ZZ$, the formula \eqref{disp:ss_fmla} makes clear that $\nu_k^w \in \ZZ^{2m}$, and moreover that
\[
   \nu_k^w \equiv s_{i,j}\nu_k^{s_{\wt\alpha}w} \equiv \nu_k^{s_{\wt\alpha}w} \bmod Q^\vee_{D_m}.
\]
Hence $c_k^w \not\equiv q \bmod 2$ by \eqref{st:c_i_equiv_q}.  Since $\nu_k^w$ satisfies the spin condition, there then exist $l_1 \in A_k$ and $l_2 \in B_k$ such that $\nu_k^w(l_1) = \nu_k^w(l_2) = 1$.  Since $\langle \wt\alpha , a_k\rangle \in \ZZ$, we also see that $i$ and $j$ are both in $A_k$ or both in $B_k$.  Without loss of generality, let us assume the former.  Then $\nu_k^{s_{\wt\alpha}w}(j_2) = 1$, since $\nu_k^w$ and $\nu_k^{s_{\wt\alpha}w}$ can differ only in their $i$, $i^*$, $j$, and $j^*$ entries.  Since $\nu_k^{s_{\wt\alpha}w}$ fails the spin condition, we conclude that $\nu_k^{s_{\wt\alpha}w}(l) \in \{0,2\}$ for all $l \in A_k$.  Hence $l_1 \in \{i,i^*,j,j^*\}$ and $\langle \wt\alpha, a_k \rangle \neq 0$.  Without loss of generality, let us assume $l_1 = i$.  In view of \eqref{disp:nu^w'_fmla}, the fact that $\nu_k^{s_{\wt\alpha}w}$ satisfies \eqref{it:SP1'} forces $\langle \wt\alpha, a_k\rangle = \pm 1$.  And since $\nu_k^{s_{\wt\alpha}w}(j) \in \{0,2\}$, we conclude from this that $\nu_k^w(j) = 1$ as well, which completes the proof.
\end{proof}

\begin{lem}\label{st:fail_entries}
Let $0 \leq k \leq m$, and suppose that $\nu_k^w$ is $\mu$-spin-permissible.  For $j \neq i$, $i^*$, let $\wt\alpha := \wt\alpha_{i,j;d}$ be an affine root such that $\nu_k^{s_{\wt\alpha}w}$ satisfies \eqref{it:SP1'} but not \eqref{it:SP2'} or \eqref{it:SP3'}.  Then
\[
   \nu_k^w(i), \nu_k^w(j) \in \biggl\{\frac 1 2, 1, \frac 3 2 \biggr\}.
\]
\end{lem}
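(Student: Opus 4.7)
The plan is to argue by contradiction: assuming one of $\nu_k^w(i)$ or $\nu_k^w(j)$ lies in $\{0,2\}$, we will show that $\nu_k^{s_{\wt\alpha}w}$ is in fact $\mu$-spin-permissible, contradicting the hypothesis that it violates either \eqref{it:SP2'} or \eqref{it:SP3'}. Using the identifications $\wt\alpha_{i,j;d} = \wt\alpha_{j,i;-d}$ and $\wt\alpha_{i,j;d} = \wt\alpha_{j^*,i^*;d}$ of reflections, all four cases $\nu_k^w(i) = 0$, $\nu_k^w(i) = 2$, $\nu_k^w(j) = 0$, $\nu_k^w(j) = 2$ reduce, after relabelling, to the single case $\nu_k^w(i) = 0$, which we now treat.

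Set $\ell := \langle \wt\alpha, a_k \rangle$. By the formula \eqref{disp:nu^w'_fmla}, $\nu_k^{s_{\wt\alpha}w}$ coincides with $\nu_k^w$ outside the four positions $i,j,i^*,j^*$, where its entries are respectively $\nu_k^w(j)-\ell$, $\ell$, $2-\nu_k^w(j)+\ell$, and $2-\ell$. Condition \eqref{it:SP1'} on $\nu_k^{s_{\wt\alpha}w}$ forces $\ell \in [0,\nu_k^w(j)]$. I would then run through the finitely many possibilities for the pair $\bigl(\nu_k^w(j),\ell\bigr) \in \{0,\tfrac 1 2, 1, \tfrac 3 2, 2\} \times \bigl(\tfrac 1 2 \ZZ \cap [0,\nu_k^w(j)]\bigr)$ and, using \eqref{st:c_i_nu_interp} to compute $c_k^{s_{\wt\alpha}w} - c_k^w$ from the change in entries, verify in each case that either $\nu_k^{s_{\wt\alpha}w} = \nu_k^w$ or $c_k^{s_{\wt\alpha}w} \leq c_k^w$. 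Since $c_k^w \leq q$ by $\mu$-spin-permissibility of $\nu_k^w$, this shows that $\nu_k^{s_{\wt\alpha}w}$ satisfies \eqref{it:SP2'}.

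It remains to verify \eqref{it:SP3'} for $\nu_k^{s_{\wt\alpha}w}$. Suppose it fails. Then $\nu_k^{s_{\wt\alpha}w} \in \ZZ^{2m}$, and in particular $\ell = \nu_k^{s_{\wt\alpha}w}(j) \in \ZZ$; together with $\ell + (2 - \nu_k^w(j)) = \nu_k^{s_{\wt\alpha}w}(i^*) \in \ZZ$, this gives $\nu_k^w(j) \in \ZZ$, and since $\nu_k^{s_{\wt\alpha}w}$ and $\nu_k^w$ agree at all other positions, we deduce $\nu_k^w \in \ZZ^{2m}$. Lemma \eqref{st:spin_fail_domain} then applies to the reflection $s_{\wt\alpha}w$ and yields $\nu_k^w(i) = \nu_k^w(j) = 1$, contradicting $\nu_k^w(i) = 0$. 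Thus $\nu_k^{s_{\wt\alpha}w}$ satisfies both \eqref{it:SP2'} and \eqref{it:SP3'}, the desired contradiction.

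The only non-trivial step is the case-by-case computation of $c_k^{s_{\wt\alpha}w} - c_k^w$ under the constraint $\ell \in [0,\nu_k^w(j)]$; this is the main obstacle, but it is routine, involving essentially fifteen sub-cases indexed by $\nu_k^w(j)$ and $\ell$, and each sub-case is resolved by directly comparing the multiset of values $\{\nu_k^w(i),\nu_k^w(j),\nu_k^w(i^*),\nu_k^w(j^*)\}$ with its reflected counterpart.
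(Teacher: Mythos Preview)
Your proof is correct and follows essentially the same approach as the paper: argue by contrapositive, verify \eqref{it:SP2'} by checking that $c_k^{s_{\wt\alpha}w} \leq c_k^w$ via inspection of the four affected entries, and verify \eqref{it:SP3'} by invoking \eqref{st:spin_fail_domain} once $\langle\wt\alpha,a_k\rangle \in \ZZ$ has been established. Your explicit symmetry reduction to $\nu_k^w(i)=0$ is a clean way to organize the inspection; the paper simply says ``inspection of \eqref{disp:nu^w'_fmla} yields easily that $c_k^{s_{\wt\alpha}w} \leq c_k^w$'' and handles \eqref{it:SP3'} by splitting on whether $\langle\wt\alpha,a_k\rangle$ is an integer or half-integer, but the content is the same. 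One small redundancy: once you have $\ell \in \ZZ$, \eqref{st:spin_fail_domain} applies immediately---you need not first deduce $\nu_k^w \in \ZZ^{2m}$.
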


\begin{proof}
We shall show that if $\nu_k^w(i) \in \{0,2\}$ or $\nu_k^w(j) \in \{0,2\}$, then $\nu_k^{s_{\wt\alpha}w}$ is $\mu$-spin-permissible.  If $\langle \wt\alpha, a_k\rangle \in \ZZ$, then the previous lemma shows that $\nu_k^w$ satisfies \eqref{it:SP3'}.  And if $\langle \wt\alpha, a_k\rangle$ is a half-integer, then \eqref{disp:nu^w'_fmla} shows that $\nu_k^{s_{\wt\alpha}w} \notin \ZZ^{2m}$ so that it trivially satisfies \eqref{it:SP3'}.  So it remains to show that $\nu_k^w$ satisfies \eqref{it:SP2'}.  But under our present hypotheses, inspection of \eqref{disp:nu^w'_fmla} yields easily that $c_k^{s_{\wt\alpha}w} \leq c_k^w$, which is $\leq q$ by assumption.
\end{proof}

\begin{lem}\label{st:useful_reflections_lem}
Assume that $\nu_k^w$ satisfies \eqref{it:SP1'} and \eqref{it:SP2'} for all $0 \leq k \leq m$.  Let $i < j < j^* < i^*$ be elements in $\{1,\dotsc,2m\}$ such that for all $i \leq k < j$,
\begin{enumerate}
\item\label{it:u_vals}
   $\nu_k^w(i) \leq 1$ and $\nu_k^w(j) \geq 1$;
\item\label{it:nu_constraints}
   $\nu_k^w(i) \geq \frac 1 2$  and $\nu_k(j) \leq \frac 3 2$; and
\item\label{it:c_constraints}
   $c_k^w < q$.
\end{enumerate}
Let $\wt\alpha := \wt\alpha_{i,j;0}$.  Then $\nu_k^{s_{\wt\alpha}w}$ satisfies \eqref{it:SP1'} and \eqref{it:SP2'} for all $0 \leq k \leq m$, and $w < s_{\wt\alpha}w$.  If moreover $\nu_k^w$ satisfies \eqref{it:SP3'} for $k$ in the range $0 \leq k < i$ or $j \leq k \leq m$, then so does $\nu_k^{s_{\wt\alpha}w}$.
\end{lem}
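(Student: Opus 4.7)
The plan is to partition $k \in \{0, \dotsc, m\}$ into three ranges, $0 \leq k < i$, $i \leq k < j$, and $j \leq k \leq m$, and handle each separately. Using $a_k(\ell) = -\tfrac{1}{2}$ for $\ell \leq k$, $a_k(\ell) = \tfrac{1}{2}$ for $\ell \geq k^*$, and $a_k(\ell) = 0$ otherwise (valid for every $k \in \{0,\dotsc,m\}$, including the special cases $k = 1$ and $k = m-1$), one computes $\langle \wt\alpha, a_k \rangle = a_k(i) - a_k(j)$ to be $0$ in the outer two ranges and $-\tfrac{1}{2}$ in the middle range. The choice $d = 0$ in $\wt\alpha$ is exactly what makes this happen, and it is what drives the proof.

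In the outer ranges \eqref{disp:nu^w'_fmla} reduces to $\nu_k^{s_{\wt\alpha}w} = s_{i,j} \nu_k^w$. For $0 \leq k < i$ both $\{i,j\}$ and $\{i^*, j^*\}$ lie in $B_k$, while for $j \leq k \leq m$ both lie in $A_k$; in either case $s_{i,j}$ preserves $A_k$ and $B_k$ setwise, preserves $c_k^w$, and preserves the property $\nu_k^w \in \ZZ^{2m}$. Thus SP1', SP2', and SP3' all pass from $\nu_k^w$ to $\nu_k^{s_{\wt\alpha}w}$ without further argument, which handles the SP3' portion of the lemma.

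The substance is in the middle range $i \leq k < j$, where $\nu_k^{s_{\wt\alpha}w} = s_{i,j}\bigl(\nu_k^w - \tfrac{1}{2} \alpha_{i,j}^\vee\bigr)$. Hypotheses (i)--(ii) combined with $\nu_k^w + (\nu_k^w)^* = \mathbf 2$ constrain $\nu_k^w$ at the four positions $i, j, i^*, j^*$ to the boxes $[\tfrac{1}{2}, 1]$, $[1, \tfrac{3}{2}]$, $[1, \tfrac{3}{2}]$, $[\tfrac{1}{2}, 1]$ respectively. Inspection shows all four updated entries land in $[0, 2]$, and duality persists because $\alpha_{i,j}^\vee + (\alpha_{i,j}^\vee)^* = 0$, so SP1' holds. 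For SP2' I plan to enumerate the four possibilities for $(\nu_k^w(i), \nu_k^w(j)) \in \{\tfrac{1}{2}, 1\} \times \{1, \tfrac{3}{2}\}$ and apply Lemma \ref{st:c_i_nu_interp} to compute the change in $c_k$; I expect the outcome $c_k^{s_{\wt\alpha}w} = c_k^w + 1$ uniformly in the four cases, whereupon hypothesis (iii) delivers SP2'. This four-case bookkeeping is the main obstacle, but it is mechanical.

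Finally, the length inequality $w < s_{\wt\alpha}w$ will follow from criterion (i) of Lemma \ref{st:bo} applied at $k = i$: setting $x := \nu_i^w(i) - \nu_i^w(j)$, hypothesis (i) forces $x \leq 0$, so $|x| < |x - 1|$ and the criterion is immediately satisfied.
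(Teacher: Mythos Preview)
Your proposal is correct and follows essentially the same argument as the paper: the same three-range partition based on $\langle\wt\alpha,a_k\rangle \in \{0,-\tfrac12\}$, the same appeal to \eqref{disp:nu^w'_fmla}, and the same use of criterion \eqref{it:crit_1} in \eqref{st:bo} for the Bruhat inequality. The only differences are cosmetic: for \eqref{it:SP2'} in the middle range the paper simply cites \eqref{st:c^w'_change_lem}\eqref{it:some_half-integer} to get $c_k^{s_{\wt\alpha}w}\leq c_k^w+1$ rather than your four-case check (which correctly gives $c_k^{s_{\wt\alpha}w}=c_k^w+1$ on the nose), and for \eqref{it:SP3'} in the outer ranges the paper invokes \eqref{st:spin_fail_domain} rather than your direct observation that $s_{i,j}$ stabilizes $A_k$ and $B_k$ setwise.
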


Note that if $i$ (resp. $j$) is proper, then the inequality $\nu_k^w(i) \leq 1$ (resp. $\nu_k^w(j) \geq 1$) in condition \eqref{it:u_vals} is implied by $u(i) = 1$ (resp. $u(j) = 2$).

\begin{proof}
We shall appeal to the formula \eqref{disp:nu^w'_fmla} for $\nu_k^{s_{\wt\alpha}w}$.  If $0 \leq k < i$ or $j \leq k \leq m$, then $\langle \wt\alpha, a_k \rangle = 0$ and $\nu_k^{s_{\wt\alpha}w} = s_{i,j}\nu_k^w$, which satisfies \eqref{it:SP1'} and \eqref{it:SP2'} because $\nu_k^w$ does.  If $\nu_k^w$ satisfies \eqref{it:SP3'}, then this also implies that $\nu_k^{s_{\wt\alpha}w}$ satisfies \eqref{it:SP3'} by \eqref{st:spin_fail_domain}.  If $i \leq k < j$, then $\langle \wt\alpha, a_k\rangle = - \frac 1 2$ and $\nu_k^{s_{\wt\alpha}w} = s_{i,j}\bigl(\nu_k^w - \frac 1 2\alpha^\vee_{i,j}\bigr)$.  So we see immediately from hypothesis \eqref{it:nu_constraints} that $\nu_k^w$ satisfies \eqref{it:SP1'}.  And since $\langle \wt\alpha, a_k \rangle$ is a half-integer, \eqref{st:c^w'_change_lem}\eqref{it:some_half-integer} and hypothesis \eqref{it:c_constraints} together imply $c_k^{s_{\wt\alpha}w} \leq q$, so that $\nu_k^{s_{\wt\alpha}w}$ satisfies \eqref{it:SP2'}.  To finish, hypothesis \eqref{it:u_vals} immediately implies that criterion \eqref{it:crit_1} in \eqref{st:bo} is satisfied for every $k$ in the range $i \leq k < j$.  So $w < s_{\wt\alpha}w$.
\end{proof}

\subsection{\texorpdfstring{$\mu$}{mu}-spin-permissibility implies \texorpdfstring{$\mu$}{mu}-admissibility}\label{ss:sp-perm=>adm_D}
As discussed at the end of \s\ref{ss:mu-spin-perm_D}, the implication $\mu$-spin-permissible $\implies$ $\mu$-admissible in \eqref{st:sp-perm=adm_D} is an immediate consequence of part \eqref{it:part_i} of the following proposition, whose proof we are now ready to give.

\begin{prop}\label{st:big_D_prop}
Suppose that $w \in \wt W_{D_m}$ is $\mu$-spin-permissible and not a translation element.
\begin{enumerate}
\renewcommand{\theenumi}{\roman{enumi}}
\item\label{it:part_i}
   There exists an affine reflection $s_{\wt\alpha}$ such that $s_{\wt\alpha}w$ is $\mu$-spin-permissible and $w < s_{\wt\alpha}w$.
\item\label{it:part_ii}
   If moreover $\nu_0^w(m) = 1$, then the reflection $s_{\wt\alpha}$ in \eqref{it:part_i} can be chosen such that $\nu_0^{s_{\wt\alpha}w}(m) = 1$.
\end{enumerate}
\end{prop}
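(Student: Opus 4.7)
The plan is to follow the Kottwitz--Rapoport strategy outlined at the end of \S\ref{ss:mu-spin-perm_D}: given $\mu$-spin-permissible non-translation $w$, produce an affine root $\wt\alpha$ so that $s_{\wt\alpha}w$ is again $\mu$-spin-permissible and $w < s_{\wt\alpha}w$. Iterating yields a chain inside the finite $\mu$-spin-permissible set terminating in a translation element, which is $\mu$-admissible by \eqref{st:transl_adm}. Since $w$ is not a translation, some proper index exists, and the proof splits on the structure of $\sigma$ near such indices.

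In the main case one finds a proper $i$ with $i < \sigma(i) < i^*$. Lemma \eqref{st:wt_alpha_i,sigma(i)} then produces an $\wt\alpha$, whose exact form depends on whether $K_i \subset [\sigma(i),i)$ or the reverse inclusion holds and on the values $u(i), u\bigl(\sigma(i)\bigr)$, such that $s_{\wt\alpha}w$ is naively $\mu$-permissible and $w < s_{\wt\alpha}w$. What remains is to verify \eqref{it:SP2'} and \eqref{it:SP3'} for $\nu_k^{s_{\wt\alpha}w}$ at every $0 \le k \le m$. When $u(i) = u\bigl(\sigma(i)\bigr)$, Lemma \eqref{st:wt_alpha_i,sigma(i)} preserves each $c_k^w$, so \eqref{it:SP2'} is automatic and \eqref{it:SP3'} needs only be checked in the narrow scenario permitted by \eqref{st:spin_fail_domain}. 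When the $u$-values differ, Lemmas \eqref{st:i_sigma(i)_entries_fractional} and \eqref{st:c^w'_change_lem} bound the change in $c_k^w$ by at most one, so \eqref{it:SP2'} follows from $c_k^w \le q$ together with \eqref{st:c_i_neq_c_p} used to exclude the boundary cases, and \eqref{it:SP3'} follows either because $\nu_k^{s_{\wt\alpha}w}$ acquires a half-integer entry, or via \eqref{st:fail_entries} used to locate the requisite ones in $A_k$ and $B_k$.

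In the residual case that every proper $i$ satisfies $\sigma(i) = i^*$, Lemma \eqref{st:wt_alpha_i,sigma(i)} does not apply. Here one instead invokes Lemma \eqref{st:useful_reflections_lem}, selecting a proper swap $i \leftrightarrow i^*$ together with an auxiliary index $j \in \{i+1,\dotsc,i^*-1\}$ situated so that \eqref{it:u_vals}--\eqref{it:c_constraints} hold on $[i,j)$. Such $j$ exist because the basic inequalities \eqref{st:basic_ineqs} force a transition of $\nu_k^w$-values across any interval witnessed by a proper swap; the strict inequality $c_k^w < q$ either holds for parity reasons or else, via \eqref{st:c_i_neq_c_p}, redirects the search to another proper index, and a subsidiary induction on the number of swaps handles the remaining possibilities.

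For part~\eqref{it:part_ii}, preserving $\nu_0^w(m) = 1$ amounts to requiring $\{i,j\} \cap \{m, m+1\} = \emptyset$, since $\alpha^\vee_{i,j}$ then has vanishing $m$th entry and $s_{i,j}$ fixes index $m$. One refines the selection in each of the cases above to avoid $\{m, m+1\}$; when the combinatorics forces a candidate proper index at $m$ or $m+1$, the extra datum $\nu_0^w(m) = 1$ combined with the recursion \eqref{disp:nu_recursion} and the structural information in \eqref{st:nu(j)_vals} supplies an alternate reflection elsewhere. The main obstacle throughout is sustaining the spin condition \eqref{it:SP3'} after reflection, since parity shifts in $c_k^{s_{\wt\alpha}w}$ can introduce new failures; the sharp localization of failure modes provided by \eqref{st:spin_fail_domain} is what makes the case analysis finite and tractable.
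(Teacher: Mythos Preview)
Your outline follows the paper's strategy but contains a fundamental gap in the ``main case.''  You treat the reflection $s_{\wt\alpha}$ coming from Lemma~\eqref{st:wt_alpha_i,sigma(i)} as if it always works and the task is merely to \emph{verify} \eqref{it:SP2'} and \eqref{it:SP3'} for $\nu_k^{s_{\wt\alpha}w}$.  This is not so: that lemma only guarantees naive $\mu$-permissibility, and there genuinely exist $k$ at which $c_k^{s_{\wt\alpha}w} = q+1$ or at which $\nu_k^{s_{\wt\alpha}w}$ fails the spin condition.  Your appeal to \eqref{st:c_i_neq_c_p} ``to exclude the boundary cases'' is misplaced --- that lemma does not exclude anything, it only locates an index $j$ where $\mu_i^w(j) \neq \mu_p^w(j)$; and \eqref{st:fail_entries} and \eqref{st:spin_fail_domain} do not establish \eqref{it:SP3'} but rather describe the shape of $\nu_k^w$ \emph{when} \eqref{it:SP2'} or \eqref{it:SP3'} fails.  (Note also that \eqref{st:spin_fail_domain} requires $\langle\wt\alpha,a_k\rangle \in \ZZ$, which fails for $i \le k < n$, so it says nothing there.)  The actual argument is: when $w' = s_{\wt\alpha'}w$ is not $\mu$-spin-permissible, one finds the minimal $p$ where failure occurs, uses \eqref{st:c_i_neq_c_p}, \eqref{st:fail_entries}, and several ad~hoc subsidiary lemmas to locate an auxiliary index $j$, and then constructs a \emph{different} affine root $\wt\alpha$ --- one of $\wt\alpha_{i,j;0}$, $\wt\alpha_{i,j^*;-1}$, or $\wt\alpha_{n,j;0}$, depending on whether $c_i^w \lessgtr c_p^w$, on $u(i)$, and on whether $j \lessgtr n$ --- for which $s_{\wt\alpha}w$ can be checked $\mu$-spin-permissible entry by entry.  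This replacement step is the heart of the proof and is absent from your sketch.

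Two further issues.  First, the paper takes $i$ to be the \emph{minimal} proper element; this is used constantly (to ensure $\nu_k^w = \nu_0^w$ for $k < i$, to know $\nu_k^w\bigl(\sigma(i)\bigr)$ explicitly on each range, to force the auxiliary $j$ to have $\nu_k^w(j)$ constant for $k \le i$, etc.).  Choosing $i$ arbitrarily forfeits this.  Second, your treatment of part~\eqref{it:part_ii} --- ``refine the selection to avoid $\{m,m+1\}$'' --- is not a proof.  The difficult case is exactly $\sigma(i) \in \{m,m+1\}$, and there the paper runs a separate long argument (its Case~C): it introduces further auxiliary reflections $s_{\wt\beta}$, $s_{\wt\gamma}$, analyzes where \emph{they} can fail \eqref{it:SP2'} or \eqref{it:SP3'}, and in the worst subcases constructs yet another $\wt\alpha$ from $\sigma^{-1}(i)$.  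None of this is captured by the recursion \eqref{disp:nu_recursion} or \eqref{st:nu(j)_vals} alone.
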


Note that part \eqref{it:part_ii} of the proposition is not needed for the proof of \eqref{st:sp-perm=adm_D}.  We shall use it to deduce \eqref{st:sp-perm=adm_B} below, which is the type $B$ analog of \eqref{st:sp-perm=adm_D}.

\begin{proof}
Let us begin by reviewing notation.  Attached to $w$ are the vectors $\mu_k^w$ \eqref{disp:mu_i^w} for $0 \leq k \leq 2m$ and $\nu_k^w$ \eqref{def:nu_i^w} for $0 \leq k \leq m$.  We decompose $w$ as a product $t_{\nu^w_0}\sigma$, where $t_{\nu^w_0}$ is the translation part of $w$ on the left and $\sigma$ is the linear part of $w$, which we regard as a permutation of $\{1,\dotsc,2m\}$.  We recall the sets $A_k$ and $B_k$ \eqref{disp:A_i_B_i} for $0 \leq k \leq m$; the upper value $u(i)$ \eqref{def:upper_value} for $1 \leq i \leq 2m$; the integer $c_k^w$ \eqref{def:c_i^w} for $0 \leq k \leq m$; the point $a_k$ for $0 \leq k \leq m$ (\ref{disp:D_m_verts}, \ref{disp:a_k}); and the affine root $\wt\alpha_{i,j;d}$ \eqref{disp:wtalpha_i,j;d} and coroot $\alpha^\vee_{i,j}$ \eqref{disp:alpha^vee_i,j} for $1 \leq i,j \leq 2m$ and $j \neq i$, $i^*$.  Given an affine root $\wt\alpha$ with linear part $\alpha$, we shall make repeated use of the formula $\nu_k^{s_{\wt\alpha}w} = s_{\alpha}\bigl(\nu_k^w + \langle \wt\alpha, a_k\rangle \alpha^\vee\bigr)$ \eqref{disp:nu^w'_fmla}.

Since $w$ is not a translation element, there exists a proper element in $\{1,\dotsc,2m\}$.  Let $i$ denote the \emph{minimal} proper element.  Then
\[
   \nu_0^w = \dotsb = \nu_{i-1}^w \neq \nu_i^w.
\]
Throughout the proof, let
\[
   n := \min\bigl\{\sigma(i),\sigma(i)^*\bigr\}.
\]
Before continuing, we record a subsidiary lemma for later use.

\begin{lem}\label{st:c_i<q}
The upper values $u(i)$ and $u\bigl(\sigma(i)\bigr)$ are distinct $\iff$ $c_i^w \not\equiv q \bmod 2$.  In particular, if these equivalent conditions hold then $c_i^w < q$.
\end{lem}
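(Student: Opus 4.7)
The plan is to string together three pieces already in place: the spin condition at $k=0$ pins down the parity of $c_0^w$, minimality of $i$ propagates this parity to $c_{i-1}^w$, and the recursion for $c_k^w$ computes the change from $c_{i-1}^w$ to $c_i^w$ in terms of upper values. Since $i$ is the minimal proper element, $\sigma$ fixes $1,\dotsc,i-1$, so by the recursion \eqref{disp:mu_recursion} we have $\mu_0^w = \mu_1^w = \dotsb = \mu_{i-1}^w$, and in particular $\nu_0^w = \nu_{i-1}^w = \mu_0^w$ lies in $X_{*D_m}$. Because $A_0 = \emptyset$, condition \eqref{it:SP3'} (applied at $k=0$, where $\nu_0^w \in \ZZ^{2m}$ automatically) cannot be satisfied by exhibiting a $j_1$, and therefore forces $\nu_0^w \equiv \mu \bmod Q^\vee_{D_m}$. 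Lemma \ref{st:c_i_equiv_q} then gives $c_0^w \equiv q \bmod 2$, hence $c_{i-1}^w \equiv q \bmod 2$.

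Next, I would apply Lemma \ref{st:c_i_change_lem} (which applies here since $\mu_{i-1}^w$ and $\mu_i^w$ both satisfy \eqref{it:SP1}) to get
\[
   c_i^w = c_{i-1}^w + u(i) - u\bigl(\sigma(i)\bigr).
\]
Since $i$ and $\sigma(i)$ are proper and $w$ satisfies \eqref{it:SP1}, each of $u(i)$ and $u\bigl(\sigma(i)\bigr)$ lies in $\{1,2\}$, so their difference is $0$ or $\pm 1$, and the difference is odd precisely when $u(i)\neq u\bigl(\sigma(i)\bigr)$. Combining with $c_{i-1}^w \equiv q \bmod 2$ yields
\[
   c_i^w \equiv q \bmod 2 \iff u(i)=u\bigl(\sigma(i)\bigr),
\]
which is the main equivalence.

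For the final clause, suppose $c_i^w \not\equiv q \bmod 2$. Condition \eqref{it:SP2} gives $c_i^w \leq q$, and the parity mismatch forces $c_i^w \leq q-1$, i.e.\ $c_i^w < q$.

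There is no real obstacle here: everything follows by assembling Lemmas \ref{st:c_i_equiv_q} and \ref{st:c_i_change_lem} with the observation that minimality of $i$ trivializes $\nu_k^w$ for $k < i$. The only point requiring a brief argument is why \eqref{it:SP3'} at $k=0$ pins down $\nu_0^w$ modulo $Q^\vee_{D_m}$, and this is immediate from $A_0 = \emptyset$.
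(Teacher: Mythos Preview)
Your proof is correct and takes essentially the same approach as the paper's: the paper's one-line proof cites exactly the three ingredients you spell out, namely \eqref{st:c_i_change_lem}, the fact that $c_{i-1}^w = c_0^w \equiv q \bmod 2$ via \eqref{it:SP3'}, and $c_i^w \leq q$ via \eqref{it:SP2'}. Your version is simply a more detailed unpacking of the same argument.
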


\begin{proof}
Everything follows from the facts \eqref{st:c_i_change_lem}, $c_{i-1}^w = c_0^w \equiv q \bmod 2$ \eqref{it:SP3'}, and $c_i^w \leq q$ \eqref{it:SP2'}.
\end{proof}

We return to the proof of \eqref{st:big_D_prop}.  We shall first prove part \eqref{it:part_i}.  Of course either $\sigma(i) = i^*$ or $\sigma(i) \neq i^*$, and we shall divide the proof into these two cases.

\ssk

\noindent\emph{Case I: $\sigma(i) = i^*$.}  Then $\nu_i^w = \nu_0^w + e_i -e_{i^*} \in \ZZ^{2m}$ by \eqref{disp:nu_recursion}, and $c_i^w \not\equiv q \bmod 2$ by \eqref{st:c_i<q}.  We shall consider the subcases $u(i) = 1$ and $u(i) = 2$.  In each subcase we shall use in a crucial way that $\nu_i^w$ satisfies the spin condition.

First suppose that $u(i) = 1$.  Then $\nu_k^w(i) = 0$ for $0 \leq k < i$ and $\nu_k^w(i) = 1$ for $i \leq k \leq m$.  By the spin condition, there exists a minimal $j \in B_i$ such that $\nu_i^w(j) = 1$.  Of course $n \neq j$, $j^*$, and therefore $\nu_k(j) = \nu_k(j^*) = 1$ for all $0 \leq k \leq i$.  Take
\[
   \wt\alpha := \wt\alpha_{i,j^*;-1}.
\]
We must show that $\nu_k^{s_{\wt\alpha}w}$ is $\mu$-spin-permissible for all $0 \leq k \leq m$.  First let $0 \leq k < i$.  Then $\nu_k^w(i) = 0$, $\nu_k^w(j^*) = 1$, $\langle\wt\alpha, a_k\rangle = 1$, and $\nu_k^{s_{\wt\alpha}w} = s_{i,j^*}(\nu_k^w + \alpha^\vee_{i,j^*}) = \nu_k^w$, which is $\mu$-spin-permissible.  Next let $i \leq k < j$.  Then $\nu_k^w(i) = 1$, $\nu_k^w(j^*) \in \bigl\{\frac 1 2, 1, \frac 3 2\bigr\}$ by \eqref{st:nu(j)_vals}, $\langle\wt\alpha, a_k\rangle = \frac 1 2$, and $\nu_k^{s_{\wt\alpha}w} = s_{i,j^*}(\nu_k^w + \frac 1 2 \alpha^\vee_{i,j^*})$.  So $\nu_k^{s_{\wt\alpha}w}$ satisfies \eqref{it:SP1'} by inspection, and $c_k^{s_{\wt\alpha}w} \leq c_k^w + 1$ by \eqref{st:c^w'_change_lem}\eqref{it:some_half-integer}.  Now, since $k < j$ and $j$ was taken to be \emph{minimal,} it follows from \eqref{st:c_i_neq_c_p}\eqref{it:c_i<c_p} that $c_i^w \geq c_k^w$.  Hence
\[
   c_k^{s_{\wt\alpha}w} \leq c_k^w + 1 \leq c_i^w + 1 \leq q,
\]
where the last inequality uses \eqref{st:c_i<q}.  So $\nu_k^{s_{\wt\alpha}w}$ satisfies \eqref{it:SP2'}.  Moreover $\nu_k^{s_{\wt\alpha}w}$ trivially satisfies the spin condition since, for example, $\nu_k^{s_{\wt\alpha}w}(j^*) = \frac 3 2$ is a half-integer.  In addition, since $\nu_i^w(i) = \nu_i^w(j^*) = 1$, we see by taking $k = i$ in criterion \eqref{it:crit_1} in \eqref{st:bo} that $w < s_{\wt\alpha}w$.  Finally let $i \leq k \leq m$.  Then $\langle \wt\alpha, a_k \rangle = 0$ and $\nu_k^{s_{\wt\alpha}w} = s_{i,j^*}\nu_k^w$, which satisfies \eqref{it:SP1'} and \eqref{it:SP2'} by inspection and \eqref{it:SP3'} by \eqref{st:spin_fail_domain}.

Now suppose that $u(i) = 2$.  Then $\nu_k^w(i) = 1$ for $0 \leq k < i$ and $\nu_k^w(i) = 2$ for $i \leq k \leq m$.  By the spin condition, there exists an element $j \leq i$ such that $\nu_i^w(j) = 1$.  Evidently $j \neq i$, and therefore $\nu_k(j) = 1$ for all $k$ since $i$ is the minimal proper element.  Take
\[
   \wt\alpha := \wt\alpha_{j,i;0}.
\]
We shall show that $w < s_{\wt\alpha}w$ and that $\nu_k^{s_{\wt\alpha}w}$ satisfies \eqref{it:SP1'} and \eqref{it:SP2'} for all $0 \leq k \leq m$ by applying \eqref{st:useful_reflections_lem} (with the roles of $i$ and $j$ reversed).  Indeed, for $j \leq k < i$ we have $\nu_k^w(j) = \nu_k^w(i) = 1$, so that conditions \eqref{it:u_vals} and \eqref{it:nu_constraints} in \eqref{st:useful_reflections_lem} are satisfied.  And by \eqref{st:c_i_change_lem} $c_k^w < c_k^w + 1 = c_i^w \leq q$, so that condition \eqref{it:c_constraints} is satisfied as well.  It remains to show that $\nu_k^{s_{\wt\alpha}w}$ satisfies the spin condition for all $k$.  For $0 \leq k < j$ and $i \leq k \leq m$, this is done by \eqref{st:spin_fail_domain}.  And for $j \leq k < i$, we have $\langle \wt\alpha, a_k \rangle = - \frac 1 2$ and $\nu_k^{s_{\wt\alpha}w} = s_{i,j}\bigl(\nu_k^w - \frac 1 2 \alpha^\vee_{i,j}\bigr)$.  Hence $\nu_k^{s_{\wt\alpha}w}(j) = \frac 1 2$ is a half-integer and $\nu_k^{s_{\wt\alpha}w}$ trivially satisfies the spin condition.  

This completes the proof in Case I.  We now turn to Case II, which will be much more involved.

\ssk

\noindent\emph{Case II: $\sigma(i) \neq i^*$.}  In this case, by minimality of $i$, we have $i < \sigma(i),\sigma(i)^* < i^*$ and
\[
   \nu_0^w(i) = \dotsb = \nu_{i-1}^w(i) = u(i) - 1
   \quad\text{and}\quad
   \nu_i^w(i) = u(i) - \frac 1 2.
\]
Moreover $\nu_k^w\bigl(\sigma(i)\bigr)$ takes the constant value $u\bigl(\sigma(i)\bigr)$ equal to $1$ or $2$ for $0 \leq k < i$; the constant value $u\bigl(\sigma(i)\bigr) - \frac 1 2$ for $i \leq k < n$; and the constant value $u\bigl(\sigma(i)\bigr)$ or $u\bigl(\sigma(i)\bigr)-1$ for $n \leq k \leq m$ according as $n = \sigma(i)$ or $n = \sigma(i)^*$.

By \eqref{st:wt_alpha_i,sigma(i)}, the element $w' := s_{\wt\alpha'}w$ is at least naively $\mu$-permissible and satisfies $w < w'$ for some $\wt\alpha' := \wt\alpha_{i,\sigma(i);d}$ with $d \in \{-1,0\}$ if $u(i) = u\bigl(\sigma(i)\bigr)$; $d \in \{-1,1\}$ if $u(i) > u\bigl(\sigma(i)\bigr)$; and $d \in \{-2, 0\}$ if $u(i) < u\bigl(\sigma(i)\bigr)$.  If $w'$ is $\mu$-spin-permissible then we are done.  So let us suppose it is not.  Our problem is to find another affine root $\wt\alpha$ such that $s_{\wt\alpha}w$ is $\mu$-spin-permissible and $w < s_{\wt\alpha}w$.  Before continuing with the main proof in Case II, we shall pause to prove some subsidiary lemmas.

\begin{lem}\label{st:min_bad_p}
$\nu_k^{w'}$ is $\mu$-spin-permissible for all $0 \leq k \leq i$.  If moreover $u(i) = u\bigl(\sigma(i)\bigr)$, then $\nu_k^{w'}$ is $\mu$-spin-permissible for all $n \leq k \leq m$.
\end{lem}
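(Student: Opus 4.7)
I would verify conditions \eqref{it:SP1'}--\eqref{it:SP3'} for $\nu_k^{w'}$ in the ranges specified by the lemma. Since \eqref{it:SP1'} follows from \eqref{st:wt_alpha_i,sigma(i)}, only \eqref{it:SP2'} and \eqref{it:SP3'} require attention. The key structural observation is that for $0 \leq k < i$, all four entries $i, \sigma(i), i^*, \sigma(i)^*$ lie in $B_k$ (since $k < i < \sigma(i) < i^* \leq 2m - k$), so the reflection $s_{i,\sigma(i)} = (i,\sigma(i))(i^*,\sigma(i)^*)$ fixes $A_k$ pointwise and permutes entries only within $B_k$; symmetrically, for $n \leq k \leq m$ these same four entries all lie in $A_k$, and $s_{i,\sigma(i)}$ fixes $B_k$ pointwise.

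First I would dispatch the case $u(i) = u\bigl(\sigma(i)\bigr)$, where \eqref{st:wt_alpha_i,sigma(i)} already gives $c_k^{w'} = c_k^w$, so \eqref{it:SP2'} is automatic. For \eqref{it:SP3'}, the structural observation lets any witness from the pointwise-fixed set transfer directly from $\nu_k^w$ to $\nu_k^{w'}$; any witness coming from $\{i, \sigma(i), i^*, \sigma(i)^*\}$ simply gets relabelled by the swap but remains within the appropriate $A_k$ or $B_k$. A brief direct computation at positions $i$ and $\sigma(i)$ in each of the two relevant subcases (according as $K_i \subset [\sigma(i),i)$ or the reverse inclusion holds) shows that for $k \geq n$ the vector $\nu_k^{w'}$ in fact equals $\nu_k^w$ outright, so spin-permissibility transfers trivially there.

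Next I would handle the case $u(i) \neq u\bigl(\sigma(i)\bigr)$, where only $0 \leq k \leq i$ is claimed. By Lemma~\ref{st:c_i<q} we have $c_i^w \not\equiv q \bmod 2$ and $c_i^w < q$. Combined with $c_k^w = c_0^w$ for $k < i$ (since $i$ is the minimal proper element) and $|c_i^w - c_0^w| = 1$ from \eqref{st:c_i_change_lem}, this gives the strict bound $c_0^w \leq q - 2$. A direct computation of $\nu_k^{w'}$ via \eqref{disp:nu^w'_fmla} yields $c_k^{w'} = c_0^w + 2$ for $0 \leq k < i$ when $u(i) > u\bigl(\sigma(i)\bigr)$, $c_k^{w'} = c_0^w$ when $u(i) < u\bigl(\sigma(i)\bigr)$, and $c_i^{w'} = c_i^w \pm 1$; the bounds above then give $c_k^{w'} \leq q$ in all cases. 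For \eqref{it:SP3'}, parity now works automatically: $c_k^{w'} \equiv c_0^w \equiv q \bmod 2$ for $k < i$ (even shift) and $c_i^{w'} \equiv c_i^w + 1 \equiv q \bmod 2$ for $k = i$ (odd shift), so by \eqref{st:c_i_equiv_q} the spin condition is either vacuous or trivially satisfied whenever $\nu_k^{w'}$ is integral.

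The main bookkeeping obstacle lies in the boundary case $k = i$, where $\nu_i^w$ has half-integer entries at positions $i$ and $\sigma(i)$; the reflection can land $\nu_i^{w'}$ in $\ZZ^{2m}$, and one must check by explicit computation in each of the six subcases of \eqref{st:wt_alpha_i,sigma(i)} that the resulting parity of $c_i^{w'}$ is compatible with \eqref{it:SP3'}. This ultimately reduces to tracking the change in contribution to $c_k$ from the four affected entries (going from four half-integers contributing $1$ to two $0$'s and two $2$'s contributing $2$, or some permutation thereof), and the parity always comes out right by the arithmetic above; no subtle combinatorial input is needed beyond the structural observation and Lemma~\ref{st:c_i<q}.
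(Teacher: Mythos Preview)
Your overall shape is right, but there are three concrete errors.

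\textbf{(1)} The bound $c_0^w \leq q-2$ is false when $u(i) < u\bigl(\sigma(i)\bigr)$. In that case \eqref{st:c_i_change_lem} gives $c_i^w = c_0^w - 1$, so $c_0^w = c_i^w + 1$ can equal $q$. Fortunately you only need the bound when $u(i) > u\bigl(\sigma(i)\bigr)$ (the only case where $c_k^{w'} = c_0^w + 2$), and there $c_0^w = c_i^w - 1 \leq q-2$ does hold; but you must make that case distinction explicit rather than asserting the bound unconditionally.

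\textbf{(2)} Your witness-transfer argument for \eqref{it:SP3'} breaks at $k=i$. There $i \in A_i$ but $\sigma(i) \in B_i$, so the swap $(i,\sigma(i))(i^*,\sigma(i)^*)$ crosses the $A_i$/$B_i$ boundary, and moreover $\langle \wt\alpha', a_i\rangle$ is a half-integer, so it is not a pure swap anyway. The correct argument at $k=i$ is parity: when $u(i) = u\bigl(\sigma(i)\bigr)$, \eqref{st:c_i<q} gives $c_i^w \equiv q \bmod 2$, and since $c_i^{w'} = c_i^w$ by \eqref{st:wt_alpha_i,sigma(i)}, \eqref{st:c_i_equiv_q} makes spin vacuous. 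You do invoke exactly this parity argument for $k=i$ in the unequal-upper-value case, so the fix is just to use it in both.

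\textbf{(3)} The claim that $\nu_k^{w'} = \nu_k^w$ ``outright'' for $k \geq n$ is wrong in half the subcases. Tracing through \eqref{st:wt_alpha_i,sigma(i)}, one gets $\nu_k^{w'} = \nu_k^w$ when $\langle \wt\alpha', a_k\rangle = \pm 1$ but $\nu_k^{w'} = s_{i,\sigma(i)}\nu_k^w$ when $\langle \wt\alpha', a_k\rangle = 0$; which of these occurs for $k \geq n$ depends on whether $n = \sigma(i)$ or $n = \sigma(i)^*$, and each happens in one of the two $K_i$-subcases. Your earlier witness-transfer argument does cover the $s_{i,\sigma(i)}\nu_k^w$ case (since for $k \geq n$ all four indices lie in $A_k$), so the fix is to drop the incorrect ``outright'' claim and rely on that argument uniformly.

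The paper avoids witness-transfer entirely by invoking \eqref{st:spin_fail_domain}: for $k < i$ and for $k \geq n$ (equal-upper-value case), $\langle \wt\alpha', a_k\rangle \in \ZZ$, and if spin failed that lemma would force $c_k^w \not\equiv q \bmod 2$ (for $k < i$) or $c_k^{w'} = c_k^w + 2$ (for $k \geq n$), both of which are ruled out. This is shorter and sidesteps the $A_k$/$B_k$ bookkeeping.
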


\begin{proof}
First let $0 \leq k < i$.  Then $\langle \wt\alpha', a_k\rangle = -d \in \ZZ$ and $c_k^w = c_0^w \equiv q \bmod 2$.  So $\nu_k^{w'}$ satisfies the spin condition by \eqref{st:spin_fail_domain}.  So it remains to show that $\nu_k^{w'}$ satisfies \eqref{it:SP2'}, that is, that $c_k^{w'} \leq q$.  We have
\[
   \nu_k^{w'} = s_{i,\sigma(i)}(\nu_k^w - d\alpha^\vee_{i,\sigma(i)}).
\]
Since $\nu_k^w \in \ZZ^{2m}$, we have $c_k^{w'} \in \{c_k^w -2, c_k^w, c_k^w + 2\}$ by \eqref{st:c^w'_change_lem}\eqref{it:all_integers}.  Thus $c_k^{w'} \leq c_k^w \leq q$ unless $c_k^{w'} = c_k^w + 2$, which occurs when $d = \pm 1$ and $\nu_k^w(i) = \nu_k^w\bigl(\sigma(i)\bigr) = 1$.  In this case $u(i) = 2$ and $u\bigl(\sigma(i)\bigr) = 1$, and $c_k^w + 2 = c_i^w + 1 \leq q$ by \eqref{st:c_i_change_lem} and \eqref{st:c_i<q}, as desired.

The case $k = i$ requires a separate argument.  If $u(i) = u\bigl(\sigma(i)\bigr)$, then $c_i^{w'} = c_i^w \equiv q \bmod 2$ by \eqref{st:wt_alpha_i,sigma(i)} and \eqref{st:c_i<q}.  Hence $\nu_i^{w'}$ satisfies \eqref{it:SP2'} and \eqref{it:SP3'}.  If $u(i) \neq u\bigl(\sigma(i)\bigr)$, then $c_i^{w'} = c_i^w + 1$ by \eqref{st:i_sigma(i)_entries_fractional}.  It follows immediately from this and \eqref{st:c_i<q} that $c_k^{w'} \leq q$ and $c_k^{w'} \equiv q \bmod 2$, so that $\nu_i^{w'}$ satisfies \eqref{it:SP2'} and \eqref{it:SP3'}.

Finally suppose that $u(i) = u\bigl(\sigma(i)\bigr)$, and let $n \leq k \leq m$.  By \eqref{st:wt_alpha_i,sigma(i)}, $c_k^{w'} = c_k^w \leq q$, so that $\nu_k^{w'}$ satisfies \eqref{it:SP2'}.  Moreover $\langle \wt\alpha', a_k\rangle \in \ZZ$, and it follows from \eqref{st:spin_fail_domain} that if $\nu_k^{w'}$ fails \eqref{it:SP3'}, then $c_k^{w'} = c_k^w + 2$.  Since $c_k^{w'} = c_k^w$, we conclude that \eqref{it:SP3'} holds.
\end{proof}

\begin{lem}\label{st:bad_k_c_i=c_k}
Suppose that $c_i^w = c_k^w$ for some $0 \leq k \leq m$, and that $u(i) = u\bigl(\sigma(i)\bigr)$ or $k < n$.  Then $\nu_k^{w'}$ is $\mu$-spin-permissible.
\end{lem}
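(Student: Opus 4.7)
My plan is to reduce the lemma to the single new range $i < k < n$, on which \eqref{st:min_bad_p} does not already apply. For $0 \le k \le i$, \eqref{st:min_bad_p} applies directly; and for $n \le k \le m$, the disjunctive hypothesis forces $u(i)=u(\sigma(i))$, in which case \eqref{st:min_bad_p} again applies. (In the sub-range $0 \le k < i$ under the assumption $u(i)\neq u(\sigma(i))$, the equality $c_i^w = c_k^w = c_0^w \equiv q \bmod 2$ would contradict \eqref{st:c_i<q}, so that situation does not actually arise.)

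On the range $i < k < n$, I will verify \eqref{it:SP1'}, \eqref{it:SP2'}, and \eqref{it:SP3'} in turn.  Condition \eqref{it:SP1'} is immediate from \eqref{st:wt_alpha_i,sigma(i)}, which guarantees that $w' = s_{\wt\alpha'}w$ is naively $\mu$-permissible.  The key opening computation for the other two conditions is $\langle\wt\alpha',a_k\rangle = a_k(i) - a_k(\sigma(i)) - d = -\tfrac 1 2 - d$, a half-integer; this uses $a_k(i) = -\tfrac 1 2$ (since $k > i$) and $a_k(\sigma(i)) = 0$ (verified in either sub-case $n=\sigma(i)$ or $n=\sigma(i)^*$ by comparing $k$ to $\sigma(i)$ and $\sigma(i)^*$).

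For \eqref{it:SP2'} and \eqref{it:SP3'} I will split on whether $u(i) = u(\sigma(i))$.  In the equal case, \eqref{st:wt_alpha_i,sigma(i)} gives $c_k^{w'} = c_k^w = c_i^w \le q$, and \eqref{st:c_i<q} gives $c_i^w \equiv q \bmod 2$; then \eqref{st:c_i_equiv_q} renders the spin condition vacuous (either $\nu_k^{w'}\notin \ZZ^{2m}$, killing the antecedent of \eqref{it:SP3'}, or $\nu_k^{w'} \equiv \mu \bmod Q^\vee_{D_m}$).  In the unequal case, \eqref{st:c_i<q} gives $c_i^w < q$, and then \eqref{st:c^w'_change_lem}\eqref{it:some_half-integer} yields $c_k^{w'} \le c_k^w + 1 \le q$, settling \eqref{it:SP2'}.

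The main obstacle will be the spin condition in the case $u(i) \neq u(\sigma(i))$.  Here $\nu_k^w(\sigma(i)) = u(\sigma(i)) - \tfrac 1 2$ is a half-integer (for $i \le k < n$), and I will split further on the parity of $\nu_k^w(i)$.  If $\nu_k^w(i)$ is a half-integer, \eqref{st:i_sigma(i)_entries_fractional} gives $c_k^{w'} = c_k^w + 1 = c_i^w + 1 \equiv q \bmod 2$, and the spin condition is vacuous just as before.  If $\nu_k^w(i)$ is an integer, then the formula $\nu_k^{w'}(\sigma(i)) = \nu_k^w(i) + \langle \wt\alpha', a_k\rangle$ obtained by composing \eqref{disp:nu^w'_fmla} with the permutation $s_{i,\sigma(i)}$ shows $\nu_k^{w'}(\sigma(i))$ is a half-integer, so $\nu_k^{w'} \notin \ZZ^{2m}$ and \eqref{it:SP3'} is again vacuous, completing the proof.
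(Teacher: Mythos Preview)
Your proof is correct and follows essentially the same approach as the paper's. The only differences are organizational: the paper case-splits on $u(i)=u(\sigma(i))$ first and handles all $k$ uniformly in the equal case, whereas you reduce to $i<k<n$ via \eqref{st:min_bad_p} before splitting; and in the unequal case you split on the parity of $\nu_k^w(i)$, while the paper equivalently assumes $\nu_k^{w'}\in\ZZ^{2m}$ and deduces both $\nu_k^w(i)$ and $\nu_k^w(\sigma(i))$ must be half-integers.
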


\begin{proof}
First suppose that $u(i) = u\bigl(\sigma(i)\bigr)$.  Then by \eqref{st:wt_alpha_i,sigma(i)} we have $c_k^{w'} = c_k^w = c_i^w$.  This is $\leq q$ because $\nu_i^w$ satisfies \eqref{it:SP2'}, and $\equiv q \bmod 2$ by \eqref{st:c_i<q}.  Thus $\nu_k^{w'}$ is $\mu$-spin-permissible.

Now suppose that $u(i) \neq u\bigl(\sigma(i)\bigr)$ and $k < n$.  For $k \leq i$ the conclusion is given by \eqref{st:min_bad_p}, so assume $i < k < n$.  Then $\langle \wt\alpha',a_k\rangle$ is a half-integer and $c_k^{w'} \in \{c_k^w-1, c_k^w, c_k^w+1\}$ by \eqref{st:c^w'_change_lem}\eqref{it:some_half-integer}.  Since $c_k^w = c_i^w < q$ by \eqref{st:c_i<q}, we conclude that $\nu_k^{w'}$ satisfies \eqref{it:SP2'}.  To see that $\nu_k^{w'}$ also satisfies the spin condition, assume that it has only integer entries.  Then, since $\langle \wt\alpha', a_k\rangle$ is a half-integer, \eqref{disp:nu^w'_fmla} requires that $\nu_k^w(i)$ and $\nu_k^w\bigl(\sigma(i)\bigr)$ are half-integers.  Hence $c_k^{w'} = c_k^w + 1$ by \eqref{st:i_sigma(i)_entries_fractional}.  But $c_k^w = c_i^w \not\equiv q \bmod 2$ by \eqref{st:c_i<q}, and therefore $c_k^{w'} \equiv q \bmod 2$.  So $\nu_k^{w'}$ satisfies \eqref{it:SP3'}.
\end{proof}

\begin{lem}
Suppose that $\nu_p^{w'}$ is not $\mu$-spin-permissible for some $0 \leq p \leq m$.
\begin{enumerate}\label{st:bad_k_c_compare_lem}
\renewcommand{\theenumi}{\roman{enumi}}
\item\label{it:c_i>c_k}
   If $c_i^w > c_p^w$, then $i < p < n$ and $\nu_p^{w'}$ satisfies \eqref{it:SP2'} but not \eqref{it:SP3'}.
\item\label{it:c_i_leq_c_k}
   If $c_i^w \leq c_p^w$, then $c_i^w < q$.
\item\label{it:k_geq_n}
   If $p \geq n$, then $\nu_p^w\bigl(\sigma(i)\bigr) = 1$, and
   \[
      u\bigl(\sigma(i)\bigr) = 1 \text{ and } n = \sigma(i)
      \quad\text{or}\quad
      u\bigl(\sigma(i)\bigr) = 2 \text{ and } n = \sigma(i)^*.
   \]
   If furthermore $c_i^w = c_p^w$, then $\nu_p^w(i) = 1$.
\end{enumerate}
\end{lem}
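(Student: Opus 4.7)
The plan is to localize where $\nu_p^{w'}$ can fail to be $\mu$-spin-permissible, then dispatch each part.  From the explicit form of $a_k$ one computes $\langle\wt\alpha',a_p\rangle=-d\in\ZZ$ for $0\le p<i$, $\langle\wt\alpha',a_p\rangle=-d-\tfrac{1}{2}$ for $i\le p<n$, and $\langle\wt\alpha',a_p\rangle\in\{-d,-1-d\}\subset\ZZ$ for $n\le p\le m$.  Lemmas~\ref{st:min_bad_p} and~\ref{st:bad_k_c_i=c_k} then restrict any failure to $p>i$, with $c_p^w\ne c_i^w$ whenever $u(i)=u(\sigma(i))$ or $p<n$.

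I would prove part~(iii) first.  Since $\langle\wt\alpha',a_p\rangle\in\ZZ$ for $p\ge n$, Lemmas~\ref{st:spin_fail_domain} and~\ref{st:fail_entries} apply.  If $\nu_p^{w'}$ fails SP3$'$, Lemma~\ref{st:spin_fail_domain} yields $\nu_p^w\in\ZZ^{2m}$, $\langle\wt\alpha',a_p\rangle=\pm1$, and $\nu_p^w(i)=\nu_p^w(\sigma(i))=1$.  If instead $\nu_p^{w'}$ fails SP2$'$ with SP3$'$ holding, Lemma~\ref{st:fail_entries} gives $\nu_p^w(i),\nu_p^w(\sigma(i))\in\{\tfrac{1}{2},1,\tfrac{3}{2}\}$; Lemma~\ref{st:nu(j)_vals} shows that both change-points $k=i$ and $k=n$ for $\nu_k^w(\sigma(i))$ have been traversed by $p\ge n$, so $\nu_p^w(\sigma(i))\in\ZZ$ and thus equals $1$.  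In either branch $\nu_p^w(\sigma(i))=1$, and combining this with the explicit value $\nu_p^w(\sigma(i))=u(\sigma(i))$ if $n=\sigma(i)$, resp.\ $u(\sigma(i))-1$ if $n=\sigma(i)^*$, forces the stated dichotomy.  The final claim $\nu_p^w(i)=1$ under $c_i^w=c_p^w$ is automatic in the SP3$'$-failure branch; in the SP2$'$-failure branch one must rule out $\nu_p^w(i)=u(i)-\tfrac{1}{2}$, which by Lemma~\ref{st:nu(j)_vals} occurs only for $i\le p<\min\{\sigma^{-1}(i),\sigma^{-1}(i)^*\}$, and a bookkeeping argument with Lemma~\ref{st:c_i_change_lem} and the interval description of~\S\ref{ss:intervals} contradicts $c_i^w=c_p^w$ in this range.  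This bookkeeping is the main technical obstacle of the proposition.

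Given~(iii), parts~(i) and~(ii) are short.  For~(i), the hypothesis $c_i^w>c_p^w$ excludes $p\ge n$: a direct computation of $\mu_p^w$ from the recursion~\eqref{disp:mu_recursion} together with the dichotomy from~(iii) yields $c_p^w\ge c_i^w$.  Hence $i<p<n$, so $\langle\wt\alpha',a_p\rangle$ is a half-integer and Lemma~\ref{st:c^w'_change_lem}\eqref{it:some_half-integer} gives $c_p^{w'}\le c_p^w+1\le c_i^w<q$, the last inequality by Lemma~\ref{st:c_i<q}; thus SP2$'$ holds for $\nu_p^{w'}$ and only SP3$'$ can fail.  For~(ii), from SP2$'$ on $\nu_p^w$ we have $c_p^w\le q$, and so $c_i^w\le c_p^w\le q$; if $c_i^w=q$, then $c_p^w=c_i^w$, and Lemma~\ref{st:bad_k_c_i=c_k} requires simultaneously $u(i)\ne u(\sigma(i))$ and $p\ge n$---but $u(i)\ne u(\sigma(i))$ gives $c_i^w\not\equiv q\bmod 2$ by Lemma~\ref{st:c_i<q}, contradicting $c_i^w=q$.
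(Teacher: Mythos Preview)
Your outline for parts~(ii) and the first half of~(iii) is correct and matches the paper.  There are two real issues.

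\textbf{Part~(i), excluding $p\ge n$.}  The sentence ``a direct computation of $\mu_p^w$ from the recursion together with the dichotomy from~(iii) yields $c_p^w\ge c_i^w$'' is not a proof, and no such direct comparison of $c$-values is available.  The paper proves the contrapositive: for $k\ge n$ with $c_i^w>c_k^w$, it \emph{verifies} that $\nu_k^{w'}$ satisfies \eqref{it:SP3'}.  The case $u(i)=u(\sigma(i))$ is already covered by Lemma~\ref{st:min_bad_p}.  When $u(i)\ne u(\sigma(i))$, one uses Lemma~\ref{st:spin_fail_domain} to reduce to the situation $\nu_k^w\in\ZZ^{2m}$, $\nu_k^w(i)=\nu_k^w(\sigma(i))=1$, $c_k^w\not\equiv q\bmod 2$.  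The spin condition on $\nu_k^w$ then supplies $j\in B_k$ with $\nu_k^w(j)=1$; the point is to find a witness in $A_k$ that survives passage to $\nu_k^{w'}$.  Here the parity forces $c_i^w\ge c_k^w+2$, and an entry count comparing $\mu_i^w$ with $\mu_k^w$ (using the explicit values $\mu_i^w(i),\mu_i^w(i^*),\mu_i^w(\sigma(i)),\mu_i^w(\sigma(i)^*)$ in each $u$-case) produces $l\notin\{i,i^*,\sigma(i),\sigma(i)^*\}$ with $\mu_i^w(l)=0$ and $\mu_k^w(l)=1$; the recursion forces $l\in\{i+1,\dotsc,k\}\subset A_k$, and integrality of $\nu_k^w$ gives $\nu_k^w(l)=1$.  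Since $\nu_k^{w'}$ and $\nu_k^w$ agree outside $\{i,i^*,\sigma(i),\sigma(i)^*\}$, this $l$ and the $j\in B_k$ establish \eqref{it:SP3'} for $\nu_k^{w'}$.  None of this is a computation of $c_p^w$; it is a constructive verification of the spin condition, and the gap of $2$ coming from parity is essential.  (Incidentally, your bound $c_i^w<q$ via Lemma~\ref{st:c_i<q} only holds when $u(i)\ne u(\sigma(i))$; in the equal-$u$ case use $c_p^{w'}=c_p^w$ from Lemma~\ref{st:wt_alpha_i,sigma(i)} instead.)

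\textbf{Part~(iii), the final claim.}  Your interval bookkeeping to rule out $\nu_p^w(i)\in\tfrac12+\ZZ$ is an unnecessary detour.  Under $c_i^w=c_p^w$, Lemma~\ref{st:bad_k_c_i=c_k} gives $u(i)\ne u(\sigma(i))$, whence $c_p^w=c_i^w\not\equiv q\bmod 2$ by Lemma~\ref{st:c_i<q}, so $c_p^w\le q-1$.  If \eqref{it:SP2'} fails then $c_p^{w'}>q$, and combining with $c_p^{w'}\le c_p^w+2$ from Lemma~\ref{st:c^w'_change_lem} forces $c_p^{w'}=c_p^w+2$.  By Lemma~\ref{st:c^w'_change_lem}\eqref{it:all_integers} this jump of $2$ requires $\nu_p^w(i)$, $\nu_p^w(\sigma(i))$, and $\langle\wt\alpha',a_p\rangle$ all to be integers; the last two already are (since $p\ge n$), so $\nu_p^w(i)\in\ZZ$, and Lemma~\ref{st:fail_entries} then pins it to $1$.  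This is the paper's argument, and it replaces your ``main technical obstacle'' with two lines.
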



\begin{proof}
We first prove \eqref{it:c_i>c_k}.  To see that $\nu_p^{w'}$ satisfies \eqref{it:SP2'}, note that if $u(i) = u \bigl(\sigma(i)\bigr)$, then by \eqref{st:wt_alpha_i,sigma(i)} $c_p^{w'} = c_p^w \leq q$.  And if $u(i) \neq u \bigl(\sigma(i)\bigr)$, then $c_p^w < c_i^w < q$ by \eqref{st:c_i<q}.  So by \eqref{st:c^w'_change_lem} $c_p^{w'} \leq c_p^w + 2 \leq q$.  So either way \eqref{it:SP2'} is satisfied.

It remains to show that $i < p < n$.  For this, we already know from \eqref{st:min_bad_p} that $i < p$.  So to finish the proof of \eqref{it:c_i>c_k}, it suffices to show that if $n \leq k \leq m$ and $c_i^w > c_k^w$, then $\nu_k^{w'}$ is $\mu$-spin-permissible.  If $u(i) = u\bigl(\sigma(i)\bigr)$, then this is done by \eqref{st:min_bad_p}.  So assume $u(i) \neq u\bigl(\sigma(i)\bigr)$.  We just showed in the previous paragraph that $\nu_k^{w'}$ satisfies \eqref{it:SP2'}.  To see that $\nu_k^{w'}$ also satisfies the spin condition, note that since $n \leq k$, we have $\langle \wt\alpha', a_k\rangle \in \ZZ$.  So by \eqref{st:spin_fail_domain}, we might as well assume that
\[
   \nu_k^w \in \ZZ^{2m},\quad
   \nu_k^w(i) = \nu_k^w\bigl(\sigma(i)\bigr) = 1,\quad
   \text{and}\quad
   c_k^w \not\equiv q \bmod 2.
\]
Since $\nu_k^w$ satisfies the spin condition and $i$, $i^*$, $\sigma(i)$, $\sigma(i)^* \in A_k$, there must exist an element $j \in B_k$ such that $\nu_k^w(j) = 1$.  Now, since $u(i) \neq u\bigl(\sigma(i)\bigr)$, we also have $c_i^w \not\equiv q \bmod 2$ by \eqref{st:c_i<q}.  Since $c_i^w > c_k^w$, we conclude that $c_i^w \geq c_k^w + 2$.  Let us now compare the entries of $\mu_i^w$ and $\mu_k^w$.  If $u(i) = 2$ and $u\bigl(\sigma(i)\bigr) = 1$, then
\[
   \mu_i^w(i) = 2,\quad
   \mu_i^w(i^*) = 1,\quad
   \mu_i^w\bigl(\sigma(i)\bigr) = 0,\quad
   \text{and}\quad
   \mu_i^w\bigl(\sigma(i)^*\bigr) = 1.
\]
If $u(i) = 1$ and $u\bigl(\sigma(i)\bigr) = 2$, then
\[
   \mu_i^w(i) = 1,\quad
   \mu_i^w(i^*) = 2,\quad
   \mu_i^w\bigl(\sigma(i)\bigr) = 1,\quad
   \text{and}\quad
   \mu_i^w\bigl(\sigma(i)^*\bigr) = 0.
\]
Either way, since $\mu_k^w(i) = \mu_k^w(i^*) = \mu_k^w\bigl(\sigma(i)\bigr) = \mu_k^w\bigl(\sigma(i)^*\bigr) = 1$ and $c_i^w \geq c_k^w + 2$, there must exist another element $l \notin \{i,i^*,\sigma(i),\sigma(i^*)\}$ such that $\mu_i^w(l) = 0$ and $\mu_k^w(l) = 1$.  By the recursion relation \eqref{disp:mu_recursion} $l \in \{i+1,\dotsc,k\} \subset A_k$.  And since $\nu_k^w \in \ZZ^{2m}$, we must have $\nu_k^w(l) = \mu_k^w(l) = 1$.  Since $\nu_k^{w'}(l) = \nu_k^w(l) = 1$ and $\nu_k^{w'}(j) = \nu_k^w(j) = 1$, we conclude that $\nu_k^{w'}$ satisfies \eqref{it:SP3'}, which completes \eqref{it:c_i>c_k}.

We next prove \eqref{it:c_i_leq_c_k}.  If $c_i^w < c_p^w$ then the conclusion is clear since $c_p^w \leq q$.  And if $c_i^w = c_p^w$ then $u(i) \neq u\bigl(\sigma(i)\bigr)$ by \eqref{st:bad_k_c_i=c_k}.  Hence \eqref{st:c_i<q} gives the conclusion.

We finally prove \eqref{it:k_geq_n}.  For $p \geq n$ the $\sigma(i)$-entry of $\nu_p^w$ is an integer, which, since $\nu_p^{w'}$ is not $\mu$-spin-permissible, must equal $1$ by \eqref{st:fail_entries}.  This immediately implies that $u\bigl(\sigma(i)\bigr) = 1$ and $n = \sigma(i)$, or $u\bigl(\sigma(i)\bigr) = 2$ and $n = \sigma(i)^*$.

To complete \eqref{it:k_geq_n}, note that if $\nu_p^{w'}$ fails \eqref{it:SP3'}, then \eqref{st:spin_fail_domain} gives $\nu_p^w(i) = 1$ independently of the values of $c_i^w$ and $c_p^w$.  If moreover $c_i^w = c_p^w$, then $u(i) \neq u\bigl(\sigma(i)\bigr)$ by \eqref{st:bad_k_c_i=c_k}.  Hence $c_p^w \not\equiv q \bmod 2$ by \eqref{st:c_i<q}.  If $\nu_p^{w'}$ fails \eqref{it:SP2'}, then this, \eqref{st:c^w'_change_lem}, and the inequality $c_p^w \leq q$ imply that $c_p^w = q - 1$ and $c_p^{w'} = q + 1$.  By \eqref{st:c^w'_change_lem} and \eqref{st:fail_entries} this requires $\nu_p^w(i) = 1$.
\end{proof}

Our subsidiary lemmas now dispensed with, we return to the main proof of the proposition in Case II.  Since $w'$ is naively $\mu$-permissible but not $\mu$-spin-permissible, there exists a minimal $p \in \{1,\dotsc,m\}$ such that $\nu_p^{w'}$ fails \eqref{it:SP2'} or \eqref{it:SP3'}.  By \eqref{st:min_bad_p} $p > i$.  Of course $c_i^w \leq c_p^w$ or $c_i^w > c_p^w$, and we shall consider each of these possibilities separately.

First suppose $c_i^w \leq c_p^w$.  We claim that there exists $j' \in \{i+1,\dotsc,p\}$ such that $\mu_i(j') = 1$ and $\mu_p(j') = 2$.  If $c_i^w < c_p^w$ then \eqref{st:c_i_neq_c_p}\eqref{it:c_i<c_p} furnishes exactly the claim.  If $c_i^w = c_p^w$ then note that $\mu_i^w(i) = 2$ or $\mu_i^w(i^*) = 2$ according as $u(i) = 2$ or $u(i) = 1$.  By \eqref{st:bad_k_c_i=c_k} $p \geq n$, and then by \eqref{st:bad_k_c_compare_lem}\eqref{it:k_geq_n} $\mu_p^w(i) = \mu_p^w(i^*) = 1$.  Since $c_i^w = c_p^w$, there therefore exists a $j'$ such that $\mu_i^w(j') = 1$ and $\mu_p^w(j') = 2$.  By the recursion relation \eqref{disp:mu_recursion} such a $j'$ must be contained in $\{i+1,\dotsc,p\}$, which proves the claim.

Now, $j'$ is clearly an element of the set
\begin{equation}\label{disp:S_set}
   S := \bigl\{\, j' \in \{i+1,\dotsc,p\} \bigm| \mu_i^w(j') = 1 \text{ and } \mu_{j'}^w(j') = 2\,\bigr\}.
\end{equation}
Let $j$ denote the \emph{minimal} element of $S$.  Evidently $u(j) = 2$, so that $u(j^*) = 1$.  Since $i$ is the minimal proper element and $j \neq n$, we have $\nu_k^w(j) = \nu_k^w(j^*) = 1$ for all $0 \leq k \leq i$.

We claim that $j \neq n$.  To prove this we might as well assume $n \leq j$.  Then $n \leq p$.  Then by \eqref{st:bad_k_c_compare_lem}\eqref{it:k_geq_n} $u(n) = 1$.  But $u(j) = 2$, which proves the claim.

Still supposing $c_i^w \leq c_p^w$, we are now ready to prescribe our affine root $\wt\alpha$, although to do so we will need to consider some further subcases.  First suppose $j < n$ and $u(i) = 2$.  Then we take
\[
   \wt\alpha := \wt\alpha_{i,j^*;-1}.
\]
To see that $s_{\wt\alpha}w$ is $\mu$-spin-permissible, first let $0 \leq k < i$.  Then $\nu_k^w(i) = \nu_k^w(j^*) = 1$, $\langle\wt\alpha , a_k\rangle = 1$, and $\nu_k^{s_{\wt\alpha}w} = s_{i,j^*}(\nu_k^w + \alpha^\vee_{i,j^*})$.  Thus by inspection $\nu_k^{s_{\wt\alpha}w}$ satisfies \eqref{it:SP1'} and $c_k^{s_{\wt\alpha}w} = c_k^w + 2$, and of course $\nu_k^{s_{\wt\alpha}w}$ satisfies the spin condition by \eqref{st:spin_fail_domain}.  To see that $\nu_k^{s_{\wt\alpha}w}$ also satisfies \eqref{it:SP2'}, consider the inequalities
\[
   c_k^w = c_{i-1}^w \leq c_i^w < q;
\]
here the first inequality uses that $\mu_{i-1}^w(i) = 1$ and $\mu_i^w(i) = 2$, and the second is \eqref{st:bad_k_c_compare_lem}\eqref{it:c_i_leq_c_k}.  Hence $c_k^{s_{\wt\alpha}w} = c_k^w + 2 \leq q$ by parity.  So indeed $\nu_k^{s_{\wt\alpha}w}$ satisfies \eqref{it:SP2'}.  Now let $i \leq k < j$.  Then $\langle \wt\alpha, a_k \rangle = \frac 1 2$, $\nu_k^{s_{\wt\alpha}w} = s_{i,j^*}\bigl(\nu_k^w + \frac 1 2 \alpha^\vee_{i,j^*}\bigr)$, and $c_k^{s_{\wt\alpha}w} \leq c_k^w + 1$ by \eqref{st:c^w'_change_lem}\eqref{it:some_half-integer}.  Since $\nu_p^w(i) \in \bigl\{1, \frac 3 2\bigr\}$ by \eqref{st:fail_entries}, we must have $\nu_k^w(i) \in \bigl\{1, \frac 3 2\bigr\}$ by \eqref{st:nu(j)_vals}.  And since $\nu_i^w(j^*) = 1$, $u(j^*) = 1$, and $k < j$, we must have $\nu_k^w(j^*) \in \bigl\{\frac 1 2, 1\bigr\}$ again by \eqref{st:nu(j)_vals}.  Therefore we see the following by inspection: that $\nu_k^{s_{\wt\alpha}w}$ satisfies \eqref{it:SP1'}; that $\nu_k^{s_{\wt\alpha}w}\bigl(\sigma(i)\bigr) = \nu_k^w\bigl(\sigma(i)\bigr)$ is a half-integer, so that $\nu_k^{s_{\wt\alpha}w}$ satisfies \eqref{it:SP3'}; and that
\[
   \nu_i^w(i) = \frac 3 2,\quad
   \nu_i^w(j^*) = 1,\quad
   \nu_i^{s_{\wt\alpha}w}(i) = 2,\quad\text{and\quad}
   \nu_i^{s_{\wt\alpha}w}(j^*) = \frac 1 2,
\]
so that $w < s_{\wt\alpha}w$ by criterion \eqref{it:crit_1} in \eqref{st:bo}.  We must still show that $\nu_k^{s_{\wt\alpha}w}$ satisfies \eqref{it:SP2'}.  For this, since $i \leq k < j$ and $j$ is the \emph{minimal} element  $\geq i+1$ such that $\mu_i(j) = 1$ and $\mu_j(j) = 2$, we have $c_i^w \geq c_k^w$.  Hence
\[
   c_k^{s_{\wt\alpha}w} \leq c_k^w + 1 \leq c_i^w + 1 \leq q,
\]
where the last inequality uses \eqref{st:bad_k_c_compare_lem}\eqref{it:c_i_leq_c_k}.  So indeed $\nu_k^{s_{\wt\alpha}w}$ satisfies \eqref{it:SP2'}.  Finally let $j \leq k \leq m$.  Then $\langle \wt\alpha,a_k\rangle = 0$ and $\nu_k^{s_{\wt\alpha}w} = s_{i,j^*}\nu_k^w$ is clearly $\mu$-spin-permissible, using \eqref{st:spin_fail_domain} for \eqref{it:SP3'}.  Thus $\wt\alpha$ solves our problem when $j < n$, $u(i) = 2$, and $c_i^w \leq c_p^w$.

Now suppose $j < n$ and $u(i) = 1$.  Then we take
\[
   \wt\alpha := \wt\alpha_{i,j;0}.
\]
We shall show that $w < s_{\wt\alpha}w$ and that $\nu_k^{s_{\wt\alpha}w}$ satisfies \eqref{it:SP1'} and \eqref{it:SP2'} for all $0 \leq k \leq m$ by applying \eqref{st:useful_reflections_lem}.  We must show that conditions \eqref{it:u_vals}--\eqref{it:c_constraints} are satisfied.  Condition \eqref{it:u_vals} is satisfied because $u(i) = 1$ and $u(j) = 2$.  For condition \eqref{it:nu_constraints}, note that $\nu_i^w(i) = \frac 1 2$ and $\nu_p^w(i) \in \bigl\{\frac 1 2, 1\bigr\}$ by \eqref{st:fail_entries} (applied with $\wt\alpha = \wt\alpha'$).  Therefore, by \eqref{st:nu(j)_vals}, $\nu_k^w(i) \in \bigl\{\frac 1 2, 1\bigr\}$ for all $i \leq k \leq p$, and in particular for all $i \leq k < j$.  And since $\nu_i^w(j) = 1$, we have again by \eqref{st:nu(j)_vals} that $\nu_i^w(j) \leq \frac 3 2$ for all $i \leq k < j$.  So indeed condition \eqref{it:nu_constraints} holds.  For condition \eqref{it:c_constraints}, let $i \leq k < j$.  Then $c_k^w \leq c_i^w$ by minimality of $j$, and $c_i^w < q$ by \eqref{st:bad_k_c_compare_lem}\eqref{it:c_i_leq_c_k}, which gives us exactly what we need.  So \eqref{st:useful_reflections_lem} applies.  To show that $\wt\alpha$ solves our problem, it remains to show that $\nu_k^{s_{\wt\alpha}w}$ satisfies the spin condition for all $k$.  As always, by \eqref{st:spin_fail_domain} we may restrict our attention to $i \leq k < j$.  Since by assumption $j < n$, we have that $\nu_k^{s_{\wt\alpha}w}\bigl(\sigma(i)\bigr) = \nu_k^w\bigl(\sigma(i)\bigr)$ and that $\nu_k^w\bigl(\sigma(i)\bigr)$ is a half-integer.  Hence $\nu_k^{s_{\wt\alpha}w}$ trivially satisfies \eqref{it:SP3'}.  This completes the subcase $j < n$, $u(i) = 1$, and $c_i^w \leq c_p^w$.

Still supposing $c_i^w \leq c_p^w$, now suppose $j > n$.  Then $p > n$, so that $u(i) \neq u\bigl(\sigma(i)\bigr)$ by \eqref{st:min_bad_p}.  We take
\[
   \wt\alpha := \wt\alpha_{n,j;0}.
\]
As in the subcase considered in the previous paragraph, we shall first apply \eqref{st:useful_reflections_lem} to show that $w < s_{\wt\alpha}w$ and that $\nu_k^{s_{\wt\alpha}w}$ satisfies \eqref{it:SP1'} and \eqref{it:SP2'} for all $k$.  Condition \eqref{it:u_vals}  in \eqref{st:useful_reflections_lem} is satisfied because $u(j) = 2$ and because \eqref{st:bad_k_c_compare_lem}\eqref{it:k_geq_n} implies that $u(n) = 1$.  For condition \eqref{it:nu_constraints}, the value $\nu_k^w(n)$ is constant for varying $k$ with $n \leq k \leq m$.  Hence $\nu_k^w(n) = \nu_p^w(n)$, which in turn equals $1$ by \eqref{st:bad_k_c_compare_lem}\eqref{it:k_geq_n}, for $n \leq k < j$.  And since $\nu_i^w(j) = 1$, we have by \eqref{st:nu(j)_vals} that $\nu_i^w(j) \leq \frac 3 2$ for all $i \leq k < j$, and in particular for all $n \leq k < j$.  So indeed condition \eqref{it:nu_constraints} holds.  For condition \eqref{it:c_constraints}, since $p$ is \emph{minimal} such that $\nu_p^{w'}$ is not $\mu$-spin-permissible, we have $c_k^{w'} \leq q$ for all $k < p$, and in particular for all $n \leq k < j$.  To make use of this, let us compare $c_k^w$ and $c_k^{w'}$.  Since $u(i) \neq u\bigl(\sigma(i)\bigr)$, by \eqref{st:wt_alpha_i,sigma(i)} and \eqref{st:bad_k_c_compare_lem}\eqref{it:k_geq_n} we have the possibilities $u\bigl(\sigma(i)\bigr) =1$, $\sigma(i) \leq m$, and $d \in \{-1,1\}$; or $u\bigl(\sigma(i)\bigr) = 2$, $\sigma(i) > m$, and $d \in \{-2,0\}$.  Either way $\langle \wt\alpha', a_k\rangle = \pm 1$ and $\nu_k^{w'} = s_{i,\sigma(i)}(\nu_k^w \pm \alpha^\vee_{i,\sigma(i)})$ for $n \leq k \leq m$, and in particular for $n \leq k < j$.  Now, $\nu_i(i) \in \bigl\{\frac 1 2, \frac 3 2\bigr\}$, and by \eqref{st:fail_entries} $\nu_p^w(i) \in \bigl\{\frac 1 2, 1,\frac 3 2\bigr\}$.  Hence by \eqref{st:nu(j)_vals} $\nu_k^w(i) \in \bigl\{\frac 1 2, 1,\frac 3 2\bigr\}$ for all $i \leq k \leq p$, and in particular for all $n \leq k < j$.  And we have already observed that $\nu_k^w(n) = 1$ for $n \leq k < j$.  Thus we see explicitly that $c_k^{w'} \in \{c_k^w + 1, c_k^w + 2\}$.  Hence $c_k^w < c_k^{w'} \leq q$.  So condition \eqref{it:c_constraints} holds and \eqref{st:useful_reflections_lem} applies.  To show that $\wt\alpha$ solves our problem, it remains to show that $\nu_k^{s_{\wt\alpha}w}$ satisfies the spin condition for all $n \leq k < j$.  But for such $k$ we have $\nu_k^w(n) = 1$, $\langle \wt\alpha, a_k\rangle = -\frac 1 2$, and $\nu_k^{s_{\wt\alpha}w} = s_{n,j}\bigl(\nu_k^w - \frac 1 2 \alpha^\vee_{n,j}\bigr) \not\in \ZZ^{2m}$.  So $\nu_k^{s_{\wt\alpha}w}$ trivially satisfies \eqref{it:SP3'}.  With this we have solved our problem in all subcases for which $c_i^w \leq c_p^w$.

Now suppose $c_i^w > c_p^w$.  Then by \eqref{st:bad_k_c_compare_lem}\eqref{it:c_i>c_k} $i < p < n$ and by \eqref{st:c_i_neq_c_p}\eqref{it:c_i>c_p} there exists a minimal element $j \in \{i+1,\dotsc,p\}$ such that $\mu_i(j) = 0$ and $\mu_p(j) = 1$.  Note that evidently $\nu_k^w(j) = 0$ and $\nu_k^w(j^*) = 2$ for all $0 \leq k \leq i$.  Moreover, since $\langle \wt\alpha', a_p \rangle$ is a half-integer, \eqref{st:c^w'_change_lem}\eqref{it:some_half-integer} gives
\[
   c_p^{w'} \leq c_p^w + 1 \leq c_i^w \leq q.
\]
Hence $\nu_p^w$ satisfies \eqref{it:SP2'}.  So it must fail \eqref{it:SP3'}.  Since this requires $\nu_p^{w'} = s_{i,\sigma(i)}(\nu_p^w + \langle \wt\alpha', a_p\rangle \alpha^\vee_{i,\sigma(i)})$ to have only integer entries, we conclude that $\nu_p^w(k)$ is a half-integer for $k \in \{i,i^*,\sigma(i),\sigma(i)^*\}$ and an integer for all other $k$.  In particular, since $\mu_p^w(j) = 1$, we must have $\nu_p^w(j) = \nu_p^w(j^*) = 1$.  We shall now prescribe $\wt\alpha$ separately in the subcases $u(i) = 2$ and $u(i) = 1$.

If $u(i) = 2$ then we take
\[
   \wt\alpha := \wt\alpha_{i,j^*;-1}.
\]
For $0 \leq k < i$, we have $\langle \wt\alpha,a_k\rangle = 1$, $\nu_k^w(i) = 1$, and $\nu_k^w(j^*) = 2$.  Hence $\nu_k^{s_{\wt\alpha}w} = s_{i,j^*}(\nu_k^w + \alpha^\vee_{i,j^*}) = \nu_k^w$ is $\mu$-spin-permissible.  Next let $i \leq k < j$.  Then $\langle\wt\alpha, a_k\rangle = \frac 1 2$.  Since $u(i) = 2$ we have $\nu_i(i) = \frac 3 2$.  And since $\nu_p^w(i)$ is a half-integer we must also have $\nu_p^w(i) = \frac 3 2$.  Hence by \eqref{st:nu(j)_vals} $\nu_k(i) = \frac 3 2$.  Since $\nu_i^w(j^*) = 2$, \eqref{st:nu(j)_vals} also implies $\nu_k^w(j^*) \in \bigl\{\frac 3 2, 2\bigr\}$.  Hence by inspection $\nu_k^{s_{\wt\alpha}w} = s_{i,j^*}\bigl(\nu_k^w + \frac 1 2 \alpha^\vee_{i,j^*}\bigr)$ satisfies \eqref{it:SP1'}.  And $c_k^{s_{\wt\alpha}w} = c_k^w$, so that $\nu_k^{s_{\wt\alpha}w}$ satisfies \eqref{it:SP2'}.  Moreover $\nu_k^{s_{\wt\alpha}w}$ trivially satisfies \eqref{it:SP3'} since $\nu_k^{s_{\wt\alpha}w}\bigl(\sigma(i)\bigr) = \nu_k^w\bigl(\sigma(i)\bigr)$ is a half-integer.  Finally let $j \leq k \leq m$.  Then $\langle \wt\alpha,a_k\rangle = 0$ and $\nu_k^{s_{\wt\alpha}w} = s_{i,j^*}\nu_k^w$, which is clearly $\mu$-spin-permissible, using as always \eqref{st:spin_fail_domain} to address \eqref{it:SP3'}.  Since $\nu_p^w(i) = \frac 3 2$ and $\nu_p^w(j^*) = 1$, we see moreover from criterion \eqref{it:crit_2} in \eqref{st:bo} that $w < s_{\wt\alpha}w$.  Thus $\wt\alpha$ solves our problem when $u(i) = 2$ and $c_i^w > c_p^w$.

If $u(i) = 1$ then we take
\[
   \wt\alpha := \wt\alpha_{i,j;0}.
\]
Our argument will parallel the case $u(i) = 2$ just discussed.  For $0 \leq k < i$ or $j \leq k \leq m$, we have $\langle \wt\alpha,a_k\rangle = 0$.  Hence $\nu_k^{s_{\wt\alpha}w} = s_{i,j}\nu_k^w$, which is clearly $\mu$-spin-permissible.  Now let $i \leq k < j$.  Then $\langle\wt\alpha, a_k\rangle = - \frac 1 2$.  Since $u(i) = 1$ we have $\nu_i(i) = \frac 1 2$.  And since $\nu_p^w(i)$ is a half-integer we must also have $\nu_p^w(i) = \frac 1 2$.  Hence by \eqref{st:nu(j)_vals} $\nu_k(i) = \frac 1 2$.  Since $\nu_i^w(j) = 0$, \eqref{st:nu(j)_vals} also implies $\nu_k^w(j) \in \bigl\{0, \frac 1 2\bigr\}$.  Hence by inspection $\nu_k^{s_{\wt\alpha}w} = s_{i,j}\bigl(\nu_k^w - \frac 1 2 \alpha^\vee_{i,j}\bigr)$ satisfies \eqref{it:SP1'}.  And $c_k^{s_{\wt\alpha}w} = c_k^w$, so that $\nu_k^{s_{\wt\alpha}w}$ satisfies \eqref{it:SP2'}.  Moreover $\nu_k^{s_{\wt\alpha}w}$ trivially satisfies \eqref{it:SP3'} since $\nu_k^{s_{\wt\alpha}w}\bigl(\sigma(i)\bigr) = \nu_k^w\bigl(\sigma(i)\bigr)$ is a half-integer.  Since $\nu_p^w(i) = \frac 1 2$ and $\nu_p^w(j) = 1$, we see moreover from criterion \eqref{it:crit_2} in \eqref{st:bo} that $w < s_{\wt\alpha}w$.  Thus $\wt\alpha$ solves our problem when $u(i) = 1$ and $c_i^w > c_p^w$.

This finishes Case II, and with it the proof of part \eqref{it:part_i} of the Proposition is complete.

\medskip

It remains to prove part \eqref{it:part_ii}.  So suppose $\nu_0^w(m) = 1$.  Since $\nu_0^w \equiv \nu_m^w \bmod Q^\vee_{D_m}$ for any $w \in \wt W_{D_m}$, we must have $i < m$.  If $\sigma(i) = i^*$, then a quick review of Case I above reveals that the affine root $\wt\alpha$ we prescribed there satisfies $\nu_0^{s_{\wt\alpha}w} = \nu_0^w$.  So $\wt\alpha$ certainly solves our problem in part \eqref{it:part_ii}.


In the rest of the proof we shall assume that $\sigma(i) \neq i^*$.  This assumption places us in the setting of Case II in the proof of part \eqref{it:part_i}, where we used \eqref{st:wt_alpha_i,sigma(i)} to produce an affine root $\wt\alpha' = \wt\alpha_{i,\sigma(i);d}$ such that $w' = s_{\wt\alpha'}w$ is naively $\mu$-permissible and $w < w'$.  We shall now consider cases based on the possibilities for $w'$.

\ssk

\noindent\emph{Case A:  $w'$ is not $\mu$-spin-permissible.}  Then in every subcase considered in Case II above, we prescribed another affine root $\wt\alpha$ that solved the problem posed in part \eqref{it:part_i}.  We claim that $\wt\alpha$ also has the additional property required in part \eqref{it:part_ii}.  As in Case II, let $p$ be minimal in $\{0,\dotsc,m\}$ such that $\nu_p^{w'}$ is not $\mu$-spin-permissible.  A quick review of Case II reveals that $\wt\alpha$ is of the form $\wt\alpha_{i,j;0}$ or $\wt\alpha_{i,j^*;-1}$ for some $i < j \leq p$, or $\wt\alpha_{n,j;0}$ for some $n < j \leq p$.  Thus to prove the claim it suffices to show that $j \neq m$, since then $\nu_k^{s_{\wt\alpha}w}(m) = \nu_k^w(m)$ for all $k$.

To show that $j \neq m$, first note that if $p < m$, then of course $j < m$.  So we might as well assume $p = m$.  Then $c_p^w \equiv q \bmod 2$ and $\langle \wt\alpha', a_p \rangle \in \ZZ$.  Hence by \eqref{st:spin_fail_domain} $\nu_p^{w'}$ satisfies the spin condition.  So $\nu_p^{w'}$ must fail \eqref{it:SP2'}.  Hence
\[
   q < c_p^{w'} \leq c_p^w + 2 \leq q + 2,
\]
where the middle inequality uses \eqref{st:c^w'_change_lem}.  Since $c_p^w \equiv q \bmod 2$, we conclude that $c_p^w = q$.  On the other hand, \eqref{st:min_bad_p} implies that $u(i) \neq u\bigl(\sigma(i)\bigr)$, so that $c_i^w < q$ by \eqref{st:c_i<q}.  We conclude that $c_i^w < c_p^w$.  

To complete the argument, we compare the entries of $\nu_i^w$ and $\nu_p^w$. Since $\nu_p^w = \nu_m^w \in \ZZ^{2m}$, \eqref{st:fail_entries} implies that $\mu_p^w(i) = \mu_p^w(i^*) = 1$.  On the other hand, we have $\mu_i^w(i) = 2$ or $\mu_i^w(i^*) = 2$ according as $u(i) = 2$ or $u(i) = 1$.  Either way, since $c_i^w < c_p^w$, we see that there exist at least \emph{two} elements $j' \in \{i+1,\dotsc,p\}$ such that $\mu_i^w(j') = 1$ and $\mu_p^w(j') = 2$.  Such elements are clearly elements of the set $S$ defined in \eqref{disp:S_set}.  But when $c_i^w \leq c_p^w$, we defined $j$ to be the \emph{minimal} element of $S$.  Therefore $j < m$, which completes Case A.

\ssk

\noindent\emph{Case B: $w'$ is $\mu$-spin-permissible and $\sigma(i) \notin \{m,m+1\}$.}  Then $\nu_k^{s_{\wt\alpha}w}(m) = \nu_k^w(m)$ for all $k$, so we may take $\wt\alpha := \wt\alpha'$.

\ssk

\noindent\emph{Case C: $w'$ is $\mu$-spin-permissible and $\sigma(i) \in \{m,m+1\}$.}  In this case the $m$th entries of $\nu_0^w$ and $\nu_0^{w'}$ may differ, so the simple argument of Case B need not apply.  Evidently $\mu_0^w\bigl(\sigma(i)\bigr) = 1$ and $\mu_i^w\bigl(\sigma(i)\bigr) = 0$, so that $u\bigl(\sigma(i)\bigr) = 1$; and $n = m$.  We consider the subcases $u(i) = 1$ and $u(i) = 2$ separately.

First suppose $u(i) = 1$.  Then $\nu_0^w(i) = 0$ and $\wt\alpha'$ equals $\wt\alpha_{i,\sigma(i);-1}$ or $\wt\alpha_{i,\sigma(i);0}$.  If $\wt\alpha' = \wt\alpha_{i,\sigma(i);-1}$, then $\langle \wt\alpha', a_0\rangle = 1$ and $\nu_0^{w'} = s_{i,\sigma(i)}(\nu_0^w + \alpha^\vee_{i,\sigma(i)}) = \nu_0^w$.  Hence $\wt\alpha'$ solves our problem.  If $\wt\alpha' = \wt\alpha_{i,\sigma(i);0}$, then $\langle \wt\alpha', a_0 \rangle = 0$ and $\nu_0^{w'}\bigl(\sigma(i)\bigr) = \nu_0^w(i) = 0$.  So we must find a different affine root $\wt\alpha$ to solve our problem.

Now, note that \eqref{st:wt_alpha_i,sigma(i)} allows us to take $\wt\alpha' = \wt\alpha_{i,\sigma(i);0}$ exactly when 
$K_i \subset [\sigma(i),i)$.  If this inclusion is an equality $K_i = [\sigma(i),i)$, then \eqref{st:wt_alpha_i,sigma(i)} also allows us to take $\wt\alpha_{i,\sigma(i);-1}$ for our $\wt\alpha'$.  And if $s_{i,\sigma(i);-1}w$ is $\mu$-spin-permissible, then $\wt\alpha_{i,\sigma(i);-1}$ solves our problem exactly as in the previous paragraph.  If $s_{i,\sigma(i);-1}w$ is not $\mu$-spin-permissible, then we can run through the argument of Case II with $\wt\alpha_{i,\sigma(i);-1}$ in place of $\wt\alpha_{i,\sigma(i);0}$ to produce a new affine root $\wt\alpha$ that solves the problem posed in part \eqref{it:part_i}.  Case A then shows that this $\wt\alpha$ also solves our problem in part \eqref{it:part_ii}.

Still assuming $u(i) = 1$, we therefore reduce to solving our problem under the assumption $K_i \varsubsetneq [\sigma(i),i)$.  Since $\sigma(i) \in \{m,m+1\}$, this assumption implies that $m \notin K_i$.  Hence $\mu_m^w(i) = 1$, which, since $\nu_m^w \in \ZZ^{2m}$, in turn implies that $\nu_m^w(i) = 1$.  Let
\[
   \wt\beta := \wt\alpha_{i,\sigma(i);-1}.
\]
Let us investigate the extent to which $\wt\beta$ solves our problem.  Let $0 \leq k < i$.  Then as before $\nu_k^w(i) = 0$, $\nu_k^w\bigl(\sigma(i)\bigr) = 1$, $\bigl\langle \wt\beta, a_k\bigr\rangle = 1$, and $\nu_k^{s_{\wt\beta}w} = s_{i,\sigma(i)}(\nu_k^w + \alpha^\vee_{i,\sigma(i)}) = \nu_k^w$, which is $\mu$-spin-permissible.  Next let $i \leq k < m$.  Then $\nu_k^w\bigl(\sigma(i)\bigr) = \frac 1 2$ and $\bigl\langle \wt\beta, a_k \bigr\rangle = \frac 1 2$.  Since $\nu_i^w(i) = \frac 1 2$ and $\nu_m^w(i) = 1$, \eqref{st:nu(j)_vals} implies that $\nu_k^w(i) \in \bigl\{\frac 1 2, 1 \bigr\}$.  Hence by inspection $\nu_k^{s_{\wt\beta}w} = s_{i,\sigma(i)}\bigl(\nu_k^w + \frac 1 2 \alpha^\vee_{i,\sigma(i)} \bigr)$ satisfies \eqref{it:SP1'}, and $c_k^{s_{\wt\beta}w}$ equals $c_k^w$ or $c_k^w + 1$ according as $\nu_k^w(i)$ equals $\frac 1 2$ or $1$.  Moreover, taking $k = i$ in criterion \eqref{it:crit_1} in \eqref{st:bo}, we see that $w < s_{\wt\beta}w$.  Finally let $k = m$.  If $m = \sigma(i)$, then $\nu_m^w\bigl(\sigma(i)\bigr) = 1$ and $\bigl\langle \wt\beta, a_m \bigr\rangle = 1$.  Since $\nu_m^w(i) = 1$, we see explicitly that $\nu_m^{s_{\wt\beta}w} = s_{i,\sigma(i)}(\nu_m^w + \wt\alpha^\vee_{i,\sigma(i)})$ satisfies \eqref{it:SP1'} and $c_m^{s_{\wt\beta}w} = c_m^w + 2$.  On the other hand, if $m = \sigma(i)^*$, then $\bigl\langle \wt\beta, a_m \bigr\rangle = 0$ and $\nu_m^{s_{\wt\beta}w} = s_{i,\sigma(i)}\nu_m^w$ is plainly $\mu$-spin-permissible.

We conclude that $s_{\wt\beta}w$ is naively $\mu$-permissible and $w < s_{\wt\beta}w$.  If moreover $s_{\wt\beta}w$ is $\mu$-spin-permissible, then as before $\wt\beta$ solves our problem.  Otherwise, quite similarly to Case II above, we shall study the way in which $s_{\wt\beta}w$ fails to be $\mu$-spin-permissible to produce the desired affine root $\wt\alpha$.  Let $p \in \{0,\dotsc,m\}$ be minimal such that $\nu_p^{s_{\wt\beta}w}$ is not $\mu$-spin-permissible.  It is an immediate consequence of our analysis in the previous paragraph that $i < p$.

The following will be the main tool to solve our problem in present situation.

\begin{lem}
Let $j \in \{i+1,\dotsc,p\}$ be such that $\mu_i^w(j) = 1$ and $\mu_p^w(j) = 2$, and let $\wt\alpha := \wt\alpha_{i,j^*;-1}$.  Then $s_{\wt\alpha}w$ is $\mu$-spin-permissible and $\nu_0^{s_{\wt\alpha}w}(m) = 1$.
\end{lem}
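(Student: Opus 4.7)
The plan is to verify, by a range-by-range case analysis on $k \in \{0,\dots,m\}$ entirely analogous to the subcases of Case II in the proof of \eqref{st:big_D_prop}, that $\nu_k^{s_{\wt\alpha}w}$ satisfies \eqref{it:SP1'}, \eqref{it:SP2'}, \eqref{it:SP3'} for every $k$, and then to read off $\nu_0^{s_{\wt\alpha}w}(m) = 1$ as a consequence of the computation in the bottom range. The preservation of the $m$-entry reduces to showing $j < m$: since the coroot $\alpha^\vee_{i,j^*} = e_i - e_{j^*} + e_j - e_{i^*}$ has zero $m$-th coordinate exactly when $\{i,i^*,j,j^*\} \cap \{m,m+1\} = \varnothing$, and since $i < m = n$ already, the condition $j < m$ (equivalently $j^* > m+1$) suffices for $s_{i,j^*}$ and $\alpha^\vee_{i,j^*}$ to fix position $m$.

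The first step is therefore to show that some $j < m$ with $\mu_i^w(j) = 1$ and $\mu_p^w(j) = 2$ exists. If $p < m$ this is trivial from $j \leq p$. If $p = m$, then because $\langle\wt\beta,a_p\rangle \in \ZZ$ the argument of Case A forces $c_p^{s_{\wt\beta}w} > q$ with $c_p^w = q$, while the fact that $u(i) = 1$ and $\nu_i^w(i) = \tfrac 1 2$ combined with \eqref{st:c_i_equiv_q} applied to the self-dual $\nu_m^w$ gives $c_i^w < q$. Counting entries of $\mu_i^w$ versus $\mu_p^w$ as in Case A then shows there are at least two indices $j' \in \{i+1,\dots,p\}$ with $\mu_i^w(j') = 1$ and $\mu_p^w(j') = 2$, at least one of which is strictly less than $m$; this is the $j$ to which the lemma applies.

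The three ranges of $k$ then proceed as follows. For $0 \leq k < i$: one has $\langle\wt\alpha,a_k\rangle = 1$, $\nu_k^w(i) = 0$ (since $u(i) = 1$), and $\nu_k^w(j^*) = 2$ (from $\mu_i^w(j) = 1$ together with the minimality of $i$, which forces the $j$- and $j^*$-entries of $\nu_\ell^w$ to be constant on $0 \leq \ell \leq i$); hence $\nu_k^{s_{\wt\alpha}w} = s_{i,j^*}(\nu_k^w + \alpha^\vee_{i,j^*}) = \nu_k^w$ and is $\mu$-spin-permissible by hypothesis. For $i \leq k < j$: one has $\langle\wt\alpha,a_k\rangle = \tfrac 1 2$, and applying \eqref{st:nu(j)_vals} to indices $i$ and $j^*$ with the boundary values $\nu_i^w(i) = \tfrac 1 2$, $\nu_m^w(i) = 1$, $\nu_i^w(j^*) = 2$, $\nu_p^w(j^*) = 1$ pins down $\nu_k^w(i) \in \{\tfrac 1 2,1\}$ and $\nu_k^w(j^*) \in \{\tfrac 3 2,2\}$, from which \eqref{it:SP1'} holds by inspection; \eqref{it:SP2'} follows from \eqref{st:c^w'_change_lem}\eqref{it:some_half-integer} combined with $c_k^w \leq c_i^w$ (the minimality of $j$) and $c_i^w < q$; and \eqref{it:SP3'} is automatic because $\nu_k^{s_{\wt\alpha}w}(\sigma(i)) = \nu_k^w(\sigma(i)) = \tfrac 1 2$ is a half-integer, as $s_{i,j^*}$ and $\alpha^\vee_{i,j^*}$ leave the $\sigma(i)$-entry untouched. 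For $j \leq k \leq m$: one has $\langle\wt\alpha,a_k\rangle = 0$ so $\nu_k^{s_{\wt\alpha}w} = s_{i,j^*}\nu_k^w$; conditions \eqref{it:SP1'} and \eqref{it:SP2'} transfer under the permutation, and \eqref{it:SP3'} follows from \eqref{st:spin_fail_domain}.

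The main obstacle I expect is the middle range: the bound $c_k^{s_{\wt\alpha}w} \leq q$ requires using the \emph{minimality} of $j$ (so that no $j' \in \{i+1,\dots,k\}$ upgrades from $\mu_i^w(j') = 1$ to $\mu_k^w(j') = 2$) to conclude $c_k^w \leq c_i^w$, and then exploiting $c_i^w < q$; verifying this inequality without circular reliance on what we are trying to prove is the most delicate bookkeeping in the argument. Finally, the conclusion $\nu_0^{s_{\wt\alpha}w}(m) = 1$ is immediate from the first range (which gave $\nu_0^{s_{\wt\alpha}w} = \nu_0^w$) together with the standing hypothesis $\nu_0^w(m) = 1$.
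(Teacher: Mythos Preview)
Your overall three-range strategy matches the paper's, but there is a genuine gap in the middle range, and a couple of computational slips.

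First, a value error: since $\mu_p^w(j) = 2$ we have $u(j) = 2$, hence $u(j^*) = 1$. So $\nu_k^w(j^*) \leq 1$ for all $k$; in particular $\nu_i^w(j^*) = 1$, not $2$, and on $i \leq k < j$ one has $\nu_k^w(j^*) \in \bigl\{\tfrac12,1\bigr\}$, not $\bigl\{\tfrac32,2\bigr\}$. (Your conclusion $\nu_k^{s_{\wt\alpha}w} = \nu_k^w$ for $k<i$ is nonetheless correct with the right values.) Relatedly, the argument for $j<m$ is simpler than you make it: the lemma is about a \emph{given} $j$, not about producing one. If $p=m$ then the analysis of $s_{\wt\beta}w$ at $k=m$ already forced $m=\sigma(i)$ and $\mu_m^w(m)=\mu_m^w\bigl(\sigma(i)\bigr)=1\neq 2=\mu_p^w(j)$, so $j\neq m$ directly.

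The real gap is \eqref{it:SP2'} on $i\leq k<j$. You invoke ``minimality of $j$'' to get $c_k^w\leq c_i^w$ and then ``$c_i^w<q$''. Neither is available here. The lemma imposes no minimality on $j$; and in the present Case~C subcase one has $u(i)=u\bigl(\sigma(i)\bigr)=1$, so \eqref{st:c_i<q} gives $c_i^w\equiv q\bmod 2$, which does \emph{not} yield $c_i^w<q$. The paper's argument is different: a direct computation (with the correct ranges above) shows $c_k^{s_{\wt\alpha}w}$ equals $c_k^w$ or $c_k^w+1$ according as $\nu_k^w(i)=\tfrac12$ or $1$. In the second case one appeals to the already-established fact that $c_k^{s_{\wt\beta}w}=c_k^w+1$ when $\nu_k^w(i)=1$, and then the \emph{minimality of $p$} (not of $j$) gives $c_k^{s_{\wt\beta}w}\leq q$ for $k<j\leq p$. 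This is the input you are missing.
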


\begin{proof}
First note that $j < m$: this is clear if $p < m$, and if $p = m$ then our above analysis shows that $m = \sigma(i)$ and $\mu_p^w(m) = \mu_p^w\bigl(\sigma(i)\bigr) = 1 \neq \mu_p^w(j)$.  Also note that evidently $\nu_k^w(j) = \nu_k^w(j^*) = 1$ for all $0 \leq k \leq i$, and $u(j^*) = 1$.

To prove the lemma, first let $0 \leq k < i$.  Then $\nu_k^w(i) = 0$, $\nu_k^w(j^*) = 1$, $\langle \wt\alpha, a_k\rangle = 1$, and $\nu_k^{s_{\wt\alpha}w} = s_{i,j^*}(\nu_k^w + \alpha^\vee_{i,j^*}) = \nu_k^w$.  Hence $\nu_k^{s_{\wt\alpha}w}$ is $\mu$-spin-permissible with $m$th entry equal to $1$.

Next let $i \leq k < j$.  Then $\nu_k^w(i) \in \bigl\{\frac 1 2, 1\bigr\}$, $\langle \wt\alpha , a_k \rangle = \frac 1 2$, and $\nu_k^{s_{\wt\alpha}w} = s_{i,j^*}\bigl(\nu_k^w + \frac 1 2 \alpha^\vee_{i,j^*}\bigr)$.  Since $\nu_i^w(j^*) = 1$ and $u(j^*) = 1$, we conclude from \eqref{st:nu(j)_vals} that $\nu_k^w(j^*) \in \bigl\{ \frac 1 2, 1 \bigr\}$.  Hence by inspection $\nu_k^{s_{\wt\alpha}w}$ satisfies \eqref{it:SP1'}, and $c_k^{s_{\wt\alpha}w}$ equals $c_k^w$ or $c_k^w + 1$ according as $\nu_k^w(i)$ equals $\frac 1 2$ or $1$.  If $\nu_k^w(i) = \frac 1 2$ then certainly $\nu_k^{s_{\wt\alpha}w}$ satisfies \eqref{it:SP2'}.  If $\nu_k^w(i) = 1$, then recall from above that $c_k^{s_{\wt\beta}w} = c_k^w + 1$.  Since $k < p$, minimality of $p$ ensures that this is $\leq q$, and again $\nu_k^{s_{\wt\alpha}w}$ satisfies \eqref{it:SP2'}.  Furthermore, since $j < m$ we have $\nu_k^{s_{\wt\alpha}w}\bigl(\sigma(i)\bigr) = \nu_k^w\bigl(\sigma(i)\bigr) = \frac 1 2$, so that $\nu_k^{s_{\wt\alpha}w}$ trivially satisfies \eqref{it:SP3'}.

Finally let $j \leq k \leq m$.  Then $\langle \wt\alpha , a_k \rangle = 0$ and $\nu_k^{s_{\wt\alpha}w} = s_{i,j^*}\nu_k^w$ is clearly $\mu$-spin-permissible. 
\end{proof}

To enable ourselves to apply the lemma, let us first show that $c_i^w \leq c_p^w$.  For let $k > i$ be such that $c_i^w > c_k^w$.  If $i < k < m$, then we showed above that $c_k^{s_{\wt\beta}w} \leq c_k^w + 1$.  By hypothesis this is $\leq c_i^w$, whence $\nu_k^{s_{\wt\beta}w}$ satisfies \eqref{it:SP2'}.  To see that $\nu_k^{s_{\wt\beta}w}$ also satisfies \eqref{it:SP3'}, suppose that $\nu_k^{s_{\wt\beta}w} \in \ZZ^{2m}$.  Then $\nu_k^w(i) = \nu_k^w\bigl(\sigma(i)\bigr) = \frac 1 2$, $\nu_k^w(i^*) = \nu_k^w\bigl(\sigma(i)^*\bigr) = \frac 3 2$, and all other corresponding entries of $\nu_k^w$ and $\nu_k^{s_{\wt\beta}w}$ are equal and in \ZZ.  Now, by \eqref{st:c_i_neq_c_p}\eqref{it:c_i>c_p} there exists $j \in \{i+1,\dotsc,k\}$ such that $\mu_i^w(j) = 0$ and $\mu_k^w(j) = 1$.  Since $\nu_k^w(j)$ is evidently an integer, it then also equals $1$.  On the other hand, the formula $\nu_k^{s_{\wt\beta}w} = s_{i,\sigma(i)}\bigl(\nu_k^w + \frac 1 2 \alpha^\vee_{i,\sigma(i)}\bigr)$ gives explicitly that $\nu_k^{s_{\wt\beta}w}\bigl(\sigma(i)\bigr) = 1$.  Since $j \in A_k$ and $\sigma(i) \in B_k$, we conclude that $\nu_k^{s_{\wt\beta}w}$ satisfies \eqref{it:SP3'}.  Thus $\nu_k^{s_{\wt\beta}w}$ is $\mu$-spin-permissible and $p \neq k$.  Now suppose $k = m$.  Then $c_k^w \equiv q \equiv c_i^w \bmod 2$ by \eqref{st:c_i<q}.  Since $c_i^w > c_k^w$, we must then have $c_i^w \geq c_k^w + 2$.  Hence by \eqref{st:c^w'_change_lem}\eqref{it:all_integers} $c_k^{s_{\wt\beta}w} \leq c_i^w$ and $\nu_k^{s_{\wt\beta}w}$ satisfies \eqref{it:SP2'}.  As always $\nu_k^{s_{\wt\beta}w}$ satisfies \eqref{it:SP3'} by \eqref{st:spin_fail_domain}, and we again conclude that $p \neq k$.  Thus $c_i^w \leq c_p^w$.

To complete the solution to our problem under our present assumptions, it remains to produce a $j$ as in the lemma and to show that, in the notation of the lemma, $w < s_{\wt\alpha}w$.  Of course the vector $\nu_p^{s_{\wt\beta}w}$ fails \eqref{it:SP2'} or \eqref{it:SP3'}, and we shall treat these cases separately.

Suppose that $\nu_p^{s_{\wt\beta}w}$ fails \eqref{it:SP2'}.  Then certainly $c_p^{s_{\wt\beta}w} > c_p^w$.  If $i < p < m$, then we saw earlier that this requires $\nu_p^w(i) = 1$.  If $p = m$, then we have already observed that $\nu_p^w(i) = 1$.  Either way, we see in particular that $\mu_p^w(i^*) = 1$.  Since $\mu_i^w(i^*) = 2$ and $c_i^w \leq c_p^w$, there then exists an element $j$ such that $\mu_i^w(j) = 1$ and $\mu_p^w(j) = 2$.  As usual, the recursion relation \eqref{disp:mu_recursion} implies that any such $j$ must be contained in $\{i+1,\dotsc,p\}$, as desired.  To see that, in the notation of the lemma, $w < s_{\wt\alpha}w$, note that $\nu_p^w(j^*) \in \bigl\{0,\frac 1 2\bigr\}$ since $\mu_p^w(j) = 2$.  So, since $\langle\wt\alpha, a_p\rangle = 0$ and $\nu_p^w(i) = 1$, we get the conclusion by taking $k = p$ in criterion \eqref{it:crit_2} in \eqref{st:bo}.

Now suppose that $\nu_p^{s_{\wt\beta}w}$ fails \eqref{it:SP3'}.  Then $i < p < m$, $\nu_p^{s_{\wt\beta}w} \in \ZZ^{2m}$, and $c_p^{s_{\wt\beta}w} \not\equiv q \bmod 2$.  Our earlier analysis then shows that $\nu_p^w(i) = \frac 1 2$ and $c_p^{s_{\wt\beta}w} = c_p^w$.  Since $c_i^w \equiv q \bmod 2$ by \eqref{st:c_i<q} and $c_i^w \leq c_p^w$ by our claim, we deduce the strict inequality $c_i^w < c_p^w$.  Then \eqref{st:c_i_neq_c_p}\eqref{it:c_i<c_p} gives us our $j$. To see that, in the notation of the lemma, $w < s_{\wt\alpha}w$, we again use that $\mu_p^w(j) = 2$.  Since this time we know that $\nu_p^w(j^*)$ is an integer, we conclude $\nu_p^w(j^*) = 0$.  Since $\langle \wt\alpha , a_p\rangle = 0$ and $\nu_p^w(i) = \frac 1 2$, we again get the conclusion by taking $k = p$ in criterion \eqref{it:crit_2} in \eqref{st:bo}.  With this we have completely solved our problem when $u(i) = 1$.

Now suppose $u(i) = 2$.  Then $\nu_0^w(i) = 1$ and $\wt\alpha'$ equals $\wt\alpha_{i,\sigma(i);- 1}$ or $\wt\alpha_{i,\sigma(i);1}$.  Neither possibility for $\wt\alpha'$ solves our problem, since $\nu_0^{s_{\wt\alpha'}w}\bigl(\sigma(i)\bigr)$ equals $0$ or $2$.  We are therefore led to consider
\[
   \wt\beta := \wt\alpha_{i,\sigma(i)^*;0}.
\]
Indeed, for all $0 \leq k < i$, and in particular for $k = 0$, we have $\bigl\langle \wt\beta, a_k\bigr\rangle = 0$ and $\nu_k^{s_{\wt\beta}w} = s_{i,\sigma(i)^*}\nu_k^w = \nu_k^w$, which by hypothesis has $m$th entry equal to $1$.  So $\wt\beta$ solves our problem provided $s_{\wt\beta}w$ is $\mu$-spin-permissible and $w < s_{\wt\beta}w$.  Let us investigate the extent to which this is the case.  We have just shown that $\nu_k^{s_{\wt\beta}w} = \nu_k^w$ is $\mu$-spin-permissible for $0 \leq k < i$.  For $i \leq k < m$, we have $\nu_k^w(i) \in \bigl\{1, \frac 3 2, 2 \bigr\}$, $\nu_k^w\bigl(\sigma(i)^*\bigr) = \frac 3 2$, and $\langle\wt\beta, a_k \rangle = -\frac 1 2$.  Hence by inspection $\nu_k^{s_{\wt\beta}w} = s_{i,\sigma(i)^*}\bigl(\nu_k^w - \frac 1 2 \alpha^\vee_{i,\sigma(i)^*}\bigr)$ satisfies \eqref{it:SP1'}, and $c_k^{s_{\wt\beta}w}$ equals $c_k^w$ or $c_k^w + 1$ according as $c_k^w \in \bigl\{\frac 3 2, 2\bigr\}$ or $c_k^w = 1$.  If $\nu_k^w(i) \in \bigl\{ \frac 3 2, 2 \bigr\}$ then certainly $\nu_k^{s_{\wt\beta}w}$ satisfies \eqref{it:SP2'}.  If $\nu_k^w(i) = 1$, then we use that $\nu_k^{w'} = s_{i,\sigma(i)}\bigl(\nu_k^w + \langle \wt\alpha',a_k\rangle \alpha^\vee_{i,\sigma(i)}\bigr)$ is $\mu$-spin-permissible.  Indeed, condition \eqref{it:SP1'} forces $\wt\alpha' = \wt\alpha_{i,\sigma(i);-1}$ and $\langle \wt\alpha', a_k \rangle = \frac 1 2$.  Since $\nu_k^w\bigl(\sigma(i)\bigr) = \frac 1 2$, we see explicitly that $c_k^{w'} = c_k^w + 1$, which is $\leq q$ by condition \eqref{it:SP2'}.  Hence $\nu_k^{s_{\wt\beta}w}$ satisfies \eqref{it:SP2'} as well.  Furthermore, since $\nu_i^w(i) = \nu_i^w\bigl(\sigma(i)^*\bigr) = \frac 3 2$, we get $w < s_{\wt\beta}w$ by taking $k = i$ in criterion \eqref{it:crit_1} in \eqref{st:bo}.  Finally let $k = m$.  Then $\nu_m^w(i) \in \{1,2\}$.  If $m = \sigma(i)^*$, then $\bigl\langle \wt\beta, a_m \bigr\rangle = 0$ and $\nu_m^{s_{\wt\beta}w} = s_{i,\sigma(i)^*}\nu_m^w$ is plainly $\mu$-spin-permissible.  On the other hand, if $m = \sigma(i)$, then $\nu_m^w\bigl(\sigma(i)\bigr) = \nu_m^w\bigl(\sigma(i)^*\bigr) = 1$ and $\bigl\langle \wt\beta, a_m \bigr\rangle = -1$.  Hence $\nu_m^{s_{\wt\beta}w} = s_{i,\sigma(i)^*}(\nu_m^w - \alpha^\vee_{i,\sigma(i)^*})$ satisfies \eqref{it:SP1'}, and $c_m^{s_{\wt\beta}w}$ equals $c_m^w$ or $c_m^w + 2$ according as $\nu_m^w(i)$ equals $2$ or $1$.  If $\nu_m^w(i) = 2$ then certainly $\nu_m^{s_{\wt\beta}w}$ satisfies \eqref{it:SP2'}.  If $\nu_m^w(i) = 1$ then we once more use that $\nu_m^{w'}$ is $\mu$-spin-permissible.  Indeed, since $m = \sigma(i)$ we have $\langle \wt\alpha', a_m\rangle = \pm 1$ and $\nu_m^{w'} = s_{i,\sigma(i)}(\nu_m^w \pm \alpha^\vee_{i,\sigma(i)})$.  Hence $c_m^{w'} = c_m^w + 1$, which is $\leq q$ by condition \eqref{it:SP2'}.  Hence $\nu_m^{s_{\wt\beta}w}$ satisfies \eqref{it:SP2'} as well.

We conclude from the previous paragraph that $s_{\wt\beta}w$ fails to solve our problem if and only if $\nu_p^{s_{\wt\beta}w}$ fails the spin condition for some $p$.  In fact it suffices to consider just $p = i$:  first let $i \leq p < m$ and suppose that $\nu_p^{s_{\wt\beta}w} \in \ZZ^{2m}$.  Then
\[
   \nu_p^w(i) = \nu_p^w\bigl(\sigma(i)^*\bigr) = \frac 3 2
   \quad\text{and}\quad
   \nu_p^w\bigl(\sigma(i)\bigr) = \nu_p^w(i^*) = \frac 1 2;
\]
all other entries of $\nu_p^w$ are integers;
\[
   \nu_p^{s_{\wt\beta}w}(i) = 2,\quad
   \nu_p^{s_{\wt\beta}w}(i^*) = 0,\quad
   \nu_p^{s_{\wt\beta}w}\bigl(\sigma(i)\bigr) = 1,\quad
   \text{and}\quad
   \nu_p^{s_{\wt\beta}w}\bigl(\sigma(i)^*\bigr) = 1;
\]
and all other corresponding entries of $\nu_p^{s_{\wt\beta}w}$ and $\nu_p^w$ are equal.  Hence $c_p^{s_{\wt\beta}w} = c_p^w$.  If this is $\not\equiv q \bmod 2$, then, since $\sigma(i) \in \{m,m+1\} \subset B_p$, we conclude that $\nu_p^{s_{\wt\beta}w}$ satisfies the spin condition $\iff$ there exists $j \in A_p$ such that $\nu_p^{s_{\wt\beta}w}(j) = 1$; or equivalently, since such a $j$ is evidently distinct from $i$ and $i^*$, there exists $j \in A_p$ such that $\nu_p^w(j) = 1$.  In particular, our discussion applies when $p = i$, using \eqref{st:c_i<q} for the relation $c_i^w \not\equiv q \bmod 2$.  If such a $j$ exists in this case, then $\nu_k^{s_{\wt\beta}w}(j) = \nu_k^w(j) = \nu_i^w(j) = \nu_i^{s_{\wt\beta}w} = 1$ for all $k$ since $i$ and $i^*$ are the only proper elements in $A_i$.  And then $\nu_p^{s_{\wt\beta}w}$ satisfies the spin condition for all $i \leq p < m$ since $j \in A_p$ and $\sigma(i) \in B_p$.  We conclude that $s_{\wt\beta}w$ is $\mu$-spin-permissible $\iff$ $\nu_i^{s_{\wt\beta}w}$ satisfies the spin condition.

So, to complete the proof, let us suppose that $\nu_i^{s_{\wt\beta}w}$ fails the spin condition; or equivalently, that $\nu_i^w(k) \in \{0,2\}$ for all $k \in A_i \smallsetminus \{i,i^*\}$.  We shall make crucial use of the fact that $q < m$.  By explicit calculation, we have $\nu_i^{w'}(k) \in \{0,2\}$ for all $k \in \{i,i^*,\sigma(i),\sigma(i^*)\}$.  Hence $\nu_i^{w'}(k) \in \{0,2\}$ for all $k \in A_i\cup\{m,m+1\}$.  Since $q < m$, there must exist a $j$, which we may moreover take to be minimal, such that $i < j < m$ and $\nu_i^{w'}(j) = \nu_i^w(j) = 1$.  Of course $\nu_k^w(j) = 1$ for all $0 \leq k \leq i$.  Let
\[
   \wt\gamma := \wt\alpha_{i,j^*;-1}.
\]
For $0 \leq k < i$, we have $\nu_k^w(i) = \nu_k^w(j) = \langle \wt\gamma, a_k \rangle = 1$ and $\nu_k^{s_{\wt\gamma}w} = s_{i,j^*}(\nu_k^w + \alpha^\vee_{i,j^*})$.  Hence $\nu_k^{s_{\wt\gamma}w}$ satisfies \eqref{it:SP1'} and $c_k^{s_{\wt\gamma}w} = c_k^w + 2$.  But
\[
   c_k^w + 2 = c_{i-1}^w + 2 = c_i^w + 1 \leq q,
\]
where the second equality uses \eqref{st:c_i_change_lem}.  So $\nu_k^{s_{\wt\gamma}w}$ satisfies \eqref{it:SP2'}.  And of course $\nu_k^{s_{\wt\gamma}w}$ satisfies the spin condition by \eqref{st:spin_fail_domain}.  Thus $\nu_k^{s_{\wt\gamma}w}$ is $\mu$-spin-permissible.  Next let $j \leq k \leq m$.  Then $\langle \wt\gamma, a_k \rangle = 0$ and $\nu_k^{s_{\wt\gamma}w} = s_{i,j^*}\nu_k^w$ is clearly $\mu$-spin-permissible.  Finally let $i \leq k < j$.  Then $\nu_k^w(i) \in \bigl\{1, \frac 3 2, 2 \bigr\}$, $\nu_k^w(j^*) \in \bigl\{\frac 1 2, 1, \frac 3 2 \bigr\}$ by \eqref{st:nu(j)_vals}, $\langle \wt\gamma, a_k \rangle = \frac 1 2$, and $\nu_k^{s_{\wt\gamma}w} = s_{i,j^*}\bigl(\nu_k^w + \frac 1 2 \alpha^\vee_{i,j^*}\bigr)$.  Hence $\nu_k^{s_{\wt\gamma}w}$ satisfies \eqref{it:SP1'} provided $\nu_k^w(i) \neq 2$.  Let us suppose for the moment that this is the case.  Since $\wt\alpha' = \wt\alpha_{i,\sigma(i);\pm 1}$ and $\nu_k^w\bigl(\sigma(i)\bigr) = \frac 1 2$, we see explicitly from the formula $\nu_k^{w'} = s_{i,\sigma(i)}\bigl(\nu_k^w + \langle \wt\alpha', a_k\rangle \alpha^\vee_{i,\sigma(i)}\bigr)$ that $c_k^{w'} = c_k^w + 1$, which is $\leq q$ by condition \eqref{it:SP2'}.  Hence
\[
   c_k^{s_{\wt\gamma}w} \leq c_k^w + 1 \leq q,
\]
where the first inequality uses \eqref{st:c^w'_change_lem}\eqref{it:some_half-integer}.  Hence $\nu_k^{s_{\wt\gamma}w}$ satisfies \eqref{it:SP2'}.  And $\nu_k^{s_{\wt\gamma}w}$ trivially satisfies \eqref{it:SP3'} because $\nu_k^{s_{\wt\gamma}w}\bigl(\sigma(i)\bigr) = \nu_k^w\bigl(\sigma(i)\bigr) = \frac 1 2$ is a half-integer.  Furthermore, since $\nu_i^w(i) = \frac 3 2$ and $\nu_i^w(j^*) = 1$, we see by taking $k = i$ in criterion \eqref{it:crit_1} in \eqref{st:bo} that $w < s_{\wt\gamma}w$.

In conclusion, we have shown that if $\nu_k^w(i) \neq 2$ for all $i \leq k < j$, then $s_{\wt\gamma}w$ is $\mu$-spin-permissible and $w < s_{\wt\gamma}w$.  Moreover $\wt\gamma$ then completely solves our problem, since $\nu_k^{s_{\wt\gamma}w}(m) = \nu_k^w(m)$ for all $k$.  So it remains to find a solution to our problem under the assumption that $\nu_k^w(i) = 2$ for some $i < k < j$, which occurs exactly when $i < \sigma^{-1}(i^*) < j$.  Minimality of $j$ forces $\nu_i^w\bigl(\sigma^{-1}(i^*)\bigr)$ to equal $0$ or $2$, and we shall complete the proof by considering these cases separately.

If $\nu_i^w\bigl(\sigma^{-1}(i^*)\bigr) = 0$ then $\nu_i^w\bigl(\sigma^{-1}(i)\bigr) = 2$, and we take
\[
   \wt\alpha := \wt\alpha_{i,\sigma^{-1}(i);-1}.
\]
To see that $\wt\alpha$ solves our problem, first let $0 \leq k < i$.  Then $\nu_k^w(i) = 1$, $\nu_k^w\bigl(\sigma^{-1}(i)\bigr) = 2$, $\langle \wt\alpha, a_k\rangle = 1$, and $\nu_k^{s_{\wt\alpha}w} = s_{i,\sigma^{-1}(i)}(\nu_k^w + \alpha^\vee_{i,\sigma^{-1}(i)}) = \nu_k^w$.  Hence $\nu_k^{s_{\wt\alpha}w}$ is $\mu$-spin-permissible.  Next let $i \leq k < \sigma^{-1}(i^*)$.  Then $\nu_k^w(i) = \frac 3 2$, $\nu_k^w\bigl(\sigma^{-1}(i)\bigr) \in \bigl\{ \frac 3 2, 2\bigr\}$ by \eqref{st:nu(j)_vals}, $\langle \wt\alpha, a_k \rangle = \frac 1 2$, and $\nu_k^{s_{\wt\alpha}w} = s_{i,\sigma^{-1}(i)}\bigl(\nu_k^w + \frac 1 2 \alpha^\vee_{i,\sigma^{-1}(i)}\bigr)$.  Hence by inspection $\nu_k^{s_{\wt\alpha}w}$ satisfies \eqref{it:SP1'} and $c_k^{s_{\wt\alpha}w} = c_k^w$, so that $\nu_k^{s_{\wt\alpha}w}$ also satisfies \eqref{it:SP2'}.  Moreover $\nu_k^{s_{\wt\alpha}w}$ trivially satisfies \eqref{it:SP3'} since $\nu_k^{s_{\wt\alpha}w}\bigl(\sigma(i)\bigr) = \nu_k^w\bigl(\sigma(i)\bigr) = \frac 1 2$ is a half-integer.  Finally let $\sigma^{-1}(i^*) \leq k \leq m$.  Then $\langle \wt\alpha, a_k \rangle = 0$ and $\nu_k^{s_{\wt\alpha}w} = s_{i,\sigma^{-1}(i)}\nu_k^w$ is clearly $\mu$-spin-permissible.  Moreover we have $\nu_{\sigma^{-1}(i^*)}^w(i) = 2$ and $\mu_{\sigma^{-1}(i^*)}^w\bigl(\sigma^{-1}(i^*)\bigr) = 1$, so that $\nu_{\sigma^{-1}(i^*)}^w\bigl(\sigma^{-1}(i)\bigr) \in \bigl\{1, \frac 3 2\bigr\}$.  Hence $w < s_{\wt\alpha}w$ by taking $k = \sigma^{-1}(i^*)$ in criterion \eqref{it:crit_2} in \eqref{st:bo}.  Of course $\nu_k^{s_{\wt\alpha}w}(m) = \nu_k^w(m)$ for all $k$, and we conclude that $\wt\alpha$ solves our problem.

If $\nu_i^w\bigl(\sigma^{-1}(i^*)\bigr) = 2$ then $\nu_i^w\bigl(\sigma^{-1}(i)\bigr) = 0$, and we take
\[
   \wt\alpha := \wt\alpha_{i,\sigma^{-1}(i);1}.
\]
To see that $\wt\alpha$ solves our problem, first let $0 \leq k < i$.  Then $\nu_k^w(i) = 1$, $\nu_k^w\bigl(\sigma^{-1}(i)\bigr) = 0$, $\langle \wt\alpha, a_k\rangle = -1$, and $\nu_k^{s_{\wt\alpha}w} = s_{i,\sigma^{-1}(i)}(\nu_k^w - \alpha^\vee_{i,\sigma^{-1}(i)}) = \nu_k^w$.  Hence $\nu_k^{s_{\wt\alpha}w}$ is $\mu$-spin-permissible.  Next let $i \leq k < \sigma^{-1}(i^*)$.  Then $\nu_k^w(i) = \frac 3 2$, $\nu_k^w\bigl(\sigma^{-1}(i)\bigr) \in \bigl\{ 0, \frac 1 2\bigr\}$ by \eqref{st:nu(j)_vals}, $\langle \wt\alpha, a_k \rangle = -\frac 3 2$, and $\nu_k^{s_{\wt\alpha}w} = s_{i,\sigma^{-1}(i)}\bigl(\nu_k^w - \frac 3 2 \alpha^\vee_{i,\sigma^{-1}(i)}\bigr)$.  Hence by inspection $\nu_k^{s_{\wt\alpha}w}$ satisfies \eqref{it:SP1'} and $c_k^{s_{\wt\alpha}w} \leq c_k^w + 1$.  To see that $\nu_k^{s_{\wt\alpha}w}$ satisfies \eqref{it:SP2'}, we use, as in earlier parts of the proof, that $\nu_k^{w'}$ does:  indeed $q \geq c_k^{w'} = c_k^w + 1$ by \eqref{st:i_sigma(i)_entries_fractional} (applied with $\wt\alpha = \wt\alpha'$), which gives us what we need.  Moreover $\nu_k^{s_{\wt\alpha}w}$ trivially satisfies \eqref{it:SP3'} since $\nu_k^{s_{\wt\alpha}w}\bigl(\sigma(i)\bigr) = \nu_k^w\bigl(\sigma(i)\bigr) = \frac 1 2$ is a half-integer.  Finally let $\sigma^{-1}(i^*) \leq k \leq m$.  Then $\nu_k^w(i) = 2$, $\nu_k^w\bigl(\sigma^{-1}(i)\bigr) = 0$, $\langle \wt\alpha, a_k \rangle = -2$, and $\nu_k^{s_{\wt\alpha}w} = s_{i,\sigma^{-1}(i)}(\nu_k^w - 2\alpha^\vee_{i,\sigma^{-1}(i)}) = \nu_k^w$.  Hence $\nu_k^{s_{\wt\alpha}}$ is $\mu$-spin-permissible.  To see that $w < s_{\wt\alpha}w$, note that $\sigma^{-1}(i)$ is evidently proper.  So there must exist a $k$ such that $\nu_k^w\bigl(\sigma^{-1}(i)\bigr) \neq 0$.  By what we have just seen, such a $k$ must satisfy $i \leq k < \sigma^{-1}(i^*)$, $\nu_k^w\bigl(\sigma^{-1}(i)\bigr) = \frac 1 2$, and $\nu_k^w(i) = \frac 3 2$.  Hence $w < s_{\wt\alpha}w$ by criterion \eqref{it:crit_1} in \eqref{st:bo}.  Finally $\nu_k^{s_{\wt\alpha}w}(m) = \nu_k^w(m)$ for all $k$, and we conclude that $\wt\alpha$ solves our problem.  This completes the proof in Case C when $u(i) = 2$, and with it the entire proposition is proved.
\end{proof}

This finally completes the proof of \eqref{st:sp-perm=adm_D}, and, equivalently, of Theorem \ref{st:adm_perm_D}.

\subsection{Parahoric variants and faces of type $I$}\label{ss:parahoric_D}
Our aims for the rest of \s\ref{s:combinatorics_D} are to extend \eqref{st:sp-perm=adm_D} and \eqref{st:mu-perm_D} to the general parahoric case (in \s\ref{ss:mu-adm_mu-sp-perm_parahoric} and \s\ref{ss:mu-perm_parahoric}, respectively) and to characterize $\mu$-admissibility in terms of $\mu$-vertexwise admissibility (in \s\ref{ss:vert-adm_D}).   
In this subsection we shall lay some of the needed groundwork.

For $F$ a subfacet of the base alcove $A_{D_m}$, let $W_{F,D_m}$ denote the common stabilizer and pointwise fixer of $F$ in $W_{\aff,D_m}$, as in \s\ref{ss:I-W_gp}.  To simplify matters for ourselves, we introduce the affine transformations on $\A_{D_m}$
\begin{equation}\label{disp:thetas}
\begin{aligned}
   \theta_0 &\colon (x_1,\dotsc,x_{2m}) \mapsto (x_{2m} - 1,x_2,\dotsc,x_{2m-1}, x_1 + 1),\\
   \theta_m &\colon (x_1,\dotsc,x_{2m}) \mapsto (x_1,\dotsc,x_{m-1},x_{m+1},x_m,x_{m+2},\dotsc,x_{2m}).
\end{aligned}
\end{equation}
Then $\theta_0$ and $\theta_m$ induce automorphisms of the affine root system of $\R_{D_m}$, and $\theta_0$ (resp.\ $\theta_m$) interchanges the points $a_0$ and $a_{0'}$ (resp.\ $a_m$ and $a_{m'}$) and fixes $a_k$ for all other $k \in \{0,0',2,\dotsc,m-2,m,m'\}$.  In particular, $\theta_0$ and $\theta_m$ send $A_{D_m}$ to itself.  It follows that both $\theta_0$ and $\theta_m$ induce automorphisms of $\wt W_{D_m}$ which 
preserve the $A_{D_m}$-Bruhat order.  The force of this in the next two subsections will be that, by applying $\theta_0$ or $\theta_m$ (or both) as needed, we shall incur no loss of generality by considering only subfacets $F$ of $A_{D_m}$ with the property that
\begin{equation}\label{disp:allowable_F_for_D_m}
   a_{0'} \in \ol F \implies a_0 \in \ol F
   \quad\text{and}\quad
   a_{m'} \in \ol F \implies a_m \in \ol F.
\end{equation}

Given a subfacet $F$ of $A_{D_m}$ satisfying \eqref{disp:allowable_F_for_D_m}, let
\[
   I := \bigl\{\, k\in\{0,\dotsc,m\} \bigm| a_k \in \ol F \,\bigr\}.
\]
Then $I$ is a nonempty subset of $\{0,\dotsc,m\}$ satisfying
\begin{equation}\label{disp:allowable_I_for_D_m}
   1 \in I \implies 0 \in I
   \quad\text{and}\quad
   m-1 \in I \implies m \in I.
\end{equation}
Indeed, if $a_1 \in \ol F$ (resp.\ $a_{m-1} \in \ol F$), then so is $a_0$ (resp.\ $a_m$), simply because $a_0$ (resp.\ $a_m$) is in the closure of the facet containing $a_1$ (resp.\ $a_{m-1}$).  Moreover $I$ uniquely determines $F$, since $a_{0'}$ (resp.\ $a_{m'}$) is also in the closure of the facet containing $a_1$ (resp.\ $a_{m-1}$), and any subfacet of $A_{D_m}$ is specified by which of the vertices $a_0$, $a_{0'}$, $a_2,\dotsc,$ $a_{m-2}$, $a_m$, $a_{m'}$ are in its closure.  In this way we obtain a bijection between the subfacets $F$ of $A_{D_m}$ satisfying \eqref{disp:allowable_F_for_D_m} and the nonempty subsets $I$ of $\{0,\dotsc,m\}$ satisfying \eqref{disp:allowable_I_for_D_m}.  Given such $I$, let
\begin{equation}\label{disp:W_I,D_m}
   W_{I,D_m} := \{\, w \in W_{\aff,D_m} \mid wa_k = a_k \text{ for all } k \in I\,\}.
\end{equation}
Of course
\[
   W_{I,D_m} = W_{F,D_m}
\]
for the unique subfacet $F$ of $A_{D_m}$ determined by $I$ in the way we have just described.

Our aim for the rest of the subsection is to interpret the set $\wt W_{D_m}/W_{I,D_m}$ in terms of faces of type $I$, for nonempty $I \subset \{0,\dotsc,m\}$ satisfying \eqref{disp:allowable_I_for_D_m}.  Define a homomorphism
\[
   \varepsilon\colon
   \xymatrix@R=0ex{
      \wt W_{D_m} \ar[r] & \ZZ/2\ZZ\\
      w \ar@{|->}[r] & \sum_{i = 1}^m \nu_0^w(i) \bmod 2.
   }
\]
Thus the affine Weyl group $W_{\aff,D_m} = Q^\vee_{D_m} \rtimes S_{2m}^\circ$ may be characterized as the subset of elements $w \in \wt W_{D_m}$ for which $\nu_0^w + (\nu_0^w)^* = \mathbf 0$ and $\varepsilon(w) = 0$.  
%
%
%

\begin{defn}\label{def:F_I,D}
Let $I$ be a nonempty subset of $\{0,\dotsc,m\}$ (not necessarily satisfying \eqref{disp:allowable_I_for_D_m}).  We define the subset $\F_{I,D_m}$ of $\{\text{faces of type } I\} \times \ZZ/2\ZZ$ as follows.
If $0$, $m \in I$, then we define
\[
   \F_{I,D_m} := \bigl\{\, (\mathbf v, \gamma)  \bigm| 
      \mu_0^{\mathbf v} \equiv \mu_m^{\mathbf v} \bmod Q^\vee_{D_m} \text{ and } \gamma = \varepsilon(t_{\mu_0^{\mathbf v}})  \,\bigr\}.
\]
If $0 \in I$ and $m \notin I$ (resp.\ $m \in I$ and $0 \notin I$), then we define
\[
   \F_{I,D_m} := \bigl\{\, (\mathbf v, \gamma)  \bigm|
      \gamma = \varepsilon(t_{\mu_0^{\mathbf v}})  \,\bigr\}
      \quad
      \bigl(\text{resp.}\ \F_{I,D_m} := \bigl\{\, (\mathbf v, \gamma)  \bigm|
      \gamma = \varepsilon(t_{\mu_m^{\mathbf v}})  \,\bigr\}\bigr).
\]
If $0$, $m \notin I$, then we define
\[
   \F_{I,D_m} := \{\text{faces of type } I\} \times \ZZ/2\ZZ.
\]
\end{defn}

If $0$, $m \in I$ and $(\mathbf v, \gamma) \in \F_{I,D_m}$, then of course we also have $\gamma = \varepsilon (t_{\mu_m^{\mathbf v}})$.  The group $\wt W_{D_m}$ acts naturally on $\F_{I,D_m}$ via the rule $w \cdot (\mathbf v,\gamma) = \bigl(w\cdot \mathbf v, \varepsilon(w) + \gamma\bigr)$.  For a nonempty subset $J$ of $I$, the natural $\wt W_{D_m}$-equivariant map
\[
   \{\text{faces of type } I\} \to \{\text{faces of type } J\}
\]
induces a natural $\wt W_{D_m}$-equivariant map
\[
   \F_{I,D_m} \to \F_{J,D_m}.
\]
Furthermore, if exactly one of $0$, $m$ is in $I$ (resp.\ both $0$ and $m$ are in $I$), then the $\wt W_{D_m}$-equivariant projection
\[
   \F_{I,D_m} \to \{\text{faces of type $I$}\}
\]
is a bijection (resp.\ is an injection whose image consists of all faces
$\mathbf v$ such that $\mu_0^{\mathbf v} \equiv \mu_m^{\mathbf v} \bmod
Q^\vee_{D_m}$).

Now suppose that $I$ satisfies \eqref{disp:allowable_I_for_D_m}, let $w \in W_{I,D_m}$, and consider the standard face $\omega_I$ of type $(2m, I)$.  The affine Weyl group $W_{\aff,D_m}$ is naturally a subgroup of the affine Weyl group
\[
   \bigl\{\, v \in \ZZ^{2m} \bigm| \Sigma v = 0 \,\bigr\} \rtimes S_{2m}
\]
for the root datum attached to $GL_{2m}$, and the points $\omega_k \in \RR^{2m}$ for $k \in \ZZ$ are all contained in minimal facets of an alcove for $GL_{2m}$.  For $k \in I$, since $w$ fixes $a_k$ and $a_k$ is the midpoint of $\omega_k$ and $\omega_{-k}$, it follows that $w$ fixes $\omega_k$ and $\omega_{-k}$.  Hence $w$ fixes $\omega_I$.  Since moreover $\varepsilon(W_{I,D_m}) \subset \varepsilon(W_{\aff,D_m})= 0$, we get a well-defined $\wt W_{D_m}$-equivariant map
\[
   f\colon 
   \xymatrix@R=0ex{
      \wt W_{D_m}/W_{I,D_m} \ar[r] & \F_{I,D_m}\\
      w W_{I,D_m}  \ar@{|->}[r] & w \cdot (\omega_I,0).
   }
\]
Our main result for the subsection is the following.

\begin{lem}\label{st:parahoric_W_faces_type_I}
Let $I$ be a nonempty subset of $\{0,\dotsc,m\}$ satisfying \eqref{disp:allowable_I_for_D_m}.  Then the map $f \colon \wt W_{D_m}/W_{I,D_m} \to \F_{I,D_m}$ is a bijection.
\end{lem}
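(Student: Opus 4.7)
The plan is to exploit the $\wt W_{D_m}$-equivariance of $f$: both source and target carry natural $\wt W_{D_m}$-actions (the source by left multiplication on cosets, the target by the rule specified before \eqref{def:F_I,D}), and $f$ intertwines them by construction, sending the identity coset to the basepoint $(\omega_I,0)$. Bijectivity of $f$ then reduces to two facts: (a) transitivity of the $\wt W_{D_m}$-action on $\F_{I,D_m}$ (giving surjectivity), and (b) the stabilizer of $(\omega_I,0)$ in $\wt W_{D_m}$ is exactly $W_{I,D_m}$ (giving injectivity). The paragraph preceding the lemma already verifies that $W_{I,D_m}$ fixes $(\omega_I,0)$, so $f$ is at least well defined.

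For (b), I would start from $w = t_\nu\sigma \in \wt W_{D_m}$ with $w\cdot(\omega_I,0) = (\omega_I,0)$ and expand. The face-fixing condition $\sigma\omega_k + \nu = \omega_k$ for $k \in 2m\ZZ\pm I$ yields $wa_k = a_k$ for each $k \in I$ by averaging the relations for $+k$ and $-k$. Summing coordinates of $\sigma\omega_k + \nu = \omega_k$ and using $\sum\omega_k = -k$ forces $\sum\nu = 0$; since $\nu \in X_{*D_m}$ satisfies $\nu + \nu^* = \mathbf d$, we get $md = 0$, hence $d = 0$. Combined with $\varepsilon(w) = 0$ and the isomorphism $X_{*D_m}/Q^\vee_{D_m} \iso \ZZ \oplus \ZZ/2\ZZ$ recalled in the proof of \eqref{st:c_i_equiv_q}, this places $\nu$ in the coroot lattice $Q^\vee_{D_m}$. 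Thus $w \in W_{\aff,D_m}$ and it pointwise fixes the vertices $a_k$ for $k \in I$, so $w \in W_{I,D_m}$.

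For (a), I would start with an arbitrary $(\mathbf v,\gamma) \in \F_{I,D_m}$ and use that $\wt W_{2m} = X_{*2m}\rtimes S_{2m}^*$ acts transitively on faces of type $I$ (from \s\ref{ss:faces_type_I}) to produce $\wt w = t_{\wt\nu}\wt\sigma \in \wt W_{2m}$ with $\wt w\cdot\omega_I = \mathbf v$. Since $X_{*D_m} = X_{*2m}$, the only obstructions to arranging $\wt w \in \wt W_{D_m}$ with $\varepsilon(\wt w) = \gamma$ are the parity of $\wt\sigma \in S_{2m}^*$ and the residue $\varepsilon(\wt w) - \gamma \bmod 2$. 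Both obstructions are absorbed by right multiplying $\wt w$ by a ``corrector'' $c \in \wt W_{2m}$ that fixes $\omega_I$. The basic correctors are the transposition $\tau = (m,m+1)$, which fixes every $\omega_k$ with $|k| < m$ and hence stabilizes $\omega_I$ precisely when $m \notin I$, and the analogous element swapping $a_0 \leftrightarrow a_{0'}$ (built from $\theta_0$ of \eqref{disp:thetas}), which stabilizes $\omega_I$ precisely when $0 \notin I$; each lies in the odd coset of $\wt W_{D_m}$ in $\wt W_{2m}$ and can be computed to shift $\varepsilon$ and $\wt\sigma$-parity in a controlled way.

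The main obstacle is the case-by-case bookkeeping: one must check, in each of the four cases of \eqref{def:F_I,D}, that the constraints defining $\F_{I,D_m}$ correspond exactly to the correction data available. When $0,m \in I$ no nontrivial corrector fixes $\omega_I$, so one must show that the hypotheses $\mu_0^{\mathbf v} \equiv \mu_m^{\mathbf v} \pmod{Q^\vee_{D_m}}$ and $\gamma = \varepsilon(t_{\mu_0^{\mathbf v}})$ force $\wt\sigma \in S_{2m}^\circ$ and $\varepsilon(\wt w) = \gamma$ automatically; this reduces to the computation that $\wt\sigma\omega_m - \omega_m \in Q^\vee_{D_m}$ is equivalent to $\wt\sigma \in S_{2m}^\circ$, using the description $S_{2m}^* \iso (\ZZ/2\ZZ)^m \rtimes S_m$ and tracking the number of pair-swaps. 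In the three remaining cases the appropriate correctors exist and can independently adjust parity and $\varepsilon$-value to match the (weaker) constraints imposed by $\F_{I,D_m}$, completing the proof.
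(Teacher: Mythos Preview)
Your proposal is correct and follows essentially the same approach as the paper's proof. Your injectivity argument matches the paper's (fixing $\omega_{\pm k}$ forces $wa_k = a_k$; fixing any vector plus $\varepsilon(w)=0$ forces $w\in W_{\aff,D_m}$), and your surjectivity argument via correctors $\tau=(m,m+1)$ and $\theta_0$ in the overgroup $\wt W_{2m}$ is exactly the content behind the paper's terse assertion that ``$\wt W_{D_m}$ acts transitively on the image of $\F_{I,D_m}$'' and its explicit use of $\theta_0\theta_m$ when $0,m\notin I$; you have simply unpacked what the paper leaves implicit. One small imprecision: in the cases where exactly one of $0,m$ lies in $I$ you have only \emph{one} corrector available, so you cannot adjust parity and $\varepsilon$ ``independently''---rather, once parity is corrected the $\varepsilon$-value is automatically right (because $\gamma$ is determined by $\mu_0^{\mathbf v}$ or $\mu_m^{\mathbf v}$, which encodes exactly the needed shift); this is what your computation in the $0,m\in I$ case already shows and it carries over.
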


\begin{proof}
First suppose that $w \in \wt W_{D_m}$ fixes $(\omega_I,0)$.  Then $\varepsilon(w) = 0$; since $w$ fixes some vector in $\RR^{2m}$, $w$ must satisfy $\nu_0^w + (\nu_0^w)^* = \mathbf 0$; and, since $w$ fixes $\omega_k$ and $\omega_{-k}$ for all $k \in I$, $w$ fixes their midpoint $a_k$ for all $k \in I$.  Hence $w \in W_{I,D_m}$.  This proves that the map $f$ is injective.

To see that $f$ is surjective when $0$ or $m$ is in $I$, we need simply note that $\wt W_{D_m}$ acts transitively on the image of $\F_{I,D_m}$ under the injection
\[
   \F_{I,D_m} \inj \{\text{faces of type $I$}\}.
\]
If $0$, $m \notin I$, then we note that the element
\[
   w := \theta_0\theta_m = t_{(-1,0^{(2m-2)},1)}(1,2m)(m,m+1) \in \wt W_{D_m},
\]
where we use cycle notation for elements in $S_{2m}^\circ$, fixes $\omega_I$ and satisfies $\varepsilon(w) = 1$.  So the surjectivity of $f$ for such $I$ now follows from the fact that $\wt W_{D_m}$ acts transitively on faces of type $I$.
\end{proof}

\subsection{\texorpdfstring{$\mu$}{mu}-admissibility and \texorpdfstring{$\mu$}{mu}-spin-permissibility in the parahoric case}\label{ss:mu-adm_mu-sp-perm_parahoric}
In this subsection we generalize \eqref{st:sp-perm=adm_D} to the general parahoric case.  For subfacets $F$, $F'$ of $A_{D_m}$, we write
\[
   \Adm_{F',F,D_m}(\mu)
\]
for the $\mu$-admissible set in $W_{F',D_m} \bs \wt W_{D_m} / W_{F,D_m}$ \eqref{def:adm}.  When $F = F' = A_{D_m}$, we abbreviate this to $\Adm_{D_m}(\mu)$.

Recall $\theta_0$, $\theta_m \in \Aff(\A_{D_m})$ from \eqref{disp:thetas}, and let $\theta \in \{\theta_0,\theta_m,\theta_0\theta_m\}$.  Then $\theta \in \wt W_{2m}$ \eqref{disp:wtW_n}, and $\theta$ stabilizes the base alcove $A_{D_m}$.  Hence for any $\mu \in X_{*D_m}$, conjugation by $\theta$ induces an automorphism
\[
   W_{F',D_m} \bs \wt W_{D_m} / W_{F,D_m} \isoarrow
      W_{\theta F', D_m} \bs \wt W_{D_m} / W_{\theta F, D_m}
\]
identifying
\[
   \Adm_{F',F,D_m}(\mu) \isoarrow \Adm_{\theta F', \theta F, D_m}(\sigma\cdot\mu),
\]
where $\sigma$ denotes the linear part of $\theta$.  When $\mu$ is the cocharacter \eqref{disp:mu_D}, since $q < m$ we have $S_{2m}^\circ \mu = S_{2m}^*\mu$, and therefore the admissible set on the right-hand side of the last display equals $\Adm_{\theta F', \theta F,D_m}(\mu)$.

In the case that $F \preceq F' \preceq A_{D_m}$, we conclude that in studying admissible sets for this $\mu$, it is harmless to assume that $F$ and $F'$ satisfy property \eqref{disp:allowable_F_for_D_m}, since we can always apply a suitable $\theta \in \{\theta_0,\theta_m,\theta_0\theta_m\}$ to place ourselves in this situation.  Thus we shall assume that $W_{F,D_m}$ (resp.\ $W_{F',D_m}$) is of the form $W_{I,D_m}$ (resp.\ $W_{J,D_m}$) for some nonempty $I \subset J \subset \{0,\dotsc,m\}$ satisfying property \eqref{disp:allowable_I_for_D_m}.

Now recall that for $w \in \wt W_{D_m} / W_{I,D_m}$,  we get well-defined vectors $\mu_k^w = w\omega_k - \omega_k$ and $\nu_k^w = wa_k - a_k$ for $k \in 2m \pm I$, as discussed in the previous subsection.

\begin{defn}\label{def:mu-spin_perm_D_parahoric}
Let $\mu$ be the cocharacter \eqref{disp:mu_D}, and let $I \subset J$ be nonempty subsets of $\{0,\dotsc,m\}$ satisfying property \eqref{disp:allowable_I_for_D_m}.  We say that $w \in W_{J,D_m} \bs \wt W_{D_m}/W_{I,D_m}$ is \emph{$\mu$-spin-permissible} if $w \equiv t_\mu \bmod W_{\aff,D_m}$ and for one, hence any, representative $\wt w$ of $w$ in $\wt W_{D_m}/ W_{I,D_m}$, $\mu_k^w$ satisfies conditions \eqref{it:SP1}, \eqref{it:SP2}, and \eqref{it:SP3} in \eqref{def:spin-perm_D} for all $k \in I$.  We write
\[
   \SPerm_{J,I,D_m}(\mu)
\]
for the set of $\mu$-spin-permissible elements in $W_{J,D_m} \bs \wt W_{D_m} / W_{I,D_m}$.  We abbreviate this to $\SPerm_{I,D_m}(\mu)$ when $J = \{0,\dotsc,m\}$, and to $\SPerm_{D_m}(\mu)$ when $I = J = \{0,\dotsc,m\}$.
\end{defn}

We shall see in \eqref{st:sp-perm_double_cosets} below (with an assist from \eqref{st:SP_SP'_equiv}) that the definition really is independent of the choice of representative $\wt w \in \wt W_{D_m} / W_{I,D_m}$.  Let us first make some other remarks.

First, the condition $w \equiv t_\mu \bmod W_{\aff,D_m}$ is well-defined because $W_{I,D_m}$, $W_{J,D_m} \subset W_{\aff,D_m}$.  If $0$ (resp.\ $m$) is in $I$, then this condition is moreover redundant, since \eqref{it:SP3} requires $w \equiv t_{\mu_0^w} \equiv t_\mu \bmod W_{\aff, D_m}$ (resp.\ $w \equiv t_{\mu_m^w} \equiv t_\mu \bmod W_{\aff, D_m}$).  Of course, for any $I$ satisfying \eqref{disp:allowable_I_for_D_m}, $w$ is equivalently $\mu$-spin-permissible if $w \equiv t_\mu \bmod W_{\aff,D_m}$ and $\nu_k^{\wt w}$ satisfies \eqref{it:SP1'}, \eqref{it:SP2'}, and \eqref{it:SP3'} for all $k \in I$, for any representative $\wt w \in \wt W_{D_m}/W_{I,D_m}$, by \eqref{st:SP_SP'_equiv}.

To prepare for \eqref{st:sp-perm_double_cosets}, we first prove the following.

\begin{lem}\label{st:tau_A_k_B_k}
Let $x \in \wt W_{D_m}$, and write $x = t_{\nu_0^x} \tau$, with $\nu_0^x \in X_{*D_m}$ and $\tau$ in the Weyl group $S_{2m}^\circ$.  Suppose that $xa_k = a_k$ for some $k \in \{1,\dotsc, m-1\}$.  Then the sets $A_k$ and $B_k$ defined in \eqref{disp:A_i_B_i} are $\tau$-stable.
\end{lem}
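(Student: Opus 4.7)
The plan is to exploit the distinction between integer and half-integer entries. Write the defining equation $xa_k = a_k$ as
\[
   \tau a_k = a_k - \nu_0^x.
\]
The point of this reformulation is that the right-hand side has a simple arithmetic character at each coordinate: since $\nu_0^x \in X_{*D_m} \subset \ZZ^{2m}$, the vector $-\nu_0^x$ has integer entries, while $a_k = \bigl((-\tfrac 1 2)^{(k)},0^{(2m-2k)},(\tfrac 1 2)^{(k)}\bigr)$ takes the value $\pm \tfrac 1 2$ at every position in $A_k$ and the value $0$ at every position in $B_k$. Hence $(a_k - \nu_0^x)(j)$ is a half-integer exactly when $j \in A_k$ and an integer exactly when $j \in B_k$.

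Next I would look at the left-hand side under the convention that $S_{2m}^\circ$ acts on $\RR^{2m}$ by permuting coordinates, so that $(\tau a_k)(j) = a_k(\tau^{-1}(j))$. The same dichotomy then applies: $(\tau a_k)(j)$ is a half-integer precisely when $\tau^{-1}(j) \in A_k$, and is an integer precisely when $\tau^{-1}(j) \in B_k$. Comparing with the conclusion of the previous paragraph yields $\tau^{-1}(A_k) = A_k$ and $\tau^{-1}(B_k) = B_k$, which gives the desired $\tau$-stability.

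There is no serious obstacle here; the argument is essentially a parity check and needs no input beyond the explicit description of $a_k$ together with the fact that $\nu_0^x$ lies in $X_{*D_m}$. The only thing to be careful about is the edge cases $k = 1$ and $k = m-1$, but the vectors $a_1$ and $a_{m-1}$ given in \eqref{disp:a_k} fit the same pattern (half-integer on $A_k$, integer $0$ on $B_k$), so the same argument applies verbatim. Note also that the hypothesis $k \in \{1,\dots,m-1\}$ is used only to ensure that both $A_k$ and $B_k$ are nonempty, so that the dichotomy is genuinely informative; for $k = 0$ or $k = m$ one of the sets is empty and the statement becomes vacuous on that side.
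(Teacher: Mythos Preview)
Your proof is correct and takes essentially the same approach as the paper: both arguments rest on the observation that $\tau a_k - a_k = -\nu_0^x$ has integer entries, while the explicit form of $a_k$ forces half-integer entries at positions in $A_k$ and integer entries at positions in $B_k$. The paper phrases this as a contradiction argument (supposing $\tau$ sends some $i \in A_k$ into $B_k$ and deriving a half-integer entry of $-\nu_0^x$), whereas you phrase it directly, but the content is identical.
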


\begin{proof}
Suppose by contradiction that $\tau$ carries some element $i \in A_k$ into $B_k$.  Since $a_k = \bigl((-\frac 1 2)^{(k)},0^{(2m-2k)},(\frac 1 2)^{(k)}\bigr)$, we obtain
\[
   (\tau a_k - a_k)\bigl(\tau(i)\bigr) = (\tau a_k)\bigl(\tau(i)\bigr) = a_k(i) = \pm \frac 1 2.
\]
But
\begin{equation}\label{disp:a_k=xa_k}
   a_k = xa_k = \nu_0^x + \tau a_k.
\end{equation}
Hence $\tau a_k - a_k = - \nu_0^x \in X_{*D_m} \subset \ZZ^{2m}$, a contradiction.
\end{proof}

\begin{lem}\label{st:sp-perm_double_cosets}
Let $w$, $w' \in \wt W_{D_m}/W_{I,D_m}$, and suppose that $w' = xw$ for some $x \in W_{I,D_m}$.  Then for $k \in I$, $\nu_k^w$ satisfies \eqref{it:SP1'} (resp.\ \eqref{it:SP2'}, resp.\ \eqref{it:SP3'}) $\iff$ $\nu_k^{w'}$ satisfies \eqref{it:SP1'} (resp.\ \eqref{it:SP2'}, resp.\ \eqref{it:SP3'}).  In particular, $w$ is $\mu$-spin-permissible $\iff$ $w'$ is $\mu$-spin-permissible.
\end{lem}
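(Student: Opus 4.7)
The plan is to write $x = t_{\nu_0^x}\tau$ with $\tau \in S_{2m}^\circ$, compute explicitly that $\nu_k^{w'} = \tau\nu_k^w$ for all $k \in I$, and then observe that each of the conditions \eqref{it:SP1'}, \eqref{it:SP2'}, \eqref{it:SP3'} is manifestly invariant under applying $\tau$.

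First I would establish the formula $\nu_k^{w'} = \tau\nu_k^w$.  Since $x \in W_{I,D_m}$ we have $xa_k = a_k$, that is, $\nu_0^x + \tau a_k = a_k$.  Therefore
\[
   \nu_k^{w'} = xwa_k - a_k = \nu_0^x + \tau(\nu_k^w + a_k) - a_k
                 = \tau\nu_k^w + (\nu_0^x + \tau a_k - a_k)
                 = \tau\nu_k^w.
\]

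Next I would verify \eqref{it:SP1'} and \eqref{it:SP2'}.  Because $\tau \in S_{2m}^\circ \subset S_{2m}^*$, one checks directly that $(\tau v)^* = \tau(v^*)$ for all $v \in \RR^{2m}$; hence $\tau \nu_k^w + (\tau \nu_k^w)^* = \tau(\nu_k^w + (\nu_k^w)^*)$, and the inequalities $\mathbf 0 \leq \nu_k^w \leq \mathbf 2$ are preserved by any permutation of entries.  Condition \eqref{it:SP2'} involves only cardinalities of entry sets, so it is obviously preserved by $\tau$.

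The main point is \eqref{it:SP3'}, but here too the work has already been done.  Since $\tau$ belongs to the Weyl group $S_{2m}^\circ$, which acts trivially on $X_{*D_m}/Q^\vee_{D_m}$, we have $\tau \nu_k^w \equiv \nu_k^w \bmod Q^\vee_{D_m}$, and of course $\tau \nu_k^w \in \ZZ^{2m}$ iff $\nu_k^w \in \ZZ^{2m}$.  Thus the hypothesis of the spin condition is preserved.  For the conclusion, when $k = 0$ or $k = m$ the sets $A_k$ and $B_k$ are either empty or all of $\{1,\dotsc,2m\}$, so they are trivially $\tau$-stable; for $k \in I \cap \{1,\dotsc,m-1\}$, Lemma \ref{st:tau_A_k_B_k} (applied to $x$, which fixes $a_k$) shows $\tau A_k = A_k$ and $\tau B_k = B_k$.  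Hence if $j_1 \in A_k$ and $j_2 \in B_k$ witness \eqref{it:SP3'} for $\nu_k^w$, then $\tau(j_1) \in A_k$ and $\tau(j_2) \in B_k$ witness it for $\tau\nu_k^w = \nu_k^{w'}$, and conversely using $\tau^{-1}$.  I do not anticipate a serious obstacle: the only subtlety is making sure Lemma \ref{st:tau_A_k_B_k} applies in the correct range of $k$, and the extreme cases $k \in \{0,m\}$ must be noted separately, but they are trivial.  The final sentence of the lemma is then immediate from the definition of $\mu$-spin-permissibility, together with the remark that $\varepsilon(x) = 0$ (since $x \in W_{\aff,D_m}$), so the congruence class modulo $W_{\aff,D_m}$ is also unchanged.
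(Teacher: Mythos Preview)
Your proof is correct and follows essentially the same approach as the paper: both establish the key formula $\nu_k^{w'} = \tau\nu_k^w$ from $xa_k = a_k$, deduce \eqref{it:SP1'} and \eqref{it:SP2'} immediately, and handle \eqref{it:SP3'} via Lemma~\ref{st:tau_A_k_B_k}. You are slightly more explicit than the paper in separating out the trivial boundary cases $k \in \{0,m\}$ (where Lemma~\ref{st:tau_A_k_B_k} is not stated) and in noting that the congruence modulo $W_{\aff,D_m}$ is preserved; the paper leaves both implicit.
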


\begin{proof}
Let $k \in I$, and write $x = t_{\nu_0^x}\tau$ with $\tau$ in the Weyl group $S_{2m}^\circ$.  Then, using \eqref{disp:a_k=xa_k} to substitute for $a_k$ in the middle equality,
\[
   \nu_k^{w'} = xwa_k - a_k = \tau wa_k -\tau a_k = \tau\nu_k^w.
\]
So it is immediate that $\nu_k^{w'}$ satisfies \eqref{it:SP1'} (resp.\ \eqref{it:SP2'}) $\iff$ $\nu_k^{w}$  satisfies \eqref{it:SP1'} (resp.\ \eqref{it:SP2'}).  To consider the spin condition, we see from the display that $\nu_k^{w'} \in \ZZ^{2m}$ $\iff$ $\nu_k^w \in \ZZ^{2m}$.  Let us suppose this is so.  The display implies that $\nu_k^{w'} \equiv \nu_k^w \bmod Q^\vee_{D_m}$.  If these are both $\equiv \mu$, then they trivially satisfy the spin condition.  So suppose they are $\not\equiv \mu$.  Since $\nu_k^{w'} = \tau\nu_k^w$ and $\tau$ permutes the sets $A_k$ and $B_k$ separately by \eqref{st:tau_A_k_B_k}, we conclude at once that $\nu_k^{w'}$ satisfies the spin condition $\iff$ $\nu_k^w$ does, which completes the proof.
\end{proof}

Our main result for the subsection is the following.  We write $\Adm_{J,I,D_m}(\mu)$ for the set of $\mu$-admissible elements in $W_{J,D_m}\bs \wt W_{D_m} / W_{I,D_m}$.  We abbreviate this to $\Adm_{I,D_m}(\mu)$ when $J = \{0,\dotsc,m\}$, and to $\Adm_{D_m}(\mu)$ when $I = J = \{0,\dotsc,m\}$.

\begin{thm}\label{st:sp-perm=adm_D_parahoric}
Let $\mu$ be the cocharacter \eqref{disp:mu_D}, and let $I \subset J \subset \{0,\dotsc,m\}$ be nonempty sets satisfying \eqref{disp:allowable_I_for_D_m}.  Then
\[
   \Adm_{J,I,D_m}(\mu) = \SPerm_{J,I,D_m}(\mu).
\]
\end{thm}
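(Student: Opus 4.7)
My strategy is to reduce to the Iwahori case already established in Theorem \ref{st:sp-perm=adm_D}.  Without loss of generality I assume $I$ and $J$ satisfy \eqref{disp:allowable_I_for_D_m}, since the affine transformations $\theta_0$ and $\theta_m$ of \eqref{disp:thetas} preserve both admissibility and spin-permissibility, as noted in \s\ref{ss:mu-adm_mu-sp-perm_parahoric}.  I also invoke the standard fact (used at the end of \s\ref{ss:top_flat} in the proof of Theorem \ref{st:main_thm}) that $\Adm_{J,I,D_m}(\mu)$ is the image of $\Adm_{D_m}(\mu)$ under the natural projection $\pi \colon \wt W_{D_m} \to W_{J,D_m} \bs \wt W_{D_m} / W_{I,D_m}$.

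The inclusion $\Adm_{J,I,D_m}(\mu) \subseteq \SPerm_{J,I,D_m}(\mu)$ is the routine direction.  Given a $\mu$-admissible double coset, I lift it via the standard fact just recalled to some $\wt w_0 \in \Adm_{D_m}(\mu)$.  Theorem \ref{st:sp-perm=adm_D} then gives that $\wt w_0$ is $\mu$-spin-permissible, so conditions \eqref{it:SP1}--\eqref{it:SP3} hold at every $k \in \{0,\dotsc,m\}$, a fortiori at each $k \in I$.  Invariance of $\mu$-spin-permissibility on double cosets, recorded in \eqref{st:sp-perm_double_cosets}, yields the desired containment.

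For the reverse inclusion, the central step is the following lifting claim: every $\mu$-spin-permissible double coset $W_{J,D_m}\wt w W_{I,D_m}$ admits a representative $\wt w_0 \in \wt W_{D_m}$ that is $\mu$-spin-permissible at every $k \in \{0,\dotsc,m\}$.  Granting this, Theorem \ref{st:sp-perm=adm_D} applied to $\wt w_0$ yields $\wt w_0 \in \Adm_{D_m}(\mu)$, and projecting recovers $\mu$-admissibility of the original double coset.

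The main obstacle is this lifting claim.  Via the bijection \eqref{st:parahoric_W_faces_type_I}, the task amounts to a combinatorial extension problem: given face-of-type-$I$ data $(\nu_k)_{k \in I}$ satisfying \eqref{it:SP1'}--\eqref{it:SP3'}, produce a face of type $\{0,\dotsc,m\}$ extending it and satisfying those conditions at every vertex.  Starting from an arbitrary extension---equivalently, an arbitrary lift $\wt w \in \wt W_{D_m}$ of the given double coset---one attempts to correct it by right-multiplying by suitable elements of $W_{I,D_m}$, which leave $\nu_k^{\wt w}$ unchanged for $k \in I$ but allow one to modify it for $k \notin I$.  Conditions \eqref{it:SP1'} and \eqref{it:SP2'} at the new vertices become tractable via the convex-hull reformulation \eqref{st:SP1-2'_conv_cond}, since convex combinations of points of $\Conv(S_{2m}^\circ \mu)$ remain in $\Conv(S_{2m}^\circ \mu)$; the spin condition \eqref{it:SP3'} at those vertices will require a case analysis in the spirit of \eqref{st:sickofnaminglemmas}, with the vertices neighboring $I$ being the most delicate.
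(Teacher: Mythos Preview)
Your overall architecture is the same as the paper's: both directions are reduced to the Iwahori case \eqref{st:sp-perm=adm_D}, and the heart of the matter is exactly the lifting claim you isolate.  The paper packages this lifting as \eqref{st:spin-perm_surj}, formulated in terms of faces of type $I$ and the $\mu_k$-vectors, and argues one missing index at a time.

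Your sketch of the lifting step, however, has a genuine gap.  The sentence ``conditions \eqref{it:SP1'} and \eqref{it:SP2'} at the new vertices become tractable via the convex-hull reformulation \eqref{st:SP1-2'_conv_cond}, since convex combinations of points of $\Conv(S_{2m}^\circ\mu)$ remain in $\Conv(S_{2m}^\circ\mu)$'' does not actually buy you anything: for a new index $k$ lying strictly between consecutive indices $k_1<k_2$ of $I$, the point $a_k$ is \emph{not} a convex combination of $a_{k_1}$ and $a_{k_2}$ (the $a_0,\dotsc,a_m$ are affinely independent up to the line $\RR\cdot\mathbf 1$), so $\nu_k^{\wt w}$ is not forced into the convex hull by convexity alone, no matter which lift $\wt w$ you choose.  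The paper instead follows G\"ortz's explicit construction \cite{goertz05}*{Lem.~8}: working with the face picture and the $\mu_k$-vectors, one inserts the missing index $i+1$ by choosing which coordinate of $v_i$ to decrement, and a minimax choice forces \eqref{it:SP1} and \eqref{it:SP2}.  The spin condition \eqref{it:SP3} can then fail for that first choice, and the paper runs a separate case analysis (Cases~1 and~2 in the proof of \eqref{st:spin-perm_surj}) to locate an alternative coordinate that repairs \eqref{it:SP3} without breaking \eqref{it:SP1}--\eqref{it:SP2}.  Your pointer to \eqref{st:sickofnaminglemmas} for the spin condition is also off the mark: that lemma only treats the extreme indices $0,1,m-1,m$ adjacent to the special vertices $a_{0'},a_{m'}$, whereas here the difficulty is at a generic interior index, and the mechanism is different.
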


\begin{proof}
The natural map $\wt W_{D_m} / W_{I,D_m} \surj W_{J,D_m} \bs \wt W_{D_m} / W_{I,D_m}$ induces a surjection
\[
   \Adm_{I,D_m}(\mu) \surj \Adm_{J,I,D_m}(\mu)
\]
by definition, and a surjection
\[
   \SPerm_{I,D_m}(\mu) \surj \SPerm_{J,I,D_m}(\mu)
\]
by \eqref{st:sp-perm_double_cosets}.  Thus it suffices to establish the equality $\Adm_{I,D_m}(\mu) = \SPerm_{I,D_m}(\mu)$ in $\wt W_{D_m} / W_{I,D_m}$.  When $I = \{0,\dotsc,m\}$, the result is given by \eqref{st:sp-perm=adm_D}.  For general $I$, we consider the natural map $\pi \colon \wt W_{D_m} \surj \wt W_{D_m} / W_{I,D_m}$.  Again by definition, $\pi$ induces a surjection $\Adm_{D_m}(\mu) \surj \Adm_{I,D_m}(\mu)$.  So the result follows from the fact that $\pi$ also induces a surjection $\SPerm_{D_m}(\mu) \surj \SPerm_{I,D_m}(\mu)$, which we prove in \eqref{st:spin-perm_surj} below.
\end{proof}

In fact we shall formulate \eqref{st:spin-perm_surj} in terms of faces of type $I$, or more precisely, in terms of the sets $\F_{I,D_m}$ \eqref{def:F_I,D} for varying $I$. Recall that in the previous subsection, for nonempty $I \subset \{0,\dotsc,m\}$ satisfying \eqref{disp:allowable_I_for_D_m}, we established a $\wt W_{D_m}$-equivariant bijection $\wt W_{D_m} / W_{I,D_m} \isoarrow \F_{I,D_m}$, compatible with the natural projections on both sides as we pass from $I$ to a subset of $I$.  To prove \eqref{st:spin-perm_surj}, it will be most natural to consider $\F_{I,D_m}$ for \emph{arbitrary} nonempty $I \subset \{0,\dotsc,m\}$.

For any nonempty $I$, $\F_{I,D_m}$ is a subset of $\{\text{faces of type}\ I\} \times \ZZ/2\ZZ$, and we say that $(\mathbf v,\gamma) \in \F_{I,D_m}$ is \emph{$\mu$-spin-permissible} if $\gamma = \varepsilon(\mu)$ and $\mu_k^{\mathbf v}$ satisfies \eqref{it:SP1}, \eqref{it:SP2}, and \eqref{it:SP3} for all $k \in I$.  If $I$ satisfies \eqref{disp:allowable_I_for_D_m}, then this definition of $\mu$-spin-permissibility coincides with the one in $\wt W_{D_m}/ W_{I,D_m}$ via the identification $\wt W_{D_m}/ W_{I,D_m} \isoarrow \F_{I,D_m}$.  For any $I$, we write $\SPerm_{\F_{I,D_m}}(\mu)$ for the set of $\mu$-spin-permissible elements in $\F_{I,D_m}$.

If $\mathbf v$ is a $2$-face of type $I$ and $k \in 2m\ZZ \pm I$, then let $l$ denote the unique element in $I$ that is congruent mod $2m$ to $k$ or $-k$.  Then by \eqref{disp:mu_per_cond} and \eqref{disp:mu_dlty_cond}, $\mu_k$ equals $\mu_l$ or $\mathbf 2 - \mu_l^*$.  Hence
\begin{itemize}
\item
   $\mu_k$ satisfies \eqref{it:SP1} $\iff$ $\mu_l$ does;
\item
   when these equivalent conditions hold, and using \eqref{st:c_equiv_defs} in the case that $\mu_k = \mathbf 2 - \mu_l^*$, $\mu_k$ satisfies \eqref{it:SP2} $\iff$ $\mu_l$ does; and
\item
   $\mu_k$ satisfies \eqref{it:SP3} $\iff$ $\mu_l$ does.
\end{itemize}
In particular, an element $\bigl(\mathbf v, \varepsilon(\mu)\bigr) \in \F_{I,D_m}$ is $\mu$-spin-permissible $\iff$ $\mu_k$ satisfies \eqref{it:SP1}, \eqref{it:SP2}, and \eqref{it:SP3} for all $k \in 2m\ZZ \pm I$.


\begin{lem}\label{st:spin-perm_surj}
Let $I \subset J$ be nonempty subsets of $\{0,\dotsc,m\}$.  Then the natural map $\F_{J,D_m} \to \F_{I,D_m}$ carries $\SPerm_{\F_{J,D_m}}(\mu)$ surjectively onto $\SPerm_{\F_{I,D_m}}(\mu)$.
\end{lem}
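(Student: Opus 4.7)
The plan is to induct on $|J \setminus I|$; the base case $J = I$ is trivial, and the inductive step reduces to $J = I \cup \{k\}$ for a single $k \in \{0,\dotsc,m\} \setminus I$. Given $\mu$-spin-permissible $(\mathbf v, \gamma) \in \F_{I,D_m}$, the task is to define $v_k$ (equivalently, $\mu_k := v_k - \omega_k$) so that the extended family $(v_i)_{i \in 2m\ZZ \pm J}$ is a face of type $J$ lying in $\F_{J,D_m}$ and so that $\mu_k$ satisfies \eqref{it:SP1}, \eqref{it:SP2}, and \eqref{it:SP3}.

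Let $i_L < k < i_R$ be the nearest neighbors of $k$ in $2m\ZZ \pm I$. The face conditions force $v_{i_R} \leq v_k \leq v_{i_L}$ with $\Sigma v_k = 2m - k$; that is, $v_{i_L} - v_k$ is a $0$-$1$ vector of weight $k - i_L$ supported in $\{\, j \mid v_{i_L}(j) > v_{i_R}(j) \,\}$. I expect that any such choice of $v_k$ already gives $\mu_k$ (and $\mu_{-k}$) with entries in $\{0,1,2\}$ and with $c_k \leq q$, so that conditions \eqref{it:SP1} and \eqref{it:SP2} at $k$ follow automatically from the corresponding conditions at $i_L$ and $i_R$, using the basic inequalities \eqref{st:basic_ineqs}, the equivalence \eqref{st:c_equiv_defs}, and the convex-hull viewpoint of \eqref{st:SP1-2'_conv_cond}. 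Thus the only remaining content is the spin condition \eqref{it:SP3} at $k$.

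To arrange \eqref{it:SP3} at $k$, I would exploit the freedom in the choice of $v_k$. For $0 < k < m$, if some choice yields a $\mu_k$ that is not self-dual (i.e., $\mu_k + (\mu_k)^* \neq \mathbf 2$), then \eqref{it:SP3} is vacuous; if every valid choice forces self-duality, I would select the decrement positions in $v_{i_L}$ so that $\mu_k$ either satisfies $c_k \equiv q \bmod 2$ (whence $\mu_k \equiv \mu \bmod Q^\vee_{D_m}$ by \eqref{st:c_i_equiv_q} and \eqref{it:SP3} is again vacuous) or possesses $1$-entries in both $A_k$ and $B_k$. For the edge cases $k \in \{0,m\}$, $\mu_k$ is forced self-dual, and $c_k \equiv q \bmod 2$ (hence $\mu_k \equiv \mu \bmod Q^\vee_{D_m}$) is obtained by propagating the corresponding parity from some $i \in I$ via iterated application of \eqref{st:c_i_change_lem}; the parity condition $\gamma = \varepsilon(\mu_k)$ in the definition of $\F_{J,D_m}$ is then automatic.

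The main obstacle will be verifying, in the self-dual case $0 < k < m$ with $c_k \not\equiv q \bmod 2$, that the decrement positions can indeed be chosen so as to produce the required pair of $1$-entries $(j_1,j_2) \in A_k \times B_k$. I anticipate this requires a case analysis paralleling the reflection arguments of \s\ref{ss:sp-perm=>adm_D}, using \eqref{st:c_i_neq_c_p} to compare entries of $\mu_{i_L}$ with those of $\mu_{i_R}$ and to identify valid swaps of decrement positions that introduce the required $1$-entry in the obstructing set $A_k$ or $B_k$ without disturbing conditions at $i_L$ and $i_R$.
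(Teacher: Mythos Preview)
Your reduction to adding a single index and your identification of \eqref{it:SP3} as the crux are both on target, and the paper proceeds the same way. But the proposal contains a real gap: the expectation that \emph{any} interpolating $v_k$ with $v_{i_R} \leq v_k \leq v_{i_L}$ automatically yields $\mu_k$ satisfying \eqref{it:SP1} and \eqref{it:SP2} is false. Writing $\mu_k = \mu_{i_L} + (\omega_{i_L} - \omega_k) - (v_{i_L} - v_k)$, one sees that decrementing $v_{i_L}$ at a position $l$ with $\mu_{i_L}(l) = 0$ and $l \notin \{\wt{i_L+1},\dotsc,\wt k\}$ produces $\mu_k(l) = -1$; similarly, bad choices can push $c_k$ above $q$. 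Neither the basic inequalities nor the convex-hull description \eqref{st:SP1-2'_conv_cond} rescues an arbitrary choice, since those results constrain $\mu_k$ only \emph{after} it is known to come from a face at index $k$ with the right properties, not before. The paper handles this by invoking G\"ortz's argument \cite{goertz05}*{Lem.~8}: in the case where the na\"ive choice $v_{i_L} - e_{\wt{i_L+1}}$ is unavailable, one must decrement at an $l$ where $\mu_{i_L}(l)$ is \emph{maximal} among the allowed positions to guarantee \eqref{it:SP1} and \eqref{it:SP2}. Only then does one turn to \eqref{it:SP3}, and even there the specific structure of this initial choice is used in the subsequent case analysis.

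A second, smaller issue: your treatment of the edge cases $k \in \{0,m\}$ appeals to \eqref{st:c_i_change_lem}, but that lemma compares $c_k^w$ and $c_{k-1}^w$ for a fixed $w \in \wt W_{D_m}$ via the permutation $\sigma$, which has no analog for a bare face of type $I$. The parity $c_k \equiv q \bmod 2$ at an endpoint must instead be \emph{arranged} by the choice of $v_k$ itself; it is not inherited from the existing data via any recursion.
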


\begin{proof}
To prove the lemma, it suffices to assume that
\[
   2m\ZZ \pm J = (2m\ZZ \pm I) \amalg (2m\ZZ \pm \{i+1\})
\]
for some $i \in 2m\ZZ \pm I$.  Let $\bigl(\mathbf v, \varepsilon(\mu)\bigr) \in \SPerm_{\F_{I,D_m}}(\mu)$, with $\mathbf v = (v_k)_{k\in 2m\ZZ \pm I}$.  To lift this element to $\SPerm_{\F_{J,D_m}}(\mu)$, our task is to extend $\mathbf v$ to a face of type $J$ for which $\mu_k^{\mathbf v}$ satisfies \eqref{it:SP1}, \eqref{it:SP2}, and \eqref{it:SP3} for all $k \in J$.
Of course, once we have extended $\mathbf v$, it suffices to check just that $\mu_{i+1}^{\mathbf v}$ satisfies \eqref{it:SP1}, \eqref{it:SP2}, and \eqref{it:SP3}.

Let $j$ denote the minimal element in $2m\ZZ \pm I$ which is $> i$.  Then $j - i \geq 2$.  Let $\pi$ denote the canonical projection
\[
   \pi\colon \{\text{faces of type } J\} \to \{\text{faces of type } I\}.
\]
Kottwitz and Rapoport \cite{kottrap00}*{Lem.\ 10.3} show that there is a bijection%
\footnote{Here our description of the target set differs from that of Kottwitz--Rapoport due to our differing conventions in defining faces of type $I$.}
\[
   \xymatrix@R=0ex{
      \pi^{-1}(\mathbf v) \ar[r]^-{\sim}
         &  \{\, v\in \ZZ^{2m} \mid v_i \geq v \geq v_j \text{ and } \Sigma v = \Sigma v_i - 1 \,\} \\
      (v_k)_{k\in 2m\ZZ \pm J} \ar@{|->}[r]
         &  v_{i+1}.
   }
\]
Thus to extend $\mathbf v$ to a face of type $J$ is to define $v_{i+1}$ subject to the conditions in the definition of the target set in the display.  
To ensure that moreover
\[
   \mu_{i+1} := v_{i+1} - \omega_{i+1}
\]
satisfies conditions \eqref{it:SP1}, \eqref{it:SP2}, and \eqref{it:SP3}, we shall largely follow G\"ortz's proof of surjectivity between $\mu$-permissible sets for $GL_{n}$ \cite{goertz05}*{Lem.\ 8}.  Indeed G\"ortz shows that one can always find a $v_{i+1}$ that extends $\mathbf v$ to a face of type $J$ such that $\mu_{i+1} \in \Conv(S_{2m}\mu)$; this last condition is equivalent, for our $\mu$, to $\mu_{i+1}$ satisfying \eqref{it:SP1} and \eqref{it:SP2}.  Unfortunately $\mu_{i+1}$ need not satisfy the spin condition, which will cause us some complications.  For any $k$, let $\wt k$ denote the unique element in $\{1,\dotsc, 2m\}$ congruent to $k \bmod 2m$.  We have that $v_j\bigl(\wt{i+1}\bigr)$ equals $v_i\bigl(\wt{i+1}\bigr)$ or $v_i\bigl(\wt{i+1}\bigr) - 1$, and we shall consider these cases separately.

\ssk
\noindent\emph{Case 1: $v_j\bigl(\wt{i+1}\bigr) = v_i\bigl(\wt{i+1}\bigr) - 1$.}
Let $v_{i+1} := v_i - e_{\wt{i+1}}$.  Then $v_{i+1}$ extends $\mathbf v$ to a face of type $J$, and $\mu_{i+1} = \mu_{i}^{\mathbf v}$.  Hence $\mu_{i+1}$ satisfies \eqref{it:SP1} and \eqref{it:SP2}.  If $\mu_{i+1}$ also satisfies \eqref{it:SP3} then we are done, so suppose $\mu_{i+1}$ fails \eqref{it:SP3}.  Then $\mu_{i+1}$ is self-dual and $\mu_{i+1} \not\equiv \mu \bmod Q^\vee_{D_m}$.  Let
\[
   c_i := \#\bigl\{\, k \bigm| \mu_i(k) = 2 \,\bigr\},
\]
and define $c_{i+1}$ likewise.  By \eqref{st:c_i_equiv_q} $c_i = c_{i+1} \not\equiv q \bmod 2$.  Hence $c_i < q$.

Now, since $\mu_i$ satisfies \eqref{it:SP3}, there must exist $l_1 \in A_i$ and $l_2 \in B_i$ such that $\mu_i(l_1) = \mu_i(l_2) = 1$.  And since $\mu_{i+1}$ fails \eqref{it:SP3}, we must have $l_1,\ l_2 \in A_{i+1}$\quad and $\mu_{i+1}(k) \in \{0,2\}$ for all $k \in B_{i+1}$, or the same statement with $A_{i+1}$ and $B_{i+1}$ interchanged.  For convenience, let us assume the former; one proceeds in an analogous way in case of the latter.  Then evidently $A_{i+1} = A_i \amalg \{l_2,l_2^*\}$; $\{l_2,l_2^*\} = \bigl\{\wt{i+1}, (\wt{i+1})^*\bigr\}$; and $B_{i+1} = B_i \smallsetminus \bigl\{\wt{i+1}, (\wt{i+1})^*\bigr\}$.  To proceed with our proof in Case 1, we shall consider separately the possibilities that $\mu_j^{\mathbf v} = \mu_{i+1}$ and $\mu_j^{\mathbf v} \neq \mu_{i+1}$.

Suppose that $\mu_j^{\mathbf v} \neq \mu_{i+1}$.  Then, since $\Sigma\mu_j^{\mathbf v} = \Sigma\mu_{i+1}$, there exists $l \in \{1,\dotsc,2m\}$ such that $\mu_j^{\mathbf v}(l) = \mu_{i+1}(l) - 1$.  This requires $l \notin \bigl\{\wt{i+1}, \wt{i+2},\dotsc,\wt \jmath\bigr\}$, whence $v_j(l) = v_{i+1}(l) - 1$.  We define $v_{i+1}' := v_i - e_l$.  Then $v_{i+1}'$ extends $\mathbf v$ to a face of type $J$, and we must show that $\mu_{i+1}' := v_{i+1}' - \omega_{i+1}$ satisfies \eqref{it:SP1}, \eqref{it:SP2}, and \eqref{it:SP3}.  All of this follows easily from the formula
\begin{equation}\label{disp:mu'_fmla}
   \mu_{i+1}' = \mu_i^{\mathbf v} + e_{\wt{i+1}} - e_l.
\end{equation}
Indeed, for \eqref{it:SP1}, we need note only that $\mu_i^{\mathbf v}$ and $\mu_j^{\mathbf v}$ satisfy \eqref{it:SP1} by hypothesis, that
\[
   \mu_{i+1}'\bigl(\wt{i+1}\bigr) = \mu_i^{\mathbf v}\bigl(\wt{i+1}\bigr) + 1 = 2,
\]
and that
\[
   \mu_{i+1}'(l) = \mu_i^{\mathbf v}(l) - 1 = \mu_{i+1}(l) - 1 = \mu_j^{\mathbf v}(l) \in \{0,1,2\}.
\]
For \eqref{it:SP2}, recalling from earlier that $c_i < q$, \eqref{disp:mu'_fmla} gives
\[
   c_{i+1}' := \#\bigl\{\, k \bigm| \mu_{i+1}'(k) = 2 \,\bigr\} \leq c_i + 1 \leq q.
\]
And for \eqref{it:SP3}, $\mu_{i+1}'$ is self-dual only if $l = (\wt{i+1})^*$, in which case $c_{i+1}' = c_i + 1$.  Then $\mu_{i+1}'\equiv \mu \bmod Q^\vee_{D_m}$ by \eqref{st:c_i_equiv_q}.

Now suppose that $\mu_j^{\mathbf v} = \mu_{i+1}$.  Then $v_j(k) = v_i(k) - 1$ for exactly the elements $k \in \{\wt{i+1}, \wt{i+2},\dotsc, \wt\jmath\}$.  Since $A_i \subset A_{i+1}$, we have $\wt{i+1} \in \{1,\dotsc,m\}$, and
\[
   B_{i+1} = \bigl\{\wt{i+2}, \wt{i+3},\dotsc, 2m - (\wt{i+1}) \bigr\}.
\]
(To be clear, we interpret this as $B_{i+1} = \emptyset$ if $\wt{i+1} = m$.)
If $\wt\jmath \in B_{i+1}$, then $B_j \subset B_{i+1}$, so that $\mu_j^{\mathbf v}(k) \in \{0,2\}$ for all $k \in B_j$.  Since $\mu_j^{\mathbf v} = \mu_{i+1} \not\equiv \mu \bmod Q^\vee_{D_m}$, this would violate the spin condition.  So $\wt\jmath \notin B_{j+1}$.  By minimality of $j \in 2m\ZZ \pm I$, we therefore have $\wt\jmath = 2m -(\wt{i+1}) + 1 = (\wt{i+1})^*$.  We now define
\[
   v_{i+1}' := v_i - e_{\wt\jmath} = v_i - e_{(\wt{i+1})^*}.
\]
Then $v_{i+1}'$ extends $\mathbf v$ to a face of type $J$, and one sees just as in the previous paragraph that $v_{i+1}' - \omega_{i+1}$ satisfies \eqref{it:SP1}, \eqref{it:SP2}, and \eqref{it:SP3}.  This completes the proof in Case 1.

\ssk
\noindent\emph{Case 2: $v_j\bigl(\wt{i+1}\bigr) = v_i\bigl(\wt{i+1}\bigr)$.}  Let $X := \{\, k \mid v_j(k) \neq v_i(k)\,\}$.  Working with respect to the Kottwitz--Rapoport conventions on faces of type $I$, G\"ortz shows in his proof of \cite{goertz05}*{Lem.\ 8}%
\footnote{For the reader's convenience, we note that G\"ortz takes permutations to act on vectors on the \emph{right,} so that $\sigma \cdot \sum_i b_ie_i = \sum_i b_i e_{\sigma^{-1}(i)}$ ($b_i \in \RR$).  It also appears that, in his notation, he means to define $m$ to equal $\sigma\bigl(\wt{\wt m}\bigr)$, not $\sigma(\wt m)$; the changes to his proof needed to accommodate this correction are trivial.}
that by taking $l$ to be any element of $X$ for which
\[
   \mu_i^{\mathbf v}(l) = \min\{\mu_i^{\mathbf v}(k)\}_{k\in X},
\]
the vector $v_{i+1} := v_i + e_l$ extends $\mathbf v$ to a face of type $J$ for which $\mu_{i+1}$ satisfies \eqref{it:SP1} and \eqref{it:SP2}.  Translating this to our conventions, we get that if $l$ is any element of $X$ for which
\[
   \mu_i^{\mathbf v}(l) = \max\{\mu_i^{\mathbf v}(k)\}_{k\in X},
\]
then the vector $v_{i+1} := v_i - e_l$ extends $\mathbf v$ to a face of type $J$ for which $\mu_{i+1}$ satisfies \eqref{it:SP1} and \eqref{it:SP2}.  If $\mu_{i+1}$ is not self-dual, or if $\mu_{i+1}$ is self-dual and $\equiv \mu \bmod Q^\vee_{D_m}$, then it trivially satisfies \eqref{it:SP3}.  So let us suppose $\mu_{i+1}$ is self-dual and $\not\equiv \mu \bmod Q^\vee_{D_m}$.

Before continuing, let us note that $\#X = j-i$ and $\wt{i+1} \notin X$.  Hence there exists $k' \in X \smallsetminus \bigl\{\wt{i+1}, \wt{i+2},\dotsc, \wt\jmath\bigr\}$.  Since $\mu_j^{\mathbf v}(k') = \mu_i^{\mathbf v}(k') - 1$ and $\mu_i^{\mathbf v}$ and $\mu_j^{\mathbf v}$ satisfy \eqref{it:SP1}, we conclude that $\max\{\mu_i^{\mathbf v}(k)\}_{k\in X}$ equals $1$ or $2$.

To proceed with the proof in Case 2, first suppose that $l$ is the unique element in $X$ such that $\mu_i^{\mathbf v}(l) \neq 0$.  We shall show that $\mu_{i+1}$ satisfies the spin condition.  Since $\mu_j^{\mathbf v}$ satisfies \eqref{it:SP1} and $\wt{i+1} \notin X$, we must have
\[
   X \smallsetminus \{l\} \subset \bigl\{ \wt{i+2}, \wt{i+3}, \dotsc, \wt\jmath \bigr\},
\]
and by counting cardinalities we see that this inclusion is an equality.  Hence $\mu_{i+1} = \mu_j^{\mathbf v}$, and $\mu_i^{\mathbf v}(k) = \mu_{i+1}(k) = \mu_j^{\mathbf v}(k) = 0$ for all $k \in \{\wt{i+2}, \wt{i+3}, \dotsc, \wt\jmath \}$.  Since $\mathbf v$ is a $2$-face and $\mu_j^{\mathbf v}$ is self-dual, we must have that $\{ \wt{i+2}, \wt{i+3}, \dotsc, \wt\jmath \}$ is contained in $\{1,\dotsc,m\}$ or in $\{m+1,\dotsc,2m\}$.  Let us assume the former; one proceeds in an analogous way in case of the latter.  Then $A_j = \{1,\dotsc,\wt\jmath,\wt\jmath^*,\dotsc,2m\}$ and $B_j = \{\wt\jmath + 1,\dotsc,2m-\wt\jmath\}$.  Since $\mu_j^{\mathbf v}$ satisfies the spin condition and $\mu_j^{\mathbf v} = \mu_{i+1} \not\equiv \mu \bmod Q^\vee_{D_m}$, there exist $k_1 \in A_j$ and $k_2 \in B_j$ such that $\mu_j^{\mathbf v}(k_1) = \mu_j^{\mathbf v}(k_2) = 1$.  Evidently $k_1 \in A_j \smallsetminus \{\wt{i+2}, \dotsc, \wt\jmath, \wt\jmath^*,\dotsc,(\wt{i+2})^*\}$.  It follows that $\wt{i+1} \in \{1,\dotsc,m\}$, that $A_{i+1} = \{1,\dotsc,\wt{i+1},(\wt{i+1})^*,\dotsc,m\}$, and that $B_{i+1} = \{\wt{i+2},\dotsc, 2m - (\wt{i+1})\}$.  Plainly $k_1 \in A_{i+1}$ and $k_2 \in B_{i+1}$, so that $\mu_{i+1}$ satisfies \eqref{it:SP3}.

Finally suppose that there exists another element $l' \in X$ distinct from $l$ and such that $\mu_i^{\mathbf v}(l') \neq 0$.  Of course if $\mu_{i+1}$ already satisfies \eqref{it:SP3} then there is nothing further to do, so let us suppose that it does not.  Let $v_{i+1}' := v_i - e_{l'}$ and $\mu_{i+1}' := v_{i+1}' - \omega_{i+1}$, and define $c_{i+1}$ and $c_{i+1}'$ as in Case 1.  Then $v_{i+1}'$ extends $\mathbf v$ to another face of type $J$, $c_{i+1} \not\equiv q \bmod 2$ by \eqref{st:c_i_equiv_q}, and $c_{i+1} < q$.  Our task is to show that $\mu_{i+1}'$ satisfies \eqref{it:SP1}, \eqref{it:SP2}, and \eqref{it:SP3}.  We have
\[
   \mu_{i+1}' = \mu_i^{\mathbf v} + e_{\wt{i+1}} - e_{l'} = \mu_{i+1} + e_l - e_{l'}.
\]
Since $\mu_i^{\mathbf v}$ and $\mu_{i+1}$ satisfy \eqref{it:SP1} and the elements $\wt{i+1}$, $l$, $l'$ are pairwise distinct, these expressions make clear that $0 \leq \mu_{i+1}'(k) \leq 2$ for all $k$ except possibly $k = \wt{l'}$; and since $\mu_i^{\mathbf v}(l') \in \{1,2\}$ by hypothesis, we have $\mu_{i+1}'(l') \in \{0,1\}$.  Hence $\mu_{i+1}'$ satisfies \eqref{it:SP1}.  Since $c_{i+1} < q$, we also read off from the display that
\[
   c_{i+1}' \leq c_i + 1 \leq q,
\]
so that $\mu_{i+1}'$ satisfies \eqref{it:SP2}.  It remains to see that $\mu_{i+1}'$ satisfies \eqref{it:SP3}.  But since $\mu_{i+1}$ is self-dual by assumption, $\mu_{i+1}'$ is self-dual only if $l' = l^*$.  In this case $c_{i+1}' = c_i \pm 1$ and $\mu_{i+1}' \equiv \mu \bmod Q^\vee_{D_m}$ by \eqref{st:c_i_equiv_q}, which completes the proof.
\end{proof}

\begin{rk}\label{rk:min_length_spin-perm}
For $I \subset J$ nonempty and satisfying \eqref{disp:allowable_I_for_D_m}, we have shown directly, in \eqref{st:sp-perm_double_cosets} and \eqref{st:spin-perm_surj}, that the map
\[
   \SPerm_{D_m}(\mu) \to \SPerm_{J,I,D_m}(\mu)
\]
is surjective.  This statement can be sharpened:  for $w \in \SPerm_{J,I,D_m}(\mu)$, let $\tensor[^J]w{^I}$ denote the minimal length representative of $w$ in $\wt W_{D_m}$.  Then $\tensor[^J]w{^I} \in \SPerm_{D_m}(\mu)$.  Indeed, since we've already shown that the displayed map is surjective, the sharpened statement is an immediate consequence of the fact that $\SPerm_{D_m}(\mu)$ is closed in the Bruhat order \eqref{st:SPerm_bruhat_closed}.


\end{rk}

\subsection{Vertexwise admissibility}\label{ss:vert-adm_D}
In this subsection we recall the notion of \emph{$\mu$-vertexwise admissibility} for any cocharacter $\mu$ in any based root datum, and we show that for $\mu$ the cocharacter \eqref{disp:mu_D} in $\R_{D_m}$, the notions of $\mu$-admissibility and $\mu$-vertexwise admissibility coincide.

Let \R be a based root datum, let $\wt W$ denote its extended affine Weyl group, and let $\mu$ be a cocharacter for \R.  Fix a base alcove $\mathbf a$, and let $\mathbf f \preceq \mathbf{f'}$ be subfacets of $\mathbf a$.  The choice of $\mathbf a$ endows $W_{\mathbf{f'}} \bs \wt W / W_{\mathbf f}$ with a Bruhat order, as in \s\ref{ss:bo}.  For any subfacet $\mathbf{v} \preceq \mathbf f$,  the natural map
\[
   \rho_{\mathbf{f'}, \mathbf f; \mathbf v} \colon W_{\mathbf{f'}} \bs \wt W / W_{\mathbf f} \to W_{\mathbf v} \bs \wt W / W_{\mathbf v}
\]
carries (by definition) the set $\Adm_{\mathbf{f'},\mathbf f}(\mu)$ surjectively to the set $\Adm_{\mathbf{v}, \mathbf v}(\mu)$.  We then have the following definition.

\begin{defn}[\cite{prs?}*{Def.\ 4.5.2}]\label{def:vert-adm}
For any subfacets $\mathbf f \preceq \mathbf{f'} \preceq \mathbf a$, the \emph{$\mu$-vertexwise admissible set} is the subset of $W_{\mathbf{f'}} \bs \wt W / W_{\mathbf f}$
\[
   \Adm_{\mathbf{f'},\mathbf f}^\vert(\mu) := 
     \bigcap_{\substack{\text{minimal}\\ \mathbf{v} \preceq \mathbf f}} \rho_{\mathbf{f'}, \mathbf f; \mathbf v}^{-1} \bigl( \Adm_{\mathbf v, \mathbf v}(\mu)\bigr).
\]
\end{defn}

In other words, an element in $W_{\mathbf{f'}} \bs \wt W / W_{\mathbf f}$ is $\mu$-vertexwise admissible if its image in $W_{\mathbf v} \bs \wt W / W_{\mathbf v}$ is $\mu$-admissible for all minimal subfacets $\mathbf v$ of $\mathbf f$.  When \R is semisimple, that is, when the root lattice has finite index in the character group, the minimal subfacets of $\mathbf f$ are vertices, i.e.\ points, which motivates the terminology.  Strictly speaking, vertexwise admissibility is only defined in \cite{prs?} when $\mathbf f = \mathbf{f'}$, but the added bit of generality we allow here will cause us no trouble, at least in the particular situations we consider later.

Since $\rho_{\mathbf{f'}, \mathbf f; \mathbf v}$ carries $\Adm_{\mathbf{f'}, \mathbf f}(\mu)$ to $\Adm_{\mathbf v, \mathbf v}(\mu)$, there is a tautological inclusion
\begin{equation}\label{disp:adm_incl}
   \Adm_{\mathbf{f'}, \mathbf f}(\mu) \subset \Adm_{\mathbf v, \mathbf v}^\vert(\mu).
\end{equation}
It is conjectured in \cite{prs?}*{Conj.\ 4.5.3} (at least for $\mathbf f = \mathbf{f'}$) that in many cases this inclusion is an equality.%
\footnote{We refer the reader to \cite{prs?} for the precise formulation of the conjecture, which is given in group-theoretic terms.}
Indeed no examples whatsoever are known where equality fails; and equality is known to hold in all situations arising from local models that have been studied,%
\footnote{In the situation arising from the local models 
studied in this paper, the equality $\Adm_{\mathbf{f'}, \mathbf f}(\mu) = \Adm_{\mathbf{f'}, \mathbf f}^\vert(\mu)$ will be proved in \s\ref{ss:vert_adm_B}.}
and furthermore whenever $\mu$-admissibility and $\mu$-permissibility are equivalent in $W_{\mathbf{f'}} \bs \wt W / W_{\mathbf f}$.  In some sense the notion of $\mu$-vertexwise admissibility is intended as a replacement for $\mu$-permissibility (which of course is not equivalent to $\mu$-admissibility in general), in that $\mu$-vertexwise admissibility is a manifestly ``vertexwise'' condition which is supposed to characterize $\mu$-admissibility, at least in many cases.

Now let us specialize to the case $\R = \R_{D_m}$ and $\mu$ the cocharacter \eqref{disp:mu_D}.  We shall show that the inclusion \eqref{disp:adm_incl} is indeed an equality.  We write $\Adm_{F',F,D_m}^\vert(\mu)$ for the $\mu$-vertexwise admissible set in $W_{F',D_m} \bs \wt W_{D_m} / W_{F,D_m}$.

\begin{thm}\label{st:vert-adm_D}
Let $\mu$ be the cocharacter \eqref{disp:mu_D}, and let $F \preceq F' \preceq A_{D_m}$.  Then
\[
   \Adm_{F',F,D_m}(\mu) = \Adm_{F',F,D_m}^\vert(\mu).
\]
\end{thm}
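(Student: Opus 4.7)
The plan is to deduce Theorem \ref{st:vert-adm_D} from the parahoric equivalence \eqref{st:sp-perm=adm_D_parahoric} combined with Lemma \eqref{st:sickofnaminglemmas}; only the reverse of the tautological inclusion \eqref{disp:adm_incl} requires proof. After applying a suitable $\theta \in \{\mathrm{id}, \theta_0, \theta_m, \theta_0\theta_m\}$ as at the beginning of \s\ref{ss:mu-adm_mu-sp-perm_parahoric}, one reduces to the case that $F \preceq F'$ satisfy \eqref{disp:allowable_F_for_D_m}, so that $W_{F,D_m} = W_{I,D_m}$ and $W_{F',D_m} = W_{J,D_m}$ for nonempty $I \subset J \subset \{0,\dotsc,m\}$ satisfying \eqref{disp:allowable_I_for_D_m}; this reduction is harmless since $q<m$ gives $S_{2m}^\circ\mu = S_{2m}^*\mu$, so both $\Adm$ and $\Adm^\vert$ for $\mu$ are $\theta$-stable. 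By \eqref{st:sp-perm=adm_D_parahoric} it then suffices to show that any $w \in \Adm^\vert_{F',F,D_m}(\mu)$ is $\mu$-spin-permissible in the sense of \eqref{def:mu-spin_perm_D_parahoric}.

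Under this normalization, the minimal subfacets of $F$ are the lines $a_k + \RR \mathbf 1$ for $k \in I \cap \{0,2,3,\dotsc,m-2,m\}$, together with $a_{0'} + \RR\mathbf 1$ when $1 \in I$ (since then $a_{0'} \in \ol F$ automatically) and $a_{m'} + \RR\mathbf 1$ when $m-1 \in I$. Applying \eqref{st:sp-perm=adm_D_parahoric} at each such minimal facet---after an auxiliary conjugation by $\theta_0$ or $\theta_m$ to handle the $a_{0'}$ or $a_{m'}$ cases---$\mu$-vertexwise admissibility of $w$ delivers, for any representative $\wt w \in \wt W_{D_m}/W_{I,D_m}$, the congruence $\wt w \equiv t_\mu \bmod W_{\aff,D_m}$, that $\nu_k^{\wt w}$ satisfies \eqref{it:SP1'}, \eqref{it:SP2'}, \eqref{it:SP3'} for every $k \in I \cap \{0,2,3,\dotsc,m-2,m\}$, and the analogs of \eqref{it:SP1'} and \eqref{it:SP2'} for $\nu_{0'}^{\wt w}$ and $\nu_{m'}^{\wt w}$ when applicable. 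Via \eqref{st:SP_SP'_equiv}, this already settles the spin-permissibility conditions for all $k \in I$ outside $\{1, m-1\}$.

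It remains to handle $k=1$ when $1 \in I$, and $k=m-1$ when $m-1 \in I$; these indices are the substance of the argument, since $a_1$ and $a_{m-1}$ are not themselves minimal-facet points in $\ol{A_{D_m}}$ and so are not directly reached by vertexwise admissibility. For $k=1$: the previous step gives that $\nu_0^{\wt w}$ and $\nu_{0'}^{\wt w}$ satisfy \eqref{it:SP1'} and \eqref{it:SP2'}, which by \eqref{st:SP1-2'_conv_cond} places both vectors in $\Conv(S_{2m}^\circ \mu)$. Together with $\wt w \equiv t_\mu \bmod W_{\aff,D_m}$, Lemma \eqref{st:sickofnaminglemmas}\eqref{it:0'} then yields $\mu$-spin-permissibility of $\nu_1^{\wt w}$. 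The case $k=m-1$ is identical, using \eqref{st:sickofnaminglemmas}\eqref{it:m'} with $a_m$ and $a_{m'}$ in place of $a_0$ and $a_{0'}$. This verifies that $w$ is $\mu$-spin-permissible and hence $\mu$-admissible. Lemma \eqref{st:sickofnaminglemmas} was established precisely to play this vertex-to-edge bridging role, which is the only nontrivial step in the argument.
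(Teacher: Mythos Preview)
Your proof is correct and follows essentially the same route as the paper: reduce via $\theta_0,\theta_m$ to the case $W_{F,D_m}=W_{I,D_m}$, use \eqref{st:sp-perm=adm_D_parahoric} to translate the problem into checking spin-permissibility of $\nu_k^{\wt w}$ for $k\in I$, settle $k\in I\setminus\{1,m-1\}$ directly from the vertexwise hypothesis at the minimal facets, and then invoke \eqref{st:sickofnaminglemmas} to bridge from $\nu_0,\nu_{0'}\in\Conv(S_{2m}^\circ\mu)$ (resp.\ $\nu_m,\nu_{m'}$) to spin-permissibility at $k=1$ (resp.\ $k=m-1$).

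The one substantive difference is how you extract the convexity condition at the vertices $a_{0'}$ and $a_{m'}$. The paper argues directly that $a_{0'}$ is a special vertex, so $W_{\{0'\},D_m}\bs\wt W_{D_m}/W_{\{0'\},D_m}\cong X_{*D_m}^+$ with Bruhat order equal to dominance; hence $\mu$-admissibility there immediately gives $\nu_{0'}^{\wt w}\in\Conv(S_{2m}^\circ\mu)$. You instead conjugate by $\theta_0$ to transport $a_{0'}$ to $a_0$ and then reapply \eqref{st:sp-perm=adm_D_parahoric} at $\{0\}$. This is valid (conjugation by $\theta_0$ preserves $\Adm(\mu)$ since $S_{2m}^\circ\mu=S_{2m}^*\mu$, and $\sigma_0\nu_{0'}^{\wt w}=\nu_0^{\theta_0\wt w\theta_0^{-1}}$, so \eqref{st:SP1-2'_conv_cond} applied to the conjugated element and $S_{2m}^*$-invariance of the convex hull give $\nu_{0'}^{\wt w}\in\Conv(S_{2m}^\circ\mu)$); it just recycles the existing machinery rather than unpacking the special-vertex structure by hand.
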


\begin{proof}
After applying one or both of the automorphisms $\theta_0$ and $\theta_m$ \eqref{disp:thetas} as needed, we may assume that $F$ and $F'$ satisfy property \eqref{disp:allowable_F_for_D_m}, as discussed at the beginning of \s\ref{ss:mu-adm_mu-sp-perm_parahoric}.  Thus we may assume that $W_{F,B_m} = W_{I,B_m}$ and $W_{F',B_m} = W_{J,B_m}$ for some nonempty subsets $I \subset J \subset \{0,\dotsc,m\}$ satisfying property \eqref{disp:allowable_I_for_D_m}.  The proof will now follow fairly easily from our characterization of $\mu$-admissibility in terms of $\mu$-spin-permissibility \eqref{st:sp-perm=adm_D_parahoric}.

We must show that $\Adm_{F',F,D_m}^\vert(\mu) \subset \Adm_{F',F,D_m}(\mu)$.  So let $w \in \Adm_{F',F,D_m}^\vert(\mu)$.  Let $\wt w$ be any lift of $w$ to $\wt W_{D_m}$.  By \eqref{st:sp-perm=adm_D_parahoric}, we must show that $\wt w \equiv t_\mu \bmod W_{\aff,D_m}$ and that $\nu_k^{\wt w}$ satisfies \eqref{it:SP1'}, \eqref{it:SP2'}, and \eqref{it:SP3'} for all $k \in I$.  Of course $\mu$-vertexwise admissibility immediately implies 
the first of these conditions.

Now, the minimal subfacets of $F$ are the lines
\begin{alignat*}{3}
   a_k + \RR\cdot (1,\dotsc, 1) & & \quad &\text{for} & \quad &
      k \in I \smallsetminus \{1,m-1\};\\
   a_{0'} + \RR\cdot (1,\dotsc, 1) & & \quad &\text{if} & \quad &
      1 \in I;\\
   a_{m'} + \RR\cdot (1,\dotsc, 1) & & \quad &\text{if} & \quad &
      m-1 \in I.
\end{alignat*}
So by vertexwise admissibility and \eqref{st:sp-perm=adm_D_parahoric} applied to  $W_{\{k\},D_m} \bs \wt W_{D_m} / W_{\{k\},D_m}$, we conclude that $\nu_k^{\wt w}$ satisfies \eqref{it:SP1'}, \eqref{it:SP2'}, and \eqref{it:SP3'} for $k \in I \smallsetminus \{1,m-1\}$.  It remains to consider the possibility that $1$, $m-1$, or both is in $I$.

If $1 \in I$, then $a_0$, $a_{0'} \in \ol F$.  Let $W_{\{0'\},D_m}$ denote the stabilizer of $a_{0'}$ in $W_{\aff,D_m}$.  Since $a_{0'}$ is a special vertex, we have
\[
   \wt W_{D_m} = X_{*D_m} \rtimes W_{\{0'\},D_m}
\]
and
\[
   W_{\{0'\},D_m} \bs \wt W_{D_m} / W_{\{0'\},D_m} \ciso W_{\{0'\},D_m} \bs X_{*D_m},
\]
which we identify with the set $X_{*D_m}^+$ of dominant elements in $X_{*D_m}$.  For $x = t_{\nu_0^x}\sigma$ with $\nu_0^x \in X_{*D_m}$ and $\sigma \in S_{2m}^\circ$, we have $t_{a_{0'}-\sigma a_{0'}}\sigma \in W_{\{0'\},D_m}$.  Hence $x = (t_{\nu_0^x + \sigma a_{0'} - a_{0'}}, t_{a_{0'}-\sigma a_{0'}}\sigma)$ according to the semidirect product decomposition $\wt W_{D_m} = X_{*D_m} \rtimes W_{\{0'\},D_m}$.  Hence the map
\[
   \wt W_{D_m} \to W_{\{0'\},D_m} \bs \wt W_{D_m} / W_{\{0'\},D_m} \ciso X_{*D_m}^+
\]
sends
\[
   x \mapsto \text{dominant form of } \nu_{0'}^x,
\]
where
\[
   \nu_{0'}^x := \nu_0^x + \sigma a_{0'} - a_{0'} = xa_{0'} - a_{0'}.
\]
The Bruhat order on $W_{\{0'\},D_m} \bs \wt W_{D_m} / W_{\{0'\},D_m}$ is just given by the dominance order on $X_{*D_m}^+$.  Thus our assumption that the image of $\wt w$ in $W_{\{0'\},D_m} \bs \wt W_{D_m} / W_{\{0'\},D_m}$ is $\mu$-admissible says exactly that $\nu_{0'}^{\wt w}$ satisfies \eqref{it:SP1'} and \eqref{it:SP2'} and $\nu_{0'}^{\wt w} \equiv \mu \bmod Q^\vee_{D_m}$.  This implies that $\nu_{0'}^{\wt w} \in \Conv(S_{2m}^\circ \mu)$.  Since we also have $0 \in I$ and we have already shown that $\nu_0^{\wt w}$ satisfies \eqref{it:SP1'}--\eqref{it:SP3'}, we conclude from \eqref{st:sickofnaminglemmas} that $\nu_1^{\wt w}$ satisfies \eqref{it:SP1'}--\eqref{it:SP3'}.

If $m - 1 \in I$, then one shows in a similar fashion that $\nu_{m-1}^{\wt w}$ satisfies \eqref{it:SP1'}--\eqref{it:SP3'}, using $a_m$ in place of $a_0$ and $a_{m'}$ in place of $a_{0'}$.
\end{proof}

\begin{rk}\label{rk:natural_def}
Our original definition of $\mu$-spin-permissibility \eqref{def:spin-perm_D}, and its parahoric variant \eqref{def:mu-spin_perm_D_parahoric}, is formulated in analogy with the definition of the Rapoport--Zink local model \cite{rapzink96}, which is in terms of totally ordered \emph{chains} (or more generally, multichains) of lattices.  However, from a root-theoretic perspective it is unnatural to emphasize the vectors $\mu_1^w$ and $\mu_{m-1}^w$, or what amounts to the same, the vectors $\nu_1^w$ and $\nu_{m-1}^w$, since the points $a_1$ and $a_{m-1}$ are not contained in minimal facets.  Instead it is more natural to consider the vectors $wa_k - a_k$ for all $k$ in some nonempty subset $I$ of $\{0,0',2,3,\dotsc,m-2,m,m'\}$.  We have just shown in the preceding theorem that if $F \preceq F'$ are the subfacets of $A_{D_m}$ whose closures contain the vertices $a_k$ for respective nonempty subsets $I \subset J$ of $\{0,0',2,3,\dotsc,m-2,m,m'\}$, then an element $w$ is in $\Adm_{F',F,D_m}(\mu)$ $\iff$ $w \equiv t_\mu \bmod W_{\aff,D_m}$ and $\wt wa_k - a_k$ satisfies \eqref{it:SP1'}, \eqref{it:SP2'}, and \eqref{it:SP3'} for all $k \in I$, for any representative $\wt w \in \wt W_{D_m}/W_{F,D_m}$; here for $k \in \{0,0',m,m'\}$, the spin condition \eqref{it:SP3'} just means $\wt wa_k - a_k \equiv \mu \bmod Q_{D_m}^\vee$ (which is equivalent to $w \equiv t_\mu \bmod W_{\aff,D_m}$).
\end{rk}

\subsection{\texorpdfstring{$\mu$}{mu}-permissibility in the parahoric case}\label{ss:mu-perm_parahoric}
In this final subsection of \s\ref{s:combinatorics_D}, we generalize \eqref{st:mu-perm_D} to give a characterization of $\mu$-permissibility in $\R_{D_m}$ in the general parahoric case.  See \eqref{def:mu-perm} for the general definition.  We write
\[
   \Perm_{F',F,D_m}(\mu)
\]
for the set of $\mu$-permissible elements in $W_{F',D_m} \bs \wt W_{D_m} / W_{F,D_m}$, and we abbreviate this to $\Perm_{D_m}(\mu)$ when $F = F' = A_{B_m}$.  As has been the case previously, we shall incur no loss of generality by restricting to subfacets of the base alcove $A_{D_m}$ satisfying property \eqref{disp:allowable_F_for_D_m}, so that the corresponding subgroups of $W_{\aff,D_m}$ are of the form $W_{I,D_m}$.

\begin{prop}\label{st:mu-perm_D_parahoric}
Let $\mu$ be the cocharacter \eqref{disp:mu_D}, let $I \subset J$ be nonempty subsets of $\{0,\dotsc,m\}$ satisfying \eqref{disp:allowable_I_for_D_m}, and let $w \in W_{J,D_m} \bs \wt W_{D_m} / W_{I,D_m}$.  Then $w$ is $\mu$-permissible $\iff$ $w \equiv t_\mu \bmod W_{\aff,D_m}$ and for one, hence any, representative $\wt w$ of $w$ in $\wt W_{D_m}/ W_{I,D_m}$, $\nu_k^{\wt w}$ satisfies \eqref{it:SP1'} and \eqref{it:SP2'} for all $k \in I$, and \eqref{it:SP3'} for all $k \in I \cap \{0,1,m-1,m\}$.
\end{prop}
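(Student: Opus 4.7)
The plan is to mirror the Iwahori-case argument in Proposition \ref{st:mu-perm_D}, keeping careful track of which vertices of the base alcove sit in the facet attached to $I$. First I would note that the congruence $w \equiv t_\mu \bmod W_{\aff,D_m}$ descends to the double coset because $W_{I,D_m}, W_{J,D_m} \subset W_{\aff,D_m}$, and that the vectors $\nu_k^{\wt w} = \wt w a_k - a_k$ for $k \in I$ are independent of the choice of representative $\wt w \in \wt W_{D_m}/W_{I,D_m}$, since any $x \in W_{I,D_m}$ fixes $a_k$ pointwise for $k \in I$.  Well-definedness of the whole package on the left $W_{J,D_m}$-coset is the general fact recorded just after \eqref{def:mu-perm}.

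Next I would unpack $\mu$-permissibility directly from \eqref{def:mu-perm}: $w$ is $\mu$-permissible iff $w \equiv t_\mu \bmod W_{\aff,D_m}$ and $\wt w v - v \in \Conv(S_{2m}^\circ \mu)$ for every $v$ in the facet $\mathbf{f}_I$ of $A_{D_m}$ corresponding to $I$.  By convexity the latter reduces to checking $\wt w v - v \in \Conv(S_{2m}^\circ \mu)$ at one point in each minimal subfacet of $\mathbf{f}_I$.  Using the bijection recalled in \S\ref{ss:parahoric_D} and the enumeration of minimal facets of $A_{D_m}$ from \S\ref{ss:type_D_variant}, these minimal subfacets are the lines through $a_k$ for $k \in I \smallsetminus \{1, m-1\}$, together with the line through $a_{0'}$ when $1 \in I$ (which forces $0 \in I$) and the line through $a_{m'}$ when $m-1 \in I$ (which forces $m \in I$).

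The equivalence in the proposition will then follow by applying the vertex-level translations developed in \S\ref{ss:mu-perm}.  At each vertex $a_k$ with $k \in I \smallsetminus \{1, m-1\}$, Lemma \ref{st:SP1-2'_conv_cond} replaces the convex-hull condition at $a_k$ with conditions \eqref{it:SP1'} and \eqref{it:SP2'} on $\nu_k^{\wt w}$.  When $1 \in I$, Lemma \ref{st:sickofnaminglemmas}\eqref{it:0'} bundles the pair of convex-hull conditions at $a_0$ and $a_{0'}$, together with $w \equiv t_\mu \bmod W_{\aff,D_m}$, into the statement that both $\nu_0^{\wt w}$ and $\nu_1^{\wt w}$ satisfy \eqref{it:SP1'}, \eqref{it:SP2'}, and \eqref{it:SP3'}; symmetrically, Lemma \ref{st:sickofnaminglemmas}\eqref{it:m'} handles the $a_{m'}$ vertex when $m-1 \in I$, producing the analogous conditions at $k = m-1, m$.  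At $k \in \{0,m\}$ outside these symmetric situations, condition \eqref{it:SP3'} reduces to the congruence $w \equiv t_\mu \bmod W_{\aff,D_m}$ since $A_0 = B_m = \emptyset$, so its inclusion in the proposition's list is automatic and harmless.

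No substantive obstacle arises; the proof is a direct bookkeeping extension of Proposition \ref{st:mu-perm_D}, whose content is entirely absorbed into the vertex-level lemmas already at our disposal.  The only point requiring attention is tracking that \eqref{it:SP3'} at $k = 1$ and $k = m-1$ carries real content and enters the list precisely when the non-vertex midpoints $a_1$ or $a_{m-1}$ lie in $\mathbf{f}_I$ — equivalently, when the special vertex $a_{0'}$ or $a_{m'}$ enters $\ol{\mathbf{f}_I}$ — while \eqref{it:SP3'} at $k = 0$ or $k = m$ is vacuous under the standing congruence modulo $W_{\aff,D_m}$.
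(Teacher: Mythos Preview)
Your proposal is correct and follows essentially the same approach as the paper: the paper's proof simply says ``As in the proof of \eqref{st:mu-perm_D}, this is obvious from \eqref{st:SP1-2'_conv_cond} and \eqref{st:sickofnaminglemmas},'' and you have spelled out precisely how those two lemmas combine with the enumeration of minimal subfacets of $\mathbf{f}_I$ to yield the result.
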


\begin{proof}
As in the proof of \eqref{st:mu-perm_D}, this is obvious from \eqref{st:SP1-2'_conv_cond} and \eqref{st:sickofnaminglemmas}.
\end{proof}

We conclude by generalizing \eqref{st:exceptional_adm=perm} to the general parahoric case.

\begin{prop}\label{st:adm=perm_q=2}
Let $\mu$ be the cocharacter \eqref{disp:mu_D}, let $F \preceq F' \preceq A_{D_m}$, and suppose that $q \leq 2$.  Then
\[
   \Adm_{F',F,D_m}(\mu) = \Perm_{F',F,D_m}(\mu).
\]
\end{prop}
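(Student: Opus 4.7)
The plan is to adapt the Iwahori argument of \eqref{st:exceptional_adm=perm} to the parahoric setting, using the parahoric characterizations of both $\mu$-admissibility \eqref{st:sp-perm=adm_D_parahoric} and $\mu$-permissibility \eqref{st:mu-perm_D_parahoric} already established. First, as in the beginning of \s\ref{ss:mu-adm_mu-sp-perm_parahoric}, I would apply the automorphisms $\theta_0$ and/or $\theta_m$ from \eqref{disp:thetas} if necessary to reduce to the case where $W_{F,D_m} = W_{I,D_m}$ and $W_{F',D_m} = W_{J,D_m}$ for nonempty $I \subset J \subset \{0,\dotsc,m\}$ satisfying \eqref{disp:allowable_I_for_D_m}; these automorphisms preserve both admissibility and permissibility since they stabilize $A_{D_m}$ and fix $\mu$ under the appropriate identifications (recall $S_{2m}^\circ\mu = S_{2m}^*\mu$ because $q < m$). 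The inclusion $\Adm \subset \Perm$ is given in general by Kottwitz--Rapoport \cite{kottrap00}*{11.2}, so only the reverse inclusion requires proof.

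Next, by \eqref{st:sp-perm=adm_D_parahoric}, it suffices to show that any $w \in \Perm_{J,I,D_m}(\mu)$ is $\mu$-spin-permissible. Pick a representative $\wt w$ of $w$ in $\wt W_{D_m}/W_{I,D_m}$. By the equivalent $\nu$-form of $\mu$-spin-permissibility (via \eqref{st:SP_SP'_equiv}) and by \eqref{st:mu-perm_D_parahoric}, the vectors $\nu_k^{\wt w}$ already satisfy \eqref{it:SP1'} and \eqref{it:SP2'} for every $k \in I$, and satisfy the spin condition \eqref{it:SP3'} for $k \in I \cap \{0,1,m-1,m\}$. The task reduces to verifying \eqref{it:SP3'} for $k \in I$ with $2 \leq k \leq m-2$.

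Fix such a $k$. If $\nu_k^{\wt w} \notin \ZZ^{2m}$, the spin condition is vacuous; similarly if $\nu_k^{\wt w} \equiv \mu \bmod Q^\vee_{D_m}$. Otherwise, by \eqref{st:c_i_equiv_q} we have $c_k^{\wt w} \not\equiv q \bmod 2$, while by \eqref{it:SP2'} and \eqref{st:c_i_nu_interp} we have $c_k^{\wt w} \leq q \leq 2$. Hence $c_k^{\wt w} \in \{0,1\}$. But for $2 \leq k \leq m-2$, the sets $A_k$ and $B_k$ each have cardinality at least $4$; since at most $c_k^{\wt w} \leq 1$ entries of $\nu_k^{\wt w}$ equal $2$ and at most $c_k^{\wt w} \leq 1$ entries equal $0$ in all of $\{1,\dotsc,2m\}$, each of $A_k$ and $B_k$ must contain an index $j$ with $\nu_k^{\wt w}(j) = 1$. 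Thus \eqref{it:SP3'} holds.

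There is no real obstacle: the argument is essentially the Iwahori case of \eqref{st:exceptional_adm=perm}, made to work in the parahoric setting thanks to \eqref{st:sp-perm=adm_D_parahoric} and \eqref{st:mu-perm_D_parahoric}, both of which have already been proved. The only mild subtlety is ensuring the reduction to $I, J$ satisfying \eqref{disp:allowable_I_for_D_m} is truly harmless, which is dispatched by the $\theta_0, \theta_m$-conjugation argument from \s\ref{ss:mu-adm_mu-sp-perm_parahoric}.
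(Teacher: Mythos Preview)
Your proposal is correct and follows essentially the same approach as the paper's own proof: reduce via the automorphisms $\theta_0$, $\theta_m$ to subsets $I \subset J$ satisfying \eqref{disp:allowable_I_for_D_m}, then repeat the cardinality argument from \eqref{st:exceptional_adm=perm} using the parahoric characterizations \eqref{st:sp-perm=adm_D_parahoric} and \eqref{st:mu-perm_D_parahoric} in place of their Iwahori versions. The paper's proof is in fact terser, simply saying ``this is done as in \eqref{st:exceptional_adm=perm},'' but the content is identical to what you have spelled out.
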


\begin{proof}
For $\theta \in \{\theta_0,\theta_m,\theta_0\theta_m\}$, conjugation by $\theta$ induces isomorphisms
\[
   \Adm_{F',F,D_m}(\mu) \isoarrow \Adm_{\theta F', \theta F,D_m}(\mu)
\]
and
\[
   \Perm_{F',F,D_m}(\mu) \isoarrow \Perm_{\theta F',\theta F, D_m}(\mu).
\]
So it suffices to prove the proposition in the case that $F$ and $F'$ satisfy \eqref{disp:allowable_F_for_D_m}.  This is done as in \eqref{st:exceptional_adm=perm}, using \eqref{st:sp-perm=adm_D_parahoric} in place of \eqref{st:sp-perm=adm_D}.
\end{proof}

\section{Type $B$ combinatorics}\label{s:combinatorics_B}

In this final section of the paper, we extend all of the combinatorial results from the previous section to type $B$.  The key result is \eqref{st:sp-perm=adm_B}, which characterizes $\mu$-admissibility in terms of a type $B$ analog of $\mu$-spin-permissibility, where $\mu$ is the cocharacter below; it is this result that is fed directly into the proofs of Theorems \ref{st:main_thm} and \ref{st:2nd_main_thm}, as explained earlier in the proof of \eqref{st:adm<=>perm_I}.

We fix an integer $m \geq 1$.  In \s\ref{ss:steinberg} we use the symbol $\mu$ to denote an arbitrary cocharacter, but from \s\ref{ss:R_B_m} on we take
\begin{equation}\label{disp:mu_B}
   \mu := \bigl(2^{q},1^{2m+1-2q},0^{(q)}\bigr), \quad 0 \leq q \leq m,
\end{equation}
except where specified to the contrary.  For $i \in \{1,\dotsc,2m+1\}$, we write $i^* = 2m + 2 - i$.  We denote by $e_1,\dotsc,$ $e_{2m+1}$ the standard basis in $\ZZ^{2m+1}$.

\subsection{Steinberg fixed-point root data}\label{ss:steinberg}

We begin \s\ref{s:combinatorics_B} with some generalities on Steinberg fixed-point root data.  We take as our main references the papers of Steinberg \cite{st68}, Kottwitz--Rapoport \cite{kottrap00}, and Haines--Ng\^o \cite{hngo02b}, especially \cite{hngo02b}*{\s9}.

Let us briefly recall what we need from the theory of Steinberg fixed-point root data.  Let $\R = (X^*,X_*,R,R^\vee,\Pi)$ be a reduced based root datum.%
\footnote{Strictly speaking, \cite{hngo02b} also assumes that \R is irreducible, but we shall not cite any facts from \cite{hngo02b} that require this assumption.}
Attached to \R are its Weyl group $W$, its affine Weyl group $W_\aff$, and its extended affine Weyl group $\wt W := X_* \rtimes W$, as in \s\ref{ss:I-W_gp}.  An \emph{automorphism} $\Theta$ of $\R$ is an automorphism $\Theta$ of the abelian group $X_*$ that stabilizes the set of coroots $R^\vee$ and such that the subsets $R$, $\Pi \subset X^*$ are stable under the dual automorphism $\Theta^*$ of $X^*$ induced by $\Theta$ and the perfect pairing $X^* \times X_* \to \ZZ$.  It follows that any automorphism of $\R$ induces automorphisms of $W$, $W_\aff$, and $\wt W$
.  Attached to $\Theta$ is the \emph{Steinberg fixed-point root datum} $\R^{[\Theta]} = (X^{*[\Theta]}, X_*^{[\Theta]}, R^{[\Theta]}, R^{\vee [\Theta]}, \Pi^{[\Theta]})$; this is a reduced based root datum described explicitly in \cite{hngo02b}*{\s9}.  We systematically use a superscript $[\Theta]$ to denote the analogs for $\R^{[\Theta]}$ of all the objects defined for $\R$.  For our purposes, we shall just mention that $\wt W^{[\Theta]}$ is naturally a subgroup of $\wt W$, with $W^{[\Theta]}$ and $W_\aff^{[\Theta]}$ equal to the respective fixed-point subgroups $W^\Theta$ and $W^\Theta_\aff$, and that the cocharacter group $X_*^{[\Theta]}$ is the subgroup of $X_*$
\[
   X_*^{[\Theta]} := \bigl\{\, x \in X_* \bigm| \Theta(x) \equiv x \bmod Z \,\bigr\},
\]
where $Z := \{\, x \in X_* \mid \langle \alpha, x \rangle = 0\ \text{for all}\ \alpha \in R\,\}$.

Let $\A := X_* \otimes \RR$ and $\A^{[\Theta]} := X_*^{[\Theta]} \otimes \RR$ denote the respective apartments for \R and $\R^{[\Theta]}$.  For each facet $\mathbf f$ in \A, let
\[
   \mathbf f^{[\Theta]} := \mathbf f \cap \A^{[\Theta]}.
\]
We then have the following obvious generalization of \cite{hngo02b}*{Prop.\ 9.3}, which follows from the fact that the affine root hyperplanes for $\R^{[\Theta]}$ in $\A^{[\Theta]}$ are exactly the intersections of the affine root hyperplanes for \R with $\A^{[\Theta]}$.

\begin{lem}\label{st:steinberg_facets}\hfill
\begin{enumerate}
\renewcommand{\theenumi}{\roman{enumi}}
\item
For any facet $\mathbf f$ in \A, the set $\mathbf{f}^{[\Theta]}$ is either empty or a facet in $\A^{[\Theta]}$.
\item
Every facet in $\A^{[\Theta]}$ is of the form $\mathbf f^{[\Theta]}$ for a unique facet $\mathbf f$ in $\A$.
\item\label{it:whocares}
The rule $\Ba \mapsto \Ba^{[\Theta]}$ is a bijection from the set of alcoves in \A that meet $\A^{[\Theta]}$ to the set of alcoves in $\A^{[\Theta]}$.\qed
\end{enumerate}
\end{lem}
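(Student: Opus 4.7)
The plan is to describe facets of $\A$ and $\A^{[\Theta]}$ through their sign patterns with respect to the respective systems of affine root hyperplanes, and then to translate between the two via the stated fact. Let $\H$ denote the system of affine root hyperplanes of $\R$ in $\A$ and $\H^{[\Theta]}$ that of $\R^{[\Theta]}$ in $\A^{[\Theta]}$, and fix once and for all a choice of positive open half-space for each $H \in \H$, which induces such a choice for each hyperplane of $\H^{[\Theta]}$. Recall that two points of $\A$ lie in the same facet precisely when they satisfy the same sign condition (on, strictly positive, or strictly negative) with respect to every $H \in \H$, and similarly for $\A^{[\Theta]}$ and $\H^{[\Theta]}$. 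The stated fact gives $\H^{[\Theta]} = \{\,H \cap \A^{[\Theta]} \mid H \in \H\,\}$, with all such intersections being proper hyperplanes in $\A^{[\Theta]}$.

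For part (i), suppose $\mathbf{f}$ is a facet of $\A$ and $\mathbf{f}^{[\Theta]}$ is nonempty. All points of $\mathbf{f}^{[\Theta]}$ share the sign pattern of $\mathbf{f}$ with respect to $\H$, which by the stated fact restricts to a single sign pattern with respect to $\H^{[\Theta]}$; hence $\mathbf{f}^{[\Theta]}$ lies in a single facet $\mathbf{g}$ of $\A^{[\Theta]}$.  Conversely, any $v \in \mathbf{g}$ shares the same sign pattern with respect to $\H^{[\Theta]}$ as any point of $\mathbf{f}^{[\Theta]}$, hence the same sign pattern with respect to $\H$, so $v \in \mathbf{f}^{[\Theta]}$. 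Thus $\mathbf{f}^{[\Theta]} = \mathbf{g}$, proving (i). For part (ii), given a facet $\mathbf{g}$ of $\A^{[\Theta]}$, pick any $v \in \mathbf{g}$ and let $\mathbf{f}$ be the facet of $\A$ containing $v$; then $v \in \mathbf{f}^{[\Theta]} \cap \mathbf{g}$, whence $\mathbf{f}^{[\Theta]} = \mathbf{g}$ by (i). Uniqueness of $\mathbf{f}$ follows from the disjointness of distinct facets in $\A$.

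For part (iii), a facet is an alcove precisely when its sign pattern is strict with respect to every hyperplane in the ambient system. If $\mathbf{a}$ is an alcove of $\A$ meeting $\A^{[\Theta]}$, then $\mathbf{a}^{[\Theta]}$ inherits strict signs with respect to every $H \in \H$, hence with respect to every hyperplane of $\H^{[\Theta]}$, so $\mathbf{a}^{[\Theta]}$ is an alcove of $\A^{[\Theta]}$. Conversely, given an alcove $\mathbf{a}'$ in $\A^{[\Theta]}$, part (ii) yields a unique facet $\mathbf{f}$ of $\A$ with $\mathbf{f}^{[\Theta]} = \mathbf{a}'$, and the main (and essentially only) point is to verify that $\mathbf{f}$ is itself an alcove. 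If instead $\mathbf{f} \subset H$ for some $H \in \H$, then $\mathbf{a}' = \mathbf{f}^{[\Theta]} \subset H \cap \A^{[\Theta]}$; the assertion that this intersection is a proper hyperplane of $\A^{[\Theta]}$ lying in $\H^{[\Theta]}$—which is the crucial content of the stated fact—contradicts that $\mathbf{a}'$ is an alcove. Bijectivity of $\mathbf{a} \mapsto \mathbf{a}^{[\Theta]}$ then follows at once from (ii).
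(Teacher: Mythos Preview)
Your proof is correct and follows exactly the approach the paper indicates: the paper gives no detailed argument, merely citing the key fact that the affine root hyperplanes for $\R^{[\Theta]}$ are precisely the intersections with $\A^{[\Theta]}$ of the affine root hyperplanes for $\R$, and you have filled in the sign-pattern argument that makes this work. The only cosmetic wrinkle is that the ``induced'' choice of positive half-space on $\H^{[\Theta]}$ need not be well-defined when several $H\in\H$ cut out the same hyperplane in $\A^{[\Theta]}$, but this is irrelevant since your argument only uses that two points of $\A^{[\Theta]}$ agree in sign with respect to a given $H\in\H$ if and only if they agree with respect to $H\cap\A^{[\Theta]}$.
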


As something of a contrast to \eqref{it:whocares}, it can happen that non-minimal facets in \A intersect with $\A^{[\Theta]}$ to give minimal facets in $\A^{[\Theta]}$.


For any facet $\Bf$ in \A that meets $\A^{[\Theta]}$, it follows from the lemma that the stabilizer group $W_{\Bf^{[\Theta]}} \subset W_\aff^{[\Theta]} = W_\aff^\Theta$ is given by
\[
   W_{\Bf^{[\Theta]}} = W_\Bf^\Theta,
\]
where as in \s\ref{ss:I-W_gp} $W_\Bf$ denotes the stabilizer group of \Bf in $W_\aff$.  We then have the following.

\begin{lem}\label{st:double_cosets_inj}
Let $\mathbf{f_1}$ and $\mathbf{f_2}$ be facets in $\A$ that meet $\A^{[\Theta]}$.  Then the natural map
\[
   W_{\mathbf{f}_{\mathbf 1}^{[\Theta]}} \big\bs \wt W^{[\Theta]} \big/ W_{\mathbf{f}_{\mathbf 2}^{[\Theta]}} \to W_{\mathbf{f_1}} \bs \wt W / W_{\mathbf{f_2}}
\]
is an injection.
\end{lem}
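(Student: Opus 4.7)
The plan is to use the unique minimum-length representative of each double coset, together with the $\Theta$-equivariance of length, and then invoke the uniqueness of the standard parabolic factorization $w = xvy$.

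First I would observe that since $\mathbf{f}_i^{[\Theta]}$ is nonempty, any point in $\mathbf{f}_i \cap \A^{[\Theta]}$ is $\Theta$-fixed and lies in $\mathbf{f}_i$; as $\Theta$ permutes the (pairwise disjoint) facets of $\A$, the facet $\mathbf{f}_i$ itself must be $\Theta$-stable. Consequently $W_{\mathbf{f}_i} \subset W_\aff$ is $\Theta$-stable with $W_{\mathbf{f}_i}^\Theta = W_{\mathbf{f}_i^{[\Theta]}}$. Choosing a $\Theta$-stable base alcove in $\A$ (using \eqref{st:steinberg_facets}\eqref{it:whocares}), the length function $\ell$ and Bruhat order on $\wt W$ become $\Theta$-equivariant.

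The next step would be to exploit $\Theta$-stability of double cosets: for $w \in \wt W^{[\Theta]}$, the double coset $W_{\mathbf{f_1}} w W_{\mathbf{f_2}}$ is $\Theta$-stable, so its unique minimum-length representative $v$ is $\Theta$-fixed, whence $v \in \wt W^{[\Theta]}$. Given $w_1, w_2 \in \wt W^{[\Theta]}$ mapping to the same double coset in $W_{\mathbf{f_1}} \bs \wt W / W_{\mathbf{f_2}}$, both share this same $v$, so injectivity reduces to showing that each $w_i$ lies in $W_{\mathbf{f}_1^{[\Theta]}} v W_{\mathbf{f}_2^{[\Theta]}}$. For this I would invoke the standard factorization theorem for parabolic double cosets in the Coxeter group $W_\aff$: every $w \in W_{\mathbf{f_1}} v W_{\mathbf{f_2}}$ admits a unique decomposition $w = xvy$ with $x \in W_{\mathbf{f_1}}$ and $y$ a distinguished (minimum-length) representative of its coset in $(v^{-1} W_{\mathbf{f_1}} v \cap W_{\mathbf{f_2}}) \bs W_{\mathbf{f_2}}$, satisfying $\ell(w) = \ell(x) + \ell(v) + \ell(y)$. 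Since $\Theta$ stabilizes $W_{\mathbf{f_1}}$, $W_{\mathbf{f_2}}$, $v$, and $\ell$, the expression $w = \Theta(x) v \Theta(y)$ obtained by applying $\Theta$ is another decomposition of the same form; uniqueness then forces $\Theta(x) = x$ and $\Theta(y) = y$, so $x \in W_{\mathbf{f_1}}^\Theta$ and $y \in W_{\mathbf{f_2}}^\Theta$, completing the proof.

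The main technical obstacle will be applying the factorization theorem in the extended setting of $\wt W$, which is not itself a Coxeter group. This is handled by noting that any single double coset $W_{\mathbf{f_1}} v W_{\mathbf{f_2}}$ lies in a single coset of $W_\aff$ in $\wt W$ (because $W_{\mathbf{f_1}}, W_{\mathbf{f_2}} \subset W_\aff$), so after translating by the appropriate element of the length-zero subgroup $\Omega_{\mathbf{a}}$ the classical Coxeter-theoretic theorem applies directly; the $\Theta$-stability of all ingredients survives this translation because the base alcove, and hence $\Omega_{\mathbf{a}}$, is $\Theta$-stable.
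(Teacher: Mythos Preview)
Your approach differs genuinely from the paper's: you argue combinatorially, using $\Theta$-equivariance of length together with the Howlett-type unique factorization $w = xvy$ for parabolic double cosets, whereas the paper gives a direct geometric argument via chambers relative to hyperplane subarrangements containing $\mathbf{f_1}$. Your route is cleaner when its hypotheses are in place; the paper's is more self-contained (no appeal to the factorization theorem) and works uniformly for facets not lying under a common alcove.

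There is one gap to address. The factorization theorem you invoke requires $W_{\mathbf{f_1}}$ and $W_{\mathbf{f_2}}$ to be \emph{standard} parabolics of $(W_\aff, \Pi_{\mathbf a})$, i.e., $\mathbf{f_1}, \mathbf{f_2} \preceq \mathbf a$; but the lemma allows the $\mathbf{f}_i$ to be arbitrary facets meeting $\A^{[\Theta]}$, and in general no single $\Theta$-stable alcove contains both. (The paper's proof only needs $\mathbf{f_2} \preceq \mathbf a$.) The fix is a preliminary reduction: choose $\mathbf a$ $\Theta$-stable with $\mathbf{f_2} \preceq \mathbf a$ (possible since $\mathbf{f}_2^{[\Theta]}$ lies in the closure of some alcove of $\A^{[\Theta]}$); then find a $\Theta$-stable alcove $\mathbf{a}_1 \succeq \mathbf{f_1}$ by the same reasoning, and take the unique $g \in W_\aff$ with $g\mathbf{a}_1 = \mathbf a$. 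Since both alcoves are $\Theta$-stable and $W_\aff$ acts simply transitively, $g$ is $\Theta$-fixed; left multiplication by $g$ then identifies the original map with the one for $(g\mathbf{f_1}, \mathbf{f_2})$, both now subfacets of $\mathbf a$. After this reduction your argument goes through as written.
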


Note that in the special case that $\mathbf{f_1} = \mathbf{f_2}$ is a special minimal facet in $\A$ meeting $\A^{[\Theta]}$, the displayed sets of double cosets identify with the sets of dominant cocharacters in $X_*^{[\Theta]}$ and $X_*$, respectively, relative to the choice of a positive Weyl chamber in $\A$ that meets $\A^{[\Theta]}$; and the displayed map is then the natural injection on dominant cocharacters.

\begin{proof}[Proof of \eqref{st:double_cosets_inj}]
Let us begin with a few generalities.  By a ``hyperplane'' in $\A$ we shall mean an ``affine root hyperplane for \R.''  For any subset $A$ in $\A$, we say that a hyperplane $H$ is an \emph{$A$-hyperplane} if $H$ contains $A$; we let $\H_A$ denote the (possibly empty) set of $A$-hyperplanes in $\A$; and we let $W_A$ denote the (possibly trivial) subgroup of $W_\aff$ generated by the reflections across the $A$-hyperplanes.  We refer to the connected components of the set $\A \smallsetminus \bigcup_{H \in \H_A}H$ as \emph{$A$-chambers}.  By general properties of reflection groups, $W_A$ acts simply transitively on the set of $A$-chambers.  If $A \subset \A^{[\Theta]}$, then $\Theta$ acts on $\H_A$, and hence on the set of $A$-chambers; an $A$-chamber $C$ meets $\A^{[\Theta]}$ if and only if $C$ is $\Theta$-stable; and $W_A^\Theta$ acts simply transitively on the set of $\Theta$-stable $A$-chambers.

To prove the lemma, our problem is to show that if $w' = x_1wx_2$ with $w$, $w' \in \wt W^{[\Theta]}$, $x_1 \in W_{\mathbf{f_1}}$, and $x_2 \in W_{\mathbf{f_2}}$, then there exist $y_1 \in W_{\mathbf{f_1}}^\Theta$ and $y_2 \in W_{\mathbf{f_2}}^\Theta$ such that $w' = y_1wy_2$.  Let $\mathbf a$ be an alcove in $\A$ meeting $\A^{[\Theta]}$ and such that $\mathbf{f_2} \preceq \mathbf a$.  Let $\Omega_{\mathbf a}$ (resp.\ $\Omega_{\Ba^{[\Theta]}}$) denote the stabilizer of $\mathbf a$ (resp.\ $\Ba^{[\Theta]}$) in $\wt W$ (resp.\ $\wt W^{[\Theta]}$), so that $\wt W = W_\aff \rtimes \Omega_{\mathbf a}$ and $\wt W^{[\Theta]} = W_\aff^\Theta \rtimes \Omega_{\Ba^{[\Theta]}}$.  Write $w = x\omega$ and $w' = x'\omega'$ with $x$, $x' \in W_\aff^\Theta$ and $\omega$, $\omega' \in \Omega_{\mathbf a^{[\Theta]}}$.  Then $\omega = \omega'$ and
\[
   x'\omega = w' = x_1wx_2 = x_1x\omega x_2\omega^{-1}\omega.
\]
We have $\omega x_2\omega^{-1} \in W_{\omega\mathbf{f_2}}$, and $\omega\mathbf{f_2}$ meets $\A^{[\Theta]}$ since $\omega \in \wt W^{[\Theta]}$.  Thus it suffices to solve the analogous problem (with $\omega \mathbf{f_2}$ in place of $\mathbf{f_2}$) for the equation $x' = x_1x(\omega x_2\omega^{-1})$.  So, since $\mathbf{f_2}$ is arbitrary, we might as well assume that $w$, $w' \in W_\aff^\Theta$ to begin with, which we shall do for the rest of the proof.

Consider the alcoves $x_1w\mathbf a$ and $w'\mathbf a$, which share the common subfacet $x_1w\mathbf{f_2}$.  Let $A$ denote the intersection of the $\mathbf{f_1}$-hyperplanes containing $x_1w\mathbf{f_2}$.  If $A$ is empty, which is to say that there are no $\mathbf{f_1}$-hyperplanes containing $x_1w\mathbf{f_2}$, then $x_1w\mathbf a$ and $w'\mathbf a$ must be contained in the same $\mathbf{f_1}$-chamber $C$, since any hyperplane that separates two facets in $\A$ must contain all the common subfacets of both.  Then $C$ and $x_1^{-1}C$ both meet $\A^{[\Theta]}$, since they respectively contain $w'\mathbf a$ and $w\mathbf a$.  Hence $x_1 \in W_{\mathbf{f_1}}^\Theta$, and we conclude immediately that $x_2 \in W_{\mathbf{f_2}}^\Theta$ as well.

If $A$ is nonempty, then tautologically $\mathbf{f_1} \subset A$, whence $W_A \subset W_{\mathbf{f_1}}$.  Let $z_1$ be the unique element in $W_A$ such that $z_1x_1w\mathbf a$ is contained in the same $A$-chamber as $w'\mathbf a$.  Since $x_1w\mathbf{f_2} \subset A$ and $A$ is fixed by the elements of $W_A$, we have $x_1w\mathbf{f_2} \preceq z_1x_1w\mathbf a$.  Since we also have $x_1w\mathbf{f_2} \preceq w'\mathbf a$, we conclude from the definition of $A$ that $z_1x_1w\mathbf a$ and $w'\mathbf a$ must be contained in the same $\mathbf{f_1}$-chamber.  As in the previous paragraph, it follows that $y_1 := z_1x_1 \in W_{\mathbf{f_1}}^\Theta$.  Since the alcoves that share the subfacet $x_1w\mathbf{f_2}$ with $y_1w\mathbf a$ are precisely those of the form $y_1wy\mathbf a$ for $y \in W_{\mathbf{f_2}}$, there exists $y_2 \in W_{\mathbf{f_2}}$ such that $y_1wy_2 = w'$, and it follows immediately that $y_2$ is $\Theta$-fixed as well.
\end{proof}

Now fix an alcove $\mathbf a$ in $\A$, and consider the associated Bruhat order $\leq$ and length function on $\wt W$, as in \s\ref{ss:bo}.  Let $\mathbf{f_1}$ and $\mathbf{f_2}$ be subfacets of $\mathbf a$.  Then every double coset $w \in W_{\mathbf{f_1}}\bs \wt W / W_{\mathbf{f_2}}$ contains a unique representative $\tensor[^{\mathbf{f_1}}]w{^{\mathbf{f_2}}}$ of minimal length in $\wt W$, which moreover has the property that $\tensor[^{\mathbf{f_1}}]w{^{\mathbf{f_2}}} \leq w'$ for all $w' \in w$.  In the Steinberg fixed-point setting, we then have the following result.

\begin{lem}\label{st:min_length_rep}
Let $\mathbf{f_1}$, $\mathbf{f_2} \preceq \Ba$, and suppose that all three meet $\A^{[\Theta]}$.  Let $w \in W_{\mathbf{f_1}} \bs \wt W / W_{\mathbf{f_2}}$ be a double coset contained in the image of $W_{\mathbf{f}_{\mathbf 1}^{[\Theta]}}\big\bs \wt W^{[\Theta]} \big/ W_{\mathbf{f}_{\mathbf 2}^{[\Theta]}}$.  Then  $\tensor[^{\mathbf{f_1}}]w{^{\mathbf{f_2}}} \in \wt W^{[\Theta]}$.
\end{lem}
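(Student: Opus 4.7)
The plan is to exploit the uniqueness of the minimum length representative together with $\Theta$-invariance of the length function. First I would observe that $\Theta$ stabilizes each of the facets $\mathbf a$, $\mathbf{f_1}$, $\mathbf{f_2}$: if $v$ is any point of $\mathbf{f_i} \cap \A^{[\Theta]}$, then $v = \Theta(v) \in \Theta(\mathbf{f_i})$, and since distinct facets of $\A$ are disjoint, $\Theta(\mathbf{f_i}) = \mathbf{f_i}$ (and similarly for $\mathbf a$).  Consequently, $\Theta$ stabilizes the subgroups $W_\aff$, $W_{\mathbf{f_1}}$, $W_{\mathbf{f_2}}$, and $\Omega_{\mathbf a}$ of $\wt W$; and since it permutes the walls of $\mathbf a$, it induces an automorphism of the Coxeter system $(W_\aff, \Pi_{\mathbf a})$, hence preserves both the length function $\ell$ and the Bruhat order on $\wt W$.

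Next I would pick a representative $\tilde w \in \wt W^{[\Theta]}$ of $w$ and write $\tilde w = t_\lambda \sigma$ with $\lambda \in X_*^{[\Theta]}$ and $\sigma \in W^\Theta$.  Then $\Theta(\tilde w) = t_z \tilde w$, where $z := \Theta(\lambda) - \lambda \in Z$ is central in $\wt W$, so $\Theta(w) = t_z w$ as a double coset in $W_{\mathbf{f_1}} \bs \wt W / W_{\mathbf{f_2}}$.  For the folding data $\R_{D_{m+1}} \to \R_{B_m}$ relevant to this paper, a direct calculation (using that $\Theta$ preserves the total-degree functional on $X_*$ while $Z$ is the lattice of constant vectors) gives $X_*^{[\Theta]} = X_*^\Theta$, so $z = 0$ and $\Theta$ actually fixes $\tilde w$, and in particular fixes the double coset $w$.

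Given that $\Theta$ both stabilizes $w$ and preserves $\ell$, the (singleton) set of minimum length representatives of $w$ is $\Theta$-stable, forcing $\Theta\bigl(\tensor[^{\mathbf{f_1}}]w{^{\mathbf{f_2}}}\bigr) = \tensor[^{\mathbf{f_1}}]w{^{\mathbf{f_2}}}$.  Hence $\tensor[^{\mathbf{f_1}}]w{^{\mathbf{f_2}}} \in \wt W^\Theta = X_*^\Theta \rtimes W^\Theta \subset X_*^{[\Theta]} \rtimes W^{[\Theta]} = \wt W^{[\Theta]}$, as required.

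The hard part, in the generality in which $X_*^{[\Theta]}$ may strictly contain $X_*^\Theta$, is that the central shift $t_z$ need not vanish, so $\Theta$ need not stabilize $w$ on the nose and the ``$\Theta$-invariance'' argument fails as stated.  In that case the natural fix is to combine the injection of Lemma \ref{st:double_cosets_inj} with a comparison of the Bruhat orders on $\wt W$ and on $\wt W^{[\Theta]}$ (the latter arising from the alcove $\mathbf a^{[\Theta]}$ inside $\A^{[\Theta]}$, using Lemma \ref{st:steinberg_facets}), and show that the minimum length representative of the preimage double coset in $\wt W^{[\Theta]}$ actually realizes $\tensor[^{\mathbf{f_1}}]w{^{\mathbf{f_2}}}$; this reduces the lemma to the compatibility of the two length functions along $\wt W^{[\Theta]} \subset \wt W$, which is the main technical obstacle in the general case but is not needed for the applications below.
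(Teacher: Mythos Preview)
The paper does not give a proof here; it simply cites \cite{sm11d}*{Lem.\ 6.2.1}.  Your direct argument via $\Theta$-invariance of the length function is correct and is essentially the standard one.  However, your detour through the special case $X_*^{[\Theta]} = X_*^\Theta$ is unnecessary, and your assessment of the general case is off.

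The general case goes through by the very same method once you notice that $t_z \in \Omega_{\mathbf a}$ for every $z \in Z$: since $\langle\alpha,z\rangle = 0$ for all $\alpha \in R$, translation by $z$ fixes every affine root hyperplane and hence stabilizes $\mathbf a$.  Thus $t_z$ is a length-zero central element, and left multiplication by $t_z$ is a length-preserving bijection from the double coset $w$ to $t_z w = \Theta(w)$.  It follows that $\Theta\bigl(\tensor[^{\mathbf{f_1}}]w{^{\mathbf{f_2}}}\bigr) = t_z \cdot \tensor[^{\mathbf{f_1}}]w{^{\mathbf{f_2}}}$; writing $\tensor[^{\mathbf{f_1}}]w{^{\mathbf{f_2}}} = t_\nu\rho$ with $\nu \in X_*$ and $\rho \in W$, this says $\Theta(\nu)-\nu = z \in Z$ and $\Theta(\rho)=\rho$, so $\tensor[^{\mathbf{f_1}}]w{^{\mathbf{f_2}}} \in X_*^{[\Theta]} \rtimes W^{[\Theta]} = \wt W^{[\Theta]}$.

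Your sketched alternative for the general case---deducing the lemma from a comparison of Bruhat orders on $\wt W$ and $\wt W^{[\Theta]}$---would be circular: that comparison is Theorem~\ref{st:bruhat_inherited}, whose proof invokes the present lemma.
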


\begin{proof}
This is proved for example in \cite{sm11d}*{Lem.\ 6.2.1}.
\end{proof}

We use the lemma to obtain the following double coset version of a result of Kottwitz--Rapoport and Haines--Ng\^o.

\begin{thm}\label{st:bruhat_inherited}
Let $\mathbf a$, $\mathbf{f_1}$, and $\mathbf{f_2}$ be as in \eqref{st:min_length_rep}, and let $\leq^{[\Theta]}$ denote the Bruhat order on $W_{\mathbf{f}_{\mathbf 1}^{[\Theta]}} \big\bs \wt W^{[\Theta]} \big/ W_{\mathbf{f}_{\mathbf 2}^{[\Theta]}}$ determined by the alcove $\mathbf a^{[\Theta]} \subset \A^{[\Theta]}$.  Regard $W_{\mathbf{f}_{\mathbf 1}^{[\Theta]}} \big\bs \wt W^{[\Theta]} \big/ W_{\mathbf{f}_{\mathbf 2}^{[\Theta]}}$ as a subset of $W_{\mathbf{f_1}} \bs \wt W / W_{\mathbf{f_2}}$ via \eqref{st:double_cosets_inj}, and let $w$, $x \in W_{\mathbf{f}_{\mathbf 1}^{[\Theta]}} \big\bs \wt W^{[\Theta]} \big/ W_{\mathbf{f}_{\mathbf 2}^{[\Theta]}}$.  Then
\[
   w \leq^{[\Theta]} x \text{ in } W_{\mathbf{f}_{\mathbf 1}^{[\Theta]}} \big\bs \wt W^{[\Theta]} \big/ W_{\mathbf{f}_{\mathbf 2}^{[\Theta]}}
   \iff
   w \leq x \text{ in } W_{\mathbf{f_1}}\bs \wt W / W_{\mathbf{f_2}}.
\]
\end{thm}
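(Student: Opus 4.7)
The plan is to reduce to the Iwahori case $\mathbf{f_1} = \mathbf{f_2} = \mathbf a$, which is precisely the result of Kottwitz--Rapoport and Haines--Ng\^o referenced in the theorem statement: for $y_1, y_2 \in \wt W^{[\Theta]}$, one has $y_1 \leq^{[\Theta]} y_2$ in $\wt W^{[\Theta]}$ if and only if $y_1 \leq y_2$ in $\wt W$. This is the only external input.

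Unwinding the definitions of the Bruhat orders on double cosets recalled in \s\ref{ss:bo}, the desired biconditional reads
\[
\tensor[^{\mathbf{f}_{\mathbf 1}^{[\Theta]}}]w{^{\mathbf{f}_{\mathbf 2}^{[\Theta]}}} \leq^{[\Theta]} \tensor[^{\mathbf{f}_{\mathbf 1}^{[\Theta]}}]x{^{\mathbf{f}_{\mathbf 2}^{[\Theta]}}} \text{ in } \wt W^{[\Theta]}
\iff
\tensor[^{\mathbf{f_1}}]w{^{\mathbf{f_2}}} \leq \tensor[^{\mathbf{f_1}}]x{^{\mathbf{f_2}}} \text{ in } \wt W.
\]
Given the Iwahori case, this will be immediate once I establish the identification
\[
\tensor[^{\mathbf{f}_{\mathbf 1}^{[\Theta]}}]w{^{\mathbf{f}_{\mathbf 2}^{[\Theta]}}} = \tensor[^{\mathbf{f_1}}]w{^{\mathbf{f_2}}}
\]
(and similarly for $x$) inside $\wt W$. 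This identification is the crux of the proof.

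To prove it, I first apply \eqref{st:min_length_rep} to place $\tensor[^{\mathbf{f_1}}]w{^{\mathbf{f_2}}}$ inside $\wt W^{[\Theta]}$; by the injectivity of \eqref{st:double_cosets_inj}, it represents the same double coset $w$ there. Both $\tensor[^{\mathbf{f}_{\mathbf 1}^{[\Theta]}}]w{^{\mathbf{f}_{\mathbf 2}^{[\Theta]}}}$ and $\tensor[^{\mathbf{f_1}}]w{^{\mathbf{f_2}}}$ are therefore representatives of a single $W_{\mathbf{f}_{\mathbf 1}^{[\Theta]}}$-$W_{\mathbf{f}_{\mathbf 2}^{[\Theta]}}$ double coset in $\wt W^{[\Theta]}$. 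Since minimum-length double coset representatives are Bruhat-below every other representative of the same double coset (a standard feature of the Bruhat order on double cosets in a Coxeter system), I get $\tensor[^{\mathbf{f}_{\mathbf 1}^{[\Theta]}}]w{^{\mathbf{f}_{\mathbf 2}^{[\Theta]}}} \leq^{[\Theta]} \tensor[^{\mathbf{f_1}}]w{^{\mathbf{f_2}}}$ in $\wt W^{[\Theta]}$. The Iwahori case upgrades this to $\tensor[^{\mathbf{f}_{\mathbf 1}^{[\Theta]}}]w{^{\mathbf{f}_{\mathbf 2}^{[\Theta]}}} \leq \tensor[^{\mathbf{f_1}}]w{^{\mathbf{f_2}}}$ in $\wt W$. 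Conversely, $\tensor[^{\mathbf{f}_{\mathbf 1}^{[\Theta]}}]w{^{\mathbf{f}_{\mathbf 2}^{[\Theta]}}} \in \wt W$ also represents the larger $W_{\mathbf{f_1}}$-$W_{\mathbf{f_2}}$ double coset, so the same standard feature applied in $\wt W$ gives the reverse inequality $\tensor[^{\mathbf{f_1}}]w{^{\mathbf{f_2}}} \leq \tensor[^{\mathbf{f}_{\mathbf 1}^{[\Theta]}}]w{^{\mathbf{f}_{\mathbf 2}^{[\Theta]}}}$ in $\wt W$. Antisymmetry of $\leq$ then forces equality.

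The main obstacle is essentially conceptual rather than technical: the argument is formal given the Iwahori case, but it hinges on carefully distinguishing the two Bruhat orders and using each of them to Bruhat-compare the two candidate minimum-length representatives \emph{within the ambient group where it is defined,} then passing between the two comparisons via the Iwahori result. Once both comparisons are in hand, antisymmetry is automatic and the reduction is complete. No intrinsic combinatorics of type $B$ or $D$ is needed; the theorem is a purely structural statement about Steinberg fixed-point root data built on the two ingredients \eqref{st:min_length_rep} and the Iwahori case.
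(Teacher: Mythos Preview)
Your proof is correct and follows essentially the same approach as the paper: both reduce to the Iwahori case of Kottwitz--Rapoport and Haines--Ng\^o by establishing that the minimal-length representatives $\tensor[^{\mathbf{f_1}}]w{^{\mathbf{f_2}}}$ and $\tensor[^{\mathbf{f}_{\mathbf 1}^{[\Theta]}}]w{^{\mathbf{f}_{\mathbf 2}^{[\Theta]}}}$ coincide. The only cosmetic difference is that the paper phrases the key identification as ``$\tensor[^{\mathbf{f_1}}]w{^{\mathbf{f_2}}}$ is the minimal-length representative of its $(W_{\mathbf f_{\mathbf 1}^{[\Theta]}}, W_{\mathbf f_{\mathbf 2}^{[\Theta]}})$-coset in $\wt W^{[\Theta]}$,'' whereas you derive it via two Bruhat inequalities and antisymmetry; these are equivalent formulations of the same step.
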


\begin{proof}
By definition of the Bruhat order on $W_{\mathbf{f_1}}\bs \wt W / W_{\mathbf{f_2}}$,
\[
   w \leq x \text{ in } W_{\mathbf{f_1}}\bs \wt W / W_{\mathbf{f_2}} 
   \iff
   \tensor[^{\mathbf{f_1}}]w{^{\mathbf{f_2}}} \leq \tensor[^{\mathbf{f_1}}]x{^{\mathbf{f_2}}} \text{ in } \wt W.
\]
By \eqref{st:min_length_rep}, $\tensor[^{\mathbf{f_1}}]w{^{\mathbf{f_2}}}$, $\tensor[^{\mathbf{f_1}}]x{^{\mathbf{f_2}}} \in \wt W^{[\Theta]}$.  Hence by the Iwahori version of the theorem \cite{hngo02b}*{Prop.\ 9.6} (which generalizes the version of the theorem for affine Weyl groups proved by Kottwitz--Rapoport \cite{kottrap00}*{Prop.\ 2.3}), the right-hand condition in the display holds
\[
   \iff
   \tensor[^{\mathbf{f_1}}]w{^{\mathbf{f_2}}}
   \leq^{[\Theta]} 
   \tensor[^{\mathbf{f_1}}]x{^{\mathbf{f_2}}} \text{ in } \wt W^{[\Theta]}.
\]
By the Iwahori version of the theorem again, $\tensor[^{\mathbf{f_1}}]w{^{\mathbf{f_2}}}$ and $\tensor[^{\mathbf{f_1}}]x{^{\mathbf{f_2}}}$ are the minimal length representatives in $\wt W^{[\Theta]}$ of their respective $(W_{\mathbf f_{\mathbf 1}^{[\Theta]}}, W_{\mathbf f_{\mathbf 2}^{[\Theta]}})$-cosets.  Hence the last display holds
\[
   \iff
   w \leq^{[\Theta]} x \text{ in } W_{\mathbf{f}_{\mathbf 1}^{[\Theta]}} \big\bs \wt W^{[\Theta]} \big/ W_{\mathbf{f}_{\mathbf 2}^{[\Theta]}}. \qedhere
\]
\end{proof}

For use later on when we come to $\mu$-permissibility, we cite the following result, which is proved for example in \cite{sm11d}*{Lem.\ 6.5.1}.

\begin{lem}\label{st:convex_hulls}
Let $\mu$ be any cocharacter in $X_*^{[\Theta]}$.  Then
\begin{flalign*}
   \phantom{\qed}& & \Conv(W^{[\Theta]} \mu) = \Conv(W\mu) \cap \A^{[\Theta]}. & & \qed
\end{flalign*}
\end{lem}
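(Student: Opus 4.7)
\medskip

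\noindent\textbf{Proof plan.}  The easy direction is $\Conv(W^{[\Theta]}\mu) \subseteq \Conv(W\mu) \cap \A^{[\Theta]}$.  Since every root reflection fixes $Z = \{\, x \in X_* \mid \langle \alpha,x\rangle = 0 \text{ for all } \alpha\in R\,\}$ pointwise, the whole Weyl group $W$ does so; hence for $w \in W^{[\Theta]} = W^\Theta$ one has $\Theta(w\mu) - w\mu = w(\Theta\mu - \mu) = \Theta\mu - \mu \in Z$, so $w\mu \in X_*^{[\Theta]} \subseteq \A^{[\Theta]}$.  Combined with $W^{[\Theta]}\mu \subseteq W\mu$ and with the fact that $\A^{[\Theta]}$ is a linear subspace (hence convex), this yields the inclusion.

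For the reverse inclusion, I would pass to the dual description of convex hulls by half-spaces.  A point $v \in \A^{[\Theta]}$ lies in $\Conv(W^{[\Theta]}\mu)$ if and only if
\[
   \lambda(v) \leq \max_{w' \in W^{[\Theta]}} \lambda(w'\mu)
   \quad \text{for every linear functional } \lambda \text{ on } \A^{[\Theta]}.
\]
Any such $\lambda$ is the restriction to $\A^{[\Theta]}$ of a $\Theta$-invariant linear functional $\tilde\lambda$ on $\A$: extend arbitrarily and average over the finite cyclic group $\langle \Theta \rangle$.  The inclusion then reduces to the single claim that, for every $\Theta$-invariant $\tilde\lambda \in X^* \otimes \RR$,
\[
   \max_{w \in W} \tilde\lambda(w\mu) \;=\; \max_{w \in W^\Theta} \tilde\lambda(w\mu),
\]
since $v \in \Conv(W\mu)$ provides $\tilde\lambda(v) \leq \max_{W}\tilde\lambda(\,\cdot\,\mu)$.

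This last equality is the main obstacle.  My approach would be as follows.  The left-hand maximum is attained on a nonempty closed face $F \subseteq \Conv(W\mu)$; both $\Conv(W\mu)$ and $\tilde\lambda$ are $\Theta$-stable, so $F$ is $\Theta$-stable, and its vertices form a $\Theta$-stable subset of $W\mu$.  It suffices to produce one vertex of $F$ of the form $w\mu$ with $w \in W^\Theta$.  By replacing $\tilde\lambda$ by $\tilde\lambda + \varepsilon \tilde\lambda'$ for a generic $\Theta$-invariant perturbation $\tilde\lambda'$, I may assume that the maximum on $\Conv(W\mu)$ is attained at a unique vertex $w_0\mu$; by $\Theta$-invariance of the (perturbed) $\tilde\lambda$, this vertex is $\Theta$-fixed.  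Equivalently, the unique open $W$-chamber $\mathcal C$ whose closure contains $w_0\mu$ is $\Theta$-stable.  Steinberg's theorem (applied to the action of $\langle \Theta\rangle$ on $W$, or to the $\Theta$-equivariant simply transitive action of $W$ on the Weyl chambers) then produces $w \in W^\Theta$ carrying a fixed reference dominant chamber to $\mathcal C$; the corresponding $W$-translate $w\mu \in \bar{\mathcal C}$ coincides with $w_0\mu$, and $w \in W^\Theta = W^{[\Theta]}$ provides the required maximizer.  Passing back from the perturbed $\tilde\lambda$ to the original one by letting $\varepsilon \to 0$ and using compactness of $W$ gives the desired equality of maxima, completing the reverse inclusion and hence the lemma.
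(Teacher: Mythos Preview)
The paper does not prove this lemma; it simply cites \cite{sm11d}*{Lem.\ 6.5.1}.  Your strategy is the natural one and, after two repairs, it works.

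First, you assert that $\Conv(W\mu)$ is $\Theta$-stable and that every $\lambda$ on $\A^{[\Theta]}$ extends to a $\Theta$-invariant functional by averaging.  Neither holds as stated: since $\mu \in X_*^{[\Theta]}$ only guarantees $\Theta\mu - \mu \in Z$, one has $\Theta(W\mu) = W\mu + (\Theta\mu - \mu)$; and $\A^{[\Theta]} = \A^\Theta + (Z\otimes\RR)$ can be strictly larger than the genuine fixed subspace $\A^\Theta$, so the average of an arbitrary extension need not restrict back to $\lambda$.  The fix is to pass to $\bar\A := \A/(Z\otimes\RR)$.  There $\bar\mu$ is honestly $\Theta$-fixed, $W\bar\mu$ is $\Theta$-stable, and the image of $\A^{[\Theta]}$ is exactly $\bar\A^\Theta$; since both convex hulls lie in the affine slice $\mu + Q^\vee\otimes\RR$, which projects bijectively to $\bar\A$, the lemma in $\A$ is equivalent to its analog in $\bar\A$, where your averaging argument is valid.

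Second, the perturbation step needs more care.  A $\Theta$-invariant functional is constant on each $\Theta$-orbit in $W\bar\mu$, so ``generic $\Theta$-invariant'' does not by itself force a unique maximizer; and when $\mu$ is singular, $w_0\bar\mu$ lies on a wall, so there is no ``unique open chamber whose closure contains $w_0\bar\mu$.''  What rescues the argument is that a generic $\Theta$-invariant $\tilde\lambda \in (\bar\A^*)^\Theta$ is \emph{regular} in $\bar\A^*$: no coroot $\alpha^\vee$ lies in $\im(\Theta-1)$ on $\bar\A$, because $\Theta$ permutes the positive coroots and hence the $\Theta$-average of any $\alpha^\vee$ is a nonzero sum of positive coroots.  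A regular $\tilde\lambda$ lies in the interior of a unique chamber of $\bar\A^*$; that chamber is $\Theta$-stable, hence of the form $wC^*$ with $w \in W^\Theta$ by the simple-transitivity argument you invoke; and the maximum of $\tilde\lambda$ on $W\bar\mu$ is then attained uniquely at $w\bar\mu \in W^\Theta\bar\mu$.  Your limiting argument now completes the proof.
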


\subsection{The root datum \texorpdfstring{$\R_{B_m}$}{R\_$\{$B\_m$\}$}}\label{ss:R_B_m}
In this subsection we define the root datum $\R_{B_m}$.  Let
\begin{align*}
   X_{*B_m} &:= X_{*2m+1}\\
    &\phantom{:}= \biggl\{\,(x_1,\dotsc,x_{2m+1}) \in \ZZ^{2m+1} \biggm|
      \begin{varwidth}{\textwidth}
         \centering
         $x_1+x_{2m+1} = x_2+x_{2m} = \dotsb =$\\
         $= x_m+x_{m+2} = 2x_{m+1}$
      \end{varwidth}\,\biggr\},
\end{align*}
where we recall $X_{*2m+1}$ from \eqref{disp:X_*n},
and
\[
   X^*_{B_m} := \ZZ^{2m+1}\big/
       \bigl\{(x_1,\dotsc,x_{m},\textstyle{-2\sum_{i=1}^{m}x_i}, x_{m},\dotsc,x_1)\bigr\}.
\]
Then the standard dot product on $\ZZ^{2m+1}$ induces a perfect pairing $X^*_{B_m} \times X_{*B_m} \to \ZZ$.  For $1 \leq i,j \leq 2m+1$ with $j \neq i$, $i^*$, let
\[
   \begin{matrix}
   \alpha_{i,j}\colon
   \xymatrix@R=0ex{
      X_{*B_m} \ar[r] & \ZZ\\
      (x_1,\dotsc,x_{2m+1}) \ar@{|->}[r] & x_i - x_j
   }
   \end{matrix}
\]
and
\[
   \alpha_{i,j}^\vee := 
   \begin{cases}
      e_i-e_j+e_{j^*}-e_{i^*},  & i,j \neq m+1;\\
      2e_i - 2e_{i^*},  & j = m+1;\\
      -2e_j + 2e_{j^*},  & i = m+1.
   \end{cases}
\]
Then we may regard $\alpha_{i,j}$ as an element of $X^*_{B_m}\ciso \Hom(X_{*B_m},\ZZ)$, and we let
\begin{equation}\label{disp:roots_B}
   \Phi_{B_m}:= \{\alpha_{i,j}\}_{j\neq i,i^*} \subset X^*_{B_m}
   \quad\text{and}\quad
   \Phi^\vee_{B_m} := \{\alpha^\vee_{i,j}\}_{j\neq i,i^*} \subset X_{*B_m}.
\end{equation}
The objects so defined form a root datum
\[
   \R_{B_m} := (X^*_{B_m},X_{*B_m},\Phi_{B_m},\Phi^\vee_{B_m}),
\]
which is the root datum for split $GO_{2m+1}$.  We take as simple roots
\[
   \alpha_{1,2}, \alpha_{2,3},\dotsc,\alpha_{m,m+1}.
\]

The Weyl group of $\R_{B_m}$ identifies canonically with $S_{2m+1}^*$.  As discussed in \s\ref{ss:I-W_gp}, we then have the extended affine Weyl group
\[
   \wt W_{B_m} := X_{*B_m} \rtimes S_{2m+1}^*.
\]
The affine Weyl group $W_{\aff,B_m}$ is the subgroup $Q^\vee_{B_m} \rtimes S_{2m+1}^* \subset \wt W_{B_m}$, where $Q^\vee_{B_m} \subset X_{*B_m}$ is coroot lattice.  Explicitly,
\[
   Q^\vee_{B_m} = \biggl\{\,(x_1,\dotsc,x_{2m+1}) \in X_{*B_m} \biggm|
   \begin{varwidth}{\textwidth}
      \centering
      $x_i + x_{i^*} = 0$ for all $i$ and\\
      $x_1 + \dotsb + x_m$ is even
   \end{varwidth}\,\biggr\}.
\]

Let
\[
   \A_{B_m}:= X_{*B_m} \otimes_\ZZ \RR \subset \RR^{2m+1}.
\]
We take as positive Weyl chamber the chamber in $\A_{B_m}$ on which the simple roots are all positive.
We take as our base alcove
\begin{equation}\label{disp:A_B_m}
   A_{B_m} := \biggl\{\,(x_1,\dotsc,x_{2m+1}) \in \RR^{2m+1} \biggm|
      \begin{varwidth}{\textwidth}
         \centering
         $x_1 + x_{2m+1} = \dotsb = x_m + x_{m+2} = 2x_{m+1}$\\
         and $x_1, x_{2m+1}-1 < x_2 < x_3 < \dotsb < x_{m+1}$
      \end{varwidth}\,\biggr\};
\end{equation}
note that this is the unique alcove contained in the Weyl chamber \emph{opposite} the positive chamber and whose closure contains the origin.  The minimal facets of $A_{B_m}$ are the lines
\[
   a + \RR\cdot \bigl(1,\dotsc,1\bigr)
\]
for $a$ one of the vertices
\begin{equation}\label{disp:B_m_verts}
\begin{aligned}
   a_k &:= \bigl((-\tfrac 1 2)^{(k)},0^{(2m + 1 - 2k)},(\tfrac 1 2)^{(k)}\bigr)
   \quad\text{for}\quad k = 0,2,3,\dotsc,m,\\
   a_{0'} &:= \bigl(-1,0^{{2m-1}},1\bigr).
\end{aligned}
\end{equation}
As discussed in \s\ref{ss:bo}, our choice of $A_{B_m}$ endows $W_{\aff,B_m}$ and $\wt W_{B_m}$ with Bruhat orders.  We also define
\[
   a_1 := \bigl(-\tfrac1 2, 0^{(2m-1)}, \tfrac 1 2\bigr).
\]

Note that in terms of our earlier notation \eqref{disp:wtW_n}, we have $\wt W_{B_m} = \wt W_{2m}$ on the nose.  Thus for each $w \in \wt W_{B_m}$ we get we get a face $\mathbf v$ of type $(2m+1,\{0,\dotsc,m\})$ by letting $w$ act on the standard face $\omega_{\{0,\dotsc,m\}}$, as in \s\ref{ss:faces_type_I}.  We write $\mu_k^w$ for the vector $\mu_k^{\mathbf{v}}$ attached to this face,
\[
   \mu_k^w := w\omega_k - \omega_k
   \quad\text{for}\quad
   k \in \ZZ.
\]
The rule $w \mapsto w\cdot \omega_{\{0,\dotsc,m\}}$ identifies $\wt W_{B_m}$ with the faces of type $(2m+1,\{0,\dotsc,m\})$.

\subsection{Relation to \texorpdfstring{$\R_{D_{m+1}}$}{R\_$\{$D\_m+1$\}$}}\label{ss:relation_to_D_m}
Consider the root datum $\R_{D_{m+1}}$ defined in \s\ref{ss:type_D_variant}, and let $\Theta := \theta_{m+1}$ be the automorphism on $X_{*D_{m+1}}$ defined in \eqref{disp:thetas} (with $m$ replaced by $m+1$),
\[
   \Theta \colon (x_1,\dotsc,x_{2m+2}) \mapsto (x_1,\dotsc,x_m,x_{m+2},x_{m+1}, x_{m+3},x_{m+4},\dotsc,x_{2m+2}).
\]
Then $\Theta$ is an automorphism of $\R_{D_{m+1}}$ in the sense of \s\ref{ss:steinberg}, where we take the roots \eqref{disp:simple_roots_D} (with $m$ replaced by $m+1$) as simple roots in $\Phi_{D_{m+1}}$.
The map on cocharacters
\[
   \iota\colon
   \xymatrix@R=0ex{
      X_{*B_m} \ar[r]  & X_{*D_{m+1}}\\
      (x_1,\dotsc,x_{2m+1}) \ar@{|->}[r]  & (x_1,\dotsc,x_m,x_{m+1},x_{m+1},x_{m+2},\dotsc,x_{2m+1})
   }
\]
identifies $X_{*B_m} \isoarrow X_{*D_{m+1}}^{[\Theta]}$, and we leave it to the reader to verify that $\iota$ induces $\R_{B_m}\iso \R_{D_{m+1}}^{[\Theta]}$.

Regarding $\A_{B_m} = X_{*B_m}\otimes \RR$ as a subspace of $\A_{D_{m+1}} = X_{*D_{m+1}}\otimes \RR$ via $\iota \otimes \id_\RR$, note that $A_{B_m} = A_{D_{m+1}} \cap \A_{B_m}$.  Moreover, $\iota \otimes \id_\RR$ identifies each of the points $a_0$, $a_{0'}$, $a_2,\dotsc,$ $a_m$ defined in \eqref{disp:B_m_verts} with the point in $\A_{D_{m+1}}$ denoted by the same symbol, as defined in \eqref{disp:D_m_verts} and \eqref{disp:a_k} (with $m$ replaced by $m+1$).  

The $\Theta$-fixed Weyl group $(S_{2m+2}^\circ)^{[\Theta]} = (S_{2m+2}^\circ)^\Theta$ consists of the $\sigma \in S_{2m+2}^\circ$ such that $\sigma(m+1) \in \{m+1,m+2\}$.  It identifies with $S_{2m+1}^*$ via restriction to $X_{*B_m}$ along $\iota$.  Let us describe the induced embedding $S_{2m+1}^* \inj S_{2m+2}^\circ$ more explicitly.  Given $\sigma \in S_{2m+1}^*$, $\sigma$ necessarily fixes $m+1$, and we view $\sigma$ as a permutation on $\{1,\dotsc,m,m+3,\dotsc,2m+2\}$ by identifying
\[
   \xymatrix@R=0ex{
      \{1,\dotsc,m,m+2,\dotsc,2m+1\} \ar[r]^-\sim
         & \{1,\dotsc,m,m+3,\dotsc,2m+2\}\\
      i \ar@{|->}[r]  
         & {\begin{cases} i, & i \leq m;\\ 
                          i+1, & m+2 \leq i. \end{cases}}
   }
\]
To define $\sigma$ on all of $\{1,\dotsc,2m+2\}$, we then declare that $\sigma$ fixes or interchanges $m+1$ and $m+2$ as needed to make $\sigma$ even in $S_{2m+2}$.

Writing $\iota'$ for the embedding $S_{2m+1} \inj S_{2m+2}^\circ$ just described, $\iota$ and $\iota'$ combine to specify the embedding
\[
   \phi\colon
   \wt W_{B_m} = X_{*B_m} \rtimes S_{2m+1}^* \xinj{\iota \rtimes \iota'}
   X_{*D_{m+1}} \rtimes S_{2m+2}^\circ = \wt W_{D_{m+1}}.
\]
Let $w \in \wt W_{B_m}$ and $0 \leq k \leq m$.  In terms of our identifications of $\wt W_{B_m}$ and $\wt W_{D_{m+1}}$ with faces of type $(2m+1,\{0,\dotsc,m\})$ and $(2m+2,\{0,\dotsc,m+1\})$, respectively, $\phi$ sends
\begin{equation}\label{disp:mu_k-vects_map}
\begin{split}
   \mu_k^w = (x_1,\dotsc, x_{2m+1}&) \mapsto\\
       & (x_1,\dotsc,x_{m},x_{m+1},x_{m+1},x_{m+2},\dotsc,x_{2m+1}) = \mu_k^{\phi(w)}.
\end{split}
\end{equation}
In this way $\phi$ identifies $\wt W_{B_m}$ with the set of $x \in \wt W_{D_m}$ such that $\mu_k^x(m+1) = \mu_k^x(m+2)$ for all $0 \leq k \leq m$.  (Note however that for such an $x$, one may have $\mu_{m+1}^x(m+1) \neq \mu_{m+1}^x(m+2)$.)

\subsection{Parahoric variants}
We continue to embed
\[
   X_{*B_m} \subset X_{*D_{m+1}},\quad
   \A_{B_m} \subset \A_{D_{m+1}},
   \quad\text{and}\quad
   \wt W_{B_m} \subset \wt W_{D_{m+1}}
\]
as in the previous subsection.
Let $F$ be a subfacet of the base alcove $A_{B_m}$, and let $W_{F, B_m}$ denote the common stabilizer and pointwise fixer of $F$ in $W_{\aff,B_m}$.  By \eqref{st:steinberg_facets} $F$ is of the form $\wt F \cap \A_{B_m}$ for a uniquely determined subfacet $\wt F$ of $A_{D_{m+1}}$, and
\[
   W_{F,B_m} = W_{\wt F,D_{m+1}}^\Theta = W_{\wt F,D_{m+1}} \cap W_{\aff,B_m} = W_{\wt F, D_{m+1}} \cap \wt W_{B_m}.
\]
The affine automorphism $\theta_0$ \eqref{disp:thetas} on $\A_{D_{m+1}}$ stabilizes $\A_{B_m}$ and interchanges $a_0$ and $a_{0'}$, while fixing $a_k$ for all $2 \leq k \leq m$.  In analogy with \s\ref{ss:parahoric_D}, by applying $\theta_0$ as needed, we shall therefore incur no loss of generality by working only with facets $F$ with the property that if $a_{0'} \in \ol F$, then $a_0 \in \ol F$.

Continuing the analogy with \s\ref{ss:parahoric_D}, given a subfacet $F \preceq A_{B_m}$ such that $a_0$ is a vertex of $F$ if $a_{0'}$ is, let
\[
   I := \bigl\{\, k \in \{0,\dotsc,m\} \bigm| a_k \in \ol F \,\bigr\}
\]
and
\begin{equation}\label{disp:W_I,B_m}
   W_{I,B_m} := \{\, w \in W_{\aff,B_m} \mid wa_k = a_k \text{ for all } k \in I \,\}.
\end{equation}
Then, arguing as in \s\ref{ss:parahoric_D}, the association $F \mapsto I$ gives a bijection from the set of such subfacets $F$ to the set of nonempty subsets $I$ of $\{0,\dotsc,m\}$ with the property that
\begin{equation}\label{disp:I_cond_B}
   1 \in I \implies 0 \in I;
\end{equation}
and for $F$ corresponding to $I$, we have
\[
   W_{F,B_m} = W_{I,B_m}.
\]
In terms of our notation \eqref{disp:W_I,D_m} (with $m$ replaced by $m+1$), we have
\[
   W_{I,B_m} = W_{\wt I,D_{m+1}}^\Theta = W_{\wt I,D_{m+1}} \cap \wt W_{B_m},
\]
where $\wt I$ is the nonempty subset of $\{0,\dotsc,m+1\}$
\begin{equation}\label{disp:wtI}
   \wt I :=
   \begin{cases}
      I \cup \{m+1\}, & m\in I;\\
      I, & m \notin I.
   \end{cases}
\end{equation}

By the same reasoning as in the paragraph before \eqref{st:parahoric_W_faces_type_I}, the elements of $W_{I,B_m}$ fix the standard face $\omega_I$ of type $(2m+1,I)$.  Thus to any $w \in \wt W_{B_m}/W_{I,B_m}$ we may attach a well-defined face $w \cdot \omega_I$ of type $(2m+1,I)$, and we write
\[
   \mu_k^w := w \omega_k - \omega_k
   \quad\text{for}\quad
   k \in I.
\]
The embedding $\phi\colon \wt W_{B_m} \subset \wt W_{D_{m+1}}$ induces an embedding
\[
   \wt W_{B_m}/W_{I,B_m} \subset \wt W_{D_{m+1}}/ W_{\wt I,D_{m+1}},
\]
and of course the corresponding map on $\mu_k$-vectors is again given by \eqref{disp:mu_k-vects_map}, for $k \in I$.  By definition of $W_{I,B_m}$, for each $w \in \wt W_{B_m}/W_{I,B_m}$ we also get a well-defined vector
\[
   \nu_k^w := wa_k - a_k
   \quad\text{for}\quad
   k \in I.
\]
If $1 \in I$, then the elements of $W_{I,B_m}$ also fix $a_{0'}$ (because $a_{0'}$ is a vertex of the facet containing $a_1$), and we define
\[
   \nu_{0'}^w := wa_{0'} - a_{0'}
\]
for $w \in \wt W_{B_m}/ W_{I,B_m}$.

\subsection{\texorpdfstring{$\mu$}{mu}-spin-permissibility}

In this subsection we define $\mu$-spin-permissibility in $\wt W_{B_m}$ and its parahoric variants.  We continue with the notation of the previous subsections.  Recall the set $A_k \subset \{1,\dotsc,2m+1\}$ from \eqref{disp:A_i_B_i}.

\begin{defn}\label{def:spin-permissible_B_m}
Let $\mu$ be the cocharacter \eqref{disp:mu_B}, and let $I \subset J$ be nonempty subsets of $\{0,\dotsc,m\}$ satisfying property \eqref{disp:I_cond_B}.  We say that $w \in W_{J,B_m} \bs \wt W_{B_m}/W_{I,B_m}$ is \emph{$\mu$-spin-permissible} if $w \equiv t_\mu \bmod W_{\aff,B_m}$ and for one, hence any, representative $\wt w$ of $w$ in $\wt W_{B_m}/W_{I,B_m}$, $\wt w$ satisfies the following conditions for all $k \in I$.
\begin{enumerate}
\renewcommand{\theenumi}{sp\arabic{enumi}}
\item\label{it:sp1}
   $\mu_k^{\wt w} + (\mu_{-k}^{\wt w})^* = \mathbf 2$ and $\mathbf 0 \leq \mu_k^{\wt w} \leq \mathbf{2}$.
\item\label{it:sp2}
   $\#\bigl\{\, j\bigm| \mu^{\wt w}_k(j) = 2\,\bigr\} \leq q$.
\item\label{it:sp3}
   (spin condition) If $\mu_k^{\wt w}$ is self-dual and $\mu_k^{\wt w} \not\equiv \mu \bmod Q^\vee_{B_m}$, then there exists $j \in A_k$ such that $\mu_k^{\wt w}(j) = 1$.
\end{enumerate}
We write
\[
   \SPerm_{J,I,B_m}(\mu)
\]
for the set of $\mu$-spin-permissible elements in $W_{J,B_m} \bs \wt W_{B_m} / W_{I,B_m}$.  We abbreviate this to $\SPerm_{I,B_m}(\mu)$ when $J = \{0,\dotsc,m\}$, and to $\SPerm_{B_m}(\mu)$ when $I = J = \{0,\dotsc,m\}$.
\end{defn}

In analogy with the type $D$ situation, we shall verify in a moment that
the definition does not depend on the choice of representative $\wt w$.  Let us first make a couple of other remarks.

First, the condition $w \equiv t_\mu \bmod W_{\aff,B_m}$ is well-defined because $W_{J,B_m}$, $W_{I,B_m} \subset W_{\aff,B_m}$.  And, as in the type $D$ setting, it is implied by the spin condition when $0 \in I$, since $A_0 = \emptyset$.  

Second, in contrast with the spin condition \eqref{it:SP3} for $\R_{D_m}$, the set $B_k$ does not appear in \eqref{it:sp3}.  Indeed if \eqref{it:sp1} holds, then necessarily $\mu_k^{\wt w}(m+1) = 1$ by \eqref{rk:whatevs}. Since $m + 1 \in B_k$ for $k$ in the range $0 \leq k \leq m$, the existence of a $j' \in B_k$ such that $\mu_k^{\wt w}(j') = 1$ is then automatic.  This point can also be understood via the following lemma.

\begin{lem}\label{st:spin-perm_intersection}
Let $I$ be a nonempty subset of $\{0,\dotsc,m\}$ satisfying property \eqref{disp:I_cond_B}, let  $w \in \wt W_{B_m} / W_{I,B_m}$, and consider the embedding
\[
   \phi\colon \wt W_{B_m} / W_{I,B_m} \inj \wt W_{D_{m+1}}/ W_{\wt I, D_{m+1}},
\]
with $\wt I$ as in \eqref{disp:wtI}.  Then
\begin{enumerate}
\renewcommand{\theenumi}{\roman{enumi}}
\item\label{it:W_aff_equiv}
   $w \equiv t_\mu \bmod W_{\aff,B_m} \iff \phi(w) \equiv t_{\iota(\mu)} \bmod W_{\aff,D_{m+1}}$;
\item\label{it:sp1_equiv}
   for $k \in I$, $\mu_k^w$ satisfies \eqref{it:sp1} $\iff$ $\mu_k^{\phi(w)}$ satisfies \eqref{it:SP1}; and
\item\label{it:sp2-3_equivs}
   for $k \in I$, if $\mu_k^w$ satisfies \eqref{it:sp1}, then $\mu_k^w$ satisfies \eqref{it:sp2} (resp.\ \eqref{it:sp3}) relative to $\mu$ $\iff$ $\mu_k^{\phi(w)}$ satisfies \eqref{it:SP2} (resp.\ \eqref{it:SP3}) relative to $\iota(\mu)$.
\end{enumerate}
Furthermore $\phi$ gives an identification
\[
   \SPerm_{I,B_m}(\mu) = \SPerm_{\wt I,D_{m+1}}\bigl(\iota(\mu)\bigr) \cap \wt W_{B_m}/W_{I,B_m}.
\]
\end{lem}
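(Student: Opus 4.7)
The plan is to verify all four assertions by direct computation using the explicit formula \eqref{disp:mu_k-vects_map}, which expresses $\mu_k^{\phi(w)}$ as the vector obtained from $\mu_k^w = (x_1,\dotsc,x_{2m+1})$ by duplicating the middle entry $x_{m+1}$ at position $m+2$. For (i), the implication $\Longrightarrow$ is immediate since $\phi$ carries $W_{\aff,B_m}$ into $W_{\aff,D_{m+1}}$ and $t_\mu$ to $t_{\iota(\mu)}$. The converse reduces to the equality $W_{\aff,D_{m+1}} \cap \wt W_{B_m} = W_{\aff,B_m}$, or equivalently $Q^\vee_{B_m} = \iota^{-1}(Q^\vee_{D_{m+1}}) \cap X_{*B_m}$, which I would check directly: for $\lambda = (x_1,\dotsc,x_{2m+1}) \in X_{*B_m}$ with $\iota(\lambda) \in Q^\vee_{D_{m+1}}$, the central vanishing in $Q^\vee_{D_{m+1}}$ forces $x_{m+1} = 0$, and then the remaining antisymmetry and parity conditions for $Q^\vee_{D_{m+1}}$ translate directly into those defining $Q^\vee_{B_m}$.

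For (ii) and (iii), the duality $\mu_k + (\mu_{-k})^* = \mathbf{2}$ and the bound $\mathbf{0} \leq \mu_k \leq \mathbf{2}$ transfer at once under the duplication, since the duplicated entry pairs symmetrically with itself. When \eqref{it:sp1} holds, \eqref{rk:whatevs} gives $x_{m+1} = 1$, so the duplicated entries are both $1$'s; this leaves the count of $2$'s unchanged (giving \eqref{it:sp2} $\iff$ \eqref{it:SP2}) and makes the self-duality of $\mu_k^{\phi(w)}$ at the duplicated indices automatic (so that self-duality of $\mu_k^w$ and of $\mu_k^{\phi(w)}$ are equivalent). The coroot congruence in \eqref{it:sp3}/\eqref{it:SP3} is equivalent by (i), the set $A_k$ in $B_m$ maps bijectively to $A_k$ in $D_{m+1}$ under $\iota'$ with entries preserved, and the extra $B_k$ clause in \eqref{it:SP3} is automatic since $m+1 \in B_k$ and $\mu_k^{\phi(w)}(m+1) = 1$.

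For the final identification, when $m \notin I$ we have $\wt I = I$ and the claim is immediate from (i)--(iii). When $m \in I$ one must additionally verify \eqref{it:SP1}, \eqref{it:SP2}, and \eqref{it:SP3} at the extra index $k = m+1$. Writing $w = t_\lambda \sigma$ and using the $D_{m+1}$ recursion $\mu_{m+1}^{\phi(w)} = \mu_m^{\phi(w)} + e_{m+1} - e_{\iota'(\sigma)(m+1)}$ with $\iota'(\sigma)(m+1) \in \{m+1, m+2\}$, condition \eqref{it:SP1} follows by inspection, and \eqref{it:SP3} reduces (since $B_{m+1} = \emptyset$ and $\mu_{m+1}^{\phi(w)}$ is automatically self-dual) to the congruence $\mu_{m+1}^{\phi(w)} \equiv \iota(\mu) \bmod Q^\vee_{D_{m+1}}$, which follows from (i).

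The main obstacle is \eqref{it:SP2} at $k = m+1$: when $\sigma$ is odd, $\iota'(\sigma)$ swaps $m+1$ and $m+2$ and $c_{m+1}^{\phi(w)} = c_m^w + 1$, so one needs $c_m^w + 1 \leq q$. The key observation will be a parity computation: $\sigma\omega_m - \omega_m$ lies in $X_{*B_m}$ with $\epsilon$-component equal to the parity of $\sigma$, so $\mu_m^w \equiv \mu \bmod Q^\vee_{B_m}$ precisely when $\sigma$ is even; combined with the type $B$ analog of \eqref{st:c_i_equiv_q}, namely that $c_m^w \equiv q \bmod 2$ iff $\mu_m^w \equiv \mu \bmod Q^\vee_{B_m}$, this forces the strict inequality $c_m^w \leq q - 1$ in the odd case, giving exactly the bound required.
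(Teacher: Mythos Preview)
Your proposal is correct and takes essentially the same approach as the paper. Parts (i)--(iii) are handled identically (the paper simply invokes ``general properties of fixed-point root data'' for $W_{\aff,B_m} = W_{\aff,D_{m+1}} \cap \wt W_{B_m}$ where you spell it out), and the only substantive point---verifying \eqref{it:SP2} at the extra index $k=m+1$ when $m\in I$---is the same parity argument, just organized differently: the paper observes that $\mu_{m+1}^{\phi(w)}\equiv\iota(\mu)\bmod Q^\vee_{D_{m+1}}$ forces $c_{m+1}^{\phi(w)}\equiv q\bmod 2$, which together with $c_{m+1}^{\phi(w)}\le c_m^{\phi(w)}+1\le q+1$ gives $c_{m+1}^{\phi(w)}\le q$, whereas you split on the parity of $\sigma$ and show $c_m^w\not\equiv q\bmod 2$ in the odd case. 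One small clarification: your claim that $\sigma\omega_m-\omega_m\in X_{*B_m}$ (equivalently, that $\mu_m^w$ is always self-dual) is correct---since $\sigma\in S_{2m+1}^*$ fixes $m+1$, one checks $(\sigma\omega_m-\omega_m)(j)+(\sigma\omega_m-\omega_m)(j^*)=0$ for all $j$---but it is worth stating this explicitly, as the analogous vector $\mu_m^{\phi(w)}$ in $D_{m+1}$ is not self-dual for trivial reasons.
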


\begin{proof}
Part \eqref{it:W_aff_equiv} is obvious because, by general properties of fixed-point root data, $W_{\aff,B_m} = W_{\aff,D_{m+1}} \cap \wt W_{B_m}$.
Parts \eqref{it:sp1_equiv} and \eqref{it:sp2-3_equivs} are obvious from the explicit form of the map on $\mu_k$-vectors \eqref{disp:mu_k-vects_map}.
Thus the real content of the lemma is contained in the final asserted equality,  which is an immediate consequence of \eqref{it:W_aff_equiv}--\eqref{it:sp2-3_equivs} and the following lemma.
\end{proof} 

\begin{lem}\label{st:automatic_spin_cond}
Under the assumptions of the previous lemma, suppose that $m \in I$, $\mu_m^w$ satisfies \eqref{it:sp1} and \eqref{it:sp2}, and $w \equiv t_\mu \bmod W_{\aff,B_m}$.  Then $\mu_m^w$ also satisfies \eqref{it:sp3}, and $\mu_{m+1}^{\phi(w)}$ satisfies \eqref{it:SP1}--\eqref{it:SP3}.
\end{lem}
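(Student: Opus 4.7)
The plan is to exploit two key observations: (1) the middle entry $\mu_m^w(m+1) = 1$ is forced by \eqref{rk:whatevs} since $n = 2m+1$ is odd, which automatically renders $\mu_m^w$ self-dual and makes the type B spin condition nearly vacuous; and (2) the assumption $w \equiv t_\mu \bmod W_{\aff,B_m}$ controls the residue class of $\mu_{m+1}^{\phi(w)}$ in $X_{*D_{m+1}}/Q^\vee_{D_{m+1}}$, giving parity information on $c_{m+1}^{\phi(w)}$.

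First I would deduce sp3 for $\mu_m^w$. Since $B_m = \{m+1\}$ at $k = m$, and $\mu_m^w(m+1) = 1$ by \eqref{rk:whatevs}, the hypothesis of \eqref{st:self-dual_mu} is met, so $\mu_m^w$ is self-dual. The congruence $w \equiv t_\mu \bmod W_{\aff,B_m}$ gives $\mu_0^w \equiv \mu \bmod Q^\vee_{B_m}$; combined with the fact that $\sigma\lambda - \lambda \in Q^\vee_{B_m}$ for any $\sigma \in S_{2m+1}^*$ and any $\lambda$ (immediate for simple reflections, hence for all), applied to the linear part of $w$ and $\omega_m$, we obtain $\mu_m^w \equiv \mu_0^w \equiv \mu \bmod Q^\vee_{B_m}$. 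The hypothesis of sp3 thus fails, so sp3 holds vacuously.

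Next I would verify SP1--SP3 for $\mu_{m+1}^{\phi(w)}$ in type $D_{m+1}$. Writing $\sigma'$ for the linear part of $\phi(w)$, we have $\sigma'(m+1) \in \{m+1,m+2\}$ by the description of $\iota'\colon S_{2m+1}^* \inj S_{2m+2}^\circ$ in \s\ref{ss:relation_to_D_m}. The recursion $\mu_{m+1}^{\phi(w)} = \mu_m^{\phi(w)} + e_{m+1} - e_{\sigma'(m+1)}$ combined with \eqref{disp:mu_k-vects_map} gives two cases: in Case 1 ($\sigma'(m+1) = m+1$) we have $\mu_{m+1}^{\phi(w)} = \mu_m^{\phi(w)}$; in Case 2 ($\sigma'(m+1) = m+2$) the central pair of entries becomes $(2,0)$ while all others remain. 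Self-duality of $\mu_m^w$ (type B) and the identity $\omega_m + \omega_{-m}^* = \mathbf 0$ transport to self-duality of $\mu_m^{\phi(w)}$ in type D (the central pair $(1,1)$ is self-dual), and the modification $(1,1) \to (2,0)$ in Case 2 preserves self-duality, so SP1 holds in both cases.

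Finally, by part \eqref{it:W_aff_equiv} of \eqref{st:spin-perm_intersection}, $\phi(w) \equiv t_{\iota(\mu)} \bmod W_{\aff,D_{m+1}}$, and the same Weyl-group argument as above yields $\mu_{m+1}^{\phi(w)} \equiv \iota(\mu) \bmod Q^\vee_{D_{m+1}}$. Applying \eqref{st:c_i_equiv_q} to $\iota(\mu) = \bigl(2^{(q)},1^{(2m+2-2q)},0^{(q)}\bigr)$, we conclude $c_{m+1}^{\phi(w)} \equiv q \bmod 2$. In Case 1, $c_{m+1}^{\phi(w)} = c_m^w \leq q$ by sp2, so SP2 holds; in Case 2, $c_{m+1}^{\phi(w)} = c_m^w + 1$, and the parity constraint forces $c_m^w \not\equiv q \bmod 2$, so together with $c_m^w \leq q$ we get $c_m^w < q$ and hence $c_{m+1}^{\phi(w)} \leq q$. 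SP3 is vacuous because $\mu_{m+1}^{\phi(w)} \equiv \iota(\mu) \bmod Q^\vee_{D_{m+1}}$. The main obstacle is the Case 2 analysis of SP2, which is exactly where the hypothesis $w \equiv t_\mu \bmod W_{\aff,B_m}$ is indispensable: it supplies the parity of $c_{m+1}^{\phi(w)}$ needed to rule out the potentially bad configuration $\sigma'(m+1) = m+2$ together with $c_m^w = q$.
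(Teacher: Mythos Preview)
Your argument for the second claim---that $\mu_{m+1}^{\phi(w)}$ satisfies \eqref{it:SP1}--\eqref{it:SP3}---is correct and essentially matches the paper's: both use the recursion to reduce to two cases, both use the congruence $\phi(w)\equiv t_{\iota(\mu)}\bmod W_{\aff,D_{m+1}}$ to get $\mu_{m+1}^{\phi(w)}\equiv\iota(\mu)\bmod Q^\vee_{D_{m+1}}$ (which works because $\omega_{m+1}\in X_{*D_{m+1}}$, so $\sigma'\omega_{m+1}-\omega_{m+1}\in Q^\vee_{D_{m+1}}$), and both deduce \eqref{it:SP2} from the resulting parity constraint together with $c_m^{\phi(w)}\leq q$.

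However, your proof of \eqref{it:sp3} for $\mu_m^w$ contains a genuine error. The assertion ``$\sigma\lambda-\lambda\in Q^\vee_{B_m}$ for any $\sigma\in S_{2m+1}^*$ and any $\lambda$'' is false: it holds only for $\lambda\in X_{*B_m}$, since that is what is needed to write $s_\alpha\lambda-\lambda=-\langle\alpha,\lambda\rangle\alpha^\vee$. But $\omega_m=\bigl((-1)^{(m)},0^{(m+1)}\bigr)\notin X_{*B_m}$, because $\omega_m+\omega_m^*=\bigl((-1)^{(m)},0,(-1)^{(m)}\bigr)$ is not constant. Concretely, for $m=2$ and $\sigma=(1452)\in S_5^*$ one computes $\sigma\omega_2-\omega_2=(0,1,0,-1,0)$, which has $x_1+x_2=1$ odd and hence lies outside $Q^\vee_{B_2}$. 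So $\mu_m^w\equiv\mu_0^w\bmod Q^\vee_{B_m}$ can fail, and \eqref{it:sp3} is \emph{not} vacuous in general. Indeed, your own Case~2 analysis for \eqref{it:SP2} concedes that $c_m^w\not\equiv q\bmod 2$ can occur, which via \eqref{st:c_i_equiv_q} is exactly the statement $\mu_m^w\not\equiv\mu$---contradicting your earlier conclusion. The paper instead argues directly: when $c_m^{\phi(w)}\not\equiv q$, one has $c_m^{\phi(w)}<q\leq m$, so among the $2m$ elements of $A_m$ at most $2(q-1)<2m$ can have $\mu_m^{\phi(w)}$-value in $\{0,2\}$, forcing some $j\in A_m$ with $\mu_m^{\phi(w)}(j)=1$; together with $\mu_m^{\phi(w)}(m+1)=1$ this witnesses \eqref{it:SP3}, hence \eqref{it:sp3}.
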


\begin{proof}
Before getting started, we note that since $\mu_m^w$ satisfies \eqref{it:sp1}, we have
\begin{equation}\label{disp:whocares}
   \mu_m^{\phi(w)}(m+1) = \mu_m^{\phi(w)}(m+2) = 1.
\end{equation}
   
Let us first show that $\mu_m^w$ satisfies \eqref{it:sp3}, or, equivalently, that $\mu_m^{\phi(w)}$ satisfies \eqref{it:SP3}.  Recall the integer $c_m^{\phi(w)}$ from \eqref{def:c_i^w}.  If $c_m^{\phi(w)} \equiv q \bmod 2$, then $\mu_m^{\phi(w)}$ trivially satisfies \eqref{it:SP3} by \eqref{st:c_i_equiv_q}.  If $c_m^{\phi(w)} \not\equiv q \bmod 2$, then $c_m^{\phi(w)} < q$ since $\mu_m^{\phi(w)}$ satisfies \eqref{it:SP2}.  And since $q \leq m$, $\# A_m = 2m$, and $\mu_k^{\phi(w)}$ satisfies \eqref{it:SP1}, there must exist $j \in A_m$ such that $\mu_k^{\phi(w)}(j) = 1$.  This combines with \eqref{disp:whocares} to show that $\mu_{m}^{\phi(w)}$ satisfies \eqref{it:SP3}.
   
It remains to show that $\mu_{m+1}^{\phi(w)}$ satisfies \eqref{it:SP1}--\eqref{it:SP3}.
It follows from the form of the embedding $\iota'\colon S_{2m+1}^* \inj S_{2m+2}^\circ$ in \s\ref{ss:relation_to_D_m} and the recursion relation \eqref{disp:mu_recursion} that
\[
   \mu_{m+1}^{\phi(w)} = \mu_{m}^{\phi(w)}
   \quad\text{or}\quad
   \mu_{m+1}^{\phi(w)} = \mu_m^{\phi(w)} + e'_{m+1} - e'_{m+2},
\]
where $e'_1,\dotsc,e'_{2m+2}$ denotes the standard basis in $\ZZ^{2m+2}$.  Since $\mu_m^{\phi(w)}$ satisfies \eqref{it:SP1}, we conclude, with the help of \eqref{disp:whocares} in the second case, that $\mu_{m+1}^{\phi(w)}$ satisfies \eqref{it:SP1}.  Furthermore, since $x \equiv t_{\mu_{m+1}^x} \bmod W_{\aff,D_{m+1}}$ for any $x \in \wt W_{D_{m+1}}/W_{\wt I,D_{m+1}}$, we find that $t_{\mu_{m+1}^{\phi(w)}} \equiv \phi(w) \equiv t_{\iota(\mu)} \bmod W_{\aff,D_{m+1}}$.  Hence $\mu_{m+1}^{\phi(w)}$ automatically satisfies \eqref{it:SP3}, and by \eqref{st:c_i_equiv_q}
\[
   c_{m+1}^{\phi(w)} \equiv q \bmod 2.
\]
Since $\mu_m^{\phi(w)}$ satisfies \eqref{it:SP2}, we have
\[
   c_m^{\phi(w)} \leq q,
\]
and by \eqref{st:c_i_change_lem}
\[
   c_{m+1}^{\phi(w)} \leq c_m^{\phi(w)} + 1.
\]
These last three displays imply $c_{m+1}^{\phi(w)} \leq q$, which completes the proof.
\end{proof}

Using \eqref{st:spin-perm_intersection}, it is now easy to see that for $w \in W_{J,B_m} \bs \wt W_{B_m}/ W_{I,B_m}$, conditions \eqref{it:sp1}--\eqref{it:sp3} are independent of the choice of representative $\wt w \in \wt W_{B_m}/ W_{I,B_m}$.  Indeed, $\phi(W_{J,B_m}) \subset W_{\wt{J},D_{m+1}}$, and we then need note only that conditions \eqref{it:SP1}--\eqref{it:SP3}, for $k \in \wt I$, are independent of the choice of element in $W_{\wt{J},D_{m+1}}\phi(\wt w) \subset \wt W_{D_{m+1}}/ W_{\wt I,D_{m+1}}$ by \eqref{st:SP_SP'_equiv} and \eqref{st:sp-perm_double_cosets}.

Let us conclude the subsection by reformulating conditions \eqref{it:sp1}--\eqref{it:sp3} in terms of the vector $\nu_k^{w}$.  Let $w \in \wt W_{B_m} / W_{I,B_m}$, and consider the following conditions for $k \in I$.

\begin{enumerate}
\renewcommand{\theenumi}{sp\arabic{enumi}$'$}
\item\label{it:sp1'}
   $\nu_k^w + (\nu_k^w)^* = \mathbf 2$ and $\mathbf 0 \leq \nu_k^w \leq \mathbf 2$.
\item\label{it:sp2'}
   $\#\{\, j\mid \nu^w_k(j) = 2\,\} + \#\{\, j \mid \nu^w_k(j) \notin \ZZ\, \}/4 \leq q$.
\item\label{it:sp3'}
   (spin condition) If $\nu_k^w \in \ZZ^{2m}$ and $\nu_k^w \not\equiv \mu \bmod Q^\vee_{B_m}$, then there exists $j \in A_k$ such that $\nu_k^w(j) = 1$.
\end{enumerate}

The following equivalences are all proved as in \eqref{st:SP_SP'_equiv}, or can be deduced from \eqref{st:SP_SP'_equiv} via the embedding $\wt W_{B_m} \inj \wt W_{D_{m+1}}$.

\begin{lem}\label{st:sp-sp'_equivs}
Let $w \in \wt W_{B_m} / W_{I,B_m}$ and $k \in I$
\begin{enumerate}
\renewcommand{\theenumi}{\roman{enumi}}
\item\label{it:sp1_equiv'}
   $\mu_k^w$ satisfies \eqref{it:sp1} $\iff$ $\nu_k^w$ satisfies \eqref{it:sp1'}.
\item
   Suppose that the equivalent conditions in \eqref{it:sp1_equiv'} hold.  Then $\mu_k^w$ satisfies \eqref{it:sp2} $\iff$ $\nu_k^w$ satisfies \eqref{it:sp2'}.
\item
   $\mu_k^w$ satisfies \eqref{it:sp3} $\iff$ $\nu_k^w$ satisfies \eqref{it:sp3'}.\qed
\end{enumerate}
\end{lem}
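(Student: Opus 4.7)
The plan is to deduce each of the three equivalences from its type $D$ counterpart in \eqref{st:SP_SP'_equiv}, via the embedding $\phi\colon \wt W_{B_m}/W_{I,B_m} \inj \wt W_{D_{m+1}}/W_{\wt I, D_{m+1}}$ furnished by \eqref{st:spin-perm_intersection} (with $\wt I$ as in \eqref{disp:wtI}); the direct route of mimicking the proof of \eqref{st:SP_SP'_equiv} works equally well but is longer. The first step is to verify the compatibility $\nu_k^{\phi(w)} = \iota(\nu_k^w)$, where $\iota\otimes\RR \colon \A_{B_m} \inj \A_{D_{m+1}}$ is the entry-duplicating map: explicitly, if $\nu_k^w = (y_1,\dotsc,y_{2m+1})$, then $\nu_k^{\phi(w)} = (y_1,\dotsc,y_m,y_{m+1},y_{m+1},y_{m+2},\dotsc,y_{2m+1})$. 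This follows from \eqref{disp:mu_k-vects_map} together with the averaging formula $\nu_k = \frac{\mu_k + \mu_{-k}}{2}$ on both sides; the $B_m$ instance of the latter is an easy consequence of the identity $a_k^{B_m} = \frac{\omega_k^{B_m} + \omega_{-k}^{B_m}}{2}$ together with $\iota(a_k^{B_m}) = a_k^{D_{m+1}}$, both immediate from the explicit descriptions \eqref{disp:omega_i} and \eqref{disp:B_m_verts}.

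Next I would check that each of \eqref{it:sp1'}, \eqref{it:sp2'}, \eqref{it:sp3'} for $\nu_k^w$ is equivalent to the corresponding condition \eqref{it:SP1'}, \eqref{it:SP2'}, \eqref{it:SP3'} for $\nu_k^{\phi(w)}$. For \eqref{it:sp1'}, duplicating an entry preserves both $\nu + \nu^* = \mathbf 2$ and the bound $\mathbf 0 \leq \nu \leq \mathbf 2$ on the nose, and conversely these conditions on $\nu_k^{\phi(w)}$ force the duplicated entry to equal $1$. For \eqref{it:sp2'}, once \eqref{it:sp1'} holds the inserted central entry equals $1$, and hence contributes neither to the count of $2$-entries nor to the count of non-integer entries. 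For \eqref{it:sp3'}, when $\nu_k^w \in \ZZ^{2m+1}$ we likewise have $\nu_k^{\phi(w)} \in \ZZ^{2m+2}$ with central entries equal to $1$; since $m+1$ lies in $B_k$ for $\R_{D_{m+1}}$ for all $k \leq m$, the $B_k$-witness demanded by \eqref{it:SP3'} is automatic, and the remaining $A_k$-existence requirement matches \eqref{it:sp3'} via the obvious bijection between $A_k$ for $\R_{B_m}$ and $A_k$ for $\R_{D_{m+1}}$ induced by $\iota$.

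Combining these three equivalences with the corresponding $\mu$-level equivalences already recorded in \eqref{st:spin-perm_intersection}, together with \eqref{st:SP_SP'_equiv} applied to $\phi(w)$, then yields all three parts of the lemma. The argument is essentially bookkeeping once the doubling compatibility $\nu_k^{\phi(w)} = \iota(\nu_k^w)$ is observed, so I anticipate no real obstacle.
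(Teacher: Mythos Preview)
Your approach via the embedding into $\wt W_{D_{m+1}}$ is one of the two routes the paper itself suggests, and it works cleanly for parts (i) and (ii). There is, however, a genuine gap in your treatment of part (iii).

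The issue is your claim that when $\nu_k^w \in \ZZ^{2m+1}$, the central entries of $\nu_k^{\phi(w)}$ equal $1$. This is only true when the $d$-value of $w$ (the common value of $\nu_k^w(j) + \nu_k^w(j^*)$) equals $2$; in general $\nu_k^w(m+1) = d/2$, which is $1$ exactly under \eqref{it:sp1'}. But part (iii) of the lemma does not assume \eqref{it:sp1}. The same problem afflicts your appeal to \eqref{st:spin-perm_intersection}: its part (iii) establishes the equivalence of \eqref{it:sp3} and \eqref{it:SP3} only under the hypothesis \eqref{it:sp1}. When $d \neq 2$, the $B_k$-witness required by \eqref{it:SP3}/\eqref{it:SP3'} is not automatic, and in fact \eqref{it:sp3} (resp.\ \eqref{it:sp3'}) can hold while \eqref{it:SP3} (resp.\ \eqref{it:SP3'}) fails. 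So your chain of equivalences breaks at both ends.

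The fix is short and is essentially the paper's other suggested route: observe directly (via the $B_m$ analogue of \eqref{it:self_dual_conds}, whose proof goes through verbatim using \eqref{disp:random_nu_i_fmla} and the basic inequalities) that $\mu_k^w$ is self-dual $\iff$ $\nu_k^w \in \ZZ^{2m+1}$, and that under this hypothesis $\nu_k^w = \mu_k^w$. Then \eqref{it:sp3} and \eqref{it:sp3'} become literally the same condition, with no need to pass through type $D$.
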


\subsection{\texorpdfstring{$\mu$}{mu}-spin-permissibility and \texorpdfstring{$\mu$}{mu}-admissibility}
In this subsection we prove the equivalence of $\mu$-spin-permissibility and $\mu$-admissibility for $\R_{B_m}$, for $\mu$ the cocharacter \eqref{disp:mu_B}, in analogy with the result for $\R_{D_m}$ \eqref{st:sp-perm=adm_D_parahoric}.  We continue with the notation of the previous subsections.  We write $\Adm_{J,I,B_m}(\mu)$ for the set of $\mu$-admissible elements in $W_{J,B_m}\bs \wt W_{B_m} / W_{I,B_m}$
, and we abbreviate this to $\Adm_{B_m}(\mu)$ when $I = J = \{0,\dotsc,m\}$.

\begin{thm}\label{st:sp-perm=adm_B}
Let $\mu$ be the cocharacter \eqref{disp:mu_B}, and let $I \subset J$ be nonempty subsets of $\{0,\dotsc,m\}$ satisfying property \eqref{disp:I_cond_B}. Then
\[
   \Adm_{J,I,B_m}(\mu) = \SPerm_{J,I,B_m}(\mu).
\]
\end{thm}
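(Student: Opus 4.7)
The plan is to parallel the argument of \eqref{st:sp-perm=adm_D} and \eqref{st:sp-perm=adm_D_parahoric} in type $D$, feeding in all structural inputs through the Steinberg fixed-point embedding $\phi\colon \wt W_{B_m} \hookrightarrow \wt W_{D_{m+1}}$ set up in \s\ref{ss:relation_to_D_m}. First I would reduce to the Iwahori case $I = J = \{0,\dotsc,m\}$, following the pattern of the proof of \eqref{st:sp-perm=adm_D_parahoric}: the natural surjection $\wt W_{B_m} \twoheadrightarrow W_{J,B_m} \bs \wt W_{B_m} / W_{I,B_m}$ sends $\Adm_{B_m}(\mu)$ onto $\Adm_{J,I,B_m}(\mu)$ by definition, and, via the obvious parahoric generalization of \eqref{st:spin-perm_intersection}, sends $\SPerm_{B_m}(\mu)$ onto $\SPerm_{J,I,B_m}(\mu)$ by invoking the type-$D$ parahoric surjectivity \eqref{st:spin-perm_surj} (applied to $\iota(\mu)$ and the subsets $\wt I \subset \wt J$ of $\{0,\dotsc,m+1\}$).

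For the easy inclusion $\Adm_{B_m}(\mu) \subset \SPerm_{B_m}(\mu)$ in the Iwahori case, the argument is the direct analog of \eqref{st:mu-adm=>mu-spin-perm}: the translation elements $t_\lambda$ for $\lambda \in S_{2m+1}^* \cdot \mu$ are manifestly $\mu$-spin-permissible, so it suffices to know that $\SPerm_{B_m}(\mu)$ is closed in the Bruhat order on $\wt W_{B_m}$. But \eqref{st:spin-perm_intersection} gives the identification $\SPerm_{B_m}(\mu) = \SPerm_{D_{m+1}}(\iota(\mu)) \cap \wt W_{B_m}$, and \eqref{st:bruhat_inherited} identifies the Bruhat order on $\wt W_{B_m}$ with the restriction of the Bruhat order on $\wt W_{D_{m+1}}$, so the desired Bruhat-closedness follows from its type-$D$ counterpart \eqref{st:SPerm_bruhat_closed}.

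The reverse inclusion $\SPerm_{B_m}(\mu) \subset \Adm_{B_m}(\mu)$ in the Iwahori case is the substantive step, for which I would run the same iterative construction as in the proof of \eqref{st:sp-perm=adm_D}. Given a non-translation $w \in \SPerm_{B_m}(\mu)$, transfer it to $\phi(w) \in \SPerm_{D_{m+1}}(\iota(\mu))$ via \eqref{st:spin-perm_intersection}; since $q \leq m$ forces the entries of $\iota(\mu)$ in positions $m+1$ and $m+2$ to equal $1$, the $\Theta$-fixedness of $\phi(w)$ combined with the type-$B$ constraint $2\nu_0^w(m+1) = \nu_0^w(1)+\nu_0^w(2m+1) = 2$ yields $\nu_0^{\phi(w)}(m+1) = 1$. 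Part \eqref{it:part_ii} of \eqref{st:big_D_prop} --- applied in $\wt W_{D_{m+1}}$, so that the central index of that statement becomes $m+1$ --- then produces an affine root $\wt\alpha$ such that $s_{\wt\alpha}\phi(w)$ is $\iota(\mu)$-spin-permissible, $\phi(w) < s_{\wt\alpha}\phi(w)$, and $\nu_0^{s_{\wt\alpha}\phi(w)}(m+1) = 1$. The main obstacle, and the one real piece of technical work in this step, is to guarantee that $s_{\wt\alpha}\phi(w)$ again lies in $\phi(\wt W_{B_m}) = \wt W_{D_{m+1}}^{[\Theta]}$ so that the iteration remains inside $\wt W_{B_m}$; this amounts to a careful bookkeeping of the case analysis in the proof of \eqref{st:big_D_prop}\eqref{it:part_ii} to confirm that every $\wt\alpha$ appearing there is $\Theta$-stable --- equivalently, that $s_{\wt\alpha}$ lies in $W_{\aff,D_{m+1}}^\Theta = W_{\aff,B_m}$. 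Once this has been verified, iterating produces a Bruhat chain inside $\wt W_{B_m}$ terminating at a translation element $t_\lambda$ with $\lambda \in X_{*B_m}$; the baby case analogous to \eqref{st:transl_adm}, combined with \eqref{st:convex_hulls} identifying $\Conv(S_{2m+1}^* \cdot \mu) = \Conv(S_{2m+2}^\circ \cdot \iota(\mu)) \cap \A_{B_m}$, forces $\lambda \in S_{2m+1}^* \cdot \mu$, giving the desired $\mu$-admissibility of $w$.
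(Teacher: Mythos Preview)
Your overall architecture matches the paper's, but the hard inclusion contains a genuine gap: the claim that every $\wt\alpha$ produced by the proof of \eqref{st:big_D_prop}\eqref{it:part_ii} is $\Theta$-stable --- so that the entire chain stays inside $\phi(\wt W_{B_m})$ --- is false, not merely unchecked bookkeeping.  For a concrete failure, take $m=2$, $q=2$, and $\phi(w) = t_{(0,2,1,1,0,2)}\,(1\,6)(3\,4) \in \wt W_{D_3}$.  This element is $\Theta$-fixed and $\iota(\mu)$-spin-permissible; the minimal proper element is $i=1$ with $\sigma(i)=i^*=6$ and $u(i)=1$, so we are in Case~I of the proof, and the minimal $j\in B_1$ with $\nu_1^{\phi(w)}(j)=1$ is $j=3=m+1$.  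The proof then takes $\wt\alpha=\wt\alpha_{1,4;-1}$, whose linear part $s_{1,4}=(1\,4)(3\,6)$ does not commute with $\Theta=(3\,4)$; indeed $s_{\wt\alpha}\phi(w)$ has linear part $(1\,3)(4\,6)$, which fails to stabilize $\{3,4\}$ and hence lies outside $\phi(\wt W_{B_2})$.

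The paper sidesteps this entirely by not asking the intermediate steps to remain in $\wt W_{B_m}$.  One runs the chain in $\wt W_{D_{m+1}}$, using \eqref{st:big_D_prop}\eqref{it:part_ii} only to preserve the scalar condition $\nu_0(m+1)=1$ at each step.  At the terminal translation $t_\nu$ this gives $\nu(m+1)=1$, and since $\nu+\nu^*=\mathbf 2$ forces $\nu(m+2)=1$ as well, we get $t_\nu\in\wt W_{B_m}$.  Then \eqref{st:bruhat_inherited} transfers the single Bruhat relation $\phi(w)\leq t_\nu$ from $\wt W_{D_{m+1}}$ back to $\wt W_{B_m}$, and the baby case \eqref{st:transl_adm} finishes.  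The paper also handles the parahoric reduction differently: rather than arguing surjectivity of $\SPerm_{B_m}(\mu)\to\SPerm_{J,I,B_m}(\mu)$ (which does not follow from \eqref{st:spin-perm_surj} alone, since a type-$D$ lift of a $\Theta$-fixed face need not be $\Theta$-fixed), it passes to the minimal length representative in $\wt W_{D_{m+1}}$, which lies in $\wt W_{B_m}$ by \eqref{st:min_length_rep} and in $\SPerm_{D_{m+1}}\bigl(\iota(\mu)\bigr)$ by \eqref{rk:min_length_spin-perm}.
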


\begin{proof}
This will follow easily from our results in type $D$ and from \eqref{st:bruhat_inherited}.  For notational convenience, we regard $X_{*B_m}$ as a subgroup of $X_{*D_{m+1}}$, and $\wt W_{B_m}$ as a subgroup of $\wt W_{D_{m+1}}$.  The inclusion $\Adm_{J,I,B_m}(\mu) \subset \SPerm_{J,I,B_m}(\mu)$ is an immediate consequence of \eqref{st:sp-perm=adm_D_parahoric}, \eqref{st:bruhat_inherited}, and \eqref{st:spin-perm_intersection}.

To prove the reverse inclusion, let $w \in \SPerm_{J,I,B_m}(\mu)$.  Then
\[
   w \in \SPerm_{\wt J, \wt I, D_{m+1}}(\mu)
\]
by \eqref{st:spin-perm_intersection}.  Let $\tensor[^{\wt J}]w{^{\wt I}}$ denote the minimal length representative of $w$ in $\wt W_{D_{m+1}}$.  Then
\[
   \tensor[^{\wt J}]w{^{\wt I}} \in \SPerm_{D_{m+1}}(\mu) \cap \wt W_{B_m}
\]
by \eqref{rk:min_length_spin-perm} and \eqref{st:min_length_rep}.  By applying \eqref{st:big_D_prop}\eqref{it:part_ii} repeatedly, we eventually find a translation element $t_{\nu} \in \SPerm_{D_{m+1}}(\mu)$ such that $\tensor[^{\wt J}]x{^{\wt I}} \leq t_\nu$ in $\wt W_{D_{m+1}}$ and $\nu(m+1) = \nu(m+2) = 1$ (regarding $\nu$ as a $(2m+2)$-tuple), i.e. $t_\nu \in \wt W_{B_m}$.  Hence $t_\nu \in \SPerm_{B_m}(\mu)$ by \eqref{st:spin-perm_intersection}, and $\tensor[^{\wt J}]w{^{\wt I}} \leq t_\nu$ in $\wt W_{B_m}$ by \eqref{st:bruhat_inherited}.  Since the $\mu$-spin-permissible translation elements in $\wt W_{B_m}$ are $\mu$-admissible by the same argument as in \eqref{st:transl_adm}, we conclude that $w \in \Adm_{J,I,B_m}(\mu)$.
\end{proof}

The key point in the above proof, besides the fact \eqref{st:bruhat_inherited} that the Bruhat order for $\R_{B_m}$ is inherited from the Bruhat order for $\R_{D_{m+1}}$, is that given $w \in \wt W_{B_m}$ such that $w \leq t_\nu$ in $\wt W_{D_{m+1}}$ for some $\nu \in S_{2m+2}^\circ\mu$, there in fact exists $\nu' \in S_{2m+1}^*\mu$ such that $w \leq t_{\nu'}$; or in other words,
that $\Adm_{B_m}(\mu) = \wt W_{B_m} \cap \Adm_{D_{m+1}}(\mu)$.  We conjecture that the same holds for any Steinberg fixed-point root datum, as follows.

\begin{conj}\label{conj}
Let $\R = (X^*,X_*,R,R^\vee,\Pi)$ be a based root datum and $\Theta$ an automorphism of \R, as in \s\ref{ss:steinberg}.  Let $\A$ denote the apartment of \R, let $\mathbf a$ be an alcove in $\A$ meeting $\A^{[\Theta]}$, and let $\mathbf{f_1}$, $\mathbf{f_2} \preceq \mathbf a$ be subfacets also meeting $\A^{[\Theta]}$.
Then for any cocharacter $\mu \in X_*^{[\Theta]}$,
\[
   \Adm_{\mathbf{f}_{\mathbf 1}^{[\Theta]}, \mathbf f_{\mathbf 2}^{[\Theta]}, \R^{[\Theta]}}(\mu) 
      = \Adm_{\mathbf{f_1},\mathbf{f_2}, \R}(\mu) \cap \bigl(W_{\mathbf{f}_{\mathbf 1}^{[\Theta]}} \bs \wt W^{[\Theta]} / W_{\mathbf f_{\mathbf 2}^{[\Theta]}}\bigr),
\]
where the Bruhat orders in the admissible sets are specified by $\Ba^{[\Theta]}$ and \Ba, respectively.
\end{conj}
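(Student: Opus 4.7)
The plan is to split the conjecture into two inclusions. For $\subset$, suppose $w \in \Adm_{\mathbf{f}_{\mathbf 1}^{[\Theta]}, \mathbf f_{\mathbf 2}^{[\Theta]}, \R^{[\Theta]}}(\mu)$, so there exists $\lambda \in W^{[\Theta]}\mu$ with $w \leq W_{\mathbf{f}_{\mathbf 1}^{[\Theta]}} t_\lambda W_{\mathbf f_{\mathbf 2}^{[\Theta]}}$ in the fixed-point double-coset space. Theorem \eqref{st:bruhat_inherited} transports this inequality to the ambient double-coset space, and the inclusion $W^{[\Theta]}\mu \subset W\mu$ then yields $w \in \Adm_{\mathbf f_1,\mathbf f_2,\R}(\mu)$. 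This direction is purely formal.

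For the harder inclusion $\supset$, I would replicate the strategy of the proof of \eqref{st:sp-perm=adm_B}. First, using Lemma \eqref{st:min_length_rep} and Theorem \eqref{st:bruhat_inherited}, reduce to the Iwahori case $\mathbf f_1 = \mathbf f_2 = \Ba$: passing to the minimal-length representative $\tensor[^{\mathbf f_1}]{w}{^{\mathbf f_2}} \in \wt W^{[\Theta]}$, it suffices to show that any $\Theta$-fixed $\tilde w \in \wt W^{[\Theta]}$ with $\tilde w \leq t_\nu$ in $\wt W$ for some $\nu \in W\mu$ also satisfies $\tilde w \leq t_\lambda$ in $\wt W$ for some $\lambda \in W^{[\Theta]}\mu$. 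I would then attempt to build a saturated increasing chain $\tilde w = \tilde w_0 < \tilde w_1 < \tilde w_2 < \dotsb$ in $\wt W^{[\Theta]}$, where each step applies a $\Theta$-stable affine reflection $s_{\wt\alpha} \in W_\aff^{[\Theta]}$ with $\tilde w_i < s_{\wt\alpha}\tilde w_i$ and with $s_{\wt\alpha}\tilde w_i$ still ambient $\mu$-admissible. Since the ambient $\mu$-admissible set is finite, the chain terminates at some $\tilde w_k$; one then argues that any such terminal element must be a translation $t_\lambda$, and that $\lambda$ lies in $W^{[\Theta]}\mu$ by the convex-hull characterization \eqref{st:convex_hulls}.

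The principal obstacle is the production of the $\Theta$-fixed raising reflections. This is precisely the role played by Proposition \eqref{st:big_D_prop}\eqref{it:part_ii} in the $B_m \subset D_{m+1}$ case: a delicate case-by-case construction guarantees that one can always find such a reflection while additionally preserving the middle-coordinate condition characterizing $\wt W_{B_m}$ inside $\wt W_{D_{m+1}}$. A uniform proof of the conjecture would require a type-independent version of this raising step, namely: for any non-translation $\Theta$-fixed $\tilde w \in \wt W^{[\Theta]}$ that is ambient $\mu$-admissible, produce a $\Theta$-stable affine root (or $\Theta$-orbit of affine roots whose product of reflections lies in $W_\aff^{[\Theta]}$) whose associated reflection raises $\tilde w$ in the Bruhat order while preserving ambient $\mu$-admissibility. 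Plausible ingredients for such a uniform argument are: an equivariant version of the exchange/raising principle for Coxeter groups under diagram automorphisms (in the spirit of Haines--Ng\^o); the convex-hull characterization of fixed-point Weyl orbits in Lemma \eqref{st:convex_hulls}; and a systematic classification of $\Theta$-orbits on the affine root system and their induced reflections in $W_\aff^{[\Theta]}$. Absent such general machinery, the conjecture would plausibly need to be verified diagram-folding by diagram-folding, with the present paper's treatment of $B_m \subset D_{m+1}$ as the template.
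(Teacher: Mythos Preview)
The statement is a \emph{conjecture}; the paper does not prove it in general, and the discussion immediately following it explicitly says so and surveys the cases in which it is known.  Your treatment of the inclusion $\subset$ via \eqref{st:bruhat_inherited}, your reduction of $\supset$ to the Iwahori case, and your identification of the essential difficulty as a uniform analogue of \eqref{st:big_D_prop}\eqref{it:part_ii} all match the paper's own remarks.

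There is one mischaracterization worth correcting.  You describe the paper's special-case strategy as building a chain \emph{inside $\wt W^{[\Theta]}$}, using reflections from $W_\aff^{[\Theta]}$ at each step.  That is not what the paper does.  In the proof of \eqref{st:sp-perm=adm_B} the chain lives in the ambient $\wt W_{D_{m+1}}$, and the reflections are ordinary $D_{m+1}$ affine reflections; what \eqref{st:big_D_prop}\eqref{it:part_ii} preserves at each step is only the single scalar condition $\nu_0(m+1) = 1$ on the translation part, which is strictly weaker than membership in $\wt W_{B_m}$.  The intermediate chain elements need not be $\Theta$-fixed at all.  The point is that for a \emph{translation} $t_\nu$ satisfying $\nu + \nu^* = \mathbf 2$, the condition $\nu(m+1) = 1$ \emph{is} equivalent to $t_\nu \in \wt W_{B_m}$, so the terminal translation lands back in $\wt W_{B_m}$; one then invokes \eqref{st:bruhat_inherited} a second time to transfer the inequality $\tilde w \leq t_\nu$ from $\wt W_{D_{m+1}}$ to $\wt W_{B_m}$.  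This is a genuinely weaker requirement than your proposal: the chain need not remain in $\wt W^{[\Theta]}$, only terminate there.  For a general attack on the conjecture this distinction may matter, since finding an ambient raising reflection preserving a single linear constraint on $\nu_0$ is likely more tractable than finding one that is $\Theta$-equivariant.
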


The content of the conjecture lies in the inclusion $\supset$, as the forward inclusion $\subset$ is a trivial consequence of \eqref{st:bruhat_inherited}.  It suffices to prove the conjecture just in the Iwahori case $\mathbf {f_1} = \mathbf{f_2} = \mathbf a$, since the sets $\Adm_{\R^{[\Theta]}}(\mu)$ and $\Adm_\R(\mu) \cap \wt W^{[\Theta]}$ respectively surject onto the left-hand and right-hand sets in the display.  If \R is irreducible of type $A$, then the conjecture has been proved by Haines and Ng\^o for any $\mu$ \cite{hngo02b}*{Th.\ 1, Prop.\ 9.7}.  The conjecture also holds whenever $\mu$-admissibility and $\mu$-permissibility are equivalent in $\wt W$ and in $\wt W^{[\Theta]}$, since it is an easy consequence of \eqref{st:convex_hulls} that $\Perm_\R(\mu) \cap \wt W^{[\Theta]} \subset \Perm_{\R^{[\Theta]}}(\mu)$.  In particular, the conjecture is known for \emph{minuscule $\mu$} when the root datums involve only types $A$, $B$, $C$, and $D$; see \cite{prs?}*{Prop.\ 4.4.5(iii)}.  As we remarked above, we proved the conjecture for $\R = \R_{D_{m+1}}$, $\R^{[\Theta]} = \R_{B_m}$, and $\mu$ the cocharacter \eqref{disp:mu_B} in the course of proving \eqref{st:sp-perm=adm_B}.
Our proof of this relied crucially on \eqref{st:big_D_prop}\eqref{it:part_ii}.  If however the conjecture were already known, then it would have obviated our need for \eqref{st:big_D_prop}\eqref{it:part_ii} in the first place.

\subsection{Vertexwise admissibility}\label{ss:vert_adm_B}
In this subsection we prove the equivalence of $\mu$-admissibility and $\mu$-vertexwise admissibility in $\R_{B_m}$ for the cocharacter $\mu$ \eqref{disp:mu_B}, in analogy with \eqref{st:vert-adm_D}.  See \eqref{def:vert-adm} for the general definition of $\mu$-vertexwise admissibility.  We write $\Adm_{F',F,B_m}^\vert(\mu)$ for the $\mu$-vertexwise admissible set in $W_{F',B_m} \bs \wt W_{B_m} / W_{F,B_m}$.

\begin{thm}\label{st:vert-adm_B}
Let $\mu$ be the cocharacter \eqref{disp:mu_B}, and let $F \preceq F' \preceq A_{B_m}$.  Then
\[
   \Adm_{F',F,B_m}(\mu) = \Adm_{F',F,B_m}^\vert(\mu).
\]
\end{thm}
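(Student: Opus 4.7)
The plan is to mimic the argument of Theorem \ref{st:vert-adm_D}, using the parahoric equivalence of $\mu$-admissibility and $\mu$-spin-permissibility provided by \eqref{st:sp-perm=adm_B} in place of \eqref{st:sp-perm=adm_D_parahoric}. The inclusion $\Adm_{F',F,B_m}(\mu) \subset \Adm^\vert_{F',F,B_m}(\mu)$ is tautological from \eqref{disp:adm_incl}, so only the reverse inclusion requires work.

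First I would observe that conjugation by the affine automorphism $\theta_0$ of $\A_{B_m}$ (inherited from \eqref{disp:thetas} by restriction, since $\theta_0$ stabilizes $\A_{B_m}$ and $A_{B_m}$) preserves both admissible and vertexwise admissible sets: since $q \leq m$ we have $S_{2m+1}^* \mu = S_{2m+1}^* \mu$ automatically, so conjugation by $\theta_0$ identifies $\Adm$ and $\Adm^\vert$ across the corresponding double-coset quotients. Therefore I may assume, without loss of generality, that $W_{F,B_m} = W_{I,B_m}$ and $W_{F',B_m} = W_{J,B_m}$ for some nonempty $I \subset J \subset \{0,\dotsc,m\}$ satisfying \eqref{disp:I_cond_B}.

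Next, let $w \in \Adm^\vert_{J,I,B_m}(\mu)$ and fix any lift $\wt w \in \wt W_{B_m}/W_{I,B_m}$. By \eqref{st:sp-perm=adm_B} it suffices to prove that $\wt w$ is $\mu$-spin-permissible, i.e.\ that $\wt w \equiv t_\mu \bmod W_{\aff,B_m}$ (immediate from vertexwise admissibility and the definition of admissibility) and that $\nu_k^{\wt w}$ satisfies conditions \eqref{it:sp1'}--\eqref{it:sp3'} for every $k \in I$. The minimal subfacets of $F$ are the lines through those $a_k$ with $k \in I \smallsetminus \{1\}$, together with $a_{0'} + \RR \cdot (1,\dotsc,1)$ when $1 \in I$. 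For each such $k \neq 1$ in $I$, the corresponding singleton parahoric is $W_{\{k\},B_m}$, and the vertexwise admissibility hypothesis combined with the single-vertex case of \eqref{st:sp-perm=adm_B} applied to $W_{\{k\},B_m} \bs \wt W_{B_m}/W_{\{k\},B_m}$ gives exactly \eqref{it:sp1'}--\eqref{it:sp3'} for $\nu_k^{\wt w}$.

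The main (and only) remaining obstacle is the case $k = 1$, since $a_1$ is not a vertex of a minimal facet of $A_{B_m}$. When $1 \in I$ we have $0 \in I$ by \eqref{disp:I_cond_B}, and the facet of $A_{B_m}$ whose closure contains $a_1$ also has $a_{0'}$ in its closure. I would handle this exactly as in the proof of \eqref{st:vert-adm_D}: apply the vertexwise admissibility hypothesis at the \emph{special} vertex $a_{0'}$ to obtain $\nu_{0'}^{\wt w} \in \Conv(S_{2m+1}^* \mu)$, combine this with the already-proved fact that $\nu_0^{\wt w}$ satisfies \eqref{it:sp1'}--\eqref{it:sp3'} (and in particular lies in $\Conv(S_{2m+1}^* \mu)$), and then invoke the type $B$ analog of \eqref{st:sickofnaminglemmas}\eqref{it:0'}. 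The latter lemma is not stated in \s\ref{s:combinatorics_B}, but it follows immediately by transferring through the embedding $\wt W_{B_m} \hookrightarrow \wt W_{D_{m+1}}$ of \s\ref{ss:relation_to_D_m} via \eqref{st:spin-perm_intersection} (noting that $\iota$ carries $a_0,a_{0'},a_1 \in \A_{B_m}$ to the points of $\A_{D_{m+1}}$ bearing the same names, and that $\Conv(S_{2m+1}^*\mu) = \Conv(S_{2m+2}^\circ \iota(\mu)) \cap \A_{B_m}$ by \eqref{st:convex_hulls}), so \eqref{st:sickofnaminglemmas}\eqref{it:0'} in type $D$ directly yields the needed assertion that $\nu_1^{\wt w}$ is $\mu$-spin-permissible. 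This completes the verification that $\wt w$ is $\mu$-spin-permissible, and hence that $w \in \Adm_{J,I,B_m}(\mu)$ by \eqref{st:sp-perm=adm_B}.
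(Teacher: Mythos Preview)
Your proof is correct and follows essentially the same route as the paper's: reduce via $\theta_0$ to subsets $I\subset J$ satisfying \eqref{disp:I_cond_B}, use \eqref{st:sp-perm=adm_B} at each vertex $a_k$ with $k\in I\smallsetminus\{1\}$, and handle $k=1$ via the special vertex $a_{0'}$ exactly as in \eqref{st:vert-adm_D}. The paper later records the type~$B$ analog of \eqref{st:sickofnaminglemmas}\eqref{it:0'} as Lemma~\ref{st:blah}(ii), proved precisely by the transfer through $\wt W_{B_m}\hookrightarrow\wt W_{D_{m+1}}$ that you sketch; your only slip is the tautological ``$S_{2m+1}^*\mu=S_{2m+1}^*\mu$'' (the relevant point is that the linear part of $\theta_0$ lies in $S_{2m+1}^*$), which does not affect the argument.
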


\begin{proof}
The argument is very similar to the proof of \eqref{st:vert-adm_D}.  After applying the automorphism $\theta_0$ if necessary, we may assume that if $a_{0'}$ is a vertex of $F$, then so is $a_0$, and likewise for $F'$.  Thus we may assume that $W_{F,B_m} = W_{I,B_m}$ and $W_{F',B_m} = W_{J,B_m}$ for some nonempty subsets $I \subset J \subset \{0,\dotsc,m\}$ satisfying property \eqref{disp:I_cond_B}.

We must show that $\Adm_{F',F}^\vert(\mu) \subset \Adm_{F',F}(\mu)$.  If $w \in \Adm_{F',F}^\vert(\mu)$ and $\wt w$ is any representative of $w$ in $\wt W_{B_m}/W_{F,B_m}$, then we see right away that $w \equiv t_\mu \bmod W_{\aff,B_m}$ and, by \eqref{st:sp-perm=adm_B} applied with $I = J = \{k\}$ for $k \in I \smallsetminus \{1\}$, that $\mu_k^{\wt w}$ satisfies \eqref{it:sp1}--\eqref{it:sp3} for $k \in I \smallsetminus \{1\}$.  Thus, by \eqref{st:sp-perm=adm_B} again, it remains only to show that $\mu_1^{\wt w}$ satisfies \eqref{it:sp1}--\eqref{it:sp3} in case $1 \in I$.  This is done by using that the image of $w$ in $W_{\{0'\},B_m} \bs \wt W_{B_m} / W_{\{0'\},B_m}$ is $\mu$-admissible, as in the proof of \eqref{st:vert-adm_D}, where $W_{\{0'\},B_m}$ denotes the stabilizer in $W_{\aff,B_m}$ of the facet $a_{0'} + \RR \cdot (1,\dotsc,1)$.
\end{proof}

\begin{rk}
Upon identifying $\wt W_G$ with $\wt W_{B_m}$ as in \eqref{st:wtW_G->wtW_B_m}, the theorem was conjectured by Pappas and Rapoport in   
\cite{paprap09}*{Conj.\ 4.2}.
\end{rk}

\begin{rk}
In analogy with \eqref{rk:natural_def}, from a root-theoretic point of view it would be more natural to define $\mu$-spin-permissibility in terms of the vectors $\mu_k^{w}$, or equivalently $\nu_k^w$, for $k \in \{0,0',2,3,\dotsc,m\}$, since the points $a_k$ for such $k$ are vertices of the base alcove.  The theorem shows that $\Adm_{F',F,B_m}(\mu)$ consists of all elements $w$ such that $w \equiv t_{\mu} \bmod W_{\aff,B_m}$ and for one, hence any, representative $\wt w$ of $w$ in $\wt W_{B_m}/ W_{F,B_m}$, $\wt w a_k - a_k$ satisfies \eqref{it:sp1'}--\eqref{it:sp3'} for all $a_k$ which are vertices of $F$.  Here, analogously to \eqref{rk:natural_def}, the spin condition \eqref{it:sp3'} for $\wt w a_{0'} - a_{0'}$ means that $\wt w a_{0'} - a_{0'} \equiv \mu \bmod Q_{B_m}^\vee$ (which is equivalent to $w \equiv t_\mu \bmod W_{\aff,B_m}$).
\end{rk}

\subsection{\texorpdfstring{$\mu$}{mu}-permissibility}
In this final subsection of the paper, we shall characterize $\mu$-permissibility in $\R_{B_m}$, for $\mu$ the cocharacter \eqref{disp:mu_B}, in terms of the conditions \eqref{it:sp1}--\eqref{it:sp3}.  Everything we shall do will be in close analogy with \s\s\ref{ss:mu-perm} and \ref{ss:mu-perm_parahoric}.  We write
\[
   \Perm_{F',F,B_m}(\mu)
\]
for the set of $\mu$-permissible elements in $W_{F',B_m} \bs \wt W_{B_m} / W_{F,B_m}$, and we abbreviate this to $\Perm_{B_m}(\mu)$ when $F = F' = A_{B_m}$.
As usual, it is harmless to restrict to restrict to subfacets $F$ of $A_{B_m}$ with the property that $a_0$ is a vertex of $F$ if $a_{0'}$ is.

\begin{prop}\label{st:mu-perm_B}
Let $\mu$ be the cocharacter \eqref{disp:mu_B}, let $I \subset J$ be nonempty subsets of $\{0,\dotsc,m\}$ satisfying property \eqref{disp:I_cond_B}, and let $w \in W_{J,B_m} \bs \wt W_{B_m} / W_{I,B_m}$.  Then $w$ is $\mu$-permissible $\iff$ $w \equiv t_\mu \bmod W_{\aff,B_m}$ and for one, hence any, representative $\wt w$ of $w$ in $\wt W_{B_m}/ W_{I,B_m}$, $\nu_k^{\wt w}$ satisfies \eqref{it:sp1'} and \eqref{it:sp2'} for all $k \in I$, and \eqref{it:sp3'} for all $k \in I \cap \{0,1\}$.
\end{prop}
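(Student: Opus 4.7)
The strategy is to mirror the proof of Proposition \ref{st:mu-perm_D_parahoric}, for which I would first establish type $B$ analogs of Lemma \ref{st:SP1-2'_conv_cond} and Lemma \ref{st:sickofnaminglemmas}(i). Once these are available, the argument is identical: the minimal subfacets of the facet $F$ corresponding to $I$ are the lines $a_k + \RR(1,\dotsc,1)$ for $k \in I \smallsetminus \{1\}$, together with the line through $a_{0'}$ if $1 \in I$. By convexity, $\mu$-permissibility of $w$ amounts to the congruence condition $w \equiv t_\mu \bmod W_{\aff,B_m}$ plus $wv - v \in \Conv(S_{2m+1}^*\mu)$ for one $v$ in each minimal subfacet of $F$. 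The analog of \eqref{st:SP1-2'_conv_cond} then handles each $k \in I \smallsetminus \{1\}$, and the analog of \eqref{st:sickofnaminglemmas}(i) handles the combined data at $k \in \{0,1\}$ and $0'$ when $1 \in I$.

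For the first analog, I would show that for $k \in I$, the conditions \eqref{it:sp1'} and \eqref{it:sp2'} on $\nu_k^{\wt w}$ are equivalent to $\nu_k^{\wt w} \in \Conv(S_{2m+1}^*\mu)$. The quickest route is via the Steinberg embedding $\phi\colon \wt W_{B_m} \inj \wt W_{D_{m+1}}$ of \S\ref{ss:relation_to_D_m}: by \eqref{st:convex_hulls} applied to $\R_{B_m} = \R_{D_{m+1}}^{[\Theta]}$, we have $\Conv(S_{2m+1}^*\mu) = \Conv(S_{2m+2}^\circ \iota(\mu)) \cap \A_{B_m}$, so $\nu_k^{\wt w} \in \Conv(S_{2m+1}^*\mu)$ iff $\nu_k^{\phi(\wt w)} \in \Conv(S_{2m+2}^\circ\iota(\mu))$. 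Lemma \eqref{st:SP1-2'_conv_cond} identifies the latter with \eqref{it:SP1'} and \eqref{it:SP2'} on $\nu_k^{\phi(\wt w)}$, and Lemmas \eqref{st:spin-perm_intersection} and \eqref{st:sp-sp'_equivs} translate these back to \eqref{it:sp1'} and \eqref{it:sp2'} for $\nu_k^{\wt w}$.

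For the second analog, I would show that when $0, 1 \in I$, the vectors $\nu_0^{\wt w}$ and $\nu_1^{\wt w}$ both satisfy \eqref{it:sp1'}, \eqref{it:sp2'}, \eqref{it:sp3'} if and only if $w \equiv t_\mu \bmod W_{\aff,B_m}$ together with $\nu_0^{\wt w},\, \nu_{0'}^{\wt w} \in \Conv(S_{2m+1}^*\mu)$. The proof is an almost verbatim adaptation of \eqref{st:sickofnaminglemmas}(i): in type $B$ as in type $D$, $a_1 = (a_0 + a_{0'})/2$, so setting $\varepsilon := e_1 - e_{\sigma(1)} + e_{\sigma(2m+1)} - e_{2m+1}$ we have $\nu_{0'}^{\wt w} = \nu_0^{\wt w} + \varepsilon$ and $\nu_1^{\wt w} = \nu_0^{\wt w} + \tfrac{1}{2}\varepsilon$. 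The case analysis on whether $\sigma(1) = 1$, $\sigma(1) = 2m+1$, or $\sigma(1) \in \{2,\dotsc,2m\}$ runs as in the type $D$ proof, invoking the first analog above in place of \eqref{st:SP1-2'_conv_cond}. The one simplification is that \eqref{it:sp3'} only demands an $A_k$-entry equal to $1$ with no $B_k$-requirement; the $B_k$-constraint that had to be engineered in the type $D$ proof is automatic here since $\nu_k^{\wt w}(m+1) = 1$ is forced by \eqref{it:sp1'} and Remark \ref{rk:whatevs}. (Where the type $D$ argument used $q < m$ to produce a middle $1$-entry, the analogous step here uses $q \leq m$, which gives $2m+1-2q \geq 1$.)

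The main obstacle is the case-analytic proof of the second analog, but its structure closely mirrors, and is in fact somewhat simpler than, that of \eqref{st:sickofnaminglemmas}(i), so I expect no genuine new difficulty.
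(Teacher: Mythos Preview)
Your proposal is correct and follows the same overall strategy as the paper: reduce to the two lemmas (the type $B$ analogs of \eqref{st:SP1-2'_conv_cond} and \eqref{st:sickofnaminglemmas}\eqref{it:0'}) and then argue exactly as in \eqref{st:mu-perm_D_parahoric}. Your treatment of the first analog via the Steinberg embedding and \eqref{st:convex_hulls} is precisely what the paper does.

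The one difference is in the second analog. You propose to rerun the case analysis of \eqref{st:sickofnaminglemmas}\eqref{it:0'} directly in type $B$, noting the simplification that the $B_k$-clause of the spin condition is automatic. The paper instead handles this lemma too by lifting to $D_{m+1}$: since $a_0$, $a_{0'}$, $a_1$ all lie in $\A_{B_m}$ and are identified with the like-named points in $\A_{D_{m+1}}$, one applies \eqref{st:sickofnaminglemmas}\eqref{it:0'} to $\phi(\wt w)$ and translates back using \eqref{st:spin-perm_intersection}, \eqref{st:sp-sp'_equivs}, and \eqref{st:convex_hulls}. This is shorter and avoids repeating the case analysis, at the cost of relying on the Steinberg machinery throughout; your direct approach is more self-contained but duplicates work already done in type $D$. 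Either is fine.
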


\begin{proof}
This is obvious from the following lemma.
\end{proof}

\begin{lem}\label{st:blah}
Keep the notation of \eqref{st:mu-perm_B}, and let $\Conv(S_{2m+1}^*\mu)$ denote the convex hull in $\A_{B_m}$ of the Weyl orbit $S_{2m+1}^*\mu$.
\begin{enumerate}
\renewcommand{\theenumi}{\roman{enumi}}
\item
   Let $k \in I$.  Then $\nu_k^{\wt w}$ satisfies \eqref{it:sp1'} and \eqref{it:sp2'} $\iff$ $\nu_k^{\wt w} \in \Conv(S_{2m+1}^*\mu)$.
\item
   Suppose that $1 \in I$.  Then $\nu_0^{\wt w}$ and $\nu_1^{\wt w}$ satisfy \eqref{it:sp1'}--\eqref{it:sp3'} $\iff$ $w \equiv t_\mu \bmod W_{\aff,B_m}$ and $\nu_0^{\wt w}$, $\nu_{0'}^{\wt w} \in \Conv(S_{2m+1}^*\mu)$.
\end{enumerate}
\end{lem}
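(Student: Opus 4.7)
The plan is to deduce both parts from the type $D$ analogs \eqref{st:SP1-2'_conv_cond} and \eqref{st:sickofnaminglemmas} (applied with $m$ replaced by $m+1$ and $\mu$ replaced by $\iota(\mu)$) via the Steinberg fixed-point formalism of \s\ref{ss:steinberg}--\s\ref{ss:relation_to_D_m}. The basic geometric observation is that the vertices $a_0$, $a_{0'}$, $a_1, a_2,\dotsc,a_m$ in $\A_{B_m}$ all coincide with their namesakes in $\A_{D_{m+1}}$ under the inclusion $\A_{B_m} \subset \A_{D_{m+1}}$, so that $\nu_k^{\wt w} = \nu_k^{\phi(\wt w)}$ for $k \in I$ and $\nu_{0'}^{\wt w} = \nu_{0'}^{\phi(\wt w)}$ (when $1 \in I$) as vectors in $\A_{B_m}$, embedded inside $\A_{D_{m+1}}$.

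First I would translate the individual conditions on $\nu_k^{\wt w}$ in type $B$ into the corresponding conditions on $\nu_k^{\phi(\wt w)}$ in type $D$. Combining \eqref{st:sp-sp'_equivs} with \eqref{st:spin-perm_intersection}\eqref{it:sp1_equiv}--\eqref{it:sp2-3_equivs} and \eqref{st:SP_SP'_equiv}, one obtains that $\nu_k^{\wt w}$ satisfies \eqref{it:sp1'} (resp.\ \eqref{it:sp2'}, resp.\ \eqref{it:sp3'}) if and only if $\nu_k^{\phi(\wt w)}$ satisfies \eqref{it:SP1'} (resp.\ \eqref{it:SP2'}, resp.\ \eqref{it:SP3'}). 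The only subtle point is the comparison of \eqref{it:sp3'} and \eqref{it:SP3'}: the type $D$ condition demands both some $j_1 \in A_k$ and some $j_2 \in B_k$ with value $1$, while the type $B$ condition asks only for $j_1 \in A_k$. But when \eqref{it:SP1'} holds, $\nu_k^{\phi(\wt w)}(m+1) = \nu_k^{\phi(\wt w)}(m+2) = 1$, and both $m+1$ and $m+2$ lie in $B_k$ for $0 \leq k \leq m$, so the existence of $j_2$ is automatic (compare the argument of \eqref{st:automatic_spin_cond}).

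For part (i), I would then apply \eqref{st:SP1-2'_conv_cond} in type $D$ to obtain that \eqref{it:SP1'} and \eqref{it:SP2'} together are equivalent to $\nu_k^{\phi(\wt w)} \in \Conv(S_{2m+2}^\circ\iota(\mu))$. Under the identification $\R_{B_m} = \R_{D_{m+1}}^{[\Theta]}$ from \s\ref{ss:relation_to_D_m}, Lemma \eqref{st:convex_hulls} gives $\Conv(S_{2m+1}^*\mu) = \Conv(S_{2m+2}^\circ\iota(\mu)) \cap \A_{B_m}$, and since $\nu_k^{\phi(\wt w)}$ lies in $\A_{B_m}$ the membership descends to $\nu_k^{\wt w} \in \Conv(S_{2m+1}^*\mu)$. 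For part (ii), I would invoke \eqref{st:sickofnaminglemmas}\eqref{it:0'} analogously to convert $\iota(\mu)$-spin-permissibility of $\nu_0^{\phi(\wt w)}$ and $\nu_1^{\phi(\wt w)}$ into the congruence $\phi(\wt w) \equiv t_{\iota(\mu)} \bmod W_{\aff,D_{m+1}}$ together with $\nu_0^{\phi(\wt w)}, \nu_{0'}^{\phi(\wt w)} \in \Conv(S_{2m+2}^\circ\iota(\mu))$, and then translate these outputs back to type $B$ using \eqref{st:spin-perm_intersection}\eqref{it:W_aff_equiv} and the convex hull identification from part (i). The entire argument is a routine transfer through the Steinberg machine; the only real obstacle is the bookkeeping around the spin condition, which is resolved once and for all by the observation about the duplicated middle entries.
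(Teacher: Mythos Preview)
Your proposal is correct and follows essentially the same approach as the paper's proof: deduce both parts from the type $D$ results \eqref{st:SP1-2'_conv_cond} and \eqref{st:sickofnaminglemmas} via the Steinberg fixed-point identification $\R_{B_m} \cong \R_{D_{m+1}}^{[\Theta]}$, the equivalence of the (sp) and (SP) conditions, and the convex hull identity from \eqref{st:convex_hulls}. The paper's proof is stated more tersely, but your proposal simply spells out the bookkeeping it leaves implicit.
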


\begin{proof}
Upon identifying $\R_{B_m}$ with $\R_{D_{m+1}}^{[\Theta]}$ as in \s\ref{ss:relation_to_D_m}, everything follows from the analogous results \eqref{st:SP1-2'_conv_cond} and \eqref{st:sickofnaminglemmas} for $\R_{D_{m+1}}$, the equivalence of the (sp) and (SP) conditions, and the fact that $\Conv(S_{2m+1}^*\mu) = \Conv(S_{2m+1}^\circ \mu) \cap \A_{B_m}$ by \eqref{st:convex_hulls}.
\end{proof}

\begin{eg}\label{eg:perm_neq_adm_B}
Recall the element $w \in \wt W_{D_4}$ from \eqref{eg:not_mu-adm_D} which is contained in $\Perm_{D_4}(\mu)$ but not in $\Adm_{D_4}(\mu)$, for $\mu$ the cocharacter $\bigl(2^{(2)},1,1,0^{(3)}\bigr) \in X_{*D_4}$.  Upon embedding $\wt W_{B_3} \subset \wt W_{D_4}$, we find that $w$, $t_\mu \in \wt W_{B_3}$, and that $w \in \Perm_{B_m}(\mu)$ but $w \notin \Adm_{B_m}(\mu)$.  As in \eqref{eg:not_mu-adm_D}, this example easily generalizes to show that $\Adm_{B_m}(\mu) \neq \Perm_{B_m}(\mu)$ whenever $m$, $q \geq 3$.  We remark that Haines and Ng\^o \cite{hngo02b}*{Th.\ 3} have previously shown that in any irreducible root datum \R not of type $A$ and of rank $\geq 4$, there exist cocharacters $\mu$ for which $\Adm_\R(\mu) \neq \Perm_\R(\mu)$.
\end{eg}

In general, $\mu$-permissible sets need not be well-behaved with regard to Steinberg fixed-point root data, in the sense that for an arbitrary cocharacter $\mu$ in $\R^{[\Theta]}$, the intersection $\Perm_\R(\mu) \cap \wt W^{[\Theta]}$ may only be properly contained in $\Perm_{\R^{[\Theta]}}(\mu)$, not equal.  However, for $\mu$ the cocharacter \eqref{disp:mu_B}, such bad behavior does not arise.

\begin{prop}\label{st:perm_intersect}
Let $F \preceq F'$ be subfacets of $A_{B_m}$, and let $\wt F$ and $\wt{F'}$ be the respective facets in $\A_{D_m}$ containing each of them.  Then
\[
\Perm_{F',F,B_m}(\mu) = \Perm_{\wt{F'},\wt F, D_m}(\mu) \cap
                        \bigl(W_{F',B_m} \bs \wt W_{B_m}/ W_{F,B_m}\bigr).
\]
\end{prop}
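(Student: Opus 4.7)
The plan is to match the parahoric permissibility characterizations in types $B_m$ and $D_{m+1}$ developed in \eqref{st:mu-perm_B} and \eqref{st:mu-perm_D_parahoric} term by term. First, as in the analogous results in \s\ref{s:combinatorics_D}, after applying the automorphism $\theta_0$ if necessary, I would reduce to $W_{F,B_m} = W_{I,B_m}$ and $W_{F',B_m} = W_{J,B_m}$ for nonempty subsets $I \subset J \subset \{0,\dotsc,m\}$ satisfying \eqref{disp:I_cond_B}, so that by \eqref{disp:wtI} the corresponding parahorics in $D_{m+1}$ are $W_{\wt I,D_{m+1}} \subset W_{\wt J,D_{m+1}}$.

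Next I would invoke \eqref{st:spin-perm_intersection}: its part \eqref{it:W_aff_equiv} makes the congruences $w \equiv t_\mu \bmod W_{\aff,B_m}$ and $\phi(w) \equiv t_{\iota(\mu)} \bmod W_{\aff,D_{m+1}}$ equivalent, while parts \eqref{it:sp1_equiv}--\eqref{it:sp2-3_equivs}, combined with the $\nu$-formulations in \eqref{st:sp-sp'_equivs} and \eqref{st:SP_SP'_equiv}, match the conditions \eqref{it:sp1'}, \eqref{it:sp2'}, \eqref{it:sp3'} on $\nu_k^w$ with \eqref{it:SP1'}, \eqref{it:SP2'}, \eqref{it:SP3'} on $\nu_k^{\phi(w)}$ for each $k \in I$. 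Combined with the characterizations \eqref{st:mu-perm_B} and \eqref{st:mu-perm_D_parahoric}, membership in $\Perm_{F',F,B_m}(\mu)$ is identified with exactly the subset of the $D_{m+1}$-permissibility conditions indexed by $k \in I$, together with the spin condition at $k \in I \cap \{0,1\}$.

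The remaining task is to reconcile $\wt I$ with $I$ and the potentially extra spin demands at $k \in \wt I \cap \{m,m+1\}$. When $m \notin I$ we have $\wt I = I$ and the two lists of conditions coincide on the nose, yielding both inclusions. The only substantive case is $m \in I$, in which $\wt I = I \cup \{m+1\}$ and the $D_{m+1}$ characterization imposes the additional constraints \eqref{it:SP1'}, \eqref{it:SP2'} at $k = m+1$ and \eqref{it:SP3'} at $k = m,m+1$. The reverse inclusion of the proposition is immediate, since all $B_m$-conditions appear among the $D_{m+1}$-conditions. For the forward inclusion, the key tool is \eqref{st:automatic_spin_cond}: once $\mu_m^{\phi(w)}$ satisfies \eqref{it:SP1} and \eqref{it:SP2} and $\phi(w) \equiv t_{\iota(\mu)} \bmod W_{\aff,D_{m+1}}$, the lemma forces \eqref{it:SP3} at $k = m$ and all of \eqref{it:SP1}--\eqref{it:SP3} at $k = m+1$ for free. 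This is really the only non-formal step; the main obstacle is simply keeping careful track of the $I \leftrightarrow \wt I$ dictionary and recognizing \eqref{st:automatic_spin_cond} as the bridge that eliminates the apparent asymmetry between the two sides.
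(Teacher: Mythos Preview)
Your proposal is correct and follows essentially the same route as the paper: reduce via $\theta_0$ to the $I,J$ setting, then match the explicit characterizations \eqref{st:mu-perm_B} and \eqref{st:mu-perm_D_parahoric} via \eqref{st:spin-perm_intersection}, with \eqref{st:automatic_spin_cond} supplying the extra conditions at $k=m,m+1$ for the forward inclusion. The only cosmetic difference is that the paper dispatches the inclusion $\supset$ in one stroke by citing the general Steinberg fact $\Perm_\R(\mu)\cap\wt W^{[\Theta]}\subset\Perm_{\R^{[\Theta]}}(\mu)$ (which follows from \eqref{st:convex_hulls}), whereas you argue it through the explicit condition lists; both are fine.
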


\begin{proof}
The inclusion $\supset$ holds by general properties of Steinberg fixed-point root data.  To prove the forward inclusion $\subset$, by applying the automorphism $\theta_0$ if necessary, we may assume that $a_{0'}$ is a vertex of $F$ if $a_0$ is, and likewise for $F'$.  The conclusion then follows at once from \eqref{st:spin-perm_intersection}, \eqref{st:automatic_spin_cond}, and our explicit descriptions of $\Perm_{\wt{F'},\wt F, D_m}(\mu)$ \eqref{st:mu-perm_D_parahoric} and $\Perm_{F',F,B_m}(\mu)$ \eqref{st:mu-perm_B}.
\end{proof}

As a consequence of \eqref{st:perm_intersect}, \eqref{st:adm=perm_q=2}, and the fact that we have proved \eqref{conj} in the case at hand, we conclude the following.

\begin{cor}
Let $\mu$ be the cocharacter \eqref{disp:mu_B}, let $F \preceq F' \preceq A_{B_m}$, and suppose that $q \leq 2$.  Then
\begin{flalign*}
   \phantom{\qed}& & \Adm_{F',F,B_m}(\mu) = \Perm_{F',F,B_m}(\mu). & & \qed
\end{flalign*}
\end{cor}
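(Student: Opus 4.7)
The plan is to assemble the corollary from three ingredients already in hand, all via the passage to type $D$ through the Steinberg fixed-point formalism of \S\ref{ss:steinberg}. The easy direction $\Adm_{F',F,B_m}(\mu) \subset \Perm_{F',F,B_m}(\mu)$ is the general theorem of Kottwitz--Rapoport \cite{kottrap00}, so the real content is the reverse inclusion. After replacing $F$ and $F'$ by their images under $\theta_0$ if necessary, I would reduce to the case where $W_{F,B_m} = W_{I,B_m}$ and $W_{F',B_m} = W_{J,B_m}$ for nonempty $I \subset J \subset \{0,\dotsc,m\}$ satisfying \eqref{disp:I_cond_B}, and let $\wt I \subset \wt J$ be the associated subsets of $\{0,\dotsc,m+1\}$ as in \eqref{disp:wtI}, determining subfacets $\wt F \preceq \wt{F'} \preceq A_{D_{m+1}}$.

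The key step is then a four-term chain. Let $w \in \Perm_{F',F,B_m}(\mu)$. By \eqref{st:perm_intersect}, $w$ lies in $\Perm_{\wt{F'},\wt F, D_{m+1}}(\mu)$. Since $q \leq 2$, \eqref{st:adm=perm_q=2} gives $w \in \Adm_{\wt{F'}, \wt F, D_{m+1}}(\mu)$. Finally, the case of Conjecture \ref{conj} that was established in the course of proving \eqref{st:sp-perm=adm_B}, namely
\[
   \Adm_{F', F, B_m}(\mu) = \Adm_{\wt{F'}, \wt F, D_{m+1}}(\mu) \cap \bigl(W_{F',B_m} \bs \wt W_{B_m} / W_{F,B_m}\bigr),
\]
places $w$ in $\Adm_{F',F,B_m}(\mu)$, as required.

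Since each of the three inputs already appears with full parahoric generality in the paper, there is no serious obstacle; the main thing to verify carefully is simply that Conjecture \ref{conj} is available in the parahoric case for $(\R_{D_{m+1}}, \Theta, \iota(\mu))$, not merely in the Iwahori case. This follows because the proof of \eqref{st:sp-perm=adm_B} proceeded by taking minimum-length double-coset representatives in $\wt W_{D_{m+1}}$ (via \eqref{st:min_length_rep} and \eqref{rk:min_length_spin-perm}) and invoking \eqref{st:bruhat_inherited}, steps which apply verbatim at the parahoric level. With this remark noted, the chain of inclusions above gives $\Perm_{F',F,B_m}(\mu) \subset \Adm_{F',F,B_m}(\mu)$ and completes the proof.
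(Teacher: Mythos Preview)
Your proof is correct and follows essentially the same route as the paper, which deduces the corollary directly from \eqref{st:perm_intersect}, \eqref{st:adm=perm_q=2}, and the established case of Conjecture~\ref{conj}. Your additional remark that the parahoric case of the conjecture follows from the Iwahori case via \eqref{st:min_length_rep} is valid, and the paper makes the same observation immediately after stating the conjecture (the Iwahori admissible sets surject onto the parahoric ones on both sides).
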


\begin{bibdiv}
\begin{biblist}


\bib{bor98}{article}{
  author={Borovoi, Mikhail},
  title={Abelian Galois cohomology of reductive groups},
  journal={Mem. Amer. Math. Soc.},
  volume={132},
  date={1998},
  number={626},
  pages={viii+50 pp.},
  label={Bo},
}

\bib{bourLGLA4-6}{book}{
  author={Bourbaki, Nicolas},
  title={Lie groups and Lie algebras. Chapters 4--6},
  series={Elements of Mathematics (Berlin)},
  note={Translated from the 1968 French original by Andrew Pressley},
  publisher={Springer-Verlag},
  place={Berlin},
  date={2002},
  pages={xii+300},
  isbn={3-540-42650-7},
}

\bib{goertz01}{article}{
  author={G{\"o}rtz, Ulrich},
  title={On the flatness of models of certain Shimura varieties of PEL-type},
  journal={Math. Ann.},
  volume={321},
  date={2001},
  number={3},
  pages={689--727},
  issn={0025-5831},
}

\bib{goertz03}{article}{
  author={G{\"o}rtz, Ulrich},
  title={On the flatness of local models for the symplectic group},
  journal={Adv. Math.},
  volume={176},
  date={2003},
  number={1},
  pages={89--115},
}

\bib{goertz05}{article}{
  author={G{\"o}rtz, Ulrich},
  title={Topological flatness of local models in the ramified case},
  journal={Math. Z.},
  volume={250},
  date={2005},
  number={4},
  pages={775--790},
  issn={0025-5874},
}

\bib{hngo02b}{article}{
  author={Haines, Thomas J.},
  author={Ng{\^o}, B. C.}*{inverted={yes}},
  title={Alcoves associated to special fibers of local models},
  journal={Amer. J. Math.},
  volume={124},
  date={2002},
  number={6},
  pages={1125--1152},
  issn={0002-9327},
}

\bib{hrap08}{article}{
  author={Haines, T.},
  author={Rapoport, M.},
  title={On parahoric subgroups},
  contribution={appendix to: G. Pappas and M. Rapoport, \emph {Twisted loop groups and their affine flag varieties}, Adv. Math. \textbf {219} (2008), no. 1, 118--198},
}

\bib{ki10}{article}{
  author={Kisin, Mark},
  title={Integral models for Shimura varieties of abelian type},
  journal={J. Amer. Math. Soc.},
  volume={23},
  date={2010},
  pages={967--1012},
  label={Ki},
}

\bib{kott92}{article}{
  author={Kottwitz, Robert E.},
  title={Points on some Shimura varieties over finite fields},
  journal={J. Amer. Math. Soc.},
  volume={5},
  date={1992},
  number={2},
  pages={373--444},
  issn={0894-0347},
  label={Ko1},
}

\bib{kott97}{article}{
  author={Kottwitz, Robert E.},
  title={Isocrystals with additional structure. II},
  journal={Compositio Math.},
  volume={109},
  date={1997},
  number={3},
  pages={255--339},
  issn={0010-437X},
  label={Ko2}
}

\bib{kottrap00}{article}{
  author={Kottwitz, R.},
  author={Rapoport, M.},
  title={Minuscule alcoves for ${\rm GL}\sb n$ and ${\rm GSp}\sb {2n}$},
  journal={Manuscripta Math.},
  volume={102},
  date={2000},
  number={4},
  pages={403--428},
  issn={0025-2611},
}

\bib{pap00}{article}{
  author={Pappas, Georgios},
  title={On the arithmetic moduli schemes of PEL Shimura varieties},
  journal={J. Algebraic Geom.},
  volume={9},
  date={2000},
  number={3},
  pages={577--605},
  issn={1056-3911},
}

\bib{paprap03}{article}{
  author={Pappas, G.},
  author={Rapoport, M.},
  title={Local models in the ramified case. I. The EL-case},
  journal={J. Algebraic Geom.},
  volume={12},
  date={2003},
  number={1},
  pages={107--145},
  issn={1056-3911},
}

\bib{paprap05}{article}{
  author={Pappas, G.},
  author={Rapoport, M.},
  title={Local models in the ramified case. II. Splitting models},
  journal={Duke Math. J.},
  volume={127},
  date={2005},
  number={2},
  pages={193--250},
  issn={0012-7094},
}

\bib{paprap08}{article}{
  author={Pappas, G.},
  author={Rapoport, M.},
  title={Twisted loop groups and their affine flag varieties},
  contribution={ type={an appendix}, author={Haines, T.}, author={Rapoport}, },
  journal={Adv. Math.},
  volume={219},
  date={2008},
  number={1},
  pages={118--198},
  issn={0001-8708},
}

\bib{paprap09}{article}{
  author={Pappas, G.},
  author={Rapoport, M.},
  title={Local models in the ramified case. III. Unitary groups},
  journal={J. Inst. Math. Jussieu},
  date={2009},
  volume={8},
  number={3},
  pages={507--564},
}

\bib{prs?}{article}{
  author={Pappas, G.},
  author={Rapoport, M.},
  author={Smithling, B.},
  title={Local models of Shimura varieties, I. Geometry and combinatorics},
  status={to appear in \emph {Handbook of Moduli}},
  note={\href {http://arxiv.org/abs/1011.5551v3}{\texttt {arXiv:1011.5551v3 [math.AG]}}},
}

\bib{rap05}{article}{
  author={Rapoport, Michael},
  title={A guide to the reduction modulo $p$ of Shimura varieties},
  language={English, with English and French summaries},
  note={Automorphic forms. I},
  journal={Ast\'erisque},
  number={298},
  date={2005},
  pages={271--318},
  issn={0303-1179},
}

\bib{rapzink96}{book}{
  author={Rapoport, M.},
  author={Zink, Th.},
  title={Period spaces for $p$-divisible groups},
  series={Annals of Mathematics Studies},
  volume={141},
  publisher={Princeton University Press},
  place={Princeton, NJ},
  date={1996},
  pages={xxii+324},
  isbn={0-691-02782-X},
  isbn={0-691-02781-1},
}

\bib{rich?}{article}{
  author={Richarz, Timo},
  title={Schubert varieties in twisted affine flag varieties and local models},
  status={preprint},
  note={\href {http://arxiv.org/abs/1011.5416v1}{\texttt {arXiv:1011.5416v1 [math.AG]}}},
}

\bib{sm11b}{article}{
  author={Smithling, Brian D.},
  title={Topological flatness of orthogonal local models in the split, even case. I},
  journal={Math. Ann.},
  volume={350},
  date={2011},
  number={2},
  pages={381--416},
  label={Sm1},
}

\bib{sm11c}{article}{
  author={Smithling, Brian D.},
  title={Admissibility and permissibility for minuscule cocharacters in orthogonal groups},
  journal={Manuscripta Math.},
  volume={136},
  date={2011},
  number={3--4},
  pages={295--314},
  label={Sm2}
}

\bib{sm11d}{article}{
  author={Smithling, Brian D.},
  title={Topological flatness of local models for ramified unitary groups. I. The odd dimensional case},
  journal={Adv. Math.},
  volume={226},
  number={4},
  date={2011},
  pages={3160--3190},
  label={Sm3},
}

\bib{st68}{book}{
  author={Steinberg, Robert},
  title={Endomorphisms of linear algebraic groups},
  series={Memoirs of the American Mathematical Society, No. 80},
  publisher={American Mathematical Society},
  place={Providence, R.I.},
  date={1968},
  pages={108},
  label={St},
}

\bib{tits79}{article}{
  author={Tits, J.},
  title={Reductive groups over local fields},
  conference={ title={Automorphic forms, representations and $L$-functions (Proc. Sympos. Pure Math., Oregon State Univ., Corvallis, Ore., 1977), Part 1}, },
  book={ series={Proc. Sympos. Pure Math., XXXIII}, publisher={Amer. Math. Soc.}, place={Providence, R.I.}, },
  date={1979},
  pages={29--69},
}

\bib{vas08}{article}{
  author={Vasiu, Adrian},
  title={Geometry of Shimura varieties of Hodge type over finite fields},
  conference={ title={Higher-dimensional geometry over finite fields}, },
  book={ series={NATO Sci. Peace Secur. Ser. D Inf. Commun. Secur.}, volume={16}, publisher={IOS}, place={Amsterdam}, },
  date={2008},
  pages={197--243},
}

\bib{zhu?}{article}{
  author={Zhu, Xinwen},
  title={On the coherence conjecture of Pappas and Rapoport},
  status={preprint},
  note={\href {http://arxiv.org/abs/1012.5979v2}{\texttt {arXiv:1012.5979v2 [math.AG]}}},
}

\end{biblist}
\end{bibdiv}

\end{document}